\author{Jean-Pierre Ramis}
\address{Institut de France (Acad{\'e}mie des Sciences)
and Institut de Math{\'e}matiques, CNRS UMR 5219, {\'E}quipe {\'E}mile Picard,
U.F.R. M.I.G., Universit{\'e} Paul Sabatier (Toulouse 3),
31062 Toulouse CEDEX 9}
\email{ramis.jean-pierre@wanadoo.fr}
\author{Jacques Sauloy}
\address{
Institut de Math{\'e}matiques, CNRS UMR 5219, {\'E}quipe {\'E}mile Picard, 
U.F.R. M.I.G., Universit{\'e} Paul Sabatier (Toulouse 3),
31062 Toulouse CEDEX 9}
\email{sauloy@math.univ-toulouse.fr}
\urladdr{www.math.univ-toulouse.fr/~sauloy/}
\title[The $q$-analogue of the wild fundamental group (III)]
{The $q$-analogue of the wild fundamental group and the inverse problem 
of the Galois theory of $q$-difference equations}
\def\C{{\mathbf C}}
\def\Q{{\mathbf Q}}
\def\Z{{\mathbf Z}}
\def\R{{\mathbf R}}
\def\N{{\mathbf N}}
\def\dim{\text{dim}}
\def\Diag{\text{Diag}}
\def\Im{\text{Im}}
\def\Ker{\text{Ker}}
\def\Id{\text{Id}}
\def\Sp{\text{Sp}}
\def\lmod{\left |}            
\def\rmod{\right |}           
\def\tq{~ | ~}              
\def\ie{\emph{i.e.}}
\def\cf{\emph{cf.}}
\def\Hom{\text{Hom}}          
\def\Aut{\text{Aut}}          
\def\End{\text{End}}          
\def\1{\underline{1}}         
\def\Ker{\text{Ker~}}         
\def\Id{\text{Id}}            
\def\Mat{{\text{Mat}}}        
\def\GL{{\text{GL}}}          
\def\gl{{\text{gl}}}          
\def\Ad{{\text{Ad}}}          
\def\ad{{\text{ad}}}          
\def\ii{{\text{i}}}
\def\sq{\sigma_q}
\def\F{{\mathcal{F}}}
\def\O{{\mathcal{O}}}
\def\M{{\mathcal{M}}}
\def\Co{{\mathcal{C}}}
\def\D{{\mathcal{D}_{q,K}}}
\def\gr{{\text{gr}}}
\def\Eq{{\mathbf{E}_{q}}}
\def\EE{{\mathcal{E}}}
\def\G{{\mathfrak{G}_{A_{0}}}}
\def\g{{\mathfrak{g}_{A_{0}}}}
\def\Ra{{\C\{z\}}}
\def\Ka{{\C(\{z\})}}
\def\Raq(d){{\C\{\xi\}_{q,(\delta)}}}
\def\Kaq(d){{\C(\{\xi\})_{q,(\delta)}}}
\def\Rf{{\C[[z]]}}
\def\Kf{{\C((z))}}
\def\Rw{{\mathcal{O}(\C^{*})}}
\def\Kw{{\mathcal{M}(\C^{*})}}
\def\Rwg{{\mathcal{O}(\C^{*},0)}}
\def\Kwg{{\mathcal{M}(\C^{*},0)}}
\def\Der{{\dot{\Delta}}}
\def\homa{{\hat{\omega}^{(0)}_{a}}}
\def\oma{{\omega^{(0)}_{a}}}
\def\Vect{{\text{Vect}}}
\def\Gal{{\text{Gal}}}
\def\Lie{{\text{Lie}}}
\def\St{\mathfrak{St}}
\def\st{\mathfrak{st}}
\def\stt{{\tilde{\st}}}
\def\Rep{{\text{Rep}}}
\def\Repa{{\text{Rep}_{A_{0}}}}
\def\Repc{{\text{Rep}_{\C}}}
\def\U{{\mathfrak U}_{A_{0}}}
\def\Res{{\text{Res}}}
\def\X{{\mathcal{X}}}
\def\Y{{\mathcal{Y}}}
\def\L{{\mathbf{L}}}
\def\mmu{{\underline{\mu}}}
\def\Lib{{\text{Lib}}}
\def\Ad{{\text{Ad}}}
\begin{document}
\frontmatter

\begin{abstract}
In \cite{RS1,RS2}, we defined $q$-analogues of alien derivations
for linear analytic $q$-difference equations with integral slopes
and proved a density theorem (in the Galois group) and a freeness 
theorem. In this paper, we completely describe the wild fundamental 
group and apply this result to the inverse problem in $q$-difference 
Galois theory.
\end{abstract}

\subjclass{39A13. Secondary: 34M50}

\keywords{q-difference equations, Stokes phenomenon, alien derivations,
Galois theory, inverse problem}

\maketitle

\tableofcontents

\mainmatter



\section{Introduction}


\subsection{The problems}

The main purpose of this paper is to give a \emph{new} and probably 
definitive version of the local meromorphic classification of $q$-difference 
modules in the \emph{integral} slopes case\footnote{This is explained in 
section \ref{subsection:overallstructure}. For the definition and properties 
of slopes, see section \ref{section:previousresults} and \cite{JSFIL}.}. 
Using this result we shall get \emph{a complete solution} of the inverse 
problem for the $q$-difference Galois theory in the local case, \emph{for all} 
$q\in\C^*$, $\vert q\vert \neq 1$, and a solution of the inverse problem 
for \emph{connected reductive algebraic groups} in the global case, 
also \emph{for all} $q\in\C^*$, $\vert q\vert \neq 1$ (for the case of 
the \emph{exceptional simple groups}, in particular, this result is 
new\footnote{For the simple groups $SL(n,\C)$, $SO(n,\C)$, $Sp(2n,\C)$ 
there are \emph{explicit} solutions with generalized $q$-hypergeometric 
difference equations due to J. Roques, \cf\ section 
\ref{subsection:knownresults}}).

\subsubsection{The $q$-wild fundamental group}

In \cite{RSZ} we gave \emph{three} versions of the local meromorphic 
classification of $q$-difference modules (in the integral slopes case). 
The first one uses algebraic normal forms and index theorems, it improves 
some results of Birkhoff and Guenther \cite{Birkhoff3}, there is no analog 
in the differential case. The second method uses a $q$-analog of Poincar\'e 
asymptotics expansions and the non abelian cohomology $H^1(\Eq,\Lambda)$ of 
some sheaves $\Lambda$ on the (loxodromic) elliptic curve $\Eq:=\C^*/q^{\Z}$, 
it parallels some results of Malgrange and Sibuya (after Birkhoff, 
Balser-J\"urkat-Lutz) in the differential case. The third method uses 
$q$-multisummability, it parallels \cite{MR3} in the differential case. \\

The new version of the classification exposed here is based upon 
a ``fundamental group'' $\pi_{1,q,w,1}^{(0)}$ that we named the 
\emph{$q$-wild fundamental group}\footnote{In $\pi_{1,q,w,1}^{(0)}$, 
the subsript $1$ is for the analogy with $\pi_1$, $q$ is clear, $w$ 
is for \emph{wild}, the last $1$ is for \emph{integral} slopes case 
(\ie with denominator $1$) and the superscript ${}^{(0)}$ is for \emph{local 
at $0$}.}, a $q$-analog of the \emph{wild fundamental group} introduced by 
the first author in the differential case \cite{DMR}, \cite{MR3}. There is 
an equivalence of (tannakian) categories between the category of finite 
dimensional representations of this $q$-wild fundamental group and the 
category of $q$-difference modules (with integral slopes), moreover the 
image of a representation is ``the" $q$-difference Galois group of the 
corresponding module. This classification is in the style of the 
\emph{Riemann-Hilbert} correspondence for regular-singular meromorphic 
linear differential equations and should have similar (important...) 
applications.

Of course there is a ``trivial" candidate for a $q$-wild fundamental group 
satisfying our requirements: the tannakian Galois group 
$\text{Gal}(\EE_1^{(0)})$ of the tannakian category $\EE_1^{(0)}$ of 
our $q$-modules, but this (proalgebraic) group is ``too abstract and 
too big", our purpose was to get a \emph{smaller} fundamental group 
(as small as possible !) which is Zariski dense in the tannakian Galois 
group and to describe it \emph{explicitly}. (As a byproduct, we shall 
get finally a complete description of the tannakian Galois group itself.) 
It is important to notice that the tannakian Galois group is an 
\emph{algebraic object}, but that the construction of the smaller group 
is based upon \emph{transcendental techniques} (complex analysis). 
This is similar to what happens with the Riemann-Hilbert correspondance. \\

We will see that it is possible to write:
\[
\text{Gal}(\EE_1^{(0)})=\mathfrak{St}\, \rtimes \text{Gal}(\EE_{p,1}^{(0)})
\]
where\footnote{This a priori strange notation is motivated by the fact that 
this group is the Galois group of the category of \emph{pure} modules.}, 
by definition, $\text{Gal}(\EE_{p,1}^{(0)}):= \Hom_{gr}(\Eq,\C^*)\times \C$ 
and $\mathfrak{St}$ is a \emph{prounipotent} group (named \emph{the Stokes 
group}). We can replace $\text{Gal}(\EE_1^{(0)})$ by an equivalent datum, 
the action of $\text{Gal}(\EE_{p,1}^{(0)})$ on the Lie algebra $\mathfrak{st}$ 
of $\mathfrak{St}$. We denote this datum  as a semi-direct product
$\mathfrak{st}\, \rtimes \text{Gal}(\EE_{p,1}^{(0)})$. \\

We build a \emph{free} Lie algebra $L$ generated by an \emph{infinite} 
family of symbols $\dot\Delta_i^{(\delta,\bar c)}$ 
($\delta\in\N^*,~\bar c\in\Eq,~i=1,\ldots ,\delta$) and $\dot\Delta^{(0)}$, 
the (pointed) $q$-alien derivations, endowed with an action of 
$\text{Gal}(\EE_{p,1}^{(0)})_s:= \Hom_{gr}(\Eq,\C^*)$, and a natural 
$\text{Gal}(\EE_{p,1}^{(0)})_s$-equivariant
map $L \rightarrow \tilde{\mathfrak{st}} := 
\mathfrak{st} \oplus \C \log \dot\Delta^{(0)}$. 
Then, \emph{by definition}:
\[
\pi_{1,q,w,1}^{(0)}:=L\rtimes \text{Gal}(\EE_{p,1}^{(0)})_s
\]
and we prove that the natural map
\[
\text{Rep}\, (\text{Gal}(\EE_1^{(0)})) \rightarrow 
\text{Rep}\, (\pi_{1,q,w,1}^{(0)})
\]
is an \emph{isomorphism}. \\

As a byproduct, we prove that, for some convenient pronilpotent completion 
$L^{\dag}$ (introduced in section \ref{subsection:firststelinverseproblem}
and studied in the appendix) of the free Lie-algebra the map:
\[
\exp(L^{\dag})\rtimes G^{(0)}_{p,1,s} \rightarrow 
\exp(\tilde{\mathfrak{st}})\,\rtimes G^{(0)}_{p,1,s} =
\mathfrak{St}\, \rtimes G^{(0)}_{p,1}=G^{(0)}_1
\]
is an isomorphism of proalgebraic groups. It is an ``explicit description" 
of the tannakian group $G^{(0)}_1$. \\

The construction of $L$ and the proof of its main properties is the outcome 
of a quite long process (in three steps: \cite{RS1}, \cite{RS2} and the 
present article) and uses some deep results of \cite{RSZ}. 
In \cite{RS1} we built some (pointed) $q$-alien derivations 
$\Der_a^{\delta}$ belonging to $\mathfrak{st}$\footnote{The pointed 
$q$ alien derivations are $q$-analog of the \emph{algebraic} pointed alien 
derivation introduced in \cite{MR2}. The name comes from the fact that 
\emph{in the simplest cases} the Martinet-Ramis pointed alien derivations 
``coincide" with the derivations introduced before by J. Ecalle under 
this name. For a proof \cf\ \cite{LoR}.}, we interpreted them using 
$q$-Borel-Ramis transform and we got the ``first level" of our construction 
(the ``linear case" as in the two-slopes case). In \cite{RS2} we proved 
the \emph{Zariski density} of the Lie algebra generated by the $q$-alien 
derivations and we gave a first (awkward...) tentative of devissage in order 
to ``free" a convenient \emph{subset} of an extended set of alien derivations. 
Here we finally give ``the good" devissage and we prove the \emph{freeness 
theorem} (theorem \ref{theo:freenesstheorem}). The freeness property is 
\emph{absolutely crucial}, it allows a very easy \emph{computation} of 
the representations of the $q$-wild fundamental group and in particular 
the solution of the inverse problem. \\
 
The ($q$-Gevrey) devissage used in the present article is based upon the  
($q$-Gevrey) devissage  of the non-abelian cohology sets of some sheaves 
of unipotent groups on $\Eq$ and its relations with the $q$-alien derivations.
 We think that this devissage \emph{is interesting by itself} and will give 
later some relations between some $H^1(\Eq,\Lambda)$ and some representations 
of algebraic groups. \\

The underlying idea of our construction is that the knowledge of a 
$q$-difference module is equivalent to the knowledge of its \emph{formal 
invariants} and of the corresponding $q$-Stokes phenomena (in the sense of 
\cite{RSZ}). This is similar to what happens in the differential case, but 
unfortunately there is a major difference, here the entries of the Stokes 
matrices are $q$-constants, that is elliptic functions on $\Eq$, and we would 
like instead some matrices belonging to $\GL_n(\C)$ (the $q$-difference 
Galois groups are defined on $\C$). This motivates the replacement of Stokes 
matrices by $q$-alien derivations (using residues) introduced in \cite{RS1}: 
a trick to \emph{reduce} the field of constants from $\mathcal{M}(\Eq)$ 
to $\C$. \\

As a byproduct of our classification theorem we get a $q$-analog of the 
Ramis density theorem of the differential case \cite{MR3}. \\

At the end of the story there is a fascinating parallel between the 
differential and the $q$-difference case. However, it was impossible 
(in any case for us...) to mimick the differential approach which is
essentially based upon the concept of \emph{solution}, because in 
the $q$-difference case the solutions behave badly by tensor products.  
Hence we followed a new path   using (roughly speaking) categories 
in place of solutions. \\

For more details about the analogies between the $q$-wild fundamental 
group and the wild fundamental group of the differential case the reader 
can have a look to the introduction of \cite{RS1}\footnote{In fact it is 
possible to get a perfect analogy if one replaces the free resurgent 
algebra of the wild fundamental group by a bigger free Lie algebra endowed 
with an action not only of $\Z$ but of its proalgebraic completion 
$\Hom_{gr}(\C^*,\C^*)\times \C$, we will return to this problem in 
a future paper.}. \\

For each point $\bar \alpha\in\Eq$, we can consider the semi-direct 
product of the free Lie algebra generated by the symbols 
$\dot\Delta_{\bar \alpha}^{\delta,\bar \alpha^{\delta}}$ ($\delta\in\N^*$)  
by $\C^*$ (the action of $\C^*$ corresponding to the grading $\delta$).
The corresponding category of representations is isomorphic to the 
category of representations of a quotient of $\pi_{1,q,w,1}$.
Similar groups appear in the linear differential case, in the non linear 
differential case (Lie algebras of Ecalle pointed alien derivations
\footnote{The Lie algebra generated by the Ecalle pointed alien derivations 
$\{\dot \Delta_n\}_{n\in\N*}$ is free, the grading corresponding to the 
rescaling of $e^{-1/x}$. There is a dictionary between Martinet-Ramis 
classification of saddle-nodes and some representations of this algebra  
\cite{Sauz}.}) and in the theory of the cosmic Galois group of 
Connes-Marcolli \cite{CoMarc}. These groups are in some sense ``motivic 
groups" (\cf\ also \cite{And} 5. Coda\footnote{\emph{``Ce groupe d'une 
ubiquit\'e stup\'efiante"}, page 16}), therefore we can interpret our 
result as a ``motivic version" of the local classification of the 
$q$-difference modules.

\subsubsection{The inverse problem of the Galois theory 
of $q$-difference equations}
 
Using the $q$-wild fundamental group we can imitate the solution of the 
local inverse problem in the differential case due to the first author. 
The problem is to find necessary and sufficient conditions on a complex 
linear algebraic group in order that this group be the $q$-difference 
Galois group of a local meromorphic $q$-difference module with integral 
slopes ($q\in\C^*,~\vert q\vert \neq 1$). \\

As in the differential case we get easily some \emph{necessary conditions} 
using the algebraic group $V(G):=G/L(G)$ (where $L(G)$ is the invariant 
subgroup generated by all the maximal tori of $G$) and a tannakian argument. 
In the differential case the corresponding conditions are sufficient, but 
here it is no longer the case, there appears a new necessary condition 
involving some type of co-weight on a maximal torus (existence of a 
$\Theta$-structure\footnote{\cf\ the definition \ref{deftheta}}.). Adding 
this condition we get a set of \emph{necessary and sufficient conditions}. 
It follows in particular that a Borel subgroup of a reductive group is 
the $q$-difference Galois group of a local meromorphic $q$-difference module 
with integral slopes. \\

In \cite{JSAIF} and \cite{JSGAL} the second author proved a classification 
theorem for \emph{regular singular} $q$-difference modules, involving the 
local modules at $0$ et $\infty$ and an invertible elliptic connection 
matrix (in Birkhoff style) and derived a description of the corresponding 
Galois group and of a Zariski dense subset of this group. We extend these 
results to the general case. Using this extension and the solution of the 
local inverse problem we get a partial solution of the global inverse 
problem. We prove in particular that every \emph{connected reductive group} 
is the $q$-difference Galois group of a \emph{rational} $q$-difference module.


\subsection{Contents of the paper}

We now briefly sketch the organisation of the paper. General notations
and conventions are explained in the next subsection 
\ref{subsection:generalnotations}. \\

Sections 2 to 4 are devoted to the ``direct problem'' of the description
of the Galois group of a $q$-difference module (or system, or equation)
with integral slopes. In section 2, we review results from our previous 
work \cite{RS1,RS2} and adapt them to our present needs. In section 3, 
we proceed to a complete description of the local Galois group. 
In section 4, we combine this with previous results from \cite{JSGAL}
to obtain a description of the global Galois group (when it makes sense);
this is less complete that section 3 but nevertheless sufficient for
our use in section 7. \\

Sections 5 to 7 are devoted to the inverse problem. This is introduced
in section 5, as well as an important technical tool, the notion of
$\Theta$-structure. In section 6, the local inverse problem is solved. 
In section 7, the global inverse problem is tackled.


\subsection{General notations}
\label{subsection:generalnotations}

Let $q \in \C$ be a complex number with modulus $|q| > 1$. We write 
$\sq$ the $q$-dilatation operator, so that, for any map $f$ on an 
adequate domain in $\C$, one has: $\sq f(z) = f(qz)$. Thus, $\sq$ 
defines a ring automorphism in each of the following rings: $\Ra$ 
(convergent power series), $\Rf$ (formal power series), $\Rw$ 
(holomorphic functions over $\C^{*}$), $\Rwg$ (germs at $0$ of elements 
of $\Rw$). Likewise, $\sq$ defines a field automorphism in each of their 
fields of fractions: $\Ka$ (convergent Laurent series), $\Kf$ (formal 
Laurent series), $\Kw$ (meromorphic functions over $\C^{*}$), $\Kwg$ 
(germs at $0$ of elements of $\Kw$).
The $\sq$-invariants elements of $\Kwg$ actually belong to $\Kw$ and can 
be considered as meromorphic functions on the quotient Riemann surface 
$\Eq = \C^{*}/q^{\Z}$. Through the mapping $x \mapsto z = e^{2 \ii \pi x}$, 
the latter is identified to the complex torus\footnote{Note however that
we shall rather use the \emph{multiplicative} notation for the group 
structure on $\Eq$.} $\C/(\Z + \Z \tau)$, where 
$q = e^{2 \ii \pi \tau}$. Accordingly, we shall identify the fields 
$\Kwg^{\sq}$, $\Kw^{\sq}$ and $\M(\Eq)$. We shall write 
$a \mapsto \overline{a}$ the canonical projection map
$\pi: \C^{*} \rightarrow \Eq$ and 
$[c;q] = \pi^{-1}\left(\overline{c}\right) = c q^{\Z}$
(a discrete logarithmic $q$-spiral).
Last, we shall have use for the function $\theta \in \Rw$, a Jacobi Theta 
function such that $\sq \theta = z \theta$ and $\theta$ has simple zeroes 
along $[-1;q]$. One then puts $\theta_{c}(z) = \theta(z/c)$, so that
$\theta_{c} \in \Rw$ satisfies $\sq \theta_{c} = (z/c) \theta_{c}$ and 
$\theta_{c}$ has simple zeroes along $[-c;q]$. \\

For any two (pro)algebraic groups $G$, $H$, the set of morphisms from
$G$ to $H$ is written $\Hom_{gralg}(G,H)$. When we want to consider all
morphisms of abstract groups, forgetting the (pro)algebraic structure,
we write $\Hom_{gr}(G,H)$.


\subsubsection*{Acknowledgements}

The final redaction of this work was achieved while the second author
was an invited professor for three months at the School of Mathematics
and Statistics of ``Wuda'' (Wuhan University, Wuhan, Hubei, People's 
Republic of China): he wishes to express his gratitude to Wuda for the 
excellent working conditions there.



\section{Previous results on the structure of the local Galois group}
\label{section:previousresults}

In this section, we recall the notations and results of \cite{RS1,RS2} 
and make more precise some of them. \\
 
A linear analytic $q$-difference equation at $0 \in \C$ is an equation:
\begin{equation}
\label{equation:qED}
\sq X = A X, 
\end{equation}
where $A \in \GL_{n}(\Ka)$. We shall identify it with the $q$-difference 
module\footnote{A difference module over a difference field 
$(K,\sigma)$ (\ie\ $\sigma$ is an automorphism of the commutative 
field $K$) is a pair $M := (V,\Phi)$, where $V$ is a finite 
dimensional vector space over $K$ and $\Phi$ a $\sigma$-linear 
automorphism: 
$\forall a \in K \;,\; \forall x \in V \;,\; \Phi(a x) = \sigma(a) \Phi(x)$.
Equivalently, $M$ is a finite length left module over the 
ring $\D := K\left<\sigma,\sigma^{-1}\right>$ of difference 
operators $\sum a_{i} \sigma^{i}$. Difference modules over 
$(\Ka,\sq)$ are called $q$-difference modules.}:
\begin{equation}
\label{equation:modstandard}
M_{A} := (\Ka^{n},\Phi_{A}), \text{~where~} \Phi_{A}(X) := A^{-1} \sq X.
\end{equation}
If $B \in \GL_{p}(\Ka)$, morphisms from $M_{A}$ to $M_{B}$ are described by:
\begin {equation}
\label{equation:morphstandard}
\Hom\bigl(M_{A},M_{B}\bigr) = \{F \in \Mat_{p,n}(\Ka) \tq (\sq F) A = B F\}.
\end{equation}

The $q$-difference modules over $\Ka$ form a $\C$-linear neutral tannakian 
category $\EE^{(0)}$, of which we shall now distinguish some particular 
subcategories. First note that to each $q$-difference module is attached 
a Newton polygon, which can be described as a sequence 
$\mu_{1} < \cdots < \mu_{k}$ of rational slopes coming with multiplicities 
$r_{1},\ldots,r_{k} \in \N^{*}$. Modules with integral slopes form the full 
subcategory $\EE_{1}^{(0)}$ of $\EE^{(0)}$. Modules having only one slope 
are called pure isoclinic; direct sums of pure isoclinic modules are called 
pure and they form the full subcategory $\EE_{p}^{(0)}$ of $\EE^{(0)}$. Pure 
modules with integral slopes form the full subcategory $\EE_{p,1}^{(0)}$ of 
both $\EE_{p}^{(0)}$ and $\EE_{1}^{(0)}$. Pure isoclinic modules of slope $0$ 
are called fuchsian; they form the full subcategory $\EE_{f}^{(0)}$ of 
$\EE_{p,1}^{(0)}$. All these categories are tannakian subcategories of 
$\EE^{(0)}$. Before describing their Galois groups, we shall have a look 
at their fiber functors. \\

For any $q$-difference module $M$, holomorphic solutions in $\sq$-invariant
open subsets of $(\C^{*},0)$ form a sheaf $\F_{M}$ over $\Eq$. This sheaf
is locally free over the structural sheaf of $\Eq$ and thereby defines a
holomorphic vector bundle which we also write $\F_{M}$. In case $M$ is
given in matricial form $M_{A} = (\Ka^{n},\Phi_{A})$, these sheaf and bundle 
admit the following descriptions:
\begin{align*}
\F_{M}(V) &= \{X \in \O\left(\pi^{-1}(V),0\right)^{n} \tq \sq X = A X \}, \\
\F_{M} &= \dfrac{(\C^{*},0) \times \C^{n}}{(z,X) \sim (qz,A(z)X)}
\longrightarrow \dfrac{(\C^{*},0)}{z \sim qz} = \Eq.
\end{align*}
In the right hand side of the first (resp. the second) equality, 
solutions $X \in \O\left(\pi^{-1}(V),0\right)^{n}$ are taken to be 
germs at $0 \in \C^*$ (resp. the bundle $(\C^{*},0) \times \C^{n}$ to 
be quotiented is taken to be trivial over the germ of $\C^*$ at $0$). \\

The functor $M \leadsto \F_{M}$ is exact, faithful and $\otimes$-compatible
and provides a fiber functor on $\EE^{(0)}$ over the base $\Eq$. Lifting
$\F_{M}$ through $\pi$ to an equivariant (trivial) bundle over $\C^{*}$,
then taking fibers, we get a family $(\oma)_{a \in \C^{*}}$ of fiber functors
on $\EE^{(0)}$ over $\C$, thus a Galois groupoid with base $\C^{*}$ over
the field $\C$. (The reason to consider points in $\C^{*}$ rather than
in $\Eq$ is that we want to use transcendental constructions\footnote{It
is not feasible in the setting of $q$-difference equations to define a
fiber functor as the space of solutions in some big field $K$. Indeed,
in order to get a fiber functor in this way, one has to take $K$ rather
big; then the fiber functor is defined over the field of constants of
$K$, which will be bigger than $\C$. For instance, the natural choice
$K = \M(\C^*)$ yields the a Galois group over $\M(\Eq)$.}.) \\

On the other hand, $\EE^{(0)}$ is endowed with a family 
$(F_{\leq \mu})_{\mu \in \Q}$ of endofunctors such that, for each module $M$,
the $F_{\leq \mu} M$ form a filtration of $M$ by subobjects, with jumps
at the slopes of $M$. The associated graded module:
$$
\gr M := \bigoplus \dfrac{F_{\leq \mu} M}{F_{< \mu} M}
$$
is pure and we get a functor $M \leadsto \gr M$ from $\EE^{(0)}$ to
$\EE_{p}^{(0)}$, which is exact, faithful and $\otimes$-compatible.
It is also a retraction of $\EE_{p}^{(0)} \subset \EE^{(0)}$. This yields
a new family of fiber functors on $\EE^{(0)}$:
$$
\homa := \oma \circ \gr.
$$
In some sense, $\EE_{p}^{(0)}$ is the ``formalisation'' of $\EE^{(0)}$ 
and we see the $\homa$, resp. the $\oma$, as points in a formal, resp. 
an analytic neighborhood of $0$. (The reason for this is that, over
the formal category, $\gr$ is isomorphic to the identity functor, see
\cite{JSFIL}.) \\

Whatever the fiber functor used to define it, the Galois group
$\Gal(\EE^{(0)})$ is the semi-direct product of the ``formal'' Galois 
group $\Gal(\EE_{p}^{(0)})$ by a prounipotent group, the kernel of the 
morphism $\Gal(\EE^{(0)}) \rightarrow \Gal(\EE_{p}^{(0)})$ dual to $\gr$. 
(This follows from the existence of the filtration.) Restricting to 
$\EE_{1}^{(0)}$, one gets:
\begin{equation}
\label{equation:gal=prodsemidir}
\Gal(\EE_{1}^{(0)}) = \St \rtimes \Gal(\EE_{p,1}^{(0)}),
\end{equation}
where $\St$ is a prounipotent group. \\

The goal of this series of papers is the description of the \emph{Stokes 
group} $\St$ and the \emph{Stokes Lie algebra}\footnote{Actually, we shall
extend here $\st$ to a Lie algebra $\stt$ which contains the ``Stokes
operators of level $0$'', that is the unipotent part of the fuchsian
Galois group, corresponding to the $q$-logarithm.} $\st := \Lie(\St)$ 
and its application to the inverse problem in $q$-difference Galois 
theory. The main tool on the side of $q$-difference equations is theorem 
\ref{theo:bigth1}, which describes all Galois groups of systems with
integral slopes in terms of representations of a \emph{wild fundamental
group}, actually, the semi-direct product of an infinite dimensional Lie
algebra with a proalgebraic group, the tannakian formal Galois group
of the category of systems with integral slopes. We obtain it with the 
help of an explicit family of \emph{galoisian Stokes operators} built
by the authors together with Changgui Zhang in \cite{RSZ} and
used there to get an analytic classification of $q$-difference
modules. It was proved in previous work \cite{RS1,RS2} that we
thus obtain a generating family. The analytic classification and
representations of $\st$ are, in some sense, two models of the 
same thing, which allows us to give a precise description of the
latter. In this comparison, the filtration above plays a crucial 
role and we shall now have a closer look at it.

\subsubsection*{Convention}

As already said, any object of $\EE^{(0)}$ is equivalent to some $M_A$.
It can moreover be shown that one may always choose $A$ in so-called
Birkhoff-Guenther normal form; in our case of interest, this is explained 
at the beginning of \ref{subsection:overallstructure}. This implies that
$A \in \GL_{n}(\C[z,z^{-1}]) \subset \GL_{n}(\Ka) \cap \GL_n(\O(\C^*))$,
so that the above definitions are simplified to:
\begin{align*}
\F_{M}(V) &= \{X \in \O\left(\pi^{-1}(V)\right)^{n} \tq \sq X = A X \}, \\
\F_{M} &= \dfrac{\C^{*} \times \C^{n}}{(z,X) \sim (qz,A(z)X)}
\longrightarrow \dfrac{\C^{*}}{z \sim qz} = \Eq.
\end{align*}
Moreover, starting from a module $M_{A} = (\Ka^{n},\Phi_{A})$ such that 
$A \in \GL_{n}(\C[z,z^{-1}])$, a module $M_{B} = (\Ka^{p},\Phi_{B})$ such 
that $B \in \GL_{p}(\C[z,z^{-1}])$, and a morphism $F: M_A \rightarrow M_B$, 
$F \in \Mat_{p,n}(\Ka)$, it follows from the relation 
$(\sq F) A = B F \Rightarrow \sq F = B F A^{-1}$ that $F$ is holomorphic 
over $\C^*$ (the functional equation allows one to expand by a factor 
$\lmod q \rmod > 1$ any punctured disk of convergence). Thus, in order to 
have a more concrete description of the fiber functors $\oma$ and $\homa$, 
we shall now restrict to the essential full tannakian subcategory of 
$\EE^{(0)}$ made of $q$-difference modules $M_{A}$ such that 
$A \in \GL_{n}(\C[z,z^{-1}])$. We shall keep the notation $\EE^{(0)}$ 
for this smaller (but equivalent) category. Then, one has canonical 
identifications $\oma(M_A) = \C^n$, $\oma(M_B) = \C^p$ and $\oma(F) = F(a)$. 
A similar description of $\homa$ will be given in 
\ref{subsection:overallstructure}.


\subsection{Some facts about tannakian filtrations}
\label{subsection:tannakianfiltration}

We shall axiomatize the situation described above in the following way.
The tannakian constructions given here essentially follow from
\cite[chap. IV, \S 2]{Saavedra}. \\

We consider a $\C$-linear tannakian category $\mathcal{C}$ endowed 
with a family of endofunctors $F_{\leq \mu}$, $\mu \in \Q$ subject 
to the following constraints:
\begin{enumerate}
\item{For each object $M$ of $\mathcal{C}$, the family $F_{\leq \mu} M$
is an ascending filtration of $M$ by subobjects. This filtration is 
separated, exhaustive and it has a finite number of jumps.}
\item{Defining $\gr^{(\mu)} M := (F_{\leq \mu} M)/(F_{< \mu} M)$ and
$\gr M := \bigoplus\limits_{\mu \in \Q} \gr^{(\mu)} M$, we obtain an
endofunctor $\gr$ which is exact, faithful and $\otimes$-compatible.}
\item{The essential image of $\gr$ is a neutral tannakian subcategory 
$\mathcal{C}_p$ (we call its objects \emph{pure}) and the restriction
of $\gr$ to $\mathcal{C}_p$ is the identity functor.}
\end{enumerate}
Choosing a fiber functor $\omega_p$ from $\mathcal{C}_p$ to $\Vect_{\C}^f$,
we can extend it to a fiber functor $\omega := \omega_p \circ \gr$ on 
$\mathcal{C}$, thus making the latter a neutral tannakian category. 
We call $G,G_p$ the corresponding Galois groups, seen as proalgebraic
groups over $\C$: these are respectively the groups of $\C$-valued points 
of the affine group schemes $\underline{\Aut}^{\otimes}(\omega)$ and
$\underline{\Aut}^{\otimes}(\omega_p)$. \\

By tannakian duality, the inclusion functor $i$ from $\mathcal{C}_p$
into $\mathcal{C}$ and its retraction $\gr$ induce morphisms of
proalgebraic groups $\gr^*: G_p \rightarrow G$ and its section
$i^*: G \rightarrow G_p$. Calling $S$ the kernel of $i^*$, we
get an exact sequence of proalgebraic groups:
$$
1 \rightarrow S \rightarrow G \rightarrow G_p \rightarrow 1.
$$
Since $i^* \circ \gr^* = (\gr \circ i)^* = \Id_{G_p}$, this is
actually a semi-direct product:
$$
G = S \rtimes G_p.
$$

\begin{exem}
\label{exem:pente1}
The above description applies to $\EE^{(0)}$ and $\EE_p^{(0)}$
as explained in the previous subsection, and so will do all
constructions in the present subsection. However, in this paper,
from \ref{subsection:overallstructure} on, we shall rather 
take $\EE_1^{(0)}$ and $\EE_{p,1}^{(0)}$. Hence, the filtrations on
subobjects will have jumps at the integers only, and the functor
$\gr$ will be $\Z$-graded rather than $\Q$-graded. If we restrict to 
$\mu \in \Z$, we shall therefore have $F_{< \mu} M = F_{\leq \mu -1} M$.
The corresponding Galois groups will be $G := \Gal(\EE_{1}^{(0)})$ and 
$G_p := \Gal(\EE_{p,1}^{(0)})$ and the factor $S$ will be denoted $\St$
and called the \emph{Stokes group}.
\end{exem}

We shall have use for the following description of the proalgebraic 
structures on $G$, $G_p$ and $S$. We start with $G$. For each object 
$M$ of $\mathcal{C}$, write $<M>$ the full subcategory whose objects 
are subquotients of direct sums of tensors 
$M^{\otimes n} \otimes (M^{\vee})^{\otimes p}$: 
this is a tannakian subcategory and there is a restriction morphism 
from $G$ to its Galois group $G(M)$, which is a linear algebraic group 
over $\C$. Actually, the map $\gamma \mapsto \gamma(M)$ is an algebraic 
morphism of $G$ into $\GL(\omega(M))$ and its image is closed; and the 
similar map from $G(M)$ to $\GL(\omega(M))$ has the same image and is 
an isomorphism, whence a canonical identification of $G(M)$ with an 
algebraic subgroup of $\GL(\omega(M))$. Say that $M' \preceq M$ if 
$M'$ is an object of $<M>$. Then there is a surjective morphism of 
algebraic groups from $G(M)$ to $G(M')$. In this way, the groups 
$G(M)$ form a filtered inverse system and the surjective maps 
$G \rightarrow G(M)$ make $G$ the inverse limit of the $G(M)$. 
Morphisms from $G$ to an arbitrary algebraic group $G'$ are defined
through the equality:
$$
\Hom(G,G') := \lim_{\to} \Hom_{gralg}(G(M),G').
$$
(The notation $\Hom_{gralg}$ was defined in \ref{subsection:generalnotations}.)
Note that the subobjects $F_{\leq \mu} M$, the subquotients $\gr^{(\mu)} M$
and therefore $\gr M$ itself all are objects of $<M>$, whence a surjective
map of algebraic groups $G(M) \rightarrow G(\gr M)$. But, from the equality 
$\omega_p \circ \gr = \omega$ follows a canonical identification of $G(\gr M)$
with $G_p(M)$, whence a surjective map $G(M) \rightarrow G_p(M)$. Calling
$S(M)$ the kernel of this map (an algebraic group), one has first a 
semidirect decomposition $G(M) = S(M) \rtimes G_p(M)$, second an inverse 
system of exact sequences
$$
1 \rightarrow S(M) \rightarrow G(M) \rightarrow G_p(M) \rightarrow 1
$$
whose inverse limit is the sequence seen before. \\

For any $M$ in $\mathcal{C}$, the vector space
$V := \omega(M) = \omega(\gr M) = \omega_p(\gr M)$ admits:
\begin{itemize}
\item{a separated exhaustive descending filtration by the 
$V_{\leq \mu} := \omega(F_{\leq \mu}M)$, $\mu \in \Q$, with a finite number 
of jumps;}
\item{a graduation by the $V^{(\mu)} := \omega(\gr^{(\mu)}M)$, $\mu \in \Q$, 
with a finite number of non trivial components.}
\end{itemize}
Moreover, the filtration is the one associated with the graduation:
$$
V_{\leq \mu} = \bigoplus\limits_{\nu \leq \mu} V^{(\nu)}.
$$ 
In the terminology of \cite{Saavedra}, it is \emph{scind\'ee} (split), 
thus \emph{admissible}. These structures are determined by $M_0 := \gr M$. 
Using them, one can define the following linear algebraic groups:
\begin{enumerate}
\item{The group of automorphisms of $V$ respecting the filtration:
$$
\GL_{M_0}(V) := 
\{f \in \GL(V) \tq \forall \mu \in \Q \;,\; f(V_{\leq \mu}) \subset V_{\leq \mu}\}.
$$}
\item{The group of automorphisms of $V$ respecting the graduation:
$$
\overline{\GL}_{M_0}(V) := 
\{f \in \GL(V) \tq \forall \mu \in \Q \;,\; f(V^{(\mu)}) \subset V^{(\mu)}\} =
\bigoplus_{\mu \in \Q} \GL(V^{(\mu)}).
$$}
\item{The group of automorphisms of $V$ respecting the filtration and 
inducing the identity on the graduation:
$$
\mathfrak{G}_{M_0}(V) := 
\{f \in \GL(V) \tq 
\forall \mu \in \Q \;,\; (f-\Id_V)(V_{\leq \mu}) \subset V_{< \mu}\}.
$$}
\end{enumerate}
Roughly, these three groups respectively correspond (in matricial form) 
to block upper triangular matrices, to block diagonal matrices and to 
unipotent block upper triangular matrices. The latter two groups are
subgroups of the first one with trivial intersection. \\

There is a natural surjective morphism of algebraic groups from
$\GL_{M_0}(V)$ onto $\overline{\GL}_{M_0}(V)$. Its kernel is 
$\mathfrak{G}_{M_0}(V)$, whence an exact sequence:
$$
1 \rightarrow \mathfrak{G}_{M_0}(V) \rightarrow \GL_{M_0}(V) 
\rightarrow \overline{\GL}_{M_0}(V) \rightarrow 1.
$$
Actually, the surjective morphism admits an obvious section, whence
a semidirect decomposition:
$$
\GL_{M_0}(V) = \mathfrak{G}_{M_0}(V) \rtimes  \overline{\GL}_{M_0}(V).
$$
The group $\mathfrak{G}_{M_0}(V)$ is unipotent (more on this further 
below). Its (nilpotent) Lie algebra can be characterized as the set 
of endomorphisms on $V$ respecting the filtration and trivial (null)
on the graduation:
$$
\mathfrak{g}_{M_0}(V) := 
\{f \in \gl(V) \tq \forall \mu \in \Q \;,\; f(V_{\leq \mu}) \subset V_{< \mu}\}.
$$

Now any ``galoisian'' automorphism $\gamma \in G(M) \subset \GL(\omega(M))$ 
stabilises images of subobjects, in particular the $\omega(F_{\leq \mu}M)$
and the $\omega(F_{< \mu}M)$; and descends to subquotients, in particular
the $\omega(\gr^{(\mu)}(M)$. With the notations just introduced, we thus
have an embedding of exact sequences:
$$
\xymatrix{
0 \ar@<0ex>[r] & S(M) \ar@<0ex>[r] \ar@{^{(}->}[d] & 
G(M) \ar@<0ex>[r] \ar@{^{(}->}[d] & G_p(M) \ar@<0ex>[r] \ar@{^{(}->}[d] & 0 \\
0 \ar@<0ex>[r] & \mathfrak{G}_{M_0}(V) \ar@<0ex>[r] & 
\GL_{M_0}(V) \ar@<0ex>[r] & \overline{\GL}_{M_0}(V) \ar@<0ex>[r] & 0
}
$$

Note in particular that, since $S(M)$ embeds into $\mathfrak{G}_{M_0}(V)$,
it is a unipotent group and its Lie algebra $s(M)$ is nilpotent. We have:
\begin{align*}
S(M) &= \exp s(M) = 1 + s(M), \\
\mathfrak{G}_{M_0}(V) &= \exp \mathfrak{g}_{M_0}(V) = 1 + \mathfrak{g}_{M_0}(V).
\end{align*}
We put, for all $\delta \geq 0$:
$$
\mathfrak{g}_{M_0}^{> \delta}(V) := 
\{f \in \gl(V) \tq 
\forall \mu \in \Q \;,\; f(V_{\leq \mu}) \subset V_{< \mu - \delta}\}.
$$
(In matricial form, this corresponds to upper triangular nilpotent
matrices with decreasing orders of nilpotency.) This gives a descending 
separated exhaustive filtration of $\mathfrak{g}_{M_0}(V)$ by ideals, with 
a finite number of jumps, and we have:
$$
\mathfrak{g}_{M_0}^{> 0}(V) = \mathfrak{g}_{M_0}(V) \text{~and~}
[\mathfrak{g}_{M_0}^{> \delta}(V),\mathfrak{g}_{M_0}^{> \delta'}(V)]
\subset \mathfrak{g}_{M_0}^{> \delta+\delta'}(V).
$$
We are interested in the induced filtration on the Stokes Lie algebras:
$$
s^{> \delta}(M) := s(M) \cap \mathfrak{g}_{M_0}^{> \delta}(V).
$$
This gives again a descending separated exhaustive filtration of 
$s(M)$ by ideals, with a finite number of jumps, such that:
$$
s^{> 0}(M) = s(M) \text{~and~}
[s^{> \delta}(M),s^{> \delta'}(M)] \subset s^{> \delta + \delta'}(M).
$$

Last, we introduce the group counterparts:
\begin{align*}
\mathfrak{G}_{M_0}^{> \delta}(V) &:= 1 + \mathfrak{g}_{M_0}^{> \delta}(V), \\
S^{> \delta}(M) & := 1 + s^{> \delta}(M) = S(M) \cap \mathfrak{G}_{M_0}^{> \delta}(V).
\end{align*}
This yields two descending separated exhaustive filtrations by normal
subgroups, with a finite number of jumps, such that:
\begin{align*}
\mathfrak{G}_{M_0}^{> 0}(V) &= \mathfrak{G}_{M_0}(V) &\text{~and~}
& [\mathfrak{G}_{M_0}^{> \delta}(V),\mathfrak{G}_{M_0}^{> \delta'}(V)]
&\subset \mathfrak{G}_{M_0}^{> \delta+\delta'}(V) \\
S^{> 0}(M) &= S(M) &\text{~and~}
& [S^{> \delta}(M),S^{> \delta'}(M)] &\subset S^{> \delta + \delta'}(M).
\end{align*}

Now we return to our global Galois groups. Since the exact sequence
$1 \rightarrow S \rightarrow G \rightarrow G_p \rightarrow 1$ is the
inverse limit of the exact sequences
$1 \rightarrow S(M) \rightarrow G(M) \rightarrow G_p(M) \rightarrow 1$,
we see that $S$ is a prounipotent group, equipped with a separated 
exhaustive filtration by normal subgroups $S^{> \delta}$. Its Lie
algebra (see \cite[A.7]{DG}) $s = \Lie~S := \lim\limits_{\leftarrow} s(M)$
is likewise equipped with a separated exhaustive filtration by ideals
$s^{> \delta}$, and it is a pronilpotent Lie algebra. Actually, $s$ 
inherits the inverse limit topology of the discrete topologies on 
the $s(M)$, which has as a fundamental system of neighborhoods of $0$ 
this filtration; and $s$ is Hausdorff and complete for that topology.


\subsection{Overall structure and representations of $\Gal(\EE_{1}^{(0)})$}
\label{subsection:overallstructure}

We now make an important assumption: \\

\textbf{\emph{From now on, we shall restrict to modules with integral 
slopes.}} \\

The reason is that we then have explicit normal forms, and we are going 
to use them heavily\footnote{In the general case of rational slopes, 
van der Put and Reversat obtained a precise description of pure modules 
and of the Galois group of $\EE_{p}^{(0)}$, see \cite{vdPR}. Relying on  
these results, Virginie Bugeaud has started to extend the methods of the 
present series of papers to the case of two arbitrary slopes.}. Indeed, 
any pure module $M_{0}$ with integral slopes $\mu_{1} < \cdots < \mu_{k}$ 
and multiplicities $r_{1},\ldots,r_{k}$ can be described as 
$M_{A_{0}} := (\Ka^{n},\Phi_{A_{0}})$, and any module $M$ such that 
$\gr M \approx M_{0}$ can be described as $M_{A} := (\Ka^{n},\Phi_{A})$ 
(see equation \eqref{equation:modstandard}), with:
\begin{equation}
\label{eqn:formesstandards}
A_{0} := \begin{pmatrix}
z^{\mu_{1}} A_{1} & \ldots & \ldots & \ldots & \ldots \\
\ldots & \ldots & \ldots  & 0 & \ldots \\
0      & \ldots & \ldots   & \ldots & \ldots \\
\ldots & 0 & \ldots  & \ldots & \ldots \\
0      & \ldots & 0       & \ldots & z^{\mu_{k}} A_{k}    
\end{pmatrix} \text{~and~}
A := \begin{pmatrix}
z^{\mu_{1}} A_{1} & \ldots & \ldots & \ldots & \ldots \\
\ldots & \ldots & \ldots  & U_{i,j} & \ldots \\
0      & \ldots & \ldots   & \ldots & \ldots \\
\ldots & 0 & \ldots  & \ldots & \ldots \\
0      & \ldots & 0       & \ldots & z^{\mu_{k}} A_{k}   
\end{pmatrix},
\end{equation}
where, for $1 \leq i \leq k$, $A_{i} \in \GL_{r_{i}}(\C)$ and where, 
for $1 \leq i < j \leq k$, $U_{i,j} \in \Mat_{r_{i},r_{j}}(\Ka)$; moreover, 
one can assume that the coefficients of each block $U_{i,j}$ belong to 
$\sum\limits_{\mu_{i} \leq \ell < \mu_{j}} \C z^{\ell}$ (Birkhoff-Guenther 
normal form). Modules $M_{A} := (\Ka^{n},\Phi_{A})$ form an essential 
tannakian subcategory of $\EE_{1}^{(0)}$, so that we can restrict all 
our definitions and constructions to such objects. \\

The fiber functors $\homa$ and $\oma$ admit the following concrete
description. Let $A,A_0$ be as in \eqref{eqn:formesstandards} and
write for short $M := M_A$, $M_0 := M_{A_0}$, so that $M_0 = \gr M$.
Then $\homa(M) = \oma(M) = \oma(M_0) = \C^n$. 
Now define similarly $B \in \GL_p(\Ka)$ in Birkhoff-Guenther normal 
form with slopes $\nu_1 < \cdots < \nu_l$ having multiplicities
$s_1,\ldots,s_l$ and $B_0$ its graded (block diagonal) component
and put $N := M_B$, $N_0 := M_{B_0}$, so that $N_0 = \gr N$. Then
any morphism $M \rightarrow N$ is a matrix $F \in \Mat_{p,n}(\Ka)$
such that $\sq F = B F A^{-1}$, so that one easily shows that 
$F \in \Mat_{p,n}(\O(\C^*))$. The corresponding graded morphism
$F_0 := \gr(F) \in \Mat_{p,n}(\Ka) \cap \Mat_{p,n}(\O(\C^*))$ has
$k l$ blocks of sizes $r_i \times s_j$, those such that $\mu_i = \nu_j$ 
coming from $F$, all the other ones being trivial. Then one has: 
\begin{align*}
\oma(F) &= F(a), \\
\homa(F) &= F_0(a).
\end{align*}

The Galois groups of $\EE_{f}^{(0)}$ and $\EE_{p,1}^{(0)}$ are abelian, 
so that we can use any fiber functor to describe them. Using the 
subscript ``f'' for ``fuchsian'' and the subscript ``p'' for ``pure'',
we have:
\begin{align*} 
G_{f}^{(0)} := \Gal(\EE_{f}^{(0)}) & = \Hom_{gr}(\C^{*}/q^{\Z},\C^{*}) \times \C, \\
G_{p,1}^{(0)} := \Gal(\EE_{p,1}^{(0)}) & = \C^{*} \times G_{f}^{(0)}.
\end{align*}
(The notation $\Hom_{gr}$ was defined in \ref{subsection:generalnotations}.)
We also write $G_{f,s}^{(0)} = \Hom_{gr}(\C^{*}/q^{\Z},\C^{*})$ the semi-simple
component of the fuchsian group $G_{f}^{(0)}$; its elements are identified
with (abstract group) morphisms $\C^{*} \rightarrow \C^{*}$ that send $q$
to $1$. Likewise, we write $G_{f,u}^{(0)} = \C$ the unipotent component of
$G_{f}^{(0)}$ and $T_{1}^{(0)} = \C^{*}$ the ``theta torus'' component of 
$G_{p,1}^{(0)}$; the latter should be compared\footnote{For details on
this analogy, see the introduction of \cite{RS1} and the conclusion
of \cite{RS2}.} with the ``exponential torus'' component of the wild 
fundamental group of differential equations. \\

Taking again $A$ in form \eqref{eqn:formesstandards}, the representation 
of $G_{p,1}^{(0)} = G_{f,s}^{(0)} \times G_{f,u}^{(0)} \times T_{1}^{(0)}$ 
corresponding to $M := M_{A}$ by tannakian duality is the following:
$$
(\gamma,\lambda,t) \mapsto 
\begin{pmatrix}
t^{\mu_{1}} \gamma(A_{1,s}) A_{1,u}^{\lambda} & \ldots & \ldots & \ldots & \ldots \\
\ldots & \ldots & \ldots  & 0 & \ldots \\
0      & \ldots & \ldots   & \ldots & \ldots \\
\ldots & 0 & \ldots  & \ldots & \ldots \\
0      & \ldots & 0       & \ldots & t^{\mu_{k}} \gamma(A_{k,s}) A_{k,u}^{\lambda}  
\end{pmatrix}.
$$
We wrote $A_{i} = A_{i,s} A_{i,u}$ the Jordan decomposition into semi-simple
and unipotent component, and $\gamma(A_{i,s})$ means $\gamma$ operating on 
eigenvalues of $A_{i}$. \\

As explained in example \ref{exem:pente1}, we now write $\St$ the kernel
of $i^*: G_1^{(0)} \rightarrow G_{p,1}^{(0)}$, a prounipotent proalgebraic
group, whence the semidirect decomposition of \eqref{equation:gal=prodsemidir}:
$$
G_1^{(0)} = \St \rtimes G_{p,1}^{(0)}.
$$
We write $\st$ the Lie algebra of $\St$; it is pronilpotent, see the end
of \ref{subsection:tannakianfiltration}.


\subsection{First look at the structure of $\St$ and $\st$}
\label{subsection:firstlookstructure}

Let us characterize \emph{Stokes operators}, \ie\ elements of 
the \emph{Stokes group} $\St$ and \emph{alien derivations}, \ie\ 
elements of the \emph{Stokes Lie algebra} $\st$. Let $s \in \St$, 
resp. $D \in \st$. Their respective images by the representation 
associated to matrix $A$ (meaning: to module $M_{A}$) are
\begin{align*}
s(A) \in \St(A) \subset \G(\C) \subset \GL_n(\C), 
&\text{~~where~~} \St(A) := \St(M_{A}), \\
D(A) \in \st(A) \subset \g(\C) \subset \gl_n(\C), 
&\text{~~where~~} \st(A) := \st(M_{A}),
\end{align*}
where we introduce the following unipotent algebraic group $\G$ 
and its Lie algebra $\g$:
\begin{align*}
\G &:= \left\{
\begin{pmatrix}
I_{r_{1}} & \ldots & \ldots & \ldots & \ldots \\
\ldots & \ldots & \ldots  & \star & \ldots \\
0      & \ldots & \ldots   & \ldots & \ldots \\
\ldots & 0 & \ldots  & \ldots & \ldots \\
0      & \ldots & 0       & \ldots &   I_{r_{k}}
\end{pmatrix}
\right\} \subset \GL_{n}, \\
\g &:= \left\{
\begin{pmatrix}
0_{r_{1}} & \ldots & \ldots & \ldots & \ldots \\
\ldots & \ldots & \ldots  & \star & \ldots \\
0      & \ldots & \ldots   & \ldots & \ldots \\
\ldots & 0 & \ldots  & \ldots & \ldots \\
0      & \ldots & 0       & \ldots &   0_{r_{k}}
\end{pmatrix}
\right\} \subset \gl_{n} = \Mat_{n}.
\end{align*}
Here $I_{r}$ and $0_{r}$ respectively denote the identity and the null
matrix of size $r \times r$. The rectangular block $\star$ indexed by
$(i,j)$ such that $1 \leq i < j \leq k$ has size $r_{i} \times r_{j}$
and links the diagonal square blocks corresponding to slopes $\mu_{i}$
and $\mu_{j}$. \\

Globally, $s$ and $D$ are characterized as follows. They must be 
functorial: if $(\sq F) A = B F$, then 
$$
s(B) F_{0}(a) = F_{0}(a) s(A) \text{~and~} D(B) F_{0}(a) = F_{0}(a) D(A)
$$
(for the chosen base point $a \in \C^{*}$). They must be $\otimes$-compatible:
$$
s(A \otimes B) = s(A) \otimes s(B) \text{~and~}
D(A \otimes B) = D(A) \otimes I_{p} + I_{n} \otimes D(B).
$$
Last, they must be trivial on pure modules: 
$$
s(A_{0}) = I_{n} \text{~and~} D(A_{0}) = 0_{n}.
$$

The character group of the semi-simple component of $G_{p,1}^{(0)}$ is:
$$
X\left(G_{f,s}^{(0)} \times T_{1}^{(0)}\right) = \Eq \times \Z.
$$

To describe the adjoint action of this group on $\st$ therefore amounts
to give the decomposition in eigenspaces; note that for the projective
limit $\st$, we have to complete the direct sum:
\begin{equation}
\label{decompositiondest}
\st = \hat{\bigoplus_{\delta \geq 1}} \st^{(\delta )}, \text{~where~}
\st^{(\delta )} = \hat{\bigoplus_{\overline{c} \in \Eq}} \st^{(\delta,\overline{c})}.
\end{equation}
(Note that only the weights such that $\delta \geq 1$ are required, because
of the triangular structure coming from the functorial filtration theorem.) 
This decomposition is expressed elementwise as a Fourier decomposition:
$$
\forall D \in \st \;,\; \forall \sigma \in G_{f,s}^{(0)} \times T_{1}^{(0)} \;,\;
\sigma D \sigma^{-1} = 
\sum_{\chi \in X\left(G_{f,s}^{(0)} \times T_{1}^{(0)}\right)}
\left<\chi,\sigma\right> D^{(\chi)},
$$
where, for $\chi = (\delta,\overline{c}) \in \Z \times \Eq$ and for
$\sigma = (t,\gamma) \in \C^{*} \times \Hom_{gr}(\C^{*}/q^{\Z},\C^{*})$:
$$
\left<\chi,\sigma\right> = t^{\delta} \gamma(\overline{c}).
$$
Thus, $D = \sum D^{(\delta,\overline{c})}$ (with unicity of the decomposition)
and:
$$
\sigma D^{(\delta,\overline{c})} \sigma^{-1} = 
t^{\delta} \gamma(\overline{c}) D^{(\delta,\overline{c})}.
$$

Since $G_{p,1}^{(0)}$ is abelian, conjugacy under elements of its unipotent 
component $G_{f,u}^{(0)}$ fixes all each $\st^{(\delta,\overline{c})}$. We shall 
write $\tau$ the (Zariski-) generator $1 \in \C = G_{f,u}^{(0)}$, so that:
$$
\tau \, \st^{(\delta,\overline{c})} \, \tau^{-1} = \st^{(\delta,\overline{c})}.
$$


\subsection{First look at the representations of $\St$ and $\st$}
\label{subsection:firstlookrepresentations}

More generally, the semi-simple component of $G_{p,1}^{(0)}$ operates on $\g$ 
through $G_{p,1}^{(0)}(A) = G_{p,1}^{(0)}(A_{0})$, whence a decomposition:
$$
\g = \bigoplus_{\delta \geq 1} \g^{(\delta)}, \text{~where~}
\g^{(\delta )} = \bigoplus_{\overline{c} \in \Eq} \g^{(\delta,\overline{c})}.
$$
(And, of course, $\st^{(\delta )}(A) = \st(A) \cap \g^{(\delta)}$, etc.)
More concretely, one can divide matrices in $\g$ in rectangular blocks
numbered $(i,j)$ with $1 \leq i < j \leq k$; the block $i,j$ has size
$r_{i} \times r_{j}$ and links the (null) square diagonal blocks corresponding
to slopes $\mu_{i}$ and $\mu_{j}$. If one assumes moreover that the matrices 
$A_{i}$ are divided in diagonal blocks corresponding to their eigenvalues, 
then one can further divide each block $(i,j)$ in rectangular blocks numbered 
$(d,e) \in \Sp A_{i} \times \Sp A_{j}$. The action of 
$\sigma = (\gamma,t) \in G_{f,s}^{(0)} \times T_{1}^{(0)}$
(through its image in $\G$) on the block $\bigl((i,j),(d,e)\bigr)$ is 
multiplication by the nonzero scalar 
$\dfrac{t^{\mu_i}}{t^{\mu_j}}
\dfrac{\gamma(\overline{d})}{\gamma(\overline{e})}$.
Thus, the matrices of $\g^{(\delta )}$ are those such that blocks with 
$\mu_{j} - \mu_{i} \neq \delta$ are all zero and the matrices of 
$\g^{(\delta,\overline{c})}$ are those matrices of $\g^{(\delta )}$ such that 
blocks with $d/e \not\equiv c \pmod{q^{\Z}}$ are all zero. We shall 
frequently identify $\g^{(\delta)}$, resp. $\g^{(\delta,\overline{c})}$ with 
the corresponding vector spaces of rectangular matrices, forgetting their null 
components. For instance, in the case of two slopes $\mu < \nu$ with
multiplicities $r,s \in \N^{*}$, the (abelian) Lie algebra $\g$ has a
single nontrivial component $\g^{(\delta )}$, with $\delta = \nu - \mu$,
and we identify it with $\Mat_{r,s}(\C)$. \\

The conjugacy action  of the unipotent component of $G_{p,1}^{(0)}(A)$ 
leaves stable each $\g^{(\delta,\overline{c})}$. Recall its Zariski-generator 
$\tau$ defined at the very end of \ref{subsection:firstlookstructure}.
Writing:
$$
U := \tau(A) = \tau(A_{0}) = 
\begin{pmatrix}
A_{1,u} & \ldots & \ldots & \ldots & \ldots \\
\ldots & \ldots & \ldots  & 0 & \ldots \\
0      & \ldots & \ldots   & \ldots & \ldots \\
\ldots & 0 & \ldots  & \ldots & \ldots \\
0      & \ldots & 0       & \ldots & A_{k,u}  
\end{pmatrix},
$$
we see that:
$$
U \, \g^{(\delta,\overline{c})} \, U^{-1} = \g^{(\delta,\overline{c})}.
$$

Now fix $M_{0},A_{0}$ in $\EE_{p,1}^{(0)}$ as above and call $\rho_0$
the attached representation of $G_{p,1}^{(0)}$. We consider objects 
$M,A$ in $\EE_{1}^{(0)}$ above $M_{0},A_{0}$ (that is, $\gr M = M_0$). 
By tannakian duality, they correspond to representations $\rho$ of 
$G_1^{(0)} = \St \rtimes G_{p,1}^{(0)}$ which restrict to $\rho_0$ on 
$G_{p,1}^{(0)}$. These representations $\rho$ are in turn in one to
one correspondance with representations of $\St$ that are compatible
with $\rho_0$. Translated in terms of representations of $\st$, this
gives:

\begin{prop}
Those representations of $\st$ corresponding to objects $M,A$ above 
$M_{0},A_{0}$ are exactly those such that:
\begin{enumerate}
\item{Each $\st^{(\delta,\overline{c})}$ is mapped to 
$\g^{(\delta,\overline{c})}$;}
\item{The conjugation by $\tau$ in $\st$ is intertwined with the conjugation 
by $U$ in $\g$, \ie\ $\rho(\tau D \tau^{-1}) = U \rho(D) U^{-1}$.}
\end{enumerate}
We write $\Repa(\st)$ the set of these representations.
\end{prop}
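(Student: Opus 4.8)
The plan is to unwind the tannakian dictionary step by step, carrying the compatibility condition on $\St$-representations through the semidirect product structure $G_1^{(0)} = \St \rtimes G_{p,1}^{(0)}$ and the $\exp/\log$ equivalence for (pro)unipotent groups, and then to split the resulting equivariance constraint along the two factors of $G_{p,1}^{(0)}$.

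First I would invoke tannakian duality for $\EE_1^{(0)}$ with fibre functor $\oma$: objects $M$ with $\gr M = M_0$ correspond bijectively to representations $\rho$ of $G_1^{(0)}$ on $V := \oma(M_0) = \C^n$ whose restriction to $G_{p,1}^{(0)}$ (via $\gr^*$) equals the fixed $\rho_0$. Because $G_1^{(0)} = \St \rtimes G_{p,1}^{(0)}$, such a $\rho$ is the same datum as a representation $\tilde\rho$ of $\St$ on $V$ that is \emph{compatible with $\rho_0$}, meaning $\tilde\rho(\sigma s \sigma^{-1}) = \rho_0(\sigma)\,\tilde\rho(s)\,\rho_0(\sigma)^{-1}$ for all $\sigma \in G_{p,1}^{(0)}$ and $s \in \St$, where $\sigma s \sigma^{-1}$ is the inner action of $G_{p,1}^{(0)}$ on $\St$; moreover, by the embedding of exact sequences recalled in \ref{subsection:firstlookstructure}, $\tilde\rho$ must take values in $\G(\C)$.

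Next I would pass to Lie algebras. Since $\St$ is prounipotent and $\G$ is unipotent, $\exp$ gives bijections $\st \to \St$ and $\g \to \G(\C)$ under which $\tilde\rho$ corresponds to a continuous morphism of pronilpotent Lie algebras $\rho: \st \to \g \subset \gl_n(\C)$, and the compatibility condition becomes $G_{p,1}^{(0)}$-equivariance: $\rho(\Ad(\sigma)D) = \Ad(\rho_0(\sigma))\,\rho(D)$ for all $\sigma \in G_{p,1}^{(0)}$ and $D \in \st$. Thus the objects $M$ above $M_0$ are identified with the equivariant $\rho$, and it remains to match ``equivariant'' with conditions (1)--(2). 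I would use the decomposition $G_{p,1}^{(0)} = \bigl(G_{f,s}^{(0)} \times T_1^{(0)}\bigr) \times G_{f,u}^{(0)}$. Equivariance under the diagonalizable factor: both $\st$ and $\g$ are (completed, resp. finite) direct sums of weight spaces indexed by the character group $\Eq \times \Z$, the index-$(\delta,\overline c)$ component of each carrying the same character $(t,\gamma) \mapsto t^\delta \gamma(\overline c)$; hence an equivariant $\rho$ is exactly one preserving the weight grading, i.e.\ satisfying $\rho(\st^{(\delta,\overline c)}) \subset \g^{(\delta,\overline c)}$, which is (1) --- and only $\delta \geq 1$ intervenes, by the triangular structure. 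Equivariance under $G_{f,u}^{(0)} = \C$ need only be tested on its Zariski-dense generator $\tau$, where $\Ad(\tau)$ is conjugation by $\tau$ on $\st$ while $\Ad(\rho_0(\tau)) = \Ad(U)$ on $\g$ since $\rho_0(\tau) = \tau(A_0) = U$; this is exactly (2). Conversely (1) and (2) together force full $G_{p,1}^{(0)}$-equivariance, since that group is generated by its semisimple part and $\tau$.

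The one genuinely delicate point is the handling of the pro-structures: one must check that imposing (1) weight space by weight space (finite-dimensional data) does recover equivariance on the completed Lie algebra $\st$, and that a Lie algebra morphism $\rho$ of the above form integrates back to a bona fide $\St$-representation compatible with $\rho_0$. Both follow from $\St$ being prounipotent with unipotent algebraic target, together with the principle that equivariance of morphisms of (pro)algebraic objects may be verified on Zariski-dense subsets; everything else is routine tannakian bookkeeping, which I would not spell out in detail.
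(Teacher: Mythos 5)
Your argument is correct and follows essentially the same route as the paper, whose entire proof is the single remark that condition (1) expresses compatibility with the semi-simple component of $\rho_0$ (with condition (2) implicitly handling the unipotent component via the generator $\tau$). You have simply spelled out the tannakian and $\exp/\log$ bookkeeping that the paper leaves implicit, including the correct observation that equivariance under $G_{f,u}^{(0)}=\C$ may be tested on its Zariski-dense generator $\tau$.
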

\begin{proof}
Indeed, the first condition expresses compatibility with the semi-simple
component of the representation $\rho_0$.
\end{proof}

In this paper, we shall extend the definition of the Stokes Lie algebra
to include the fuchsian unipotent component and put:
$$
\stt := \st \rtimes \Lie(G_{f,u}^{(0)}) = \C \nu \oplus \st,
$$
that is, $\stt$ is generated by $\st$ and by $\Lie(G_{f,u}^{(0)}) = \C \nu$,
where $\nu := \log \tau$. Since $G_{f,u}^{(0)}$ commutes with 
$G_{f,s}^{(0)} \times T_{1}^{(0)}$, the adjoint action of this group 
on $\C \nu$ is trivial and we write $\stt^{(\chi)} := \st^{(\chi)}$ 
and $\stt^{(0)} := \C \nu$.

\begin{coro}
Those representations of $\stt$ corresponding to objects $M,A$ above 
$M_{0},A_{0}$ are exactly those such that:
\begin{enumerate}
\item{Each $\stt^{(\delta,\overline{c})}$ is mapped to 
$\g^{(\delta,\overline{c})}$;}
\item{The element $\nu$ is mapped to $\log U$.}
\end{enumerate}
We write $\Repa(\stt)$ the set of these representations.
\end{coro}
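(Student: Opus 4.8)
The plan is to deduce the corollary directly from the preceding proposition by unwinding the definition $\stt = \C\nu \oplus \st$ and the semidirect product structure $G_1^{(0)} = \St \rtimes G_{p,1}^{(0)}$. First I would note that a representation of $\stt$ restricting to the given $\rho_0$ on $G_{p,1}^{(0)}$ amounts to the data of a representation of $\st$ together with the image of the extra generator $\nu = \log\tau$, subject to the compatibility that conjugation by $\tau$ on $\st$ is intertwined with conjugation by $U$ on $\g$. Condition (1) of the corollary is literally condition (1) of the proposition, since $\stt^{(\delta,\overline c)} = \st^{(\delta,\overline c)}$ for $\delta \geq 1$; so the only new content is translating condition (2) of the proposition into the statement ``$\nu \mapsto \log U$''.

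The key step is the following observation: since $\tau$ is the Zariski-generator of the one-dimensional unipotent group $G_{f,u}^{(0)} = \C$, and $U = \tau(A)$ is unipotent, the representation $\rho_0$ sends $\tau$ to $U$ and hence (by the uniqueness of logarithms of unipotent elements, i.e. the isomorphism $\exp\colon \g_{M_0}(V)^{\mathrm{nilp}} \to \G_{M_0}(V)$ restricted appropriately, together with compatibility of $\rho$ with $\rho_0$ on the common subgroup) must send $\nu = \log\tau$ to $\log U$. Conversely, once $\nu \mapsto \log U$ is imposed, exponentiating shows $\tau \mapsto U$, and then the intertwining relation of proposition~(2), $\rho(\tau D \tau^{-1}) = U\rho(D) U^{-1}$, follows automatically because $\rho$ is a Lie algebra homomorphism: $\rho(\Ad(\tau)D) = \rho(e^{\ad\nu} D) = e^{\ad\rho(\nu)}\rho(D) = e^{\ad\log U}\rho(D) = \Ad(U)\rho(D)$. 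So conditions (1)--(2) of the corollary are equivalent to conditions (1)--(2) of the proposition for the underlying representation of $\st$, plus the single determination $\nu \mapsto \log U$.

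I would therefore structure the proof as: (a) a representation of $\stt$ above $M_0, A_0$ is, by the semidirect decomposition $\stt = \C\nu \oplus \st$ and the fact that $G_{f,u}^{(0)}$ is the unipotent part of the fuchsian group already pinned down by $\rho_0$, the same as a representation of $\st$ above $M_0,A_0$ in the sense of the proposition together with the forced value $\rho(\nu) = \log\rho_0(\tau) = \log U$; (b) check that this forced value is exactly what makes proposition~(2) hold, using $\ad\nu$ acting on $\st^{(\delta,\overline c)}$ as it should (indeed $\Ad\tau$ stabilizes each $\st^{(\delta,\overline c)}$, as recalled at the end of \ref{subsection:firstlookstructure}), and the naturality of $\exp$/$\log$ on pronilpotent/prounipotent pro-objects; (c) conclude. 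The main obstacle, such as it is, is bookkeeping: one must be careful that ``$\nu \mapsto \log U$'' is well-defined and functorial across all $M,A$ above $M_0,A_0$ simultaneously — i.e. that it is a single element of $\Hom(\stt, \text{(pro-object)})$ and not just a collection of unrelated matrices — but this is immediate because $\nu \in \Lie(G_{f,u}^{(0)}) \subset \st t$ is a fixed global element and $\log$ commutes with the restriction maps $G_1^{(0)}(M) \to G_1^{(0)}(M')$ in the inverse system. Hence there is essentially nothing to prove beyond this translation, and a two-line proof citing the proposition and the identity $\Ad(\exp x) = \exp(\ad x)$ suffices.
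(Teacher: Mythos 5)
Your argument is correct and is essentially the paper's own (the paper gives no written proof, treating the corollary as an immediate translation of the preceding proposition via $\stt = \C\nu \oplus \st$, $\rho_0(\tau) = U$, and $\Ad(\exp x) = \exp(\ad x)$). Your write-up just makes that translation explicit, including the forced value $\rho(\nu) = \log U$ and the equivalence of the intertwining condition with it.
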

\hfill $\Box$


\subsection{Explicit generators of $\St$}
\label{section:explicitgeneratorsofSt}

Let $A_{0},A$ be as in \eqref{eqn:formesstandards}. Then, there is 
a unique $F \in \G(\Kf)$ such that $F[A_{0}] = A$. We write it 
$\hat{F}_{A}$. The components of the $(i,j)$ block of $\hat{F}_{A}$ 
have $q$-Gevrey level $\delta := \mu_{j} - \mu_{i}$, meaning that 
they are divergent formal series with coefficients $a_k$ having 
a growth of order $q^{k^{2}/2 \delta}$ (up to some $O(R^{k})$ factor). 
Stokes operators, to be defined herebelow, are obtained by ``summing''
this formal object in various directions then taking quotients of 
such summations (ambiguities). We consider as candidate ``directions 
of summation'' the $q$-spirals $[c;q]$ in $\C^{*}$, equivalently, 
the points $\overline{c} \in \Eq$. Define:
$$
\Sigma_{A_{0}} := \{\overline{c} \in \Eq \tq 
q^{\Z} c^{\mu_{i}} \Sp(A_{i}) \cap q^{\Z} c^{\mu_{j}} \Sp(A_{j}) \neq \emptyset
\text{~for some~} 1 \leq i < j \leq k\},
$$
thus a finite subset of $\Eq$. Then \cite{JSStokes}:

\begin{prop}
For all $\overline{c} \in \Eq \setminus \Sigma_{A_{0}}$, there is a unique 
$F \in \G(\Kw)$ such that $F[A_{0}] = A$ and subject to the following
constraints: components of the $(i,j)$ block are meromorphic over $\C^{*}$ 
with at worst poles over $[-c;q]$, of order $\leq \mu_{j} - \mu_{i}$.
\end{prop}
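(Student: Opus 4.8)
The plan is to reduce the statement to the existence and uniqueness of a $q$-Borel--Laplace type summation of $\hat F_{A}$ in the direction $\overline c$, performed block by block along the slope filtration. First I would recall from \cite{JSStokes} (or reprove) that in the \emph{two-slopes} case, say slopes $\mu < \nu$ with $\delta := \nu - \mu$, the $q$-Gevrey level $\delta$ formal series occurring in the single off-diagonal block of $\hat F_{A}$ admits, for $\overline c \notin \Sigma_{A_{0}}$, a unique $q$-Borel--Laplace sum whose entries are meromorphic on $\C^{*}$ with poles confined to $[-c;q]$ and of order $\le \delta$; the condition $\overline c \notin \Sigma_{A_{0}}$ is exactly what guarantees that the relevant theta-quotient kernels do not create spurious zeros/poles and that the discrete Laplace integral (a $q$-analogue, i.e. a sum over the spiral) converges and is unambiguous. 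The pole location along $[-c;q]$ and the bound $\delta$ on the order come from the explicit shape of the $q$-Borel kernel $\theta_{c}$ and its functional equation $\sq\theta_{c} = (z/c)\theta_{c}$.

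Next I would treat the general $k$-slopes case by dévissage on the number of slopes, exactly along the filtration $F_{\le\mu}$. Writing $A$ in Birkhoff--Guenther normal form \eqref{eqn:formesstandards}, one splits off the lowest slope $\mu_{1}$: the sub/quotient decomposition expresses $\hat F_{A}$ as a product of a ``first column/row'' factor (a two-slopes-type problem relating slope $\mu_{1}$ to each $\mu_{j}$) and a factor involving only the slopes $\mu_{2} < \cdots < \mu_{k}$, to which the induction hypothesis applies. Since $\Sigma_{A_{0}}$ for the full system contains the $\Sigma$ of every sub-block, the hypothesis $\overline c \notin \Sigma_{A_{0}}$ gives simultaneous summability of all the pieces; multiplying the summed factors back together (they all lie in the unipotent group $\G(\Kw)$, which is closed under products) yields an $F \in \G(\Kw)$ with $F[A_{0}] = A$, and the block-$(i,j)$ pole estimate $\le \mu_{j}-\mu_{i}$ is additive under this matrix multiplication in the unipotent group, so it is preserved.

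For uniqueness: if $F_{1}, F_{2} \in \G(\Kw)$ both satisfy $F[A_{0}]=A$ with the stated pole constraints, then $G := F_{1} F_{2}^{-1} \in \G(\Kw)$ satisfies $G[A_{0}] = A_{0}$, i.e. $\sq G = A_{0} G A_{0}^{-1}$; its off-diagonal blocks are meromorphic solutions of a $q$-difference system with, on the $(i,j)$ block, at worst poles along $[-c;q]$ of order $\le \mu_{j}-\mu_{i}$. One shows by descending induction on the slope gap that each such block must vanish: a nonzero entry would be a meromorphic section of a line bundle on $\Eq$ of negative degree (the degree being controlled by $-(\mu_{j}-\mu_{i})$ against the pole bound and the slope discrepancy), hence $0$; alternatively, one invokes that $F_{1}$ and $F_{2}$ have the same asymptotic expansion $\hat F_{A}$ in the relevant $q$-Gevrey sense and that the kernel of the asymptotics map, under the pole constraint, is trivial (the $q$-analogue of a Watson-type lemma, already available in \cite{RSZ}/\cite{JSStokes}). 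I expect the main obstacle to be the bookkeeping in the dévissage step: keeping track simultaneously of the pole orders $\le \mu_{j}-\mu_{i}$ for \emph{all} blocks while factoring through sub- and quotient-modules, and checking that the avoidance condition $\overline c \notin \Sigma_{A_{0}}$ really is the right one at every stage of the induction (rather than some larger bad set) — this is where the precise interplay between the spectra $\Sp(A_{i})$, the powers $c^{\mu_{i}}$, and the $q^{\Z}$-periodicity enters, and it has to be handled with care.
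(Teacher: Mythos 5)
The paper itself offers no proof of this proposition: it is quoted directly from \cite{JSStokes} (hence the bare $\Box$), so your reconstruction can only be judged on its own terms. Your overall architecture is the right one and matches how the result is actually established in \cite{JSStokes} and \cite{RSZ}: a two-slopes base case, a d\'evissage along the slope filtration with the (correct) observation that the pole bounds $\leq \mu_j - \mu_i$ are stable under products and inverses in the unipotent group $\G(\Kw)$, and uniqueness reduced to the vanishing of an automorphism of $A_0$ subject to the same polar constraints.

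There is, however, a genuine error in your uniqueness step. You claim that a nonzero off-diagonal block of $G = F_1 F_2^{-1}$ would be ``a meromorphic section of a line bundle on $\Eq$ of negative degree, hence $0$''. The count does not give negative degree: for a block of level $\delta = \mu_j - \mu_i$, the allowed poles of order $\delta$ along $[-c;q]$ exactly offset the slope $\delta$ of the homogeneous equation $\sq g = z^{-\delta} A_i\, g\, A_j^{-1}$, so the relevant vector bundle on $\Eq$ (of rank $r_i r_j$, not a line bundle in general) is flat of degree zero, and a degree argument alone cannot conclude. Indeed, if it could, the proposition would hold for \emph{every} $\overline{c} \in \Eq$ and the hypothesis $\overline{c} \notin \Sigma_{A_0}$ would be superfluous; the set $\Sigma_{A_0}$ exists precisely because degree-zero flat bundles can have nonzero global sections. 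The correct argument is spectral: put $h := \theta_c^{\delta} g$, so that $h$ is holomorphic on $\C^*$ and satisfies $\sq h = c^{-\delta} A_i\, h\, A_j^{-1}$; comparing Laurent coefficients gives $A_i h_n = q^n c^{\delta} h_n A_j$, and $h_n \neq 0$ would force $q^{\Z} c^{\mu_i} \Sp(A_i) \cap q^{\Z} c^{\mu_j} \Sp(A_j) \neq \emptyset$, which is exactly what $\overline{c} \notin \Sigma_{A_0}$ excludes. The same computation shows the associated linear operator has zero kernel and index zero, hence zero cokernel, which is what makes the non-homogeneous block equations solvable in your existence step; so repairing this one point also consolidates the existence side. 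The remaining bookkeeping you flag (pole orders of the non-homogeneous terms produced at each stage of the d\'evissage, and the fact that $\Sigma_{A_0}$ contains the bad sets of all sub-blocks) is real but routine, as you suspected.
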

\hfill $\Box$

One proves in \cite{RSZ} that, in some adequate sense, this $F$ is 
asymptotic to $\hat{F}_{A}$. We write it $S_{\overline{c}} \hat{F}_{A}$
and we consider it as a \emph{summation of $S_{\overline{c}} \hat{F}_{A}$
in the ``direction'' $\overline{c} \in \Eq$}. Thus, elements of 
$\Sigma_{A_{0}}$ are \emph{prohibited} directions of summation. 
The Stokes operators are then defined as:
$$
S_{\overline{c},\overline{d}} \hat{F}_{A} :=
\left(S_{\overline{c}} \hat{F}_{A}\right)^{-1} S_{\overline{d}} \hat{F}_{A}.
$$
These are meromorphic automorphisms of $A_{0}$, and they are galoisian 
in the following sense: evaluating them at a fixed base point $a \in \C^*$ 
that is not a pole will yield elements of $\St(A)$ for the corresponding 
fiber functor $\homa$. More precisely \cite{RS2}:

\begin{prop}
For all $\overline{c}, \overline{d} \in \Eq \setminus \Gamma$ such that
$a \not\in [-c;q] \cup [-d;q]$ (so that $a$ is not a pole):
$$
S_{\overline{c},\overline{d}} \hat{F}_{A}(a) \in \St(A)
$$
and these elements, together with their conjugates under the action of 
$G_{p,1}^{(0)}(A)$, are Zariski-generators of $\St(A)$. 
\end{prop}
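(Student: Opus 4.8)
The plan is to prove the two assertions separately: first that each $S_{\overline{c},\overline{d}}\hat{F}_{A}(a)$ lies in $\St(A)$, then that these elements (with their $G_{p,1}^{(0)}(A)$-conjugates) Zariski-generate $\St(A)$. For the first part, I would invoke the characterisation of $\St(A)$ recalled in Subsection \ref{subsection:firstlookstructure}: an element of $\GL_n(\C)$ lies in $\St(A)$ iff (via the fiber functor $\homa$) it is functorial, $\otimes$-compatible, and trivial on pure modules. Triviality on pure modules is immediate since $S_{\overline{c}}\hat{F}_{A}$ and $S_{\overline{d}}\hat{F}_{A}$ both realise isomorphisms $A_{0}\to A$, so their ratio is an automorphism of $A_{0}$ inducing the identity on $\gr$; evaluated at $a$ it lies in $\G(\C)$ and is the identity on each diagonal block. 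The $\otimes$-compatibility and functoriality should follow from the corresponding compatibilities of the summation operators $S_{\overline{c}}$ — these are established in \cite{RSZ,RS2} — which is why the proposition is attributed there. Concretely: if $(\sq F)A = B F$ with $F_{0}=\gr F$, one checks that $S_{\overline{c}}\hat{F}_{B}\cdot F_{0} = F\cdot S_{\overline{c}}\hat{F}_{A}$ (uniqueness of the summation forces this, since both sides solve the same $q$-difference problem with the same asymptotics and the same pole locus), and dividing the $\overline{c}$-version by the $\overline{d}$-version kills $F$ and leaves the intertwining relation $S_{\overline{c},\overline{d}}\hat{F}_{B}\cdot F_{0} = F_{0}\cdot S_{\overline{c},\overline{d}}\hat{F}_{A}$; evaluating at $a$ gives exactly the functoriality condition defining $\St$. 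The tensor case is analogous, using that $S_{\overline{c}}$ commutes with $\otimes$.

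For the generation statement, the strategy is tannakian duality combined with the analytic classification theorem of \cite{RSZ}. The subcategory $<M>$ generated by $M$ has Galois group $G(M)$, and $\St(A)=S(M)$ is the unipotent radical direction described in Subsection \ref{subsection:tannakianfiltration}. A Zariski-closed subgroup $H\subseteq \St(A)$ that contains all the $S_{\overline{c},\overline{d}}\hat{F}_{A}(a)$ and is stable under $G_{p,1}^{(0)}(A)$-conjugation corresponds, via tannakian duality, to a quotient category; to show $H=\St(A)$ it suffices to show that the only objects of $<M>$ fixed by $H$ are those already fixed by all of $G_1^{(0)}$, equivalently that an object $N$ of $<M>$ on which all the Stokes operators act trivially must be pure (isomorphic to $\gr N$). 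This is precisely the content of the analytic classification: a module in $\EE_{1}^{(0)}$ is determined by its graded part $\gr N$ together with the collection of its Stokes operators (the class in the relevant $H^{1}(\Eq,\Lambda)$, or equivalently the family $S_{\overline{c},\overline{d}}\hat{F}_{N}$), and if all these operators are trivial then $\hat{F}_{N}$ converges, so $N\cong \gr N$. Since the $G_{p,1}^{(0)}(A)$-stability is built into the hypothesis, this forces $H$ to act trivially on no new objects beyond those fixed by $G_1^{(0)}$ itself, hence $H=\St(A)$.

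The main obstacle I anticipate is the generation step, specifically translating ``the Stokes operators $S_{\overline{c},\overline{d}}\hat{F}_{N}$, for varying admissible $\overline{c},\overline{d}$ and evaluated at $a$, see all of the analytic class of $N$'' from the cohomological classification of \cite{RSZ} into a statement purely about the finite-dimensional group $\St(A)\subset\GL_n(\C)$. The subtlety is that the Stokes operators a priori have \emph{elliptic-function} entries (they are meromorphic automorphisms of $A_{0}$ over $\C^{*}$), and it is the passage to residues / evaluation at a point $a$ — the device recalled in the introduction for reducing the field of constants from $\M(\Eq)$ to $\C$ — that must be shown not to lose information at the level of the Galois group. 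One handles this by noting that varying $a$ (equivalently, using the functoriality/equivariance to spread the values around $\Eq$) recovers the full elliptic object, and that $\St(A)$, being defined over $\C$, is already generated by the $\C$-points obtained this way; the precise argument is exactly what is carried out in \cite{RS2}, which is why the proposition is cited from there. I would therefore organise the write-up as: (1) reduce to the classification theorem of \cite{RSZ}; (2) invoke the density/generation result of \cite{RS2} for the final closure statement; (3) fill in the functoriality and $\otimes$-compatibility verifications, which are routine given the corresponding properties of the summations.
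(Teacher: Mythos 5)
Your proposal is correct and matches the paper's treatment: the paper gives no proof of this proposition at all, simply citing \cite{RS2}, and your sketch (membership in $\St(A)$ via the intertwining relation forced by uniqueness of the summations, plus triviality on pure modules; generation via tannakian duality combined with the analytic classification of \cite{RSZ} and the density theorem of \cite{RS2}) is exactly the route taken in the cited reference. The only point I would tighten is the Chevalley step: the criterion applies to \emph{lines} in tensor constructions rather than to objects, but since you explicitly defer the precise closure argument to \cite{RS2}, this is consistent with what the paper does.
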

\hfill $\Box$

Since 
$S_{\overline{c},\overline{d}} \hat{F}_{A} =
(S_{\overline{c_0},\overline{c}} \hat{F}_{A})^{-1}
S_{\overline{c_0},\overline{d}} \hat{F}_{A}$, we may as well fix $\overline{c_0}$
and consider the family of all $S_{\overline{c_0},\overline{c}} \hat{F}_{A}(a)$.
The question of their relations thus comes next.


\subsection{Explicit generators of $\st$}
\label{section:explicitgeneratorsofst}

In order to try to``free'' these generators, one goes to the Lie algebra.
Fix an arbitrary $\overline{c_{0}} \in \Eq \setminus \Sigma_{A_{0}}$. 
For a given $A$, the map:
$$
\overline{c} \mapsto \log S_{\overline{c_{0}},\overline{c}} \hat{F}_{A}(a)
$$
is meromorphic on $\Eq$ with poles on $\Sigma_{A_{0}}$, with values 
in $\st(A)$. Its residue at $\alpha \in \Sigma_{A_{0}}$ is written:
$$
\Delta_{\alpha}(A) := 
\Res_{\beta = \alpha} \log S_{\overline{c_{0}},\beta} \hat{F}_{A}(a)\in \st(A).
$$
Residues at points $\alpha \not \in \Sigma_{A_{0}}$ are null, except maybe at 
the particular point $\overline{a}$, where $a$ encodes the fiber functor; 
but this one has no intrinsic significance and we shall have no use for it. \\

Now the above statement may be reinforced as follows. From \cite{RS1,RS2},
it follows that the mapping $A \mapsto \Delta_{\alpha}(A)$ is functorial
and tensor compatible in the sense of the Stokes Lie algebra (see section
\ref{subsection:firstlookstructure}) when defined on all operands; by
continuity, this remains true without condition:

\begin{lemm}
Each mapping $A \leadsto \Delta_{\alpha}(A)$ defines an element $\Delta_{\alpha}$ 
of $\st$.
\end{lemm}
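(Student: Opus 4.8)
The plan is to verify that the family $\left(\Delta_{\alpha}(A)\right)_A$, which a priori is only a collection of elements $\Delta_{\alpha}(A) \in \st(A)$ indexed by the objects $A$ of our category, is \emph{coherent} with respect to the transition maps defining $\st$ as the projective limit $\st = \varprojlim \st(M_A)$, and that it is functorial and tensor-compatible. Concretely, recall that an element of $\st$ is the same thing as a natural transformation of the relevant fiber functor that is trivial on $\gr$, equivalently a coherent system $(D(A))_A$ with $D(A) \in \st(A)$ satisfying: (i) for every morphism $F \colon M_A \to M_B$ (so $(\sq F)A = BF$), one has $D(B) F_0(a) = F_0(a) D(A)$; (ii) $D(A \otimes B) = D(A) \otimes I_p + I_n \otimes D(B)$; (iii) $D(A_0) = 0$. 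So the content of the lemma is precisely that $A \leadsto \Delta_{\alpha}(A)$ satisfies (i), (ii), (iii).

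First I would dispose of (iii): when $A = A_0$ is pure, $\hat F_{A_0} = I_n$, hence each summation $S_{\overline c} \hat F_{A_0} = I_n$, hence every Stokes operator $S_{\overline{c_0},\beta}\hat F_{A_0}(a) = I_n$ and its logarithm is $0$; a residue of the zero function is $0$, so $\Delta_\alpha(A_0) = 0$. Next, the heart of the argument is to establish (i) and (ii) \emph{under the restriction that the relevant directions are allowed}, i.e. that none of the finitely many prohibited directions (the points of $\Sigma_{A_0}$, $\Sigma_{B_0}$, $\Sigma_{(A\otimes B)_0}$, $\Sigma_{(A\oplus B)_0}$, as appropriate) interferes. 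This is exactly where I invoke the sentence in the excerpt: ``From \cite{RS1,RS2}, it follows that the mapping $A \mapsto \Delta_{\alpha}(A)$ is functorial and tensor compatible in the sense of the Stokes Lie algebra when defined on all operands.'' The mechanism behind that citation is: the summation operators $S_{\overline c}(\cdot)$ are themselves functorial and $\otimes$-compatible where defined (this is the content of the analytic classification of \cite{RSZ}, reproved/used in \cite{RS1,RS2}), hence so are the Stokes operators $S_{\overline{c_0},\beta}\hat F_A$, hence so is $\beta \mapsto \log S_{\overline{c_0},\beta}\hat F_A(a)$ as a meromorphic function with values in the appropriate $\st(A)$; taking residues at a point $\alpha$ is $\C$-linear and commutes with the linear algebraic operations (composition with the fixed matrix $F_0(a)$, the tensor formula) appearing in (i) and (ii), so the identities pass from the summations to the residues on the locus where everything is defined.

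The remaining — and genuinely new — step is the removal of the genericity hypothesis: conditions (i) and (ii) are polynomial (indeed linear) identities between the matrices $\Delta_\alpha(A)$, $\Delta_\alpha(B)$, $\Delta_\alpha(A\otimes B)$, $F_0(a)$, etc., and we want them for \emph{all} choices of the data, knowing them for all \emph{generic} choices. The plan is a continuity/density argument: fix $A$, $B$, $F$; the identities (i), (ii) involve, besides the fixed matrices $F_0(a)$, the quantities $\Delta_\alpha$ evaluated at $A$, $B$, and at the constructed objects $A\otimes B$, $A\oplus B$ (with $F$ built accordingly). Each such $\Delta_\alpha(\cdot)$ is a residue, hence an integral $\frac{1}{2\pi i}\oint \log S_{\overline{c_0},\beta}\hat F(a)\,d\beta$ over a small loop around $\alpha$, and the integrand depends holomorphically (even meromorphically, but holomorphically near the loop once $\alpha \notin$ the bad set of each object involved) on the parameters $a$, $\overline{c_0}$, and on the base point chosen for the fiber functor. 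Varying the auxiliary base point $a$ in a small neighbourhood avoids the spurious residue issue at $\overline a$, and perturbing $\overline{c_0}$ keeps us in the allowed region; since (i) and (ii) hold on a dense open set of the parameter space and both sides are real-analytic (in fact holomorphic) in those parameters, they hold identically. I expect \emph{this} density/continuity step to be the main obstacle, not because it is deep but because one must be careful to state precisely in what variables one is deforming and to check that $\Sigma_{A_0}$, $\Sigma_{B_0}$ and the bad sets of the tensor and direct-sum constructions are all \emph{finite}, so that a single residue-defining loop around $\alpha$ can be chosen uniformly and the integrand stays holomorphic on it — this is what licenses differentiation under the integral and hence the analyticity needed to propagate the identities from the generic locus to everywhere. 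Once (i), (ii), (iii) are in hand, the coherence of $(\Delta_\alpha(A))_A$ with the transition maps $\st(M_A) \to \st(M_{A'})$ for $M' \preceq M$ is a special case of functoriality (i), and we conclude that $\Delta_\alpha := (\Delta_\alpha(A))_A$ is a well-defined element of $\st = \varprojlim \st(M_A)$.
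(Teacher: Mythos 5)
Your proposal is correct and follows essentially the same route as the paper, whose entire proof of this lemma is the sentence immediately preceding it: functoriality and $\otimes$-compatibility of $A \mapsto \Delta_{\alpha}(A)$ on the locus where all summations are defined are quoted from \cite{RS1,RS2}, and the extension to all operands is dismissed with the words ``by continuity''. Your elaboration of that continuity step (residues as loop integrals depending holomorphically on the auxiliary parameters, linear identities propagating from a dense open set by analyticity, finiteness of the prohibited sets $\Sigma_{A_0}$ etc.) is a reasonable filling-in of what the paper leaves implicit.
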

\hfill $\Box$

According to \eqref{decompositiondest} in \ref{subsection:firstlookstructure}, 
$\Delta_{\alpha}$ admits a decomposition:
$$
\Delta_{\alpha} = \hat{\bigoplus} \Delta_{\alpha}^{(\delta)}, \quad
\Delta_{\alpha}^{(\delta)} = \hat{\bigoplus} \Delta_{\alpha}^{(\delta,\overline{c})}.
$$
We see the components $\Delta_{\alpha}^{(\delta,\overline{c})}$ as $q$-analogs
of alien derivations. It was proved in \cite[theorem 3.5]{RS2} (with slightly
different notations) that: 

\begin{prop}
The $\Delta_{\alpha}$, together with their conjugates under the action of 
$G_{p,1}^{(0)}$, are topological generators of $\st$. 
\end{prop}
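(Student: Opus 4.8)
The plan is to derive this from the corresponding statement at finite level, namely the proposition recalled in \ref{section:explicitgeneratorsofSt} asserting that the $S_{\overline{c},\overline{d}}\hat F_A(a)$ together with their $G_{p,1}^{(0)}(A)$-conjugates Zariski-generate $\St(A)$, and then pass to residues and to the Lie algebra. First I would fix a module $M_A$ above $M_0 = M_{A_0}$ and work inside the linear algebraic group $\St(A)$, whose Lie algebra $\st(A)$ is nilpotent, so that $\exp:\st(A)\to\St(A)$ is an isomorphism of varieties. The key observation is that a Zariski-closed subgroup of the unipotent group $\St(A)$ is determined by its Lie algebra, so it suffices to show that the Lie subalgebra of $\st(A)$ generated by the $\Delta_\alpha(A)$ ($\alpha\in\Sigma_{A_0}$) together with their $G_{p,1}^{(0)}(A)$-conjugates is all of $\st(A)$.

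Next I would recover the generators $\log S_{\overline{c_0},\overline{c}}\hat F_A(a)$ from the residues $\Delta_\alpha(A)$. The map $\overline c\mapsto \log S_{\overline{c_0},\overline c}\hat F_A(a)$ is meromorphic on the compact curve $\Eq$ with poles contained in the finite set $\Sigma_{A_0}$ and values in the finite-dimensional vector space $\st(A)$; by the theory of elliptic functions with values in a vector space, such a function is determined, up to an additive constant in $\st(A)$, by its principal parts, hence by finitely many $\C$-linear combinations of iterated $\overline\partial$-type data attached to the residues $\Delta_\alpha(A)$ and, in the multiple-pole case, their derivatives. Crucially, those higher-order polar coefficients are themselves obtained from the $\Delta_\alpha^{(\delta,\overline c)}$ by the $G_{p,1}^{(0)}(A)$-action: conjugation by the semisimple part $\gamma$ multiplies the $(\delta,\overline c)$-component by $\gamma(\overline c)$, and letting $\gamma$ vary while differentiating in the appropriate sense produces exactly the derivatives of the elliptic function from its residues. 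Thus the $\C$-span of $\{\Delta_\alpha(A)\}$ and all $G_{p,1}^{(0)}(A)$-conjugates contains every value $\log S_{\overline{c_0},\overline c}\hat F_A(a)$ (for $\overline c\notin\Sigma_{A_0}$, $a$ not a pole), up to the irrelevant constant term, which I would absorb using the freedom in choosing $\overline{c_0}$ or discard as in the remark about the point $\overline a$ having no intrinsic significance.

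From there the argument closes quickly: the vector space spanned by all $\log S_{\overline{c_0},\overline c}\hat F_A(a)$ and their conjugates generates, as a Lie algebra, the same subalgebra as the group generated by the $S_{\overline{c_0},\overline c}\hat F_A(a)$ and their conjugates generates as an algebraic subgroup of $\St(A)$ — because $\exp$ is an iso of varieties on a nilpotent Lie algebra and a one-parameter subgroup through a group element corresponds to the line through the logarithm. By the cited finite-level proposition this subgroup is $\St(A)$, so the Lie subalgebra is $\st(A)$. This holds for every $A$ in the essential subcategory; taking the projective limit over $A$ (with the inverse-limit topology on $\st$ described at the end of \ref{subsection:tannakianfiltration}), and using that the transition maps $\st(A')\twoheadrightarrow\st(A)$ are surjective and that the $\Delta_\alpha$ and the $G_{p,1}^{(0)}$-action are compatible with these maps, we conclude that the closed Lie subalgebra of $\st$ topologically generated by the $\Delta_\alpha$ and their $G_{p,1}^{(0)}$-conjugates is all of $\st$. (Since $\st$ is Hausdorff and complete, a dense subalgebra whose image is everything at each finite level is topologically dense.)

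The main obstacle I anticipate is the interpolation step: showing precisely that a $\st(A)$-valued elliptic function is reconstructed from its residues \emph{together with the torus action}, especially at points $\alpha\in\Sigma_{A_0}$ where the function can have poles of order $>1$ (the $(i,j)$ block of $\hat F_A$ carrying $q$-Gevrey level $\delta=\mu_j-\mu_i$, so poles up to that order appear). One must check that the higher polar coefficients really are reached by the $G_{p,1}^{(0)}(A)$-conjugation — i.e., that the weight decomposition $\st(A)=\bigoplus\st^{(\delta,\overline c)}(A)$ is fine enough to separate these coefficients — and handle carefully the additive constant and the spurious residue at $\overline a$. This is exactly the content of \cite[theorem 3.5]{RS2} being invoked, so in the write-up I would reduce to it cleanly rather than redo the elliptic-function bookkeeping, but it is the conceptual heart of the matter and the place where the ``trick to reduce the field of constants from $\M(\Eq)$ to $\C$'' via residues is actually cashed in.
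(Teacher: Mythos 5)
The paper does not prove this proposition: it is recalled verbatim from \cite[theorem 3.5]{RS2} and closed with a $\Box$, so the only ``proof'' on offer is the citation. Your outer architecture is reasonable and does match the spirit of \cite{RS1,RS2}: work at the finite level $\St(A)$, use that a closed subgroup of a unipotent group is determined by its Lie algebra, invoke the Zariski-generation of $\St(A)$ by the $S_{\overline{c},\overline{d}}\hat{F}_{A}(a)$, and pass to the inverse limit using surjectivity of the transition maps and completeness of $\st$. Those steps are sound.

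The gap is in the central step, where you claim to recover the values $\log S_{\overline{c_0},\overline{c}}\hat{F}_{A}(a)$ from the residues $\Delta_{\alpha}(A)$ by using the $G_{p,1}^{(0)}(A)$-conjugation to produce the higher-order polar coefficients. This cannot work as described: the semi-simple part of $G_{p,1}^{(0)}$ acts on the entire weight component $\st^{(\delta,\overline{d})}(A)$ by the single scalar $t^{\delta}\gamma(\overline{d})$, uniformly in the summation variable $\overline{c}$ and in the pole $\alpha$. Conjugating therefore only rescales the whole component of the $\st(A)$-valued elliptic function; it does not differentiate it, and it cannot separate the residue from the order-$\geq 2$ Laurent coefficients at a pole lying in the same weight space. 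So the span of the residues and their conjugates need not contain the principal parts, and your reconstruction of the function values collapses. The mechanism actually used in \cite{RS1,RS2} (and reproduced for the two-slope case in \ref{subsection:linkinganalyticclassificationtorepresentations} of this paper) is dual rather than reconstructive: one shows that the residue map on the classifying space $\F(M_0) \simeq H^{1}(\Eq,\Lambda_{I}(M_{0}))$ is injective --- if all residues of a class vanish, the class is trivial --- and then generation of $\left(\stt/[\stt,\stt]\right)^{(\delta,\overline{c})}$ follows by duality, the general case being handled by the $q$-Gevrey d\'evissage of the filtration. Note finally that your concluding fallback, ``reduce cleanly to \cite[theorem 3.5]{RS2},'' is circular here: that theorem \emph{is} the statement you were asked to prove.
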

\hfill $\Box$

From the preceding section, we draw:

\begin{theo}
\label{theo:generateursdetilde(st)}
The ``$q$-alien derivations'' $\Delta_{\alpha}^{(\delta,\overline{c})}$ together 
with $\nu$ generate topologically the Lie algebra $\stt$.
\end{theo}
\hfill $\Box$

\begin{rema}
It was conjectured at the end of \cite{RS2} that those ``$q$-alien 
derivations'' $\Delta_{\alpha}^{(\delta,\overline{c})}$ such that 
$\alpha^{\delta} = \overline{c}$ (remember we use a multiplicative notation
for the group $\Eq$), together with their conjugates under the action of 
$G_{p,1}^{(0)}$, are topological generators of $\st$. This will be proved
in section \ref{subsection:linkinganalyticclassificationtorepresentations}.
Therefore, those $\Delta_{\alpha}^{(\delta,\overline{c})}$ such that 
$\alpha^{\delta} = \overline{c}$ together with $\nu$ generate topologically 
the Lie algebra $\stt$. The condition on $\alpha,\delta,\overline{c}$ can 
be interpreted in terms of ``directions of maximal growth'' as in the theory 
of differential equations. \\
From considerations related to the classification theory (see section
\ref{subsection:resultsonclassif}), one can predict that these generators 
are not free: there should be $\delta$ of them for each pair 
$\delta,\overline{c}$, but there are $\delta^{2}$. In this respect, 
the ``freeness theorem'' of \cite{RS2} is quite incomplete. We shall
here complete it by theorem \ref{theo:freenesstheorem} at the end of
\ref{subsection:freeingalienderivations}.
\end{rema}


\subsection{$q$-Gevrey interpolation}
\label{subsection:qGevreyinterpolation}

Here, we use \cite[\S 3.3.3]{RS2}. For each level 
$\delta \in \N \cup \{\infty\}$, we define a category $\EE^{\delta}$ 
with the same objects as $\EE_{1}^{(0)}$ but morphisms having coefficients 
in the field of $q$-Gevrey series of level $> \delta$ (see definition
at the beginning of \ref{section:explicitgeneratorsofSt}). For 
$\delta = \infty$, the morphisms are analytic and
$\EE^{\infty} = \EE_{1}^{(0)}$. For $\delta = 0$, any $\hat{F}_{A}$
is a morphism, so that any $A$ is equivalent to $A_{0}$ and
$\EE^{0} = \EE_{p,1}^{(0)}$. In between, the interpolating categories 
$\EE^{\delta}$ are related by essentially surjective and (not fully)
faithful $\otimes$-compatible inclusion functors 
$\EE^{\delta} \hookrightarrow \EE^{\delta-1}$, whence the following
diagram:
$$
\xymatrix{
\EE_{1}^{(0)} \ar@{=}[d] \ar@<0ex>[rrrr]^{\gr} & & & &
\EE_{p,1}^{(0)} \ar@{=}[d] \\
\EE^{\infty} \ar@<0ex>[r] \ar@<0ex>[drr]_\homa & 
\cdots \ar@<0ex>[r] &
\EE^{\delta} \ar@<0ex>[r] \ar@<0ex>[d] & 
\cdots \ar@<0ex>[r] &
\EE^{0} \ar@<0ex>[dll]^\oma \\
& & \Vect_{\C}^f & & 
}
$$
Each $\EE^{\delta}$ is tannakian, with the same fiber functors as 
$\EE_{1}^{(0)}$, and its Galois group is a closed subgroup of $G_{1}^{(0)}$ 
(its elements are $\otimes$-automorphisms of the fiber functor with more 
constraints imposed by functoriality since there are more morphisms;
this is a particular case of \cite[prop. 2.21 (b), p. 139]{DM}). 
Actually:
$$
\Gal(\EE^{\delta}) = \St^{\leq \delta} \rtimes G_{p,1}^{(0)},
$$
where $\St^{\leq \delta}$ is the subgroup of $\St$ with Lie 
algebra\footnote{This was denoted $\st(\delta)$ in \cite{RS2}.}:
$$
Lie(\St^{\leq \delta}) = \st^{\leq \delta} :=
\sum_{\delta' \leq \delta} \st^{(\delta')}.
$$
Thus, $\st^{\leq \delta}$
contains in particular all the $\Delta_{\alpha}^{(\delta',\overline{c})}$
for $\delta' \leq \delta$. \\

We now define:
$$
\stt^{\leq \delta} := \sum_{\delta' \leq \delta} \stt^{(\delta')} =
\C \nu \oplus \st^{\leq \delta}.
$$
Then, from what was said before and the grading, one draws:

\begin{prop}
The Lie algebra $\stt^{\leq \delta}$ is generated by $\nu$ and the
$\Delta_{\alpha}^{(\delta',\overline{c})}$ such that 
$\alpha^{\delta'} = \overline{c}$ for $\delta' \leq \delta$.
\end{prop}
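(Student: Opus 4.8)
The plan is to deduce the statement about $\stt^{\leq\delta}$ from the corresponding statement about $\st^{\leq\delta}$ together with the definition $\stt^{\leq\delta} = \C\nu \oplus \st^{\leq\delta}$, so the core of the argument concerns the $\st$-part. First I would recall, from the preceding subsection, that $\Gal(\EE^{\delta}) = \St^{\leq\delta}\rtimes G_{p,1}^{(0)}$ and that $\st^{\leq\delta} = \sum_{\delta'\leq\delta}\st^{(\delta')}$; in particular $\st^{\leq\delta}$ is a sub-Lie-algebra of $\st$ stable under the adjoint action of $G_{p,1}^{(0)}$, and it contains every $\Delta_{\alpha}^{(\delta',\overline c)}$ with $\delta'\leq\delta$. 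From Proposition~\ref{prop} (the statement just before \ref{subsection:qGevreyinterpolation}, reinterpreted under the $\alpha^{\delta'}=\overline c$ refinement promised in the Remark and established in \ref{subsection:linkinganalyticclassificationtorepresentations}) we know that the $\Delta_{\alpha}$, equivalently their homogeneous components $\Delta_{\alpha}^{(\delta',\overline c)}$ with $\alpha^{\delta'}=\overline c$, together with their $G_{p,1}^{(0)}$-conjugates, topologically generate $\st$.

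The next step is to extract from this the analogous generation statement for the truncation $\st^{\leq\delta}$. The key observation is that the adjoint action of the semisimple component $G_{f,s}^{(0)}\times T_1^{(0)}$ is \emph{graded}: conjugating $\Delta_{\alpha}^{(\delta',\overline c)}$ by $\sigma=(\gamma,t)$ multiplies it by the scalar $t^{\delta'}\gamma(\overline c)$, hence stays inside $\st^{(\delta',\overline c)}$; and conjugation by the unipotent generator $\tau$ likewise preserves each $\st^{(\delta',\overline c)}$ (end of \ref{subsection:firstlookstructure}). Therefore all the $G_{p,1}^{(0)}$-conjugates of the $\Delta_{\alpha}^{(\delta',\overline c)}$ with $\delta'\leq\delta$ already lie in $\st^{\leq\delta}$, and the closed Lie subalgebra they generate is contained in $\st^{\leq\delta}$. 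For the reverse inclusion, note that the closed Lie subalgebra $\mathfrak h$ they generate inside $\st$ is a $G_{p,1}^{(0)}$-stable graded (completed) sub-Lie-algebra, so $\mathfrak h = \hat\bigoplus_{\delta'}\mathfrak h^{(\delta')}$ with $\mathfrak h^{(\delta')}\subset\st^{(\delta')}$; since the topological generators of $\st$ of weight $\delta'\leq\delta$ all lie in $\mathfrak h$, the full generation of $\st$ forces $\mathfrak h^{(\delta')}=\st^{(\delta')}$ for every $\delta'\leq\delta$ — indeed, any homogeneous component of $\st$ of weight $\leq\delta$ is an iterated bracket (and limit thereof) of generators, and each such bracket involving only generators of weight $\leq\delta$ that does not raise total weight above $\delta$ stays inside $\mathfrak h$, while brackets that would land in weight $>\delta$ are irrelevant to $\st^{\leq\delta}$. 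Hence $\mathfrak h\supset\st^{\leq\delta}$, giving $\mathfrak h=\st^{\leq\delta}$.

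Finally I would append the element $\nu$: by definition $\stt^{\leq\delta}=\C\nu\oplus\st^{\leq\delta}$, and since $\nu$ together with the generators of $\st^{\leq\delta}$ generate a Lie algebra containing $\st^{\leq\delta}$ and $\C\nu$, hence all of $\stt^{\leq\delta}$, the claim follows. (One should also check closure: $[\nu,\st^{(\delta',\overline c)}]\subset\st^{(\delta',\overline c)}$, which is exactly the statement that $\tau$-conjugation preserves each graded piece, differentiated; so adjoining $\nu$ does not escape $\stt^{\leq\delta}$.)

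The main obstacle I expect is the bookkeeping around \emph{graded topological generation}: one must be careful that ``topological generators'' means generators of a closed subalgebra in the pronilpotent (projective-limit) topology, so the argument that a $G_{p,1}^{(0)}$-stable closed subalgebra containing all weight-$\leq\delta$ generators must contain all of $\st^{\leq\delta}$ requires knowing that $\st^{\leq\delta}$ is itself closed and that the grading is compatible with the completion — both of which are recorded in \ref{subsection:tannakianfiltration} and \ref{subsection:qGevreyinterpolation}. The other delicate point is invoking the refined generation result ($\alpha^{\delta'}=\overline c$), which at this point in the paper is only a forward reference; a fully rigorous write-up either uses the weaker unconditional Proposition (all $\Delta_{\alpha}^{(\delta,\overline c)}$, no constraint) and upgrades later, or simply cites \ref{subsection:linkinganalyticclassificationtorepresentations} — I would do the latter and flag it.
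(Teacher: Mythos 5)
Your argument is correct and is exactly the one the paper intends: the paper offers no written proof beyond ``from what was said before and the grading,'' and your write-up simply makes explicit that truncation step — all generators $\Delta_{\alpha}^{(\delta'',\overline{c})}$ have positive weight, so any homogeneous bracket of weight $\leq \delta$ involves only generators of weight $\leq \delta$ (plus the weight-zero $\nu$), and conjugation by $G_{p,1}^{(0)}$ acts by scalars, resp.\ by $\exp(\ad \nu)$, on each graded piece, so the conjugates required in the earlier generation statement are absorbed. Your handling of the forward reference to \ref{subsection:linkinganalyticclassificationtorepresentations} for the refined ($\alpha^{\delta'}=\overline{c}$) generating set also mirrors what the paper does via the Remark in \ref{section:explicitgeneratorsofst}.
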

\hfill $\Box$



\section{Structure of the Stokes component}
\label{section:structStokes}

In this section, we shall describe in detail the structure of $\st$
and its representations. We first recall some necessary facts about 
classification. 


\subsection{Some useful results on local analytic classification}
\label{subsection:resultsonclassif}

These results come from \cite{RSZ,JSStokes}. Fix a pure module $M_{0}$
with matrix $A_{0}$ in form \eqref{eqn:formesstandards}. The modules 
formally equivalent to $M_0$ are those such that $\gr M \approx M_0$.
In order to classify them analytically, one rigidifies the situation
by introducing ``marked pairs'' $(M,g)$ made up of an analytic 
$q$-difference module $M$ and an isomorphism $g: \gr(M) \rightarrow M_{0}$.
We then define two such marked pairs $(M,g)$ and $(M',g')$ to be
equivalent if there exists a morphism $f: M \rightarrow M'$ such that 
$g = g' \circ \gr(f)$. By standard commutative algebra, such a morphism 
$f$ is automatically an isomorphism. \\

The set of equivalence classes of marked pairs is written $\F(M_{0})$ and 
we see it as the space of \emph{isoformal analytic classes in the formal 
class of $M_{0}$}. The corresponding classification problem was solved 
in \cite{RSZ} and we shall use it in
\ref{subsection:linkinganalyticclassificationtorepresentations} to get 
an alternative description of $\Repa(\st)$. \\

We define the sheaf $\Lambda_{I}(M_{0})$ of \emph{meromorphic automorphisms 
of $M_{0}$ infinitely tangent to identity} as: 
$$
\Lambda_{I}(M_{0})(V) := \{F \in \G(\O(\pi^{-1}(V))) \tq F[A_{0}] = A_{0}\}.
$$
($V$ denoting an open subset of $\Eq$.) This is a sheaf of unipotent 
groups over $\Eq$, and it is abelian only in the case that $M_{0}$ has 
one or two slopes; in the former case, it is trivial, in the latter case, 
it is a vector bundle \cite{JSStokes}. \\

Now let $M$ in the formal class of $M_{0}$, with matrix $A$ in form 
\eqref{eqn:formesstandards}. The family of all the
$S_{\overline{c},\overline{d}} \hat{F}_{A}$ for all
$\overline{c},\overline{d} \in \Eq \setminus \Sigma_{A_{0}}$ is a cocycle
for the above sheaf:
$$
(S_{\overline{c},\overline{d}} \hat{F}_{A})_{\overline{c},\overline{d}}
\in Z^{1}(\U,\Lambda_{I}(M_{0})).
$$
Here, $\U$ is the covering of $\Eq$ by the Zariski open sets 
$\Eq \setminus \{\overline{-c}\}$, 
$\overline{c} \in \Eq \setminus \Sigma_{A_{0}}$. The conditions on the poles
of summations $S_{\overline{c}} \hat{F}_{A}$ imply that each
$S_{\overline{c},\overline{d}} \hat{F}_{A}$ has only poles 
on $[-c;q] \cup [-d;q]$, with multiplicities $\leq \mu_{j} - \mu_{i}$
for the coefficients of the block $(i,j)$. We call \emph{privileged}
such a cocycle and write $Z^{1}_{pr}(\U,\Lambda_{I}(M_{0}))$ the space
of privileged cocycles.

\begin{theo} \cite{RSZ,JSStokes}
The maps sending $A$ to this cocycle and the latter to its cohomology class
induce isomorphisms:
$$
\F(M_{0}) \rightarrow Z^{1}_{pr}(\U,\Lambda_{I}(M_{0}))
\rightarrow H^{1}(\Eq,\Lambda_{I}(M_{0})).
$$
\end{theo}

We now describe a $q$-Gevrey interpolation of this classification. Write
$\G^{\geq \delta}$ the subgroup of $\G$ defined by the vanishing of all
blocks $(i,j)$ such that $0 < \mu_{j} - \mu_{i} < \delta$. This is a normal 
subgroup of $\G$ and each quotient $\G^{\geq \delta}/\G^{\geq \delta+1}$
is abelian, indeed isomorphic to $\g^{(\delta)}$, whence an exact sequence:
$$
0 \rightarrow \g^{(\delta)} \rightarrow \G/\G^{\geq \delta+1}
\rightarrow \G/\G^{\geq \delta} \rightarrow 1.
$$
This is actually a central extension. It induces a central extension of
sheaves:
$$
0 \rightarrow \lambda_{I}^{(\delta)}(M_{0}) \rightarrow 
\Lambda_{I}(M_{0})/\Lambda_{I}^{\geq \delta+1}(M_{0})
\rightarrow \Lambda_{I}(M_{0})/\Lambda_{I}^{\geq \delta}(M_{0}) \rightarrow 1.
$$
The sheaf 
$\lambda_{I}(M_{0})^{(\delta)} := 
\Lambda_{I}^{\geq \delta}(M_{0})/\Lambda_{I}^{\geq \delta+1}(M_{0})$
is a sheaf of abelian groups, actually a vector bundle over $\Eq$,
corresponding by the construction at the beginning of section
\ref{section:previousresults} to a $q$-difference module that is
pure isoclinic of slope $\delta$: it is indeed the direct sum of
the equations $\sq f = (z^{\mu_{i}} A_{i}) f (z^{\mu_{j}} A_{j})^{-1}$
for $\mu_{j} - \mu_{i} = \delta$. Now, using some non-abelian cohomology
from \cite{Frenkel}, one gets an exact sequence:
\begin{equation}
\label{equation:lasuiteexacte}
0 \rightarrow V^{(\delta)} \rightarrow \F^{\leq \delta}(M_{0})
\rightarrow \F^{\leq \delta-1}(M_{0}) \rightarrow 1.
\end{equation}
The meaning of this sequence is the following:
\begin{enumerate}
\item{The leftmost term 
$V^{(\delta)} := H^{1}(\Eq,\lambda_{I}^{(\delta)}(M_{0}))$
is a finite dimensional complex vector space (first cohomology of 
a vector bundle); its dimension is:
$$
\dim_{\C} V^{(\delta)} = \delta \sum\limits_{\mu_{j} - \mu_{i} = \delta} r_{i} r_{j}.
$$}
\item{The group $V^{(\delta)}$ operates freely on the mid term, which
is defined as the cohomology pointed set
$\F^{\leq \delta}(M_{0}) := 
H^{1}(\Eq,\Lambda_{I}(M_{0})/\Lambda_{I}^{\geq \delta+1}(M_{0}))$.
(The special point of this pointed set is the class of the trivial
cocycle all of whose components are the identity.)}
\item{The corresponding quotient map is the canonical arrow from
$\F^{\leq \delta}(M_{0})$ to the cohomology pointed set
$\F^{\leq \delta-1}(M_{0}) := 
H^{1}(\Eq,\Lambda_{I}(M_{0})/\Lambda_{I}^{\geq \delta}(M_{0}))$.}
\end{enumerate}
Thus, the fibers\footnote{Actually, each $\F^{\leq \delta}(M_{0})$ 
can be endowed with an affine structure over the vector space 
$\bigoplus\limits_{k \leq \delta} V^{(k)}$, but we shall not need 
this fact.} of $\F^{\leq \delta}(M_{0}) \rightarrow \F^{\leq \delta-1}(M_{0})$
inherit a natural structure of affine space over the vector space 
$V^{(\delta)}$. Accordingly, for $v \in V^{(\delta)}$, we shall write 
$\alpha \mapsto v \oplus \alpha$ the translation by $v$ in 
$\F^{\leq \delta}(M_{0})$ (that is, in each of the fibers just mentioned);
and for two class $\alpha,\alpha' \in \F^{\leq \delta}(M_{0})$ having the 
same image in $\F^{\leq \delta-1}(M_{0})$, we shall write
$\alpha' \ominus \alpha$ the unique element of $V^{(\delta)}$ such that
$\alpha' = v \oplus \alpha$. \\

The interpretation of $\F^{\leq \delta}(M_{0})$ in terms of classification
rests on the same interpolating categories $\EE^{\delta}$ as in subsection
\ref{subsection:qGevreyinterpolation}. An object of $\EE^{\delta}$ can be
identified with a matrix $A$ in $\GL_{n}(\Ka)$, with undetermined blocks
$(i,j)$ for $\mu_{j} - \mu_{i} > \delta$, symbolized here by $\star$:
$$
\begin{pmatrix}
z^{\mu_{1}} A_{1} & \ldots & \ldots & \star  & \star  & \star \\
\ldots & \ldots & \ldots  & \ldots & \star  & \star  \\
\ldots & \ldots & \ldots  & U_{i,j} & \ldots & \star  \\
0      & \ldots & \ldots   & \ldots & \ldots & \ldots \\
\ldots & 0 & \ldots  & \ldots & \ldots & \ldots \\
0      & \ldots & 0       & \ldots & \ldots & z^{\mu_{k}} A_{k}   
\end{pmatrix}
$$
The highest meaningful block diagonal consists in blocks $U_{i,j}$
with level $\mu_{j} - \mu_{i} = \delta$. \\

We fix a block diagonal matrix $A_{0}$ and we classify all matrices $A$ 
with diagonal $A_{0}$, up to $q$-Gevrey gauge equivalence of level
$> \delta$, that is under transforms in $\G(\Kf)$ all of whose coefficients
are series of $q$-Gevrey level $> \delta$. This amounts to the same as 
fixing the pure module $M_{0}$ and doing $q$-Gevrey classification in 
its formal class. The space of isoformal classes above $A_{0}$ in 
$\EE^{\delta}$ received a cohomological description in \cite{RSZ}: 
it is $\F^{\leq \delta}(M_{0})$. Using the Birkhoff-Guenther normal 
form (\emph{loc. cit.}), one can moreover require null blocks $(i,j)$ 
for $\mu_{j} - \mu_{i} > \delta$ and find its dimension as an affine space. 
We shall write $cl(A)$ the class of the module $M_A$ in 
$\F^{\leq \delta}(A_{0}) := \F^{\leq \delta}(M_{0})$. \\

In \ref{subsection:linkinganalyticclassificationtorepresentations},
we shall have use for the corresponding computational description of
the exact sequence \eqref{equation:lasuiteexacte}. Consider $A,A'$ 
in $\F^{\leq \delta}(M_{0})$ having the same image in
$\F^{\leq \delta-1}(M_{0})$. Then 
$\hat{F}_{A,A'} := \hat{F}_{A'} (\hat{F}_{A})^{-1}$ lies in 
$\G^{\geq \delta}(\Kf)$, as well as its summations:
$$
S_{\overline{c}} \hat{F}_{A,A'} := 
S_{\overline{c}} \hat{F}_{A'} (S_{\overline{c}} \hat{F}_{A})^{-1}
$$
We get a cocycle:
$$
S_{\overline{c},\overline{d}} \hat{F}_{A,A'} :=
\left(S_{\overline{c},\overline{d}} \hat{F}_{A,A'}\right)^{-1}
S_{\overline{c},\overline{d}} \hat{F}_{A,A'}
$$
of $\Lambda_{I}(M_{0})$, in which the blocks for  $\mu_{j} - \mu_{i} > \delta$
have no meaning and those for $0 < \mu_{j} - \mu_{i} < \delta$ vanish; thus,
it yields a well defined privileged cocycle of 
$\lambda_{I}(M_{0})^{(\delta)} := 
\Lambda_{I}^{\geq \delta}(M_{0})/\Lambda_{I}^{\geq \delta+1}(M_{0})$,
whence a class in $V^{(\delta)} := H^{1}(\Eq,\lambda_{I}^{(\delta)}(M_{0}))$.
This class is the element $cl(A') \ominus cl(A) \in V^{(\delta)}$ which
sends the class of $A$ to the class of $A'$ in $\F^{\leq \delta}(M_{0})$.


\subsection{Linking representations of $\st$ to isoformal analytic classes}
\label{subsection:linkingisoformalclassestorepresentations}

Let $M_0$ an object of $\EE_{p,1}^{(0)}$ Its fiber by the functor $\gr$ 
from $\EE_{1}^{(0)}$ to $\EE_{p,1}^{(0)}$ can be identified with the
category $\Co(M_0)$ with objects the pairs $(M,u)$, $M$ an object 
of $\EE_{1}^{(0)}$ and $u: \gr M \rightarrow M_0$ an isomorphism; and 
with morphisms $(M,u) \rightarrow (N,v)$ the morphisms $f: M \rightarrow N$
in $\EE_{1}^{(0)}$ such that $u = v \circ \gr f$. Such a morphism is
automatically an isomorphism so that $\C(M_0)$ is a groupoid and $\F(M_0)$
is the set $\pi_0(\C(M_0))$ of its connected components. Its cohomological
description was explained in \ref{subsection:resultsonclassif}, we now
use tannakian duality to get a representation theoretic description. \\

To alleviate notations, in this section, we respectively write $\Co,\Co_0$ 
for $\EE_{1}^{(0)},\EE_{p,1}^{(0)}$ and $G,G_0$ for their Galois groups
$G_1^{(0)} = \Gal(\EE_{1}^{(0)}),G_{p,1}^{(0)} = \Gal(\EE_{p,1}^{(0)})$.
We write $\Repc(G),\Repc(G_0)$ the categories of complex finite dimensional
rational representations of these proalgebraic groups. The choice of the 
fiber functors is here irrelevant, all that we need is the equivalences
of category $\Co$ with $\Repc(G)$ and of category $\Co_0$ with $\Repc(G_0)$. 
Objects of $\Repc(G)$ are rational morphisms $\rho: G \rightarrow \GL(V)$,
$V$ a finite dimensional complex space; and morphisms in $\Repc(G)$ from
$\rho: G \rightarrow \GL(V)$ to $\rho': G \rightarrow \GL(V')$ are linear
maps $\phi: V \rightarrow V'$ such that 
$\forall g \in G \;,\; \rho'(g) \circ \phi = \phi \circ \rho(g)$. The
category $\Repc(G_0)$ is similarly defined. \\

We also introduce the auxiliary comma-category $\overline{\Co}$ with objects
the triples $(M,M_0,u)$ where $M,M_0$ are objects of $\Co,\Co_0$ and where
$u: \gr M \rightarrow M_0$ is an isomorphism; and with morphisms 
$(M,M_0,u) \rightarrow (N,N_0,v)$ the pairs $(f,f_0)$ made up of a 
morphism $f: M \rightarrow N$ and of a morphism $f_0: M_0 \rightarrow N_0$
such that $f_0 \circ u = v \circ \gr f$.

\begin{lemm}
The category $\overline{\Co}$ is equivalent to $\Co$ and we can identify 
the fiber $\Co(M_0)$ described above with the fiber $\overline{\Co}(M_0)$.
\end{lemm}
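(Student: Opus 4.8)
The plan is to exhibit an explicit equivalence $\Phi\colon \overline{\Co} \to \Co$ and check it restricts to a bijection on the relevant fibers. The obvious candidate is the forgetful functor $\Phi(M,M_0,u) := M$ on objects and $\Phi(f,f_0) := f$ on morphisms; one checks at once that this is a functor. The real content is that $\Phi$ is fully faithful and essentially surjective. Essential surjectivity is immediate: given $M$ in $\Co$, set $M_0 := \gr M$ and $u := \Id_{\gr M}$, so that $(M,\gr M,\Id)$ maps to $M$.

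For fullness and faithfulness, I would argue as follows. Given objects $(M,M_0,u)$ and $(N,N_0,v)$ of $\overline{\Co}$ and a morphism $f\colon M \to N$ in $\Co$, I must produce a \emph{unique} $f_0\colon M_0 \to N_0$ with $f_0 \circ u = v \circ \gr f$. Since $u$ is an isomorphism, the only possibility is $f_0 := v \circ \gr(f) \circ u^{-1}$, which gives uniqueness (hence faithfulness of $\Phi$) and, conversely, defines a legitimate morphism $M_0 \to N_0$ in $\Co_0$ since $\gr$ is a functor and $v \circ \gr(f) \circ u^{-1}$ is a morphism in $\EE_{p,1}^{(0)}$; this gives fullness. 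Note we do not even need $f$ to be an isomorphism here, though in our situation it automatically is. Thus $\Phi$ is an equivalence of categories.

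It remains to identify the fibers over a fixed $M_0$. On the $\overline{\Co}$ side, $\overline{\Co}(M_0)$ is by definition the subcategory of triples $(M,M_0,u)$ with the second entry \emph{equal} to the chosen $M_0$ and morphisms $(f,f_0)$ with $f_0 = \Id_{M_0}$; unwinding, such a morphism is exactly an $f\colon M \to N$ with $u = v \circ \gr f$, which is precisely the definition of a morphism in $\Co(M_0)$. So $\Phi$ carries $\overline{\Co}(M_0)$ isomorphically onto $\Co(M_0)$: on objects it is the identity-on-$M_0$ part of the forgetful map, on morphisms it is literally the identity. Hence $\Co(M_0)$ and $\overline{\Co}(M_0)$ are identified, compatibly with the equivalence $\overline{\Co} \simeq \Co$.

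The only mild subtlety — and the step I would be most careful about — is bookkeeping: one must be sure that the equivalence $\overline{\Co} \simeq \Co$ and the fiber identification are compatible in the precise sense needed downstream, i.e. that the square relating $\Co(M_0) \hookrightarrow \overline{\Co} \simeq \Co$ and $\Co(M_0) \hookrightarrow \Co$ (via $(M,u)\mapsto M$) commutes up to natural isomorphism. This follows formally once one writes a quasi-inverse $\Psi$ of $\Phi$ (say $\Psi(M) = (M,\gr M,\Id)$) and the natural isomorphism $\Phi\Psi \Rightarrow \Id$, $\Psi\Phi \Rightarrow \Id$; on the fiber $\Co(M_0)$ this natural isomorphism is the one induced by the marking $u$. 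There is no hard analysis or algebra here — it is a purely diagrammatic verification — so the lemma reduces to carefully writing these functors and natural transformations.
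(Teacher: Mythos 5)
Your proof is correct and follows essentially the same route as the paper: the forgetful functor together with the section $M \leadsto (M,\gr M,\Id_{\gr M})$, the paper simply exhibiting this quasi-inverse pair and the natural isomorphism $(\Id_M,u)$ directly rather than first checking full faithfulness and essential surjectivity. (One minor caveat: your aside that $f$ "automatically is" an isomorphism holds only for morphisms in the fiber $\overline{\Co}(M_0)$, where $f_0=\Id$, not for general morphisms of $\overline{\Co}$.)
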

\begin{proof}
Let $F$ be the functor from $\Co$ to $\overline{\Co}$ defined by
$M \leadsto (M, \gr M, \Id_{\gr M})$ and $f \leadsto (f,\gr f)$ and
let $G$ be the forgetful functor from $\overline{\Co}$ to $\Co$.
Then $G \circ F$ is the identity functor of $\Co$ and $F \circ G$
is isomorphic to the identity functor of $\overline{\Co}$ by the
natural transformation which sends $X = (M,M_0,u)$ to the morphism
$(\Id_M,u)$ from $F \circ G(X) = (M,\gr M,\Id_{\gr M})$ to $X$.
\end{proof}

We now carry on this construction to the equivalent categories $\Repc(G)$ 
and $\Repc(G_0)$. The inclusion $\Co_0 \hookrightarrow \Co$ and its 
retraction $\gr: \Co \rightarrow \Co_0$ correspond dually to morphisms 
$G_0 \overset{\gr^*}{\rightarrow} G \overset{\pi}{\rightarrow} G_0$
such that $\pi \circ \gr^* = \Id_{G_0}$. We shall use $\gr^*$ to
identify $G_0$ to a (proalgebraic) subgroup of $G$. The Stokes
group is $\St = \Ker \pi$ and one deduces from these morphisms
the equality $G = \St \rtimes G_0$. Thus, $G_0$ acts upon $\St$
by inner automorphisms, which we shall denote $s \mapsto s^g := g^{-1} s g$.
We also shall denote $D \mapsto D^g$ the corresponding adjoint action
on the Lie algebra $\st$ of $\St$. \\

The functor $\Co_0 \hookrightarrow \Co$ is thereby identified with
the following functor from $\Repc(G_0)$ to $\Repc(G)$:
\begin{align*}
\left(\rho_0: G_0 \rightarrow \GL(V)\right) & \leadsto
\left(\rho_0 \circ \pi: G \rightarrow \GL(V)\right), \\
(\phi: V \rightarrow V', \rho_0 \rightarrow \rho_0') & \leadsto
(\phi: V \rightarrow V', \rho_0 \circ \pi \rightarrow \rho_0' \circ \pi).
\end{align*}
Similarly, the functor $\gr: \Co \rightarrow \Co_0$ is identified with
the following functor from $\Repc(G)$ to $\Repc(G_0)$:
\begin{align*}
\left(\rho: G \rightarrow \GL(V)\right) & \leadsto
\left(\rho_{|G_0}: G_0 \rightarrow \GL(V)\right), \\
(\phi: V \rightarrow V', \rho \rightarrow \rho') & \leadsto
(\phi: V \rightarrow V', \rho_{|G_0} \rightarrow \rho'_{|G_0}).
\end{align*}
Since $(\rho_0 \circ \pi)_{|G_0} = \rho_0$, the composition is the
identity of $\Repc(G_0)$ as it should. Then one checks that $\overline{\Co}$
is identified to the category of triples $(\rho,\rho_0,u)$, where
$\rho: G \rightarrow \GL(V)$ and $\rho_0: G_0 \rightarrow \GL(V_0)$ 
are rational representations and where $u: V \rightarrow V_0$ is an
isomorphism from $\rho_{|G_0}$ to $\rho_0$, with morphisms from
$(\rho,\rho_0,u)$ to $(\rho',\rho'_0,u')$ the pairs $(\phi,\phi')$
where $\phi: V \rightarrow V'$ and $\phi': v' \rightarrow V'_0$
yield morphisms $\rho \rightarrow \rho'$ and $\rho_0 \rightarrow \rho'_0$ 
of representations and where moreover $\phi_0 \circ u = u' \circ \phi$.
The equivalences of $\Co$ and $\overline{\Co}$ are easy to explicit. \\

Last, if $M_0$ ``is'' the representation $\rho_0: G_0 \rightarrow \GL(V)$,
the fiber $\overline{\Co}(M_0)$ is identified with the category with
objects the pairs $(\rho,u)$ of a rational representation
$\rho: G \rightarrow \GL(V)$ and a map $u: V \rightarrow V_0$ which is 
an isomorphism from $\rho_{|G_0}$ to $\rho_0$, with morphisms from
$(\rho,u)$ to $(\rho',u')$ the maps $\phi: V \rightarrow V'$ which
yield morphisms $\rho \rightarrow \rho'$ such that $u = u' \circ \phi$.

\begin{lemm}
The fiber $\overline{\Co}(M_0)$ can be identified with the set of
representations $\rho: G \rightarrow \GL(V_0)$ such that $\rho_{|G_0} = \rho_0$.
\end{lemm}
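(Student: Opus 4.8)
The statement claims that the groupoid $\overline{\Co}(M_0)$, whose objects are pairs $(\rho,u)$ with $\rho\colon G\to\GL(V)$ a rational representation and $u\colon V\to V_0$ an isomorphism from $\rho_{|G_0}$ to $\rho_0$, is equivalent (in fact, can be identified) with the \emph{set} (i.e.\ discrete groupoid) of those representations $\rho\colon G\to\GL(V_0)$ satisfying $\rho_{|G_0}=\rho_0$ on the nose. The plan is to exhibit this as the standard ``rigidification'' move: use the isomorphism $u$ to transport the representation $\rho$ onto the fixed space $V_0$, and check that after this transport all morphisms in $\overline{\Co}(M_0)$ become identities.

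First I would define a functor $\Phi$ from $\overline{\Co}(M_0)$ to the proposed target: given $(\rho,u)$, set $\Phi(\rho,u):=u\,\rho\,u^{-1}\colon G\to\GL(V_0)$, that is, $g\mapsto u\circ\rho(g)\circ u^{-1}$. One checks immediately that this is a rational representation on $V_0$, and that its restriction to $G_0$ is $u\circ\rho_{|G_0}\circ u^{-1}=\rho_0$, precisely because $u$ is by hypothesis an isomorphism of representations $\rho_{|G_0}\to\rho_0$; so $\Phi(\rho,u)$ indeed lies in the target set. On morphisms: a morphism $\phi\colon(\rho,u)\to(\rho',u')$ is a linear map $\phi\colon V\to V'$ intertwining $\rho$ and $\rho'$ with $u=u'\circ\phi$. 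From $u=u'\circ\phi$ and the fact that $u,u'$ are isomorphisms, $\phi=u'^{-1}\circ u$ is itself an isomorphism, and one computes that $\Phi(\phi):=u'\circ\phi\circ u^{-1}=u'\circ(u'^{-1}u)\circ u^{-1}=\Id_{V_0}$. Hence every morphism of $\overline{\Co}(M_0)$ is sent to an identity; so $\Phi$ lands in the discrete category whose objects are the representations $\rho\colon G\to\GL(V_0)$ with $\rho_{|G_0}=\rho_0$ and whose only morphisms are identities.

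Next I would construct the inverse. Given $\sigma\colon G\to\GL(V_0)$ with $\sigma_{|G_0}=\rho_0$, send it to the object $(\sigma,\Id_{V_0})$; this is an object of $\overline{\Co}(M_0)$ since $\Id_{V_0}$ is trivially an isomorphism from $\sigma_{|G_0}=\rho_0$ to $\rho_0$. Clearly $\Phi(\sigma,\Id_{V_0})=\sigma$, so this is a section of $\Phi$. For the other composite, the pair $(\Id_{V_0},\dots)$ is not literally $(\rho,u)$, but the two are isomorphic in $\overline{\Co}(M_0)$ via the morphism $u$ itself: indeed $u\colon(\rho,u)\to(\Phi(\rho,u),\Id_{V_0})$ satisfies the intertwining condition $\Id_{V_0}\circ(u\rho u^{-1})\circ u=u\rho=\dots$ wait---more cleanly, $u$ is a morphism $(\rho,u)\to(u\rho u^{-1},\Id)$ because $u$ intertwines $\rho$ with $u\rho u^{-1}$ and satisfies $u=\Id_{V_0}\circ u$. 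This natural isomorphism shows the other composite is isomorphic to the identity functor, so $\Phi$ is an equivalence; and since both categories are actually rigid (no nonidentity automorphisms: the target is discrete, and in $\overline{\Co}(M_0)$ any automorphism $\phi$ of $(\rho,u)$ satisfies $u=u\circ\phi$, forcing $\phi=\Id$), the equivalence is in fact an isomorphism of categories, which is the asserted identification.

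The only mildly delicate point---and it is more a matter of bookkeeping than of substance---is to make sure the hypothesis ``$u$ is an isomorphism \emph{from $\rho_{|G_0}$ to $\rho_0$}'' is used in the right place: it is exactly what guarantees that the transported representation $u\rho u^{-1}$ restricts to $\rho_0$ and not merely to something isomorphic to it. Everything else is the routine observation that conjugating by a fixed isomorphism kills the rigidifying datum $u$, collapsing the groupoid of ``$\rho$ up to a marking'' onto the plain set of ``strictly marked'' $\rho$'s. No properties of $G$, $G_0$, or $\St$ beyond $\pi\circ\gr^*=\Id_{G_0}$ (already used to identify $G_0$ with a subgroup of $G$) are needed; in particular the proof is purely formal and identical to the classical rigidification of a fiber of a fibered category.
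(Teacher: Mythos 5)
Your proof is correct and follows essentially the same route as the paper's: the paper likewise sends $(\rho,u)$ to $\rho_u\colon g\mapsto u\circ\rho(g)\circ u^{-1}$, observes this is a retraction of the obvious inclusion $\sigma\mapsto(\sigma,\Id_{V_0})$, and concludes it is an equivalence of categories. (Only minor caveat: what you get is an equivalence, not literally an isomorphism of categories, since $\Phi$ need not be injective on objects; but this is exactly the "identification" the lemma asserts.)
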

\begin{proof}
This set is considered as a category having only identity morphisms. The 
identification comes from the functor which sends the object $(\rho,u)$ 
to the representation $\rho_u: g \mapsto u \circ \rho(g) \circ u^{-1}$ and
every morphism to the corresponding identity morphism. This is a retraction
of the obvious inclusion, and an equivalence of categories.
\end{proof}

Now we return to our more concrete setting, with $G = \St \rtimes G_0$.
If $\rho_0: G_0 \rightarrow \GL(V)$ is fixed, to specify a representation
$\rho: G \rightarrow \GL(V_0)$ such that $\rho_{|G_0} = \rho_0$, we need 
only to give its restriction $\overline{\rho}$ to $\St$, and this is
subject to the necessary and sufficient condition:
$$
\forall s \in \St \;,\; \forall g \in G_0 \;,\;
\overline{\rho}(s^g) = (\overline{\rho}(s))^{\rho_0(g)}.
$$
Since $\St$ is connected and prounipotent, $\overline{\rho}$ is determined
by the corresponding representation of the Lie algebra $\st$. In the end,
we have proved:

\begin{prop}
The fiber $\overline{\Co}(M_0)$ can be identified with the set:
$$
\{\rho: \st \rightarrow \gl(V_0) \tq 
\forall D \in \st \;,\; \forall g \in G_0 \;,\; 
\rho(D^g) = (\rho(D))^{\rho_0(g)}\}.
$$
\end{prop}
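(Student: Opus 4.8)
The plan is to chain together the three identifications already established in this subsection, so that the final Proposition becomes essentially a bookkeeping statement. First I would recall that by the first Lemma, $\overline{\Co}$ is equivalent to $\Co$ and the fiber $\overline{\Co}(M_0)$ coincides with $\Co(M_0)$; by the second Lemma, $\overline{\Co}(M_0)$ is identified with the set of rational representations $\rho: G \to \GL(V_0)$ with $\rho_{|G_0} = \rho_0$ (where $\rho_0: G_0 \to \GL(V_0)$ is the representation attached to $M_0$). So the only thing left to prove is that \emph{this} set is in bijection with the set of Lie algebra representations $\rho: \st \to \gl(V_0)$ satisfying the stated equivariance condition $\rho(D^g) = (\rho(D))^{\rho_0(g)}$ for all $D \in \st$, $g \in G_0$.

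The key step is the dévissage $G = \St \rtimes G_0$. Given $\rho: G \to \GL(V_0)$ with prescribed restriction $\rho_0$ to $G_0$, the semidirect product structure means $\rho$ is completely determined by its restriction $\overline{\rho}$ to $\St$, and conversely a representation $\overline{\rho}: \St \to \GL(V_0)$ extends (uniquely) to such a $\rho$ if and only if the two pieces are compatible, i.e. $\overline{\rho}(g^{-1} s g) = \rho_0(g)^{-1} \overline{\rho}(s) \rho_0(g)$ for all $s \in \St$, $g \in G_0$ — in the notation of the text, $\overline{\rho}(s^g) = (\overline{\rho}(s))^{\rho_0(g)}$. This is just the universal property of the semidirect product applied to $\GL(V_0)$: a morphism out of $\St \rtimes G_0$ is the same datum as a pair of morphisms out of the factors that intertwine the conjugation actions. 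Then, since $\St$ is connected and prounipotent (recalled at the end of \ref{subsection:tannakianfiltration} and \ref{subsection:overallstructure}), the exponential map is an isomorphism of varieties and $\overline{\rho}$ corresponds bijectively to a representation of the pronilpotent Lie algebra $\st$ on $V_0$; under this correspondence the group-level intertwining condition translates termwise into the Lie-algebra-level condition $\rho(D^g) = (\rho(D))^{\rho_0(g)}$, because the $G_0$-conjugation action on $\St$ differentiates to the adjoint action $D \mapsto D^g$ on $\st$ (this is exactly the notation set up just before the Proposition) and conjugation by $\rho_0(g)$ in $\GL(V_0)$ differentiates to conjugation by $\rho_0(g)$ in $\gl(V_0)$.

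I expect no serious obstacle here; the proposition is a formal consequence of material already laid out, and the proof is a three-line invocation of: (i) the two preceding Lemmas to reduce $\overline{\Co}(M_0)$ to $\{\rho : G \to \GL(V_0),\ \rho_{|G_0} = \rho_0\}$; (ii) the semidirect decomposition $G = \St \rtimes G_0$ to replace $\rho$ by its restriction $\overline{\rho}$ to $\St$ subject to the equivariance constraint; (iii) connectedness and prounipotence of $\St$ to pass from $\overline{\rho}$ to the corresponding Lie algebra representation of $\st$, matching the group-level equivariance with the Lie-algebra-level equivariance. The one point deserving a word of care is functoriality of the passage $\St \leftrightarrow \st$ with respect to the $G_0$-action — i.e. that the isomorphism $\exp: \st \xrightarrow{\sim} \St$ is $G_0$-equivariant — but this is automatic since $\exp$ is canonical (defined by the pro-structure) and $G_0$ acts by automorphisms of the proalgebraic group $\St$, hence commutes with $\exp$. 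With these ingredients the identification is immediate, and that is the content of the proof.
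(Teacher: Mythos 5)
Your argument is exactly the paper's: reduce via the two preceding lemmas to representations $\rho: G \rightarrow \GL(V_0)$ with $\rho_{|G_0} = \rho_0$, use the decomposition $G = \St \rtimes G_0$ to replace $\rho$ by its restriction to $\St$ subject to the intertwining condition, and pass to $\st$ via connectedness and prounipotence of $\St$. Your extra remark on the $G_0$-equivariance of $\exp$ is a harmless elaboration of what the paper leaves implicit.
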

\hfill $\Box$


\subsection{Linking representations of $\st$ with 
$H^{1}(\Eq,\lambda_{I}(M_{0}))$}
\label{subsection:linkinganalyticclassificationtorepresentations}

The bijection of $H^{1}(\Eq,\lambda_{I}(M_{0}))$ with $\Repa(\stt)$
resulting from the two descriptions of $\F(M_0)$ (see sections
\ref{subsection:firstlookrepresentations}, \ref{subsection:resultsonclassif}
and \ref{subsection:linkingisoformalclassestorepresentations}) is obtained
as follows: for any matrix $A$ corresponding to a class in $\F(M_0)$,
first compute the privileged cocycle
$(S_{\overline{c},\overline{d}} \hat{F}_{A}) \in Z^{1}_{pr}(\U,\Lambda_{I}(M_{0}))$.
Write temporarily $h(A)$ its class in $H^{1}(\Eq,\lambda_{I}(M_{0}))$. 
On the other hand, write $D_{\alpha}$ the residue at 
$\beta = \alpha$ of the meromorphic function 
$\beta \mapsto \log S_{\overline{c_0},\beta} \hat{F}_{A}(z_0) \in \g$,
and $\Delta_{\alpha}^{(\delta,\overline{c})}(A)$ the components of 
$D_{\alpha}$ for $\alpha^{\delta} = \overline{c}$. Then call
$\rho(A) \in \Repa(\stt)$ the unique representation which sends the
``$q$-alien derivations'' $\Delta_{\alpha}^{(\delta,\overline{c})}$ to the
$\Delta_{\alpha}^{(\delta,\overline{c})}(A)$ and $\nu$ to $U$ (the block-diagonal
matrix of unipotent components of $A$). The bijection puts in correspondance
the class $h(A)$ with the representation $\rho(A)$. \\

Using $Z^{1}_{pr}(\U,(\Lambda_{I}/\Lambda_{I}^{\geq \delta+1})(M_{0}))$,
we get correspondingly a bijection of 
$H^{1}(\Eq,(\Lambda_{I}/\Lambda_{I}^{\geq \delta+1})(M_{0}))$ with
$\Repa(\stt^{\leq \delta})$. This gives a commutative diagram with
surjective horizontal arrows and bijective vertical arrows:
$$
\xymatrix{
H^{1}(\Eq,\Lambda_{I}(M_{0})) \ar@<0ex>[rr] \ar@<0ex>[d] & &
\Repa(\stt) \ar@<0ex>[d] \\
H^{1}(\Eq,(\Lambda_{I}/\Lambda_{I}^{\geq \delta+1})(M_{0})) \ar@<0ex>[rr] & &
\Repa(\stt^{\leq \delta})
}
$$

Just with this information, we shall now start to get structural
information about $\stt$. Let $c \in \C^*$, $\delta \in \N^*$ and
set $A_0 := \begin{pmatrix} 1 & 0 \\ 0 & c z^{\delta} \end{pmatrix}$.
Then:
$$
\g = \g^{(\delta,\overline{c})} = \begin{pmatrix} 0 & \C \\ 0 & 0 \end{pmatrix}.
$$
Since $\g$ is abelian, $\Repa(\stt)$ can be identified with the dual
space of $\left(\frac{\stt}{[\stt,\stt]}\right)^{(\delta,\overline{c})}$.
But since $H^{1}(\Eq,\Lambda_{I}(M_{0})) = H^{1}(\Eq,\F_{c z^{\delta}})$ 
has dimension $\delta$ \cite{RSZ,RS1}, we conclude:
$$
\dim_\C \left(\frac{\stt}{[\stt,\stt]}\right)^{(\delta,\overline{c})} = \delta.
$$
Applying theorem \ref{theo:generateursdetilde(st)}, we see that (the 
images of) the $\Delta_{\alpha}^{(\delta,\overline{c})}$ for arbitrary
$\alpha \in \Eq$ generate the complex vector space
$\left(\frac{\stt}{[\stt,\stt]}\right)^{(\delta,\overline{c})}$. However,
when computing the residues in the case of a matrix 
$A_u := \begin{pmatrix} 1 & u \\ 0 & c z^{\delta} \end{pmatrix}$, we only
find poles at points such that $\alpha^{\delta} = \overline{c}$. Thus, if
all the $\Delta_{\alpha}^{(\delta,\overline{c})}$ such that 
$\alpha^{\delta} = \overline{c}$ vanish on the class of $A_u$, this class
is trivial. By duality, this means that (the images of) those 
$\Delta_{\alpha}^{(\delta,\overline{c})}$ such that $\alpha^{\delta} = \overline{c}$
generate $\left(\frac{\stt}{[\stt,\stt]}\right)^{(\delta,\overline{c})}$.

\begin{rema}
This is in essence the content of \cite[theorem 4.9]{RS1} and represent
the basic step in the ``abelian (two slopes) case''. In \cite{RS2} we
attempted to obtain the general case by devissage of the filtration.
Here we achieve this devissage in the next two subsections.
\end{rema}

For each pair $(\delta,\overline{c}) \in \N^* \times \Eq$, 
we choose $\delta$ among the $\delta^2$ points $\alpha \in \Eq$ such that 
$\alpha^\delta = \overline{c}$ in such a way that the images of the
corresponding $\Delta_{\alpha}^{(\delta,\overline{c})}$ form a basis
of $\frac{\stt}{[\stt,\stt]}$. We write $\Delta_i^{(\delta,\overline{c})}$,
$1 \leq i \leq \delta$, the corresponding $q$-alien derivations. 
In accordance with the analogy explained in the introduction, we
see them as \emph{``pointed''} and from now on they will be denoted
$\Der_i^{(\delta,\overline{c})}$, and $\nu \in \st^{(0)}$ will be denoted
$\Der^{(0)}$.

\begin{prop}
The family of all $\Der_i^{(\delta,\overline{c})}$ together with 
$\Der^{(0)} := \nu$ topologically generate $\stt$.
\end{prop}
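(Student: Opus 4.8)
The statement is a topological Nakayama (or "Burnside-type") argument: in a pronilpotent Lie algebra, a family topologically generates if and only if its image spans (densely) the abelianization. So the plan is to reduce to this principle and then check the hypothesis using the dimension count obtained just above and the way the $\Der_i^{(\delta,\overline{c})}$ were chosen. Write $\mathfrak{h}$ for the closed Lie subalgebra of $\stt$ topologically generated by all the $\Der_i^{(\delta,\overline{c})}$ together with $\Der^{(0)} = \nu$; we must show $\mathfrak{h} = \stt$. First I would record that $\stt$ is pronilpotent, complete and Hausdorff: it is the inverse limit of the $\stt(M) := \C \log U \ltimes \st(M)$, each of which is a finite-dimensional nilpotent Lie algebra (a one-dimensional Lie algebra acting nilpotently — since $U$ is unipotent — on the finite-dimensional nilpotent $\st(M)$, so nilpotent by Engel), and a closed subalgebra of $\stt$ equals $\stt$ as soon as it surjects onto each $\stt(M)$.

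\textbf{The Nakayama lemma.} The key abstract input is: if $\mathfrak{g}$ is a complete Hausdorff Lie algebra which is an inverse limit of finite-dimensional nilpotent Lie algebras along surjections, and $S \subseteq \mathfrak{g}$ is a subset whose image in the topological abelianization $\mathfrak{g}/\overline{[\mathfrak{g},\mathfrak{g}]}$ spans a dense subspace, then the closed Lie subalgebra generated by $S$ is all of $\mathfrak{g}$. I would prove this by reducing, via completeness, to the case of a finite-dimensional nilpotent $\mathfrak{n} = \lim_{\leftarrow}$-component: there the condition reads $\mathfrak{h}_{\mathfrak n} + [\mathfrak n,\mathfrak n] = \mathfrak n$ (where $\mathfrak h_{\mathfrak n}$ is the subalgebra generated by the image of $S$), whence $[\mathfrak n,\mathfrak n] = [\mathfrak h_{\mathfrak n} + [\mathfrak n,\mathfrak n], \mathfrak h_{\mathfrak n} + [\mathfrak n,\mathfrak n]] \subseteq \mathfrak h_{\mathfrak n} + [\mathfrak n,[\mathfrak n,\mathfrak n]]$, so $\mathfrak n = \mathfrak h_{\mathfrak n} + C^{k}\mathfrak n$ for every $k$ by iteration, and $C^k\mathfrak n = 0$ for $k \gg 0$.

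\textbf{Checking the hypothesis.} Since the Lie bracket of $\stt$ is equivariant for the action of $G_{f,s}^{(0)} \times T_1^{(0)}$, the ideal $\overline{[\stt,\stt]}$ is closed and graded, so $\stt/\overline{[\stt,\stt]} = \widehat{\bigoplus}_{\chi}\,(\stt/\overline{[\stt,\stt]})^{(\chi)}$. In degree $0$ one has $\st^{(0)} = 0$, hence $[\stt,\stt] \subseteq \st$ and $\nu \notin \overline{[\stt,\stt]}$, so $(\stt/\overline{[\stt,\stt]})^{(0)} = \C\,\overline{\nu}$, spanned by the image of $\Der^{(0)}$. In degree $\chi = (\delta,\overline{c})$ with $\delta \geq 1$, the computation preceding the proposition (the rank-one two-slope module, using $\dim_\C H^1(\Eq,\F_{cz^\delta}) = \delta$) gives $\dim_\C (\stt/[\stt,\stt])^{(\delta,\overline{c})} = \delta$, and the $\Der_i^{(\delta,\overline{c})}$, $1 \le i \le \delta$, were chosen precisely so that their images form a basis of this space. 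Thus the images of $\{\Der_i^{(\delta,\overline{c})}\} \cup \{\Der^{(0)}\}$ span the algebraic direct sum $\bigoplus_\chi (\stt/\overline{[\stt,\stt]})^{(\chi)}$, which is dense in $\stt/\overline{[\stt,\stt]}$. Applying the lemma gives $\mathfrak h = \stt$. (Theorem \ref{theo:generateursdetilde(st)} is implicitly what guarantees there is nothing outside these graded pieces, but it is in fact not needed once one has the dimension count.)

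\textbf{Main obstacle.} The delicate point is entirely the passage from "$\mathfrak h + \overline{[\stt,\stt]} = \stt$" to "$\mathfrak h = \stt$", i.e. making the topological Nakayama lemma rigorous in the pro-infinite-dimensional setting: one must be careful that "closed subalgebra topologically generated by $S$" is compatible with passage to the finite nilpotent quotients $\stt(M)$ and that the density statement in the completed direct sum is the correct formulation. Everything else — gradedness of $\overline{[\stt,\stt]}$, the degree-$0$ computation, and the fact that the chosen $\Der_i^{(\delta,\overline{c})}$ realize a basis of each graded piece of the abelianization — is either formal or already established above.
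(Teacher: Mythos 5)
Your proof is correct, and it reaches the conclusion by a slightly different route than the paper. Both arguments have the same essential input (the images of $\nu$ and the chosen $\Der_i^{(\delta,\overline{c})}$ span each graded piece of the abelianization, via the dimension count $\dim_\C(\stt/[\stt,\stt])^{(\delta,\overline{c})} = \delta$), but they differ in how they descend from "generators modulo commutators" to "generators". The paper works with the $\N$-grading of $\stt$ and inducts on the degree $\delta$: writing $S$ for the subalgebra generated by the family, it shows $\stt^{(\delta)} = S^{(\delta)} + \sum_{i+j=\delta}[\stt^{(i)},\stt^{(j)}]$, absorbs the terms with $i,j<\delta$ by the induction hypothesis, and is then left with the degree-preserving term $[\C\nu,\stt^{(\delta)}]$, which it kills by iterating $\Phi(E):=[\C\nu,E]$ and invoking the topology (in effect, the nilpotency of $\ad\nu$ on each finite quotient). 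You instead bypass the grading induction entirely and invoke an abstract topological Nakayama lemma for inverse limits of finite-dimensional nilpotent Lie algebras, verifying it on each quotient $\stt(M)$ by the standard lower-central-series argument; the grading is then only used to check the density hypothesis in the abelianization. Your version is arguably cleaner and isolates the general principle; the paper's version is more hands-on and makes visible exactly where the $\nu$-bracket causes trouble. One small caveat on your final parenthesis: theorem \ref{theo:generateursdetilde(st)} is not quite dispensable, since the paper uses it to justify that the $\Delta_\alpha^{(\delta,\overline{c})}$ with $\alpha^\delta=\overline{c}$ span the $\delta$-dimensional space $(\stt/[\stt,\stt])^{(\delta,\overline{c})}$, i.e.\ that a basis of the required form can be chosen at all; but since you take that choice as given (as the statement of the proposition implicitly does), this does not affect the correctness of your argument.
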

\begin{proof}
Call $S$ the sub-Lie algebra generated by this family. It is naturally
$\N$-graded and one has, for all $\delta \in \N$:
$$
\stt^{(\delta)} = S^{(\delta)} + [\stt,\stt]^{(\delta)} =
S^{(\delta)} + \sum_{i+j = \delta}[\stt^{(i)},\stt^{(j)}].
$$
We shall prove inductively that $S^{(\delta)} = \stt^{(\delta)}$
for all $\delta \in \N$, which will imply the conclusion. For
$\delta = 0$, both sides are equal to $\C \nu$. Assuming it to
be true for all degrees $< \delta$, we calculate:
\begin{align*}
\stt^{(\delta)} &= S^{(\delta)} + \sum_{i+j = \delta}[\stt^{(i)},\stt^{(j)}] \\
&=  S^{(\delta)} + [\C \nu,\stt^{(\delta)}] +
\sum_{i+j = \delta \atop i,j < \delta}[\stt^{(i)},\stt^{(j)}] \\
&=  S^{(\delta)} + [\C \nu,\stt^{(\delta)}] +
\sum_{i+j = \delta \atop i,j < \delta}[S^{(i)},S^{(j)}] \\
&=  S^{(\delta)} + [\C \nu,\stt^{(\delta)}]
\end{align*}
since $[S^{(i)},S^{(j)}] \subset S^{(\delta)}$ when $i + j = \delta$.
Iterating, we get $\stt^{(\delta)} = S^{(\delta)} + \Phi^k(\stt^{(\delta)})$
for all $k$, where $\Phi(E) := [\C \nu,E]$. By the topological 
description of $\stt$ at the end of \ref{subsection:tannakianfiltration},
this ends the proof.
\end{proof}

We will show in \ref{subsection:freeingalienderivations} that this family 
is in some sense free.


\subsection{$q$-Gevrey devissage of the space of representations}

From the $q$-Gevrey d\'evissage of $H^{1}(\Eq,\Lambda_{I}(M_{0}))$
and of $Z_{pr}^{1}(\U,\Lambda_{I}(M_{0}))$, and from the identifications
with $\Repa(\stt)$, we get the following commutative diagram of
exact sequences (for concision, we do not indicate the dependency
on $M_0$):
$$
\xymatrix{
0 \ar@<0ex>[r] &
H^{1}(\Eq,\lambda_{I}^{(\delta)}) \ar@<0ex>[r] &
H^{1}(\Eq,(\Lambda_{I}/\Lambda_{I}^{\geq \delta+1})) \ar@<0ex>[r] &
H^{1}(\Eq,(\Lambda_{I}/\Lambda_{I}^{\geq \delta})) \ar@<0ex>[r] &
0 \\ 
0 \ar@<0ex>[r] &
Z_{pr}^{1}(\U,\lambda_{I}^{(\delta)}) 
\ar@<0ex>[r] \ar@<0ex>[u] \ar@<0ex>[d] &
Z_{pr}^{1}(\U,(\Lambda_{I}/\Lambda_{I}^{\geq \delta+1})) 
\ar@<0ex>[r] \ar@<0ex>[u] \ar@<0ex>[d] &
Z_{pr}^{1}(\U,(\Lambda_{I}/\Lambda_{I}^{\geq \delta})) 
\ar@<0ex>[r] \ar@<0ex>[u] \ar@<0ex>[d] &
0 \\ 
0 \ar@<0ex>[r] &
W^{(\delta)} \ar@<0ex>[r] & 
\Repa(\stt^{\leq \delta}) \ar@<0ex>[r] & 
\Repa(\stt^{\leq \delta-1}) \ar@<0ex>[r] & 
0 \\ 
}
$$
In the last line, being an exact sequence means that $W^{(\delta)}$
is a vector space acting on the middle term with quotient the
rightmost term. We shall now describe this space and this action.
For this, we recall the description given in \cite[prop. 3.3.9]{RS2}
of the fibers of the surjection from $\Repa(\stt^{\leq \delta})$ to
$\Repa(\stt^{\leq \delta-1})$ (modulo the change of notation from
$\st(\delta)$ there to $\stt^{\leq \delta}$ here). Let $B$ an element
of $\Co^{\delta-1}$ in Birkhoff-Guenther normal form having graded part $A_0$.
Two elements $A,A'$ of $\Co^\delta$ lifting $B$ are related by a unique
formal gauge transform $\hat{F}_{A,A'} \in \G(\Kf)$. This matrix has
null blocks for $0 < \mu_j - \mu_i < \delta$; the blocks corresponding
to levels $\mu_j - \mu_i > \delta$ are irrelevant; and we call
$\hat{f}_{A,A'}$ the part of $\hat{F}_{A,A'}$ corresponding to level
$\mu_j - \mu_i = \delta$. The family of all the
$S_{\overline{c},\overline{d}}\hat{f}_{A,A'}$ is a cocycle in
$Z_{pr}^{1}(\U,\lambda_{I}^{(\delta)})$. Then:
$$
\Delta_{\alpha}^{(\delta)}(A,A') :=
\Delta_{\alpha}^{(\delta)}(A') - \Delta_{\alpha}^{(\delta)}(A) =
Res_{\beta = \alpha} 
S_{\overline{c_0},\beta}\hat{f}_{A,A'}(z_0)
$$
Moreover, all families $(\Delta_{\alpha}^{(\delta)})$ arising this
way correspond to a difference $\Delta(A') - \Delta(A)$. We thus define:
$$
W^{(\delta)} := 
\Bigl\{\bigl(
\Res_{\beta = \alpha} S_{\overline{c_0},\beta} f(z_0)\bigr)_{\alpha}^{(\delta,\overline{c})} 
\tq f \in Z_{pr}^{1}(\U,\lambda_{I}^{(\delta)}) \Bigr\}.
$$
If we encode a representation by the family of all the 
$\Der_i^{(\delta,\overline{c})}(A)$, we see that we do get
an action of $W^{(\delta)}$ on $\Repa(\stt^{\leq \delta})$ with quotient
$\Repa(\stt^{\leq \delta-1})$.


\subsection{Freeing the alien derivations}
\label{subsection:freeingalienderivations}

\begin{defi}
Let $\Der^{(0)}$ be a symbol corresponding to to the element $\nu$
introduced in section \ref{subsection:firstlookrepresentations}. 
For each $(\delta,\overline{c}) \in \N^* \times \Eq$ and 
$i = 1,\ldots,\delta$, let $\Der_i^{(\delta,\overline{c})}$ be a symbol 
corresponding to the actual alien derivation with the same notation.
We call $L$ the free Lie algebra generated by $\Der^{(0)}$ and all
the $\Der_i^{(\delta,\overline{c})}$. We graduate it by
the semi-group $\{0\} \cup \N^* \times \Eq$ by taking $\deg \Der^{(0)} = 0$
and $\deg \Der_i^{(\delta,\overline{c})} = (\delta,\overline{c})$.
We also endow $L$ with the following action of $T_1^{(0)} \times G_{f,1}^{(0)}$:
\begin{align*}
(\Der_i^{(\delta,\overline{c})})^{(t,\gamma)} &:= 
t^\delta \gamma(\overline{c}) \Der_i^{(\delta,\overline{c})}, \\
(\Der^{(0)})^{(t,\gamma)} &:= \Der^{(0)}.
\end{align*}
We write $\Repa(L)$ the set of representations from $L$ to $\g$
compatible with this action (and similarly for all stable sub-Lie
algebras of $L$). 
\end{defi}

Write $L^{\leq \delta}$ the sub-Lie algebra generated by $\Der^{(0)}$ and
all the $\Der_i^{(\delta',\overline{c})}$, $\delta' \leq \delta$;
and $L^{> \delta}$ the ideal generated by
all the $\Der_i^{(\delta',\overline{c})}$, $\delta' > \delta$.
After \cite[chap. 2, \S 2, no 9, prop. 10]{Blie}, $L^{\leq \delta}$ is free
with basis the stated system of generators, while $L^{> \delta}$ is free
with basis the family of all 
$(ad D_1) \cdots (ad D_k) \Der_i^{(\delta'',\overline{c})}$
where the $D_i$ are $\Der_i^{(\delta',\overline{c})}$ with
$\delta' \leq \delta$ and where $\delta'' > \delta$.
Moreover, $L = L^{\leq \delta} \oplus L^{> \delta}$. Likewise, we have
$L^{\leq \delta} = L^{\leq \delta-1} \oplus L^{(\delta)}$, where $L^{(\delta)}$ is
the ideal generated by all the $\Der_i^{(\delta,\overline{c})}$. \\

From the obvious dominant morphisms of Lie algebras $L \rightarrow \stt$
and $L^{\leq \delta} \rightarrow \stt^{\leq \delta}$, one deduces a commutative
diagram with surjective horizontal maps and injective vertical maps:
$$
\xymatrix{
\Repa(\stt) \ar@<0ex>[r] \ar@<0ex>[d] & \Repa(\stt^{\leq \delta}) \ar@<0ex>[d] \\
\Repa(L) \ar@<0ex>[r] & \Repa(L^{\leq \delta}) 
}
$$
On the other hand, we have identifications:
\begin{align*}
U := \Repa(L) & \approx 
\bigoplus_{(\delta,\overline{c}) \in \N^* \times \Eq} 
\left(\g^{(\delta,\overline{c})}\right)^\delta \\
U^{\leq \delta} := \Repa(L^{\leq \delta}) & \approx 
\bigoplus_{(\delta',\overline{c}) \in \N^* \times \Eq \atop 1 \leq \delta' \leq \delta} 
\left(\g^{(\delta',\overline{c})}\right)^{\delta'} \\
U^{(\delta)} := \Repa(L^{(\delta)}) & \approx 
\bigoplus_{\overline{c} \in \Eq} 
\left(\g^{(\delta,\overline{c})}\right)^\delta.
\end{align*}
Indeed, the value of the generator $\nu$ is imposed since we consider
representations in $\Repa$, \ie\ relative to the fixed $A_0$. Then,
we can enrich as follows the previous diagram of exact sequences:
$$
\xymatrix{
0 \ar@<0ex>[r] &
H^{1}(\Eq,\lambda_{I}^{(\delta)}) \ar@<0ex>[r] &
H^{1}(\Eq,(\Lambda_{I}/\Lambda_{I}^{\geq \delta+1})) \ar@<0ex>[r] &
H^{1}(\Eq,(\Lambda_{I}/\Lambda_{I}^{\geq \delta})) \ar@<0ex>[r] &
0 \\ 
0 \ar@<0ex>[r] &
Z_{pr}^{1}(\U,\lambda_{I}^{(\delta)}) 
\ar@<0ex>[r] \ar@<0ex>[u] \ar@<0ex>[d] &
Z_{pr}^{1}(\U,(\Lambda_{I}/\Lambda_{I}^{\geq \delta+1})) 
\ar@<0ex>[r] \ar@<0ex>[u] \ar@<0ex>[d] &
Z_{pr}^{1}(\U,(\Lambda_{I}/\Lambda_{I}^{\geq \delta})) 
\ar@<0ex>[r] \ar@<0ex>[u] \ar@<0ex>[d] &
0 \\ 
0 \ar@<0ex>[r] &
W^{(\delta)} \ar@<0ex>[r] \ar@<0ex>[d] & 
\Repa(\stt^{\leq \delta}) \ar@<0ex>[r]  \ar@<0ex>[d] & 
\Repa(\stt^{\leq \delta-1}) \ar@<0ex>[r]  \ar@<0ex>[d] & 
0 \\ 
0 \ar@<0ex>[r] &
U^{(\delta)} \ar@<0ex>[r] & 
\Repa(L^{\leq \delta}) \ar@<0ex>[r] & 
\Repa(L^{\leq \delta-1}) \ar@<0ex>[r] & 
0 \\ 
}
$$
The new vertical arrows are \emph{a priori} injections.

\begin{theo}[Freeness theorem]
\label{theo:freenesstheorem}
The map $\Repa(\stt) \rightarrow \Repa(L)$ is bijective.
\end{theo}
\begin{proof}
By induction, using the two last lines of the above diagram, it is enough 
to show that the leftmost vertical arrow is bijective. But it is linear and 
it sends injectively each $W^{(\delta,\overline{c})}$ to $U^{(\delta,\overline{c})}$,
which has the same dimension $\delta$.
\end{proof}


\subsection{First step in direction of the inverse problem}
\label{subsection:firststelinverseproblem}

Recall from section \ref {subsection:overallstructure} the description
of the pure (or formal) Galois group with integral slopes:
$$
G_{p,1}^{(0)} = \Gal(\EE_{p,1}^{(0)}) = 
\C^{*} \times \Hom_{gr}(\C^{*}/q^{\Z},\C^{*}) \times \C.
$$
In section \ref{subsection:firstlookrepresentations}, we took off its
unipotent component $\C$ and glued it with the Stokes group, so we now
introduce its semi-simple component:
$$
G_{p,1,s}^{(0)} := \C^{*} \times \Hom_{gr}(\C^{*}/q^{\Z},\C^{*}).
$$
It acts as follows on the free Lie algebra $L$: the action on $\C \nu$ 
is trivial; for each $(\delta,\overline{c}) \in \N^* \times \Eq$, the 
action on the component $L^{(\delta,\overline{c})}$ is multiplication by
$t^\delta \gamma(\overline{c})$.

\begin{defi}
The \emph{wild fundamental group} of $\EE_{1}^{(0)}$ is the semi-direct
product $L \rtimes G_{p,1,s}^{(0)}$. A \emph{representation} of the wild
fundamental group is the data of a rational linear representation of 
$G_{p,1,s}^{(0)}$ together with a representation of $L$, required to be
compatible with the corresponding adjoint actions.
\end{defi}

The Tannakian category $Rep\, (L \rtimes G^{(0)}_{p,1,s})$ is by definition 
built with such representations.

Let $\rho: L \rtimes G^{(0)}_{p,1,s} \rightarrow \GL(V)$ be a representation 
of the wild fundamental group in the sense of the above definition. It it 
is easy to check that the restriction $d\rho_2: L \rightarrow  \End(V)$ is 
nilpotent and vanishes on every alien derivation but perhaps a finite number. 
Therefore $d\rho_2$ factors by $L^{\dag}$, $L^{\dag}$ being the f-pronilpotent 
completion of the free Lie algebra $L$ (\cf\ the appendix 
\ref{section:pronil}), which is a pronilpotent  proalgebraic Lie algebra. 

\begin{rema}
The natural morphism $L\rightarrow \tilde{\mathfrak{st}}$ factors into 
$L \rightarrow L^{\dag} \rightarrow  \tilde{\mathfrak{st}}$, the first 
morphism being injective and dominant (\ie\ its image is dense)\footnote{We 
are going to prove below that the second morphism is an isomorphism}. 
We deduce morphisms:
\[
L\rtimes G^{(0)}_{p,1,s} \rightarrow L^{\dag} \rtimes G^{(0)}_{p,1,s} \rightarrow
\tilde{\mathfrak{st}}\,\rtimes G^{(0)}_{p,1,s},
\]
and then functors:
\[
\Rep (\tilde{\mathfrak{st}}\,\rtimes G^{(0)}_{p,1,s})\rightarrow 
\Rep (L^{\dag}\rtimes G^{(0)}_{p,1,s})\rightarrow \Rep(L\rtimes G^{(0)}_{p,1,s}).
\]
It follows from theorem \ref{theo:freenesstheorem} that these are 
equivalences of categories, therefore: 
\[
L^{\dag}\rtimes G^{(0)}_{p,1,s} \rightarrow
\tilde{\mathfrak{st}}\,\rtimes G^{(0)}_{p,1,s}
\]
is an isomorphism in the obvious proalgebraic sense (proposition 
\ref{prop:calcul}) and $L^{\dag} \rightarrow \tilde{\mathfrak{st}}$ is an 
isomorphism of pronilpotent  proalgebraic Lie algebras.

Finally we get an isomorphism of proalgebraic groups:
\[
\exp(L^{\dag})\rtimes G^{(0)}_{p,1,s}\rightarrow 
\exp(\tilde{\mathfrak{st}})\,\rtimes G^{(0)}_{p,1,s} =
\mathfrak{St}\, \rtimes G^{(0)}_{p,1}=G^{(0)}_1.
\]
This is an ``explicit description" of the tannakian group $G^{(0)}_1$.
\end{rema}

To summarize, we have proved:

\begin{theo}
\label{theo:bigth1}
There is a natural bijection between representations of the wild fundamental
group of $\EE_{1}^{(0)}$ and isomorphism classes of objects of $\EE_{1}^{(0)}$. 
All the Galois groups of such objects are images of such representations.
\end{theo}
\hfill $\Box$



\section{Structure of the global Galois group}

We consider here the Galois theory of equations with matrix in $\GL_n(\C(z))$.
We shall not develop the theory in such general terms as we did in the previous
sections, but just enough to be able to apply it to the inverse problem.

\subsection{The global fuchsian Galois group}

We recall here results from \cite{JSGAL}, mostly its subsection 3.2. 
Unhappily, some of the results that we need are not completely proven 
there: details can be found in the thesis ``Th\'eorie de Galois des 
\'equations aux q-diff\'erences fuchsiennes'', available at 
\verb?http://www.math.univ-toulouse.fr/~sauloy/PAPIERS/these.pdf?. 
We shall slightly adapt the notations of \emph{loc. cit.} so that they 
extend more easily to our case of interest in the next section. \\

Let $\EE_f$ be the category with objects the matrices $A \in \GL_n(\C(z))$
which are fuchsian\footnote{All definitions and constructions given at $0$
can be applied at $\infty$ by using the coordinate $w := 1/z$.} at $0$ and 
at $\infty$, and with morphisms $F: A \rightarrow B$ the matrices
$F \in \Mat_{p,n}(\C(z))$ such that $(\sq F) A = B F$. It is endowed with
a natural tensor structure\footnote{The conventions used to obtain a
matrix (and not a quadritensor) as the result of tensoring two matrices are 
detailed in \emph{loc. cit.}.} which makes it a neutral tannakian category.
Each object $A$ of $\EE_f$ can be written, non canonically:
$$
A = M^{(0)}[A^{(0)}] = M^{(\infty)}[A^{(\infty)}], 
$$
where:
$$
A^{(0)}, A^{(\infty)} \in \GL_n(\C) \text{~and~}
M^{(0)} \in \GL_n(\C(\{z\})), M^{(\infty)} \in \GL_n(\C(\{w\})).
$$
For the constant matrix $C \in \GL_n(\C)$, one builds a canonical
fundamental solution of $\sq X = C X$ in the following way. First,
special functions are built from theta functions that satisfy the
following elementary equations: $\sq l_q = l_q + 1$; and, for all
$c \in \C^*$: $\sq e_c = c e_c$. All these functions are meromorphic
over $\C^*$; moreover, we have $e_1 = 1$ and $e_{qc} = z e_c$. Then, 
from the Jordan decomposition $C = C_s C_u$, where 
$C_s = P \Diag(c_1,\ldots,c_n) P^{-1}$, one draws $e_{C_u} := C_u^{l_q}$ 
and $e_{C_s} := P \Diag(e_{c_1},\ldots,e_{c_n}) P^{-1}$. Last,
$e_C := e_{C_s} e_{C_u}$. Thus $\sq X = A X$ admits the following non 
canonical fundamental solutions:
$$
\X^{(0)} := M^{(0)} e_{A^{(0)}} \text{~and~} \X^{(\infty)} := M^{(\infty)} e_{A^{(\infty)}}.
$$
The \emph{Birkhoff connection matrix} is then defined as:
$$
P := (\X^{(\infty)})^{-1} \X^{(0)} \in \GL_n(\M(\Eq)).
$$
In order to give it a functorial and even galoisian meaning, we record 
two basic facts. First \cite[lemma 1.2.4.1, p. 935]{JSGAL}, if $F^{(0)}$
is a meromorphic (at $0$) morphism from $A^{(0)}$ to $B^{(0)}$, then of
course $F^{(0)} e_{A^{(0)}} = e_{B^{(0)}} R^{(0)}$ where $R^{(0)}$ is elliptic.
But more is true: from the special form of our solutions, one can deduce
that $R^{(0)} \in \Mat_{p,n}(\C)$. (Similarly at $\infty$.) This is used
in the context of the following commutative diagram:
$$
\xymatrix{
I_n \ar@<0ex>[r]^{e_{A^{(\infty)}}} \ar@<0ex>[d]^{R^{(\infty)}} &
A^{(\infty)} \ar@<0ex>[r]^{M^{(\infty)}} \ar@<0ex>[d]^{F^{(\infty)}} &
A \ar@<0ex>[d]^{F} &
A^{(0)} \ar@<0ex>[l]_{M^{(0)}} \ar@<0ex>[d]^{F^{(0)}} &
I_n \ar@<0ex>[l]_{e_{A^{(0)}}} \ar@<0ex>[d]^{R^{(0)}} \\
I_p \ar@<0ex>[r]^{e_{B^{(\infty)}}} &
B^{(\infty)} \ar@<0ex>[r]^{N^{(\infty)}} &
B &
B^{(0)} \ar@<0ex>[l]_{N^{(0)}} &
I_p \ar@<0ex>[l]_{e_{B^{(0)}}} 
}
$$
One can start from $F$ and complete it outwards, or start from $R^{(0)}$
and $R^{(\infty)}$ and complete it inwards. \\

As for the tensor properties, the basic fact is that it is impossible to
choose the family of functions $e_c$ so that $e_c e_d = e_{cd}$. Thus we
are led to introduce the cocycle of elliptic functions 
$\phi(c,d) := \dfrac{e_c e_d}{e_{cd}}$ and to extend it to matrices (through
their eigenvalues) so as to have the formula:
$$
e_{C_1} \otimes e_{C_2} = e_{C_1 \otimes C_2} \Phi(C_1,C_2).
$$
Note that for unipotent matrices there is no twisting since $e_1 = 1$.

\paragraph{The tensor category $\mathcal{C}_f$ of connection data}
\label{paragraph:descriptionCf}

Its objects are triples $(A^{(0)},P,A^{(\infty)})$, where 
$A^{(0)}, A^{(\infty)} \in \GL_n(\C)$ and $P \in \GL_n(\M(\Eq))$.
Morphisms from $(A^{(0)},P,A^{(\infty)})$ to $(B^{(0)},Q,B^{(\infty)})$
are pairs $(R^{(0)},R^{(\infty)}) \in \Mat_{p,n}(\C)^2$ such that:
\begin{align*}
R^{(\infty)} P & = Q R^{(0)}, \\
F^{(0)} := e_{B^{(0)}} R^{(0)} (e_{A^{(0)}})^{-1}
& \text{~is meromorphic at~} 0 \\
F^{(\infty)} := e_{B^{(\infty)}} R^{(\infty)} (e_{A^{(\infty)}})^{-1}
& \text{~is meromorphic at~} \infty.
\end{align*}
In \emph{loc. cit.} an explicit condition is given ensuring these
meromorphies, but we shall not need it. (It is used to guarantee
that the following constructions do work.) \\

Now the tensor product has to be twisted in order to get the theorem
we need. For morphisms, and for the left and right components of
objects, we use the usual tensor product. For the middle component,
we shall use the twisted tensor product, defined as follows:
$$
(A_1^{(0)},P_1,A_1^{(\infty)}) \otimes (A_2^{(0)},P_2,A_2^{(\infty)}) :=
(A_1^{(0)} \otimes A_2^{(0)},
P_1 \underline{\otimes} P_2,
A_1^{(\infty)} \otimes A_2^{(\infty)}),
$$
where:
$$
P_1 \underline{\otimes} P_2 := 
\Phi(A_1^{(\infty)},A_2^{(\infty)}) 
(P_1 \otimes P_2) 
\bigl(\Phi(A_1^{(0)},A_2^{(0)})\bigr)^{-1}.
$$

\begin{theo}
The tensor categories $\EE_f$ and $\mathcal{C}_f$ are equivalent.
\end{theo}
\begin{proof}
Because of the non canonical choice, one does not define a functor from
one of these categories to the other. Instead, one defines yet another
category $\mathcal{S}_f$ with objects $(A^{(0)},M^{(0)},A^{(\infty)},M^{(\infty)})$
and with morphisms $(R^{(0)},R^{(\infty)})$, all being subject to adequate 
conditions. The tensor structure on $\mathcal{S}_f$ is the natural one.
Then functors from $\mathcal{S}_f$ to $\EE_f$ and $\mathcal{C}_f$ are easily 
defined. Note that the proof of the essential surjectivity of the second 
functor is essentially due to Birkhoff (it rests on his theorem of 
factorisation of analytic matrices).
\end{proof}

\paragraph{The Galois group of $\EE_f$ and $\mathcal{C}_f$}

From the description of $\mathcal{C}_f$, it is clear how to define
fiber functors $\omega_f^{(0)}$ and $\omega_f^{(\infty)}$ on it. These
extend to the local categories obtained by keeping only the $0$ or
$\infty$ component, and by allowing meromorphic morphisms. One thus
obtains the local Galois groups $G_f^{(0)}$ and $G_f^{(\infty)}$ that
were described in section \ref{subsection:overallstructure}. We
want to use $P$ to connect them. More precisely, we should like
each value $P(a) \in \GL_n(\C)$ to behave like a ``connection formula''
in Riemann-Hilbert correspondance, and so be a galoisian isomorphism 
from $\omega^{(0)}(A)$ to $\omega^{(\infty)}(A)$. This does not work
because the formation of the Birkhoff matrix is not $\otimes$-compatible:
that is, $(P_1 \underline{\otimes} P_2)(a) \neq P_1(a) \otimes P_2(a)$. 
We shall therefore twist $P$ in order to obtain tensor-compatibility
and also functoriality. This is done as follows. \\

One can define explicitly a family of (abstract) group morphisms $g_a$ 
from $\C^*$ to itself such that\footnote{In \cite[3.2.2.2]{JSGAL}, the
stated condition is $g_a(q) =1$, but it is a typographical error.}
$g_a(q) = a$ for all $a \in \C^*$.
Then we set $\psi_a(c) := \dfrac{e_c(a)}{g_a(c)}$ and we extend each
function $\psi_a$ to a function $\Psi_a$ on matrices, through their
eigenvalues. Last, we define:
$$
\check{P}(a) := \bigl(\Psi_a(A^{(\infty)}\bigr)^{-1} P(a) \Psi_a(A^{(0)}),
$$
and can prove that, for each $a \in \C^*$, one has an isomorphism
of fiber functors $(A^{(0)},P,A^{(\infty)}) \leadsto \check{P}(a)$
from $\omega_f^{(0)}$ to $\omega_f^{(\infty)}$. Since $\check{P}(a)$ is 
not defined for all $a$, this actually applies to a smaller category 
than $\EE_f$, but any given object belongs to ``most'' of these
subcategories.

\begin{theo}
The group generated by $G_f^{(0)}$, one particular conjugate 
$\bigl(\check{P}(a)\bigr)^{-1} G_f^{(\infty)} \check{P}(a)$ and the set 
of all defined values $\bigl(\check{P}(b)\bigr)^{-1} \check{P}(a)$ is 
Zariski-dense in the global Galois group of $A$.
\end{theo}

The proof uses Chevalley criterion: any line in any tensor construction
that is fixed by the smaller group is fixed by the bigger one. It rests
on the following useful fact: if $x$ is an eigenvector for $G_f^{(0)}$,
then it is an eigenvector for $\Psi_a(A^{(0)})$. We shall sketch the proof
in our case of interest in the next subsection.

\subsection{The global Galois group with integral slopes}

We now extend the results above to the case of irregular equations with
integral slopes. As the extension involves no new idea, our presentation
will be concise. The category $\EE_1$ of interest has as objects systems
with matrix $A \in \GL_n(\C(z))$ such that their slopes at $0$ and at
$\infty$ are integral; and as morphisms $A \rightarrow B$ matrices
$F \in \Mat_{p,n}(\C(z))$ such that $(\sq F) A = B F$. The tensor product
is the natural one and makes it a neutral tannakian category. Each object 
$A$ of $\EE_1$ can be written, non canonically:
$$
A = M^{(0)}[A^{(0)}] = M^{(\infty)}[A^{(\infty)}], 
$$
where $M^{(0)} \in \GL_n(\C(\{z\}))$, $M^{(\infty)} \in \GL_n(\C(\{w\}))$
and $A^{(0)}$, $A^{(\infty)}$ are in Birkhoff-Guenther normal form. \\

To define solutions, we choose once and for all a function $\theta$ such
that $\sq \theta = z \theta$ and an arbitrary direction of summation in
$\Eq$. Because of this, the following constructions are only valid on a
subcategory of $\EE_1$, but each particular object of $\EE_1$ belongs to
``most'' of these subcategories. We shall call $A_p^{(0)}$, $A_p^{(\infty)}$
the pure systems associated to $A^{(0)}$, $A^{(\infty)}$ by the $\gr$ functor
(hence there block-diagonal parts). Let $S^{(0)}$ the meromorphic isomorphism 
from $A_p^{(0)}$ to $A^{(0)}$ obtained by summation along the selected 
direction mentioned above; and similarly at infinity. Then, calling
$\mu_1,\ldots,\mu_k$ the slopes of $A^{(0)}$ and $r_1,\ldots,r_k$ their
multiplicities, let 
$\Gamma^{(0)} := \Diag(\theta^{\mu_1} I_{r_1},\ldots,\theta^{\mu_k} I_{r_k})$.
We have $A_p^{(0)} = \Gamma^{(0)}[A_f^{(0)}]$ with $A_f^{(0)} \in \GL_n(\C)$.
In the end, using the similar notations at infinity, we put:
$$
e_{A^{(0)}} := S^{(0)} \Gamma^{(0)} e_{A_f^{(0)}} \text{~and~}
e_{A^{(\infty)}} := S^{(\infty)} \Gamma^{(\infty)} e_{A_f^{(\infty)}}.
$$
Thus $\sq X = A X$ admits the following non canonical fundamental solutions:
$$
\X^{(0)} := M^{(0)} e_{A^{(0)}} \text{~and~} \X^{(\infty)} := M^{(\infty)} e_{A^{(\infty)}}.
$$
The \emph{Birkhoff connection matrix} is then defined as:
$$
P := (\X^{(\infty)})^{-1} \X^{(0)} \in \GL_n(\M(\Eq)).
$$
Its tensor behaviour is exactly similar to that observed in the fuchsian case
and we shall set, in appropriate context:
$$
(A_1^{(0)},P_1,A_1^{(\infty)}) \otimes (A_2^{(0)},P_2,A_2^{(\infty)}) :=
(A_1^{(0)} \otimes A_2^{(0)},
P_1 \underline{\otimes} P_2,
A_1^{(\infty)} \otimes A_2^{(\infty)}),
$$
where:
$$
P_1 \underline{\otimes} P_2 := 
\Phi((A_1)_f^{(\infty)},(A_2)_f^{(\infty)}) 
(P_1 \otimes P_2) 
\bigl(\Phi((A_1)_f^{(0)},(A_2)_f^{(0)})\bigr)^{-1}.
$$
The functorial behaviour requires some more comments. Let $B$ an object
of rank $p$ in $\EE_1$ and $B^{(0)}$, $N^{(0)}$, $B_p^{(0)}$, $T^{(0)}$, 
$B_f^{(0)}$, $\Delta^{(0)}$, $\Y^{(0)}$, $\Y^{(\infty)}$, and $Q$ the associated 
data corresponding respectively to $A^{(0)}$, $M^{(0)}$, $A_p^{(0)}$, $S^{(0)}$, 
$A_f^{(0)}$, $\Gamma^{(0)}$, $\X^{(0)}$, $\X^{(\infty)}$ and $P$. Let $F$ be a 
morphism from $A$ to $B$. Then we have a commutative diagram:
$$
\xymatrix{
I_n \ar@<0ex>[r]^{e_{A_f^{(\infty)}}} \ar@<0ex>[d]^{R^{(\infty)}} &
A_f^{(\infty)} \ar@<0ex>[r]^{\Gamma^{(\infty)}} \ar@<0ex>[d]^{F_f^{(\infty)}} &
A_p^{(\infty)} \ar@<0ex>[r]^{S^{(\infty)}} \ar@<0ex>[d]^{F_p^{(\infty)}} &
A^{(\infty)} \ar@<0ex>[r]^{M^{(\infty)}} \ar@<0ex>[d]^{F^{(\infty)}} &
A \ar@<0ex>[d]^{F} &
A^{(0)} \ar@<0ex>[l]_{M^{(0)}} \ar@<0ex>[d]^{F^{(0)}} &
A_p^{(0)} \ar@<0ex>[l]_{S^{(0)}} \ar@<0ex>[d]^{F_p^{(0)}} &
A_f^{(0)} \ar@<0ex>[l]_{\Gamma^{(0)}} \ar@<0ex>[d]^{F_f^{(0)}} &
I_n \ar@<0ex>[l]_{e_{A_f^{(0)}}} \ar@<0ex>[d]^{R^{(0)}} \\
I_p \ar@<0ex>[r]_{e_{B_f^{(\infty)}}} &
B_f^{(\infty)} \ar@<0ex>[r]_{\Delta^{(\infty)}} &
B_p^{(\infty)} \ar@<0ex>[r]_{T^{(\infty)}} &
B^{(\infty)} \ar@<0ex>[r]_{N^{(\infty)}} &
B &
B^{(0)} \ar@<0ex>[l]^{N^{(0)}} &
B_p^{(0)} \ar@<0ex>[l]^{T^{(0)}} &
B_f^{(0)} \ar@<0ex>[l]^{\Delta^{(0)}} &
I_p \ar@<0ex>[l]^{e_{B_f^{(0)}}} 
}
$$
Of course, all vertical arrows can be defined from $F$. For instance,
$F^{(0)} := (N^{(0)})^{-1} \circ F \circ M^{(0)} \in \Mat_{p,n}(\C(\{z\}))$
is a morphism in $\EE_1^{(0)}$, and similarly at $\infty$. Then one can
see that $F_p^{(0)} := (T^{(0)})^{-1} \circ F^{(0)} \circ S^{(0)}$ is actually
$\gr F^{(0)}$ (and similarly at $\infty$); and, from the block-diagonal
structures of the involved matrices, one can see that
$F_f^{(0)} := (\Delta^{(0)})^{-1} \circ F_p^{(0)} \circ \Gamma^{(0)}$ is
actually equal to $F_p^{(0)} = \gr F^{(0)}$, the block-diagonal of $F^{(0)}$.
Then, from the lemma already quoted \cite[lemma 1.2.4.1, p. 935]{JSGAL},
we see that 
$R^{(0)} := (e_{B_f^{(0)}})^{-1} \circ F_f^{(0)} \circ e_{A_f^{(0)}} \in \Mat_{p,n}(\C)$
and similarly at $\infty$. \\

Conversely, if we are given the two lines and the most external vertical
arrows $R^{(0)}$, $R^{(\infty)}$, the condition to be able to go inwards
and fill in the other vertical arrows to get a commutative diagram is
that $Q R^{(0)} = R^{(\infty)} P$. The condition to get a rational $F$ is
that $F^{(0)} \in \Mat_{p,n}(\C(\{z\}))$ and similarly at $\infty$. Indeed,
from the functional equation $\sq F^{(0)} = B^{(0)} F^{(0)} (A^{(0)})^{-1}$
and the fact that $A^{(0)},B^{(0)}$ are in Birkhoff-Guenther normal form,
one deduces that $F^{(0)}$ is meromorphic on $\C$, and similarly at $\infty$,
so that $F$ is actually meromorphic on the Riemann sphere, thus rational.

\paragraph{The tensor category $\mathcal{C}_1$ of connection data}

Its objects are triples $(A^{(0)},P,A^{(\infty)})$, where 
$A^{(0)}, A^{(\infty)} \in \GL_n(\C(\{z\}))$ are in Birkhoff-Guenther
normal form and $P \in \GL_n(\M(\Eq))$.
Morphisms from $(A^{(0)},P,A^{(\infty)})$ to $(B^{(0)},Q,B^{(\infty)})$
are pairs $(R^{(0)},R^{(\infty)}) \in \Mat_{p,n}(\C)^2$ such that:
\begin{align*}
R^{(\infty)} P & = Q R^{(0)}, \\
F^{(0)} := e_{B^{(0)}} R^{(0)} (e_{A^{(0)}})^{-1}
& \text{~is meromorphic at~} 0 \\
F^{(\infty)} := e_{B^{(\infty)}} R^{(\infty)} (e_{A^{(\infty)}})^{-1}
& \text{~is meromorphic at~} \infty.
\end{align*}

\begin{rema}
We saw in \ref{paragraph:descriptionCf} that there was an explicit condition 
(although we did not state it) on $R^{(0)}$ for $F_p^{(0)} = F_f^{(0)}$ to be 
meromorphic at $0$. Here, we must add a new condition to ensure that $F^{(0)}$ 
is also meromorphic at $0$. This condition is obviously related to the 
summations $S^{(0)}$ and $T^{(0)}$. We have not so far an explicit criterion, 
but it could be related to the way $F_p^{(0)}$ links the classifying cohomology 
class in $H^1(\Eq,\F_{A_p^{(0)}})$ corresponding to $A^{(0)}$ to the classifying 
cohomology class in $H^1(\Eq,\F_{B_p^{(0)}})$ corresponding to $B^{(0)}$.
\end{rema}

The tensor structure is defined as follows. For morphisms, and for the 
left and right components of objects, we use the usual tensor product. 
For the middle component, we shall use the twisted tensor product, defined 
as follows:
$$
(A_1^{(0)},P_1,A_1^{(\infty)}) \otimes (A_2^{(0)},P_2,A_2^{(\infty)}) :=
(A_1^{(0)} \otimes A_2^{(0)},
P_1 \underline{\otimes} P_2,
A_1^{(\infty)} \otimes A_2^{(\infty)}),
$$
where:
$$
P_1 \underline{\otimes} P_2 := 
\Phi(A_1^{(\infty)},A_2^{(\infty)}) 
(P_1 \otimes P_2) 
\bigl(\Phi(A_1^{(0)},A_2^{(0)})\bigr)^{-1}.
$$
Recall that we have extended the definition of $\Phi$ to this setting.

\begin{theo}
The tensor categories $\EE_1$ and $\mathcal{C}_1$ are equivalent.
\end{theo}
\begin{proof}
The method and the proof are the same as in the fuchsian case: we use
an enriched category $\mathcal{S}_1$ with objects 
$(A^{(0)},M^{(0)},A^{(\infty)},M^{(\infty)})$ and with morphisms 
$(R^{(0)},R^{(\infty)})$, all being subject to obvious conditions. 
The tensor structure on $\mathcal{S}_1$ is the natural one.
Then functors from $\mathcal{S}_1$ to $\EE_1$ and $\mathcal{C}_1$ are 
defined and proved to be $\otimes$-equivalences exactly as in the
fuchsian case.
\end{proof}

\paragraph{The Galois group of $\EE_1$ and $\mathcal{C}_1$}

From the description of $\mathcal{C}_1$, it is clear how to define
fiber functors $\omega_1^{(0)}$ and $\omega_1^{(\infty)}$ on it and
that their extension to the local categories $\EE_1^{(0)}$ and 
$\EE_1^{(\infty)}$ yields local Galois groups which are precisely
the Galois group $G_1^{(0)}$ studied in this paper and its counterpart 
$G_1^{(\infty)}$ at $\infty$. \\

Also the formula:
$$
\check{P}(a) := \bigl(\Psi_a(A^{(\infty)}\bigr)^{-1} P(a) \Psi_a(A^{(0)}),
$$
extends here with the only adaptation that $\Psi_a(A^{(0)})$ means
$\Psi_a(A_f^{(0)})$, and similarly at $\infty$. Again, one finds that,
for each $a \in \C^*$, one has an isomorphism of fiber functors 
$(A^{(0)},P,A^{(\infty)}) \leadsto \check{P}(a)$ from $\omega_1^{(0)}$ 
to $\omega_1^{(\infty)}$ (again, on appropriate subcategories).

\begin{theo}
\label{theo:structureglobale}
The group generated by $G_1^{(0)}$, one particular conjugate 
$\bigl(\check{P}(a)\bigr)^{-1} G_1^{(\infty)} \check{P}(a)$ and the set 
of all defined values $\bigl(\check{P}(b)\bigr)^{-1} \check{P}(a)$ is 
Zariski-dense in the global Galois group of $A$.
\end{theo}
\begin{proof}
The proof uses again Chevalley criterion in a similar way to \emph{loc. cit.}. 
Suppose we have two lines $D^{(0)}$ and $D^{(\infty)}$ that are respectively
fixed by $G_1^{(0)}$ and $G_1^{(\infty)}$ and such that each $\check{P}(a)$
sends $D^{(0)}$ to $D^{(\infty)}$. Taking generators $x^{(0)},x^{(\infty)}$, we
se by tannakian duality that they define rank one subobjects
$x^{(0)}: a^{(0)} \rightarrow A^{(0)}$ and 
$x^{(\infty)}: a^{(\infty)} \rightarrow A^{(\infty)}$.
By the lemma quoted at the end of the previous subsection, the fact
that $x^{(0)}$, $x^{(\infty)}$ are respectively eigenvectors of $G_1^{(0)}$,
$G_1^{(\infty)}$ implies that the value $P(a)$ of the non twisted connection
matrix sends $D^{(0)}$ to $D^{(\infty)}$, so that
$P(a) x^{(0)} = p(a) x^{(\infty)}$ for some $p(a) \in \C$. But then $p$ is
a non trivial elliptic function, $(a^{(0)},p,a^{(\infty)})$ is a rank one
object of $\mathcal{C}_1$ and $(x^{(0)},x^{(\infty)})$ an embedding of this
object as a subobject of $(A^{(0)},P,A^{(\infty)})$. Then, by functoriality,
all elements of the global Galois group must fix this subobject, whence
the two lines.
\end{proof}

\begin{coro}
Topological generators of the Stokes Lie algebra at $0$ and $\infty$
together with topological generators of the local pure Galois groups
and the values of $\check{P}(a)$ are together topological generators
of the global Galois group. 
\end{coro}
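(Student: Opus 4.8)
The plan is to deduce the corollary directly from Theorem~\ref{theo:structureglobale}, simply by expanding each of the three generating families occurring there into smaller, more explicit pieces; no new idea is needed.

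First I would record the elementary group–theoretic fact that will be used twice. If a proalgebraic group $H$ decomposes as a semi-direct product $H = N \rtimes K$ with $N$ a prounipotent normal subgroup, if $\Lie(N)$ is topologically generated as a Lie algebra by a subset $X$, and if $K$ is topologically (Zariski-) generated by a subset $Y$, then $H$ is topologically generated by $\exp(X) \cup Y$. Indeed, the closed subgroup generated by $\exp(X)$ has Lie algebra a closed Lie subalgebra of $\Lie(N)$ containing $X$, hence equal to $\Lie(N)$, so that subgroup is all of $N$; adjoining $Y$ produces a closed subgroup containing both $N$ and $K$, hence $N \cdot K = H$. Applying this to $G_1^{(0)} = \St \rtimes G_{p,1}^{(0)}$ of section~\ref{subsection:overallstructure}, with $N = \St$, $\Lie(N) = \st$ topologically generated by the $q$-alien derivations $\Der_i^{(\delta,\overline{c})}$ together with $\nu$ (theorem~\ref{theo:generateursdetilde(st)}, recalling $\stt = \C\nu \oplus \st$ and $\tau = \exp\nu \in G_{p,1}^{(0)}$), and $K = G_{p,1}^{(0)}$, we get that $G_1^{(0)}$ is topologically generated by the exponentials of a system of topological generators of the Stokes Lie algebra at $0$ together with topological generators of the local pure Galois group $G_{p,1}^{(0)}$. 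The analogous decomposition at $\infty$ yields the corresponding statement for $G_1^{(\infty)}$, and, since conjugation by a fixed element is a continuous automorphism, $\bigl(\check{P}(a)\bigr)^{-1} G_1^{(\infty)} \check{P}(a)$ is topologically generated by the conjugates $\bigl(\check{P}(a)\bigr)^{-1}\gamma\,\check{P}(a)$ of those generators of $G_1^{(\infty)}$.

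Feeding this into Theorem~\ref{theo:structureglobale} then gives the result: the global Galois group of $A$ is the Zariski closure of the subgroup generated by $G_1^{(0)}$, by one fixed conjugate $\bigl(\check{P}(a)\bigr)^{-1} G_1^{(\infty)} \check{P}(a)$, and by all defined values $\bigl(\check{P}(b)\bigr)^{-1}\check{P}(a)$; replacing the first two families by the explicit generating sets above produces exactly the asserted family. The only delicate point — and it is bookkeeping rather than mathematics — is the loose phrase ``the values of $\check{P}(a)$'': since $\check{P}(a)$ is not itself an element of the global Galois group, this must be read as the datum of the single conjugation by $\check{P}(a)$ together with the honest Galois elements $\bigl(\check{P}(b)\bigr)^{-1}\check{P}(a)$, and once a base point $a_0$ is fixed one conjugate plus the ratios to $a_0$ suffice, because $\bigl(\check{P}(b)\bigr)^{-1}\check{P}(a) = \bigl((\check{P}(a_0))^{-1}\check{P}(b)\bigr)^{-1}(\check{P}(a_0))^{-1}\check{P}(a)$.
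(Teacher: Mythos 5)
Your proposal is correct and is exactly the deduction the paper intends: the corollary is stated without proof as an immediate consequence of Theorem~\ref{theo:structureglobale} combined with the semidirect decompositions $G_1^{(0)} = \St \rtimes G_{p,1}^{(0)}$ (and its analogue at $\infty$) and the known topological generators of $\st$. Your auxiliary lemma on generators of a semidirect product with prounipotent kernel, and your reading of the loose phrase ``values of $\check{P}(a)$,'' are both sound and just make explicit what the authors left implicit.
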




\section{The inverse problem}


\subsection{Known results}
\label{subsection:knownresults}

To our knowledge there existed before almost no result on the \emph{local} 
inverse problem that we shall solve below (for the integral slope case). 
We will review the known results on the \emph{global} inverse problem.

As far as we know, the first significant result on the global inverse 
problem of the $q$-difference Galois theory is due to P. Etingof \cite{Et} 
(Proposition 3.4, page 7).

\begin{prop}
\label{Etingof} 
Let $G$ be any connected complex linear algebraic group, there exists 
$\rho>0$ (depending on $G$) such that, for all $0<\vert q\vert<\delta$, 
there exists a rational  \emph{regular} difference system $\sigma_q Y=AY$ 
whose $q$-difference Galois group is $G$.
\end{prop}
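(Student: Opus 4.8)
The plan is to realise $G$ as the Zariski closure of a finitely generated subgroup produced by the connection matrix of a suitably chosen regular system, using the description of the global Galois group of such a system. First I would fix a faithful rational representation $G \hookrightarrow \GL_n(\C)$; it then suffices to exhibit a rational system $\sq Y = A Y$ (with $A \in \GL_n(\C(z))$) which is regular, i.e.\ fuchsian at $0$ and at $\infty$, and whose $q$-difference Galois group, realised inside $\GL_n(\C)$ through the associated solution space, equals $G$. Since $G$ is connected it contains a Zariski-dense finitely generated subgroup; concretely I would pick a semisimple $s_0 \in G(\C)$ generic in a maximal torus $T$ (so that $\overline{\langle s_0\rangle}=T$) and finitely many elements $u_1,\dots,u_s\in G(\C)$ lying on one-parameter unipotent subgroups (root subgroups of a Levi factor and generators of the unipotent radical), chosen so that $s_0,u_1,\dots,u_s$ generate a dense subgroup of $G$. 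The goal then becomes to build $A$ so that: (i) $A(0)=A(\infty)=I_n$, so that the local Galois groups $G_1^{(0)}$ and $G_1^{(\infty)}$ (here fuchsian, as $A$ is regular) are trivial, whence by Theorem~\ref{theo:structureglobale} -- in this fuchsian setting essentially Etingof's own analysis, \cite{Et} -- the global Galois group of $A$ is the Zariski closure of the subgroup of $\GL_n(\C)$ generated by the connection values $\check P(b)^{-1}\check P(a)$; (ii) the rational factors out of which $A$ is assembled take their values in $G$, so that this group is contained in $G$; (iii) the values $\check P(b)^{-1}\check P(a)$ include $s_0,u_1,\dots,u_s$, so that the group contains a dense subgroup of $G$, hence equals $G$.

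For (iii) I would follow Etingof and take $A = A_1\cdots A_N$, an ordered product of ``elementary'' $G$-valued rational factors -- a diagonal theta-quotient factor realising the torus direction and block-triangular ``$q$-primitive'' factors realising the unipotent directions -- each normalised to equal $I_n$ at $0$ and at $\infty$ and with poles and zeros confined to one $q$-spiral $[\zeta_j;q]$, the $\zeta_j$ chosen so that these spirals are pairwise disjoint. Because $A(0)=A(\infty)=I_n$, the canonical fundamental solutions at $0$ and at $\infty$ are given by infinite products in the values $A(q^k z)$, convergent for every $|q|\neq 1$; and, the singular spirals being separated, the Birkhoff matrix $P$ of $A$ is the corresponding ordered product of the connection matrices of the $A_j$. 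Each elementary connection matrix can be written explicitly through the Jacobi theta functions $\theta_c$ (for the rank-one factor it is, up to an explicit constant factor, a finite product $\prod \theta_b(z)/\theta_a(z)$), and depends analytically on the finitely many continuous parameters entering $A_j$. Hence (iii) reduces to solving a finite system of transcendental equations that prescribes a handful of values of $P$ to be $s_0,u_1,\dots,u_s$; I would solve it by a continuity / implicit-function argument valid for $0<|q|<\rho$, with $\rho$ depending only on $n$ and the number of generators, hence on $G$.

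The hard part is this last step. One must control the explicit theta-function form of the connection matrix of each $G$-valued elementary factor well enough to force it, simultaneously for all $j$ and uniformly for small $q$, to hit a prescribed finite subset of $G$ that generates a dense subgroup -- while keeping every factor $G$-valued (so as not to enlarge the Galois group beyond $G$) and the $\zeta_j$ on pairwise disjoint spirals (so that the contributions of the different factors to $P$ decouple near the respective base points). The smallness of $|q|$ is used precisely to make these theta-function families sufficiently non-degenerate and to exclude resonances between the spirals, turning the interpolation problem into a controlled perturbation of an explicitly solvable limit. Once (i)--(iii) are secured, $\Gal(\sq Y = AY)=G$ follows at once from Theorem~\ref{theo:structureglobale} together with the fact that a Zariski-dense subgroup of $G$ has closure $G$.
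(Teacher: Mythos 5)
The paper does not actually prove this proposition: it is quoted from Etingof (\cite{Et}, Prop.~3.4), and the only indication given of its proof is that it rests on the lemma (from \cite{TT}) that every linear algebraic group contains a finitely generated Zariski-dense subgroup. Your overall strategy --- a regular system has trivial local Galois groups, so its global Galois group is the Zariski closure of the group generated by the values $P(b)^{-1}P(a)$ of the connection matrix, and one then arranges these values to generate a dense subgroup of $G$ --- is exactly Etingof's, and your reduction in steps (i)--(iii) is sound.

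Two points in your step (iii) are, however, genuinely problematic. First, the Birkhoff matrix of a product $A = A_1\cdots A_N$ is \emph{not} the ordered product of the Birkhoff matrices of the factors, even when the singular $q$-spirals are pairwise disjoint: the canonical solution $M^{(0)}$ is an infinite ordered product of values of $A$ along the $q$-orbit of $z$ converging to $0$, and this does not split multiplicatively in $A$ unless the factors commute. So the decoupling on which your explicit theta-function computation rests fails. Second, the interpolation step (``solve a finite system of transcendental equations so that prescribed values of $P$ equal $s_0,u_1,\dots,u_s$'') is the entire content of the proposition, and it is not established by an unspecified implicit-function argument; in particular nothing in your setup produces the threshold $\rho$. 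The way Etingof closes this is different and simpler: as $q \to 0$ the canonical solutions tend to the identity locally uniformly away from the singular spirals, so that $P(z) \to A(z)$ (up to inversion and normalisation); one chooses a $G$-valued rational $A$ with $A(0)=A(\infty)=I_n$ whose values $A(a_i)A(b_i)^{-1}$ at finitely many points generate a Zariski-dense subgroup of $G$ (possible since $G$ is connected); and one invokes the openness, for connected $G$, of the condition that a tuple generate a Zariski-dense subgroup, so the property persists for $0<\lmod q\rmod<\rho$. Replacing your exact interpolation by this approximation-plus-openness argument is both sufficient and the only place where the smallness of $\lmod q \rmod$ is needed.
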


We recall that the system $\sigma_q Y=AY$ is regular if $A(0)=A(\infty)=I$.

The proof uses the following result (\cf\ \cite{TT}).

\begin{lemm}
Let $G$ be any complex linear algebraic goup, then there exists 
$g_1,\ldots ,g_m\in G$ such that the subgroup generated by 
$g_1,\ldots ,g_m$ is Zariski dense in $G$.
\end{lemm}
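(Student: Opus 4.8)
\emph{Sketch of the intended proof.} The plan is to build a finitely generated Zariski-dense subgroup of $G$ by a greedy procedure that raises the dimension of the Zariski closure one step at a time; the argument works over any algebraically closed field, so in particular uncountability of $\C$ plays no role. First I would isolate the one substantial ingredient, a \emph{dimension-bump lemma}: if $H \subset G$ is a Zariski-closed subgroup with $\dim H < \dim G$, then there is some $g \in G$ for which the closure $\overline{\langle H, g\rangle}$ of the abstract subgroup generated by $H$ and $g$ has dimension strictly greater than $\dim H$.

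To prove this lemma I would argue by contradiction. If no such $g$ existed, then for every $g \in G$ the closed subgroup $K_{g} := \overline{\langle H, g\rangle}$ would contain $H$ and satisfy $\dim K_{g} = \dim H$; since $K_{g}^{\circ}$ is connected, hence irreducible as a variety, and $H^{\circ} \subseteq K_{g}^{\circ}$ is a closed subgroup of full dimension, one gets $H^{\circ} = K_{g}^{\circ}$. As $K_{g}^{\circ}$ is normal in $K_{g} \ni g$, every $g \in G$ would normalise $H^{\circ}$, so $H^{\circ}$ would be normal in $G$. Passing to the linear algebraic quotient $\overline{G} := G/H^{\circ}$, of dimension $\dim G - \dim H > 0$, the image of each $\langle H, g\rangle$ lies in the finite group $K_{g}/H^{\circ} = K_{g}/K_{g}^{\circ}$, so every element of $\overline{G}(\C)$ would have finite order; but a positive-dimensional linear algebraic group over $\C$ contains a closed subgroup isomorphic to $\mathbf{G}_{a}$ or $\mathbf{G}_{m}$, hence an element of infinite order, a contradiction. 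I expect this lemma, which rests on the structure theory of algebraic groups (no positive-dimensional algebraic group over $\C$ is a torsion group), to be the only non-formal point.

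With the lemma available I would then run the construction. Start from the trivial subgroup $\Gamma_{0} = \{e\}$; given a finitely generated $\Gamma_{i}$ with closure $H_{i} := \overline{\Gamma_{i}}$, stop if $H_{i} = G$, and otherwise, if $\dim H_{i} < \dim G$, use the lemma to choose $g$ with $\dim \overline{\langle \Gamma_{i}, g\rangle} > \dim H_{i}$ and set $\Gamma_{i+1} := \langle \Gamma_{i}, g\rangle$. Along this loop the integers $\dim H_{i}$ strictly increase while staying $\le \dim G$, so after at most $\dim G$ steps one reaches a finitely generated $\Gamma$ whose closure $H$ satisfies $\dim H = \dim G$. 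Then $H^{\circ}$ is a closed full-dimensional subgroup of the irreducible variety $G^{\circ}$, whence $H \supseteq G^{\circ}$, so $H$ has finite index in $G$; adjoining to a finite generating set of $\Gamma$ a set of representatives for the finitely many cosets of $H$ in $G$ yields a finitely generated subgroup whose Zariski closure contains $G^{\circ}$ and meets every connected component of $G$, hence equals $G$. The finitely many elements introduced along the way furnish the desired $g_{1},\ldots,g_{m}$.
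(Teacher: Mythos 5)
Your argument is correct, and all the steps check out: the dimension-bump lemma (if every $g$ left $\dim \overline{\langle H,g\rangle}$ equal to $\dim H$, then $H^{\circ}=K_g^{\circ}$ would be normalised by every $g$, and the quotient $G/H^{\circ}$ would be a positive-dimensional torsion linear group, which cannot exist in characteristic zero since its identity component contains a copy of $\mathbf{G}_a$ or $\mathbf{G}_m$), the induction on $\dim \overline{\Gamma_i}$ (using that $\overline{\langle \Gamma_i,g\rangle}=\overline{\langle \overline{\Gamma_i},g\rangle}$), and the final adjunction of coset representatives to pass from $G^{\circ}$ to $G$. Be aware, though, that there is nothing in the paper to compare against: the authors state this lemma without proof and simply refer to the Tretkoffs' paper \cite{TT}, where it is the key input for realising groups as monodromy groups of regular singular connections. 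Your proof is the standard self-contained one (essentially the argument in \emph{loc.\ cit.}); an alternative frequently seen route is to write $G^{\circ}$ as generated by finitely many one-parameter subgroups $\mathbf{G}_a$ and $\mathbf{G}_m$, each of which is the Zariski closure of a single well-chosen element (a non-torsion unipotent, resp.\ an element generating a dense subgroup of the torus), and then add component representatives; that version buys slightly more explicit generators, while yours is cleaner in that it needs only the non-existence of positive-dimensional torsion groups.
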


We recall that the Tretkoffs used this lemma (and the Riemann-Hilbert 
correspondance) to solve the inverse problem of the Galois differential 
theory with regular singular systems.

If $G$ is \emph{abelian}, it is possible to improve the proposition 
\ref{Etingof}. 

\begin{prop}
\label{classes} 
Let $G$ be any \emph{abelian} connected complex linear algebraic group, 
then, \emph{for all} $q\in\C^*$, $\vert q\vert\neq 1$, there exists a 
rational \emph{regular} difference system $\sigma_q Y=AY$ whose 
$q$-difference Galois group is $G$.
\end{prop}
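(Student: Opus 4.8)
The plan is to build the desired system explicitly, exploiting the structure theorem for connected abelian linear algebraic groups: any such $G$ is isomorphic to a product $(\C^*)^r \times \C^s$ of a torus and a vector group (equivalently, $G_m^r \times G_a^s$). By the tensor-compatibility of the Galois correspondence and the fact that the Galois group of a direct sum $A_1 \oplus A_2$ of systems is a subgroup of the product $\Gal(A_1) \times \Gal(A_2)$ containing enough to surject onto each factor, it suffices to realize each of the two building blocks separately by a rational regular system and then take a direct sum, checking that the Galois group of the sum is the full product and not a proper subgroup. So the heart of the matter is two cases: $G = \C^*$ and $G = \C$.

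For $G = \C$ (the additive group $G_a$), one wants a regular system whose Galois group is the one-dimensional unipotent group; the natural candidate is a rank-two system of the shape $\sigma_q Y = \begin{pmatrix} 1 & b(z) \\ 0 & 1 \end{pmatrix} Y$ with $b \in \C(z)$ chosen so that $b(0) = b(\infty) = 0$ (to guarantee regularity) and so that the associated $q$-difference equation $\sigma_q g - g = b$ has no rational (indeed no ``reasonable'') solution — which forces the Galois group to be all of $G_a$ rather than trivial. A standard choice such as $b(z) = \dfrac{z}{(z-\alpha)(z-\beta)}$ for suitable $\alpha,\beta$ not in a common $q$-spiral should do; the non-solvability is the point where the hypothesis $|q| \neq 1$ enters, via an analysis of the residues/poles of a putative solution along $q$-spirals. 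For $G = \C^*$ (the multiplicative group $G_m$), one wants a rank-one system $\sigma_q Y = c(z) Y$ with $c \in \C(z)$, $c(0) = c(\infty) = 1$, whose Galois group is all of $G_m$; this happens exactly when $c$ is not of the form $\dfrac{\sigma_q f}{f}$ times a root-of-unity-power times nothing else, i.e.\ when the ``non-resonance'' condition of the fuchsian theory (recalled implicitly in \ref{subsection:overallstructure}) holds. Concretely $c(z) = \dfrac{z - \alpha}{z - \beta}$ with $\overline{\alpha} \neq \overline{\beta}$ and $\alpha/\beta$ not a root of unity, suitably normalized at $0$ and $\infty$, realizes $G_m$; here too the argument that the Galois group is not a proper (finite) subgroup uses $|q| \neq 1$ so that the equation $\sigma_q f / f = c$ has no solution in $\M(\Eq)$ unless a divisor condition is met.

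Once the two basic blocks are in hand, the plan is to assemble $G = (\C^*)^r \times \C^s$ as the direct sum of $r$ rank-one blocks (with pairwise ``independent'' parameters $\alpha_i/\beta_i$, so that no multiplicative relation among the characters holds and the Galois group of the torus part is the full $(\C^*)^r$) and $s$ rank-two unipotent blocks (with parameters chosen so that the corresponding additive equations are ``independent'' modulo the image of $\sigma_q - 1$, giving the full $\C^s$), and finally to argue that the Galois group of the whole direct sum is the product of the pieces. For this last step I would invoke a Goursat-type lemma: a Zariski-closed subgroup of $(\C^*)^r \times \C^s$ surjecting onto each coordinate factor and onto each coordinate-pair, under genericity of the parameters, must be the whole group — the independence conditions on the parameters are exactly designed to kill any proper subgroup defined by a character relation or a linear relation. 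The main obstacle, and the step deserving the most care, is precisely this genericity/independence bookkeeping together with the non-solvability statements for the individual equations: proving that $\sigma_q g - g = b$ has no rational solution, and that $\sigma_q f / f = c$ has no suitable solution, for the explicit $b,c$ chosen, uniformly for \emph{all} $q$ with $|q| \neq 1$ — this is where the improvement over Etingof's Proposition \ref{Etingof} (which only handled small $|q|$) really lies, and it rests on a direct inspection of poles along $q$-spirals rather than on any fixed-point or analytic perturbation argument.
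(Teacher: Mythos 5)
Your route is genuinely different from the paper's, and as written it has a gap at its central step. The ``Goursat-type lemma'' you invoke --- that a Zariski-closed subgroup of $(\C^*)^r \times \C^s$ surjecting onto each coordinate factor and each coordinate pair must be the whole group --- is false: the kernel of $(t_1,t_2,t_3)\mapsto t_1t_2t_3$ in $(\C^*)^3$, or the hyperplane $x_1+x_2+x_3=0$ in $\C^3$, surjects onto every pair of coordinates. The correct criterion is that no non-trivial character of the torus part and no non-zero linear form on the vector part annihilates the Galois group, and for your direct sum this amounts to proving that the elliptic connection functions $p_1,\ldots,p_r\in\M(\Eq)$ of the rank-one blocks are multiplicatively independent modulo constants, and that $1,\pi_1,\ldots,\pi_s$ (for the unipotent blocks) are $\C$-linearly independent. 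That \emph{global} independence statement --- not any pairwise condition --- is the real content of the proposition, and your proposal only asserts it (``genericity bookkeeping''), just as it asserts rather than proves the non-solvability of $\sq g-g=b$ in $\C(z)$. Two further points: $c(z)=(z-\alpha)/(z-\beta)$ has $c(0)=\alpha/\beta$ and $c(\infty)=1$, so it cannot be normalized to a \emph{regular} rank-one system (you need at least two zeros and two poles with $\alpha_1\alpha_2=\beta_1\beta_2$); and to identify the Galois group of the direct sum you must quote some global description of that group (Picard--Vessiot, or the connection-matrix density theorem of \cite{JSGAL}), since abstract closed-subgroup reasoning only yields surjectivity onto each factor.

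The paper's own proof goes the other way around and is essentially a one-liner: writing $G\approx (G_m)^k\times(G_u)^l$, it produces a single rational map $f:\Eq\rightarrow G$ which is ``dominant'' in the sense relevant here (the subgroup generated by its image is Zariski dense --- the elliptic-curve analogue of the Tretkoff lemma), and then invokes \cite{JSGAL}: every elliptic matrix is the Birkhoff connection matrix of a regular rational system (Birkhoff factorization), and the Galois group of a regular system is the Zariski closure of the group generated by the values $P(b)^{-1}P(a)$. Once your independence conditions are actually established, your block-by-block construction reconstructs exactly such an $f$ coordinate by coordinate, so the two approaches converge; the paper's formulation isolates the one lemma that does the work and makes the uniformity in $q$ (the improvement over Etingof's result) automatic.
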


This proposition follows from \cite{JSGAL}, using the following lemma.

\begin{lemm}
Let $G$ be any \emph{abelian}  connected complex linear algebraic goup, 
then there exists a rational \emph{dominant} map $f:\Eq\rightarrow G$. 
\end{lemm}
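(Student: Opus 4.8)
The plan is to reduce $G$ to its standard shape and then to read off $f$ from the abundance of functions on the elliptic curve $\Eq$. Since $G$ is a connected commutative linear algebraic group over the field $\C$, which has characteristic zero, the classical structure theory (\cf\ \cite{DG}) gives an isomorphism $G \simeq (\C^{*})^{r} \times \C^{s}$, the first factor a torus and the second a vector group; so we may assume $G = (\C^{*})^{r} \times \C^{s}$. It will then be enough to construct a rational map $f \colon \Eq \rightarrow (\C^{*})^{r} \times \C^{s}$ whose image is contained in no proper algebraic subgroup: this is the property of $f$ actually used (through Chevalley's criterion) when computing Galois groups in the next sections, and it forces the Zariski closure of the subgroup generated by the values of $f$ to be all of $G$.

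First I would choose the components of $f$ inside $\M(\Eq) = \Kw^{\sq}$ using two elementary features of this field. On the one hand, $\M(\Eq)^{*}/\C^{*}$ is free abelian of infinite rank: via the divisor map it is the group of principal divisors of $\Eq$, a non finitely generated subgroup of the free abelian group $\mathrm{Div}^{0}(\Eq)$; hence one can pick $u_{1},\dots,u_{r} \in \M(\Eq)^{*}$ whose classes modulo $\C^{*}$ are $\Z$-linearly independent. On the other hand, $\M(\Eq)$ is infinite-dimensional over $\C$; hence one can pick $y_{1},\dots,y_{s} \in \M(\Eq)$ whose classes modulo $\C$ are $\C$-linearly independent. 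Concretely, one may take $u_{i} := \theta_{a_{i}} \theta_{b_{i}} \theta_{c_{i}}^{-1} \theta_{d_{i}}^{-1}$ with $a_{i} b_{i} = c_{i} d_{i}$ (so that $\sq u_{i} = u_{i}$, hence $u_{i} \in \M(\Eq)$) and with pairwise disjoint divisor supports on $\Eq$, and the $y_{j}$ among further $\sq$-invariant ratios of Theta functions and their logarithmic derivatives. Setting $f := (u_{1},\dots,u_{r},y_{1},\dots,y_{s})$ yields a morphism with values in $(\C^{*})^{r} \times \C^{s} = G$ on the dense Zariski-open subset of $\Eq$ where none of the $u_{i}$ has a zero or a pole and none of the $y_{j}$ has a pole; that is, a rational map $\Eq \rightarrow G$.

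The main thing to get right — the hard part — is to check that the image $C$ of $f$ lies in no proper algebraic subgroup $H$ of $G$. Suppose it does. Being irreducible, $C$ is connected, while $H$ is a finite disjoint union of cosets of its identity component $H^{\circ}$, a proper connected algebraic subgroup of $G$; so $C$ lies in a single coset of $H^{\circ}$, hence in a coset of a maximal proper connected algebraic subgroup $K$ of $G$. Now (as is classical, because over $\C$ there are no nonzero algebraic morphisms, and no nontrivial algebraic extensions, between a torus and a vector group) every connected algebraic subgroup of $(\C^{*})^{r} \times \C^{s}$ is of the form $T' \times W'$ with $T'$ a subtorus of $(\C^{*})^{r}$ and $W'$ a linear subspace of $\C^{s}$; so $K$ is either $\ker(\chi)^{\circ} \times \C^{s}$ for some nontrivial character $\chi$ of $(\C^{*})^{r}$, associated to a nonzero $(a_{i}) \in \Z^{r}$, or $(\C^{*})^{r} \times W$ for some hyperplane $W = \ker(\ell) \subsetneq \C^{s}$. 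In the first case $\chi \circ f = \prod_{i} u_{i}^{a_{i}}$ would be a nonzero constant in $\M(\Eq)^{*}$, contradicting the $\Z$-independence of the classes of the $u_{i}$ modulo $\C^{*}$. In the second case, writing $\ell = \sum_{j} b_{j} y_{j}^{*}$, the composite of $f$ with the projection onto $\C^{s}$ and then $\ell$ would be $\sum_{j} b_{j} y_{j}$, a constant in $\M(\Eq)$, contradicting the $\C$-independence of the classes of the $y_{j}$ modulo $\C$. Hence no proper algebraic subgroup of $G$ contains $C$, so the Zariski closure of the subgroup generated by the values of $f$ is $G$ itself, which is the assertion.
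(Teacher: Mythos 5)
Your proof is correct and takes essentially the same route as the paper, whose entire argument is the reduction to the isomorphism $G \approx (G_m)^k \times (G_u)^l$ with everything after that left implicit. You supply precisely the omitted details — the choice of multiplicatively independent theta quotients and additively independent elliptic functions, and the check that the image lies in no coset of a proper subgroup — together with the correct reading of ``dominant'' (which cannot mean Zariski-dense image once $\dim G > 1$) as ``the values generate a Zariski-dense subgroup''.
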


Here $\Eq$ is seen as a projective algebraic curve (an elliptic curve).
The proof of this lemma follows from the existence of an isomorphism 
$G \approx (G_m)^k\times (G_u)^l$. \\

From propositions \ref{Etingof} and \ref{classes} we can conjecture that 
for every connected complex linear algebraic group $G$ and for all  
$q \in \C^*$, $\vert q\vert \neq 1$, there exists a rational \emph{regular} 
difference system $\sigma_q Y=AY$ whose $q$-difference Galois group is $G$.

Such a system will have in general ``a lot of singularities". Below we will 
attack the global inverse problem in the opposite direction, searching a 
system with a \emph{minimal} number of singularities in the spirit of a 
$q$-analog of the Abhyankar conjecture.

Another source of solutions of the \emph{inverse} problem are of course 
the known solutions of the \emph{direct} problem, in particular from the 
computation of the $q$-difference Galois groups of the \emph{generalized 
$q$-hypergeometric equations} (regular-singular or not). One can find a 
complete solution of this last problem in a series of papers of J. Roques 
\cite{Roques1, Roques2, Roques3}. Limiting ourselves to the cases of 
\emph{simple} groups, the complete list obtained by J. Roques is: 
$\text{SL}(n,\C)$, $\text{SO}(n,\C)$, $\text{Sp}(2n,\C)$. 

It is interesting to compare with the differential case (\cf\ \cite{BeuBrHe}, 
\cite{DuvM}, \cite{K}, \cite{Mit}). The simple groups which are differential 
groups of \emph{generalized hypergeometric differential equations} 
(regular-singular or not) are: $\text{SL}(n,\C)$, $\text{SO}(n,\C)$, 
$\text{Sp}(2n,\C)$ and \dots the group $G_2$ ! Therefore the \emph{only} 
difference between the $q$-difference case and the differential case is 
the exceptional group $G_2$.


\subsection{Linear algebraic groups: reminders and complements}


\subsubsection{Notations and definitions. Levi decomposition}

In the following \emph{all} the algebraic groups are \emph{complex linear} 
algebraic groups. In general $G$ is a linear algebraic group, $\mathfrak{g}$ 
is its Lie algebra, $T \subset G$ is a torus, $\mathfrak{t}$ the Lie algebra 
of $T$, and $D \subset G$ is an abelian semi-simple group.

An algebraic group G contains a unique \emph{maximal normal solvable 
subgroup}, this subgroup is closed. Its identity component is called 
the \emph{radical} $R(G)$ of $G$.

We will denote $R_u(G)$ the \emph{unipotent radical} of $G$ (i.e. the set 
of unipotent elements of $R(G)$). A group $G$ is \emph{reductive} if and 
only if $R_u(G)=\{e\}$.

\begin{defi}
A \emph{Levi subgroup} of a linear algebraic group $G$ is a maximal 
reductive subgroup.
\end{defi}

We have an exact sequence:
\[
\{e\} \rightarrow R_u(G) \rightarrow G \rightarrow G/R_u(G) \rightarrow \{e\}
\]
and, if $H \subset G$ is a Levi subgroup, then the quotient map 
$G \rightarrow G/R_u(G)$ induces an isomorphism $H \rightarrow G/R_u(G)$. 
More precisely we have the following result (essentially due to Mostow).

\begin{prop}
Let $G$ be a linear algebraic group.
\begin{itemize}
\item[(i)]
If $H \subset G$ is a Levi subgroup, then $G$ is a semi-direct product:
$G = R_u(G) \rtimes H$.
\item[(ii)]
Any two Levi subgroups of $G$ are conjugate under an inner automorphism.
\item[(iii)]
If $H \subset G$ is a subgroup and if the quotient map 
$G \rightarrow G/R_u(G)$ induces an isomorphism 
$H \rightarrow G/R_u(G)$, then $H$ is a Levi subgroup.
\end{itemize}
\end{prop}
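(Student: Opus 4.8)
The statement is the Levi--Mostow decomposition theorem for complex linear algebraic groups, and the plan is to run Mostow's d\'evissage. Throughout, write $U := R_u(G)$ and let $q\colon G \to G/U$ be the quotient; recall that $U$ is a connected normal unipotent subgroup and that $G/U$ is reductive. Two features of characteristic zero will be used repeatedly: every closed subgroup of a unipotent group is connected and every connected abelian unipotent group is a vector group, so that the exponential identifies the category of unipotent groups with that of nilpotent Lie algebras; and every reductive group is linearly reductive, hence its rational (Hochschild) cohomology with coefficients in any rational module vanishes in positive degrees --- in particular $H^{1}$ and $H^{2}$ vanish.

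The first step is to produce a closed subgroup $H \subseteq G$ for which $q|_H\colon H \to G/U$ is an isomorphism; call such an $H$ a \emph{Levi complement}. I would argue by induction on $\dim U$. If $U = \{e\}$, take $H = G$. Otherwise let $N$ be the last nontrivial term of the descending central series of $U$: it is characteristic in $U$ (so normal in $G$), central in $U$, and a vector group. Since $R_u(G/N) = U/N$ has strictly smaller dimension, the induction hypothesis applied to $G/N$ yields a closed subgroup $H_1 \supseteq N$ with $H_1/N \xrightarrow{\sim} G/U$ and $R_u(H_1) = N$. One is then reduced to splitting the central extension of algebraic groups
\[
\{e\} \rightarrow N \rightarrow H_1 \rightarrow G/U \rightarrow \{e\},
\]
where the reductive group $G/U$ acts rationally on the vector group $N$ by conjugation. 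Its class lies in $H^{2}(G/U,N) = 0$; equivalently, one first splits the associated extension of Lie algebras (vanishing of the corresponding $H^{2}$, i.e. Levi's theorem in equivariant form) and then integrates the splitting back to a closed subgroup $H \subseteq H_1$ via the equivalence of categories for unipotent groups. This $H$ is a Levi complement.

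Next I would clear (iii), (ii) and (i). If $H$ is a Levi complement, then $H \cap U = \Ker(q|_H) = \{e\}$ and $HU = q^{-1}(G/U) = G$, so $G = U \rtimes H$ and $H \cong G/U$ is reductive; and if $H'$ is a reductive subgroup with $H \subseteq H'$, then $H' \cap U$ is a connected normal unipotent subgroup of $H'$, hence contained in $R_u(H') = \{e\}$, so $q|_{H'}$ is injective while $q(H') \supseteq q(H) = G/U$, whence $H' = H$: so every Levi complement is a Levi subgroup, which is (iii) (plus the semidirect decomposition). Any two Levi complements are conjugate by an element of $U$, again by the d\'evissage: modulo $N$, the inductive conjugacy lets one assume $HN = H'N$, and then $H, H'$ are complements of the central vector group $N$ in $HN$, their difference being a $1$-cocycle with class in $H^{1}(G/U,N) = 0$, so $H' = uHu^{-1}$ with $u \in N \subseteq U$; composing the conjugations obtained stage by stage gives the claim. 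Finally, let $H$ be an arbitrary Levi subgroup (maximal reductive). As above $H \cap U = \{e\}$, so $K := HU$ is a closed subgroup with $R_u(K) = U$ and $K/U \cong H$ reductive; if $H_0$ is a Levi complement of $G$, then $K \cap H_0 = q|_{H_0}^{-1}(q(H))$ maps isomorphically onto $K/U$, so by the conjugacy statement inside $K$ one has $H = u(K \cap H_0)u^{-1}$ for some $u \in U$; thus $H \subseteq uH_0u^{-1}$, which is reductive, and maximality of $H$ forces $H = uH_0u^{-1}$. Hence $q|_H$ is an isomorphism and $G = U \rtimes H$, which is (i); and (ii) is now the conjugacy of Levi complements established above, the conjugating element lying in $U$, hence inner.

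The main obstacle is the pair of vanishing statements $H^{2}(G/U,N) = 0$ and $H^{1}(G/U,N) = 0$ for the rational action of the reductive group $G/U$ on the vector group $N$, carried out in the category of algebraic groups and not merely of Lie algebras: this is exactly where characteristic zero enters, through linear reductivity and through the equivalence between unipotent groups and nilpotent Lie algebras, as in Mostow's original argument. Everything else is bookkeeping with the descending central series of $U$ and with the unipotent radicals of the successive subgroups that occur.
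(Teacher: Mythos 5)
Your argument is correct, but it proves far more than the paper does: the paper simply cites Mostow for (i) and (ii) and only supplies a two-line deduction of (iii) --- a subgroup $H$ mapping isomorphically onto $G/R_u(G)$ is reductive, hence contained in a maximal reductive subgroup $H'$, which by (i) also maps isomorphically onto $G/R_u(G)$, forcing $H = H'$. You instead reprove the Mostow theorem from scratch by d\'evissage along the descending central series of $R_u(G)$, reducing existence and conjugacy of Levi complements to the vanishing of $H^{2}$ and $H^{1}$ of the reductive quotient with coefficients in a vector group (linear reductivity in characteristic zero), and then identifying Levi complements with maximal reductive subgroups; this is essentially the standard proof of the result the paper cites, and it is sound. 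What your route buys is self-containedness and an explicit location of where characteristic zero enters; what the paper's route buys is brevity, since only (iii) genuinely needs an argument once Mostow is granted. One cosmetic slip: the extension $\{e\} \rightarrow N \rightarrow H_1 \rightarrow G/U \rightarrow \{e\}$ need not be \emph{central} --- $N$ is central in $U$, but the conjugation action of $G/U$ on $N$ can be nontrivial, which is exactly why the obstruction lives in $H^{2}(G/U,N)$ for that module structure, as you in fact use it. Your deduction of (iii) coincides in substance with the paper's.
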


\begin{proof}
For (i) and (ii) \cf\ \cite{Mostow} (a subgroup is fully reducible if and 
only if it is reductive). 

Let $H \subset G$ be a subgroup such that the quotient map 
$G \rightarrow G/R_u(G)$ induces an isomorphism $H \rightarrow G/R_u(G)$, 
$H$ is reductive, therefore it is contained in a maximal reductive subgroup 
$H'$ and $H=H'$.
\end{proof}

A \emph{Levi decomposition} of a linear algebraic group $G$ is an 
isomorphism $G \approx U \rtimes S$, where $S$ is \emph{reductive} and 
$U$ is \emph{unipotent}.

 
\subsubsection{Diagonalisable and triangularisable groups}

We shall recall the notions of \emph{diagonalisable} and of  
\emph{triangularizable} algebraic group. The properties of the 
diagonalisable groups and of the triangularisable \emph{connected} groups 
are well known, but for the triangularisable \emph{non connected} groups 
we do not know good references hence, for sake of completeness, we shall 
detail the necessary results.
 
An algebraic group $G$ is \emph{diagonalizable} if and only if it is
abelian and semi-simple ($G=G_s$). If $G$ is diagonalizable, then every 
representation of $G$ is diagonalizable in the matrix sense. An algebraic 
group $G$ is  diagonalisable if and only if there exists a faithful 
representation of $G$ which is diagonalisable in the matrix sense.
 
We will say that a linear algebraic group $G$ is \emph{triangularizable}
if there exists a faithful triangular representation. A triangularizable 
group is \emph{solvable}.

A solvable \emph{connected} linear algebraic group is triangularizable
(Lie-Kolchin theorem). In particular a unipotent group is triangularizable.
 
\begin{prop}
\label{triangprop}
A linear algebraic group is triangularizable if and only if 
$G \approx U \rtimes D$, where $U$ is \emph{unipotent} and $D$ is 
abelian and semi-simple. Then $U=R_u(G)$ and $D\approx G/R_u(G)$. 

The Levi subgroups of a triangularizable algebraic group $G$ are the 
maximal abelian semi-simple subgroups. If $G$ is connected, the Levi 
subgroups are the maximal tori.
\end{prop}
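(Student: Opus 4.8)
The plan is to establish the equivalence first and then read off the statements about Levi subgroups, using three standard inputs: Kolchin's fixed point theorem, the fact that in characteristic zero every unipotent algebraic group is connected, and the Mostow-type Levi decomposition recalled in the proposition just above.

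First I would prove sufficiency, together with the identifications $U=R_u(G)$ and $D\approx G/R_u(G)$. Given $G=U\rtimes D$ with $U$ unipotent and $D$ diagonalizable, $U$ is connected and normal, hence $U\subset R_u(G)$; conversely the image of $R_u(G)$ in $G/U\approx D$ is a unipotent subgroup of a diagonalizable group, hence trivial, so $R_u(G)=U$ and $D\approx G/R_u(G)$, and since $D$ is reductive, part (iii) of the Mostow-type proposition shows $D$ is a Levi subgroup. To see that $G$ is triangularizable I would fix a faithful $\rho\colon G\to\GL(V)$ and prove by induction on $\dim V$ that every $G$-module carries a complete $G$-stable flag: $V^U\neq 0$ by Kolchin, $V^U$ is $G$-stable because $U$ is normal, $G$ acts on $V^U$ through the diagonalizable quotient $G/U\approx D$ so that $V^U$ contains a $G$-stable line $L$, and one concludes by the inductive hypothesis applied to $V/L$; a basis adapted to such a flag puts $\rho(G)$ in upper triangular form. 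For necessity I would embed $G$ in the group $T_n$ of invertible upper triangular matrices, let $U$ be the kernel of the homomorphism $G\to D_n$ sending a triangular matrix to its diagonal: then $U$ is normal and unipotent, hence equal to $R_u(G)$ by the same argument, while $G/U$ embeds in the torus $D_n$ and is therefore abelian semi-simple, and parts (i) and (iii) of the Mostow-type proposition give $G=U\rtimes H$ with $H\approx G/R_u(G)$; set $D:=H$.

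Next I would describe the Levi subgroups. The crucial remark is that every subgroup of a triangularizable group is again triangularizable (just restrict the faithful triangular embedding); hence, applying the equivalence just proved to a \emph{reductive} subgroup $H\subset G$, we find that $H=R_u(H)\rtimes D_H=D_H$ is abelian semi-simple, and the converse is clear. So the reductive subgroups of a triangularizable $G$ are exactly its abelian semi-simple subgroups, whence, passing to maximal elements, the Levi subgroups (that is, the maximal reductive subgroups) are precisely the maximal abelian semi-simple subgroups. In the connected case $G$ is connected solvable by Lie--Kolchin, so a Levi subgroup $D\approx G/R_u(G)$ is connected, abelian and semi-simple, that is, a torus, and being maximal among abelian semi-simple subgroups it is a maximal torus; conversely all maximal tori of connected $G$ are conjugate, one of them is a Levi subgroup by the above, and conjugates of Levi subgroups are Levi subgroups (part (ii) of the Mostow-type proposition), so every maximal torus is a Levi subgroup.

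The hard part will be the sufficiency direction for \emph{non-connected} $G$: Lie--Kolchin is not available — a non-connected solvable, even reductive, group such as $S_3$ in its two-dimensional representation has no common eigenvector — so one must genuinely exploit that $U$ is unipotent, through the space $V^U$ of $U$-invariants, which is non-zero by Kolchin and $G$-stable by normality of $U$, combined with the diagonalizability of $G/U$. Once this flag-building step is secured, the remaining arguments are formal, the only other point deserving care being the stability of triangularizability under passing to subgroups, which is what drives the description of the Levi subgroups.
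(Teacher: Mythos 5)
Your proof is correct and follows essentially the same route as the paper: Kolchin's lemma for the sufficiency direction, the embedding $G\subset T_n=U_n\rtimes D_n$ combined with the Mostow decomposition for necessity, and the identification of Levi subgroups with maximal abelian semi-simple subgroups. The only difference is that where the paper simply cites Kolchin's lemma \cite[I.7, lemma, p 20]{Kolchin} for the implication ``$U\rtimes D$ triangularizable'', you supply its standard proof (the $V^U$ flag-building induction), which is a welcome but inessential addition.
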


\begin{proof}
\begin{enumerate}
\item 
If $G \approx U \rtimes D$, where $U$ is \emph{unipotent} and $D$ is abelian 
and semi-simple, $G$ is triangularizable by \cite[I.7, lemma, p 20]{Kolchin}.
\item
We suppose that $G$ is triangularizable, there exists a faithful representation
$\rho: G \rightarrow \GL_n(\C)$, such that $\rho(G)$ is an upper triangular 
subgroup of $\GL_n(\C)$, a subgroup of the upper triangular subgroup $T_n$. 
We denote $U_n$ (resp. $D_n$) the unipotent upper-triangular subgroup (resp. 
the diagonal subgroup) of $\GL_n(\C)$, then $T_n = U_n \rtimes D_n$, 
$T_n/U_n = D_n$.

There exists a Levi decomposition $G = U \rtimes D$, where $U$ is 
\emph{unipotent} and $D$ reductive. Then $\rho(U)$ is unipotent, therefore 
it is a subgroup of $U_n$ and $\rho$ induces an injective morphism
$D = G/U \rightarrow T_n/U_n=D_n$. The group $D_n$ is abelian semi-simple, 
and $D$ is isomorphic to a subgroup, therefore $D$ is abelian semi-simple.
\item
The Levi subgroups are abelian semi-simple and any abelian semi-simple 
subgroup is reductive, the result follows.
\end{enumerate}
\end{proof}


\subsection{$\Theta$-structures on linear algebraic groups}
\label{subection:Theta-structures}


\subsubsection{Weights and coweights}

\begin{defi}
Let $G$ be an abelian semi-simple group. The \emph{weight group} $G^{\bullet}$ 
of $G$ is the group of homomorphisms of algebraic groups $G\rightarrow \C^*$.
\end{defi}

A weight on $G$ is also called a \emph{character} on $G$.
 
The group $G^{\bullet}$  is an abelian finitely generated group.
 
The weight functor $G \leadsto G^{\bullet}$ is an antiequivalence of categories
between abelian semi-simple algebraic groups and finitely generated abelian 
groups. The quasi inverse of the weight functor is 
$\Hom_{gr}(.,\C^*)$.

Let $f: \C^* \rightarrow \C^*$ be a homomorphism of algebraic groups,  
then $f: z \mapsto z^n$, $n \in \Z$ is the \emph{degree} of $f$ and we 
denote $\deg f=n$.

\begin{defi}
Let $D$ be an abelian semi-simple group. A coweight on $D$ is a homomorphism 
of algebraic groups $\C^* \rightarrow D$. 
\end{defi}

A coweight on $D$ is also called a \emph{one parameter subgroup} of $D$.

If $\chi: \C^* \rightarrow D$ is a coweight on $D$, its image is contained 
in the maximal torus $T \subset D$, therefore it is also a coweight on $T$.

For every weight $\xi$ and every coweight $ \chi$ on an algebraic abelian 
semi-simple group $D$, we set 
\[
<\xi,\chi > :=\deg (\xi \circ \chi).
\]

\begin{defi}
Let $T$ be a complex algebraic torus. The \emph{weight lattice} $T^{\bullet}$ 
of $T$ is the group of weights $T \rightarrow \C^*$, and the \emph{coweight 
lattice} $T_{\bullet}$ of $T$ is the group of coweights $\C^* \rightarrow T$. 
\end{defi}
 
The groups $T^{\bullet}$ and $T_{\bullet}$ are both \emph{free abelian} groups 
whose rank is the dimension of $T$. The map 
$(\xi,\chi) \mapsto <\xi,\chi > := \deg (\xi\circ\chi)$ is a canonical 
\emph{non degenerate pairing} $T^{\bullet}\times T_{\bullet}\rightarrow \Z$ 
(\cf\ for example \cite[16.1]{H}). \\

The weight functor $T \leadsto T^{\bullet}$, resp. the coweight functor 
$T \leadsto T_{\bullet}$ is an antiequivalence, resp. an equivalence of 
categories between algebraic tori and finitely generated free abelian groups. 
The quasi inverse of the weight functor is $\Hom_{gr}(.\C^*)$.

An isomorphism of algebraic torus $\Phi: (\C^*)^{\mu} \rightarrow T$ gives 
a $\Z$-basis of $T_{\bullet}$ and the inverse isomorphism 
$\Phi^{-1}: T \rightarrow (\C^*)^{\mu}$ gives a $\Z$-basis of $T^{\bullet}$.

To a weight $\xi: T \rightarrow \C^*$ we associate its infinitesimal 
counterpart $L \xi: \mathfrak{t} \rightarrow \C$ (remember $\mathfrak{t}$
denotes the Lie algebra of $T$). We will sometimes ``identify" the group 
of weights and the group of infinitesimal weights and we will interpret 
$T^{\bullet}$ as a $\Z$-submodule of the complex dual space $\mathfrak{t}^*$ 
of $\mathfrak{t}$. According to the tradition, we will freely use the 
\emph{additive} notation for the weights. We will denote $\mathcal{W}_{\R}$ 
the real vector space $\R \otimes_{\Z} T^{\bullet}$.

For $\xi \in T^{\bullet}$, $\chi \in T_{\bullet}$, we define: 
\[
<L\xi,L\chi > := L\xi \circ L\chi \,(1) = <\xi,\chi >. 
\]

Let $G$ be a linear algebraic group and let be $D$ an \emph{abelian 
semi-simple group}. We recall that the \emph{roots} of $D$ are the 
\emph{non trivial} weights on $D$ for the adjoint action of $D$ on 
the Lie algebra $\mathfrak{g}$. We denote $\mathfrak{g}_{\xi}$ the 
root space associated to the root $\xi$:
$$
\mathfrak{g}_{\xi} :=
\{x\in\mathfrak{g} \tq \forall \lambda\in D \;,\;
(\Ad \lambda)(x) = \xi(\lambda)\,x \}.
$$

We have 
$\mathfrak{g} =
\mathfrak{g}_0 \oplus \bigoplus_{\xi\in\mathcal{R}} \mathfrak{g}_{\xi}$, 
the sum being on the set of roots $\mathcal{R}$ and 
$\mathfrak{g}_0$ being the space of elements \emph{invariant} by $D$.

If $D = T$ is connected (a torus), then 
$$
\mathfrak{g}_{\xi} =
\{ x\in\mathfrak{g} \tq \forall \tau \in \mathfrak{t} \;,\; 
(\ad \tau) (x)=L\xi(\tau)\, x\}.
$$

Let $\chi$ be a \emph{non trivial} coweight on a torus $T\subset G$, 
it induces a \emph{grading} of Lie algebras on $\mathfrak{g}$:
\[
\forall\, k \in \Z \;,\; 
gr_{\chi}^k := \{x \in \mathfrak{g} \tq 
\forall t \in \C^* \;,\; (\Ad \chi(t))(x) = t^k x\} =
\{x \in \mathfrak{g} \tq [L\chi(1),x] = k x\}.
\]
If $\xi$ is a weight on $T$, then there exists a unique $k\in\Z$ such that 
$\mathfrak{g}_{\xi} \subset gr_{\chi}^k$ and we have $k = <\xi,\chi >$. 
In particular
$\mathfrak{g}_0\subset gr_{\chi}^0$. We have:
\[
gr_{\chi}^k = \bigoplus_{<\xi,\chi>=k} \mathfrak{g}_{\xi}.
\]

 
\subsubsection{$\Theta$-coweights and $\Theta$-structures}

\begin{defi}
Let $D \subset G$ be an abelian semi-simple subgroup and 
$\mathcal{P} := \{\xi_i\}_{i\in I}$ a finite family of non trivial weights 
on $D$. We will say that a coweight $\chi$ on $D$ is \emph{positive} 
(resp. \emph{negative}) on $\mathcal{P}$ if, for every $i\in I$, 
$<\xi_i,\chi >\, >0$ (resp. $<\xi_i,\chi >\, <0$).
\end{defi}

\begin{defi}
Let $G$ be a \emph{triangularizable} linear algebraic group. Let $D$ be 
a \emph{Levi subgroup of $G$}. By proposition \ref{triangprop}, $D$
is abelian semi-simple. A $\Theta$-\emph{coweight} on $D$ is a coweight 
on $D$ which is \emph{negative} on the family of \emph{roots} for the 
adjoint action of $D$ on the Lie algebra $\mathfrak{g}$ of $G$.
\end{defi}

If $\chi$ is a $\Theta$-\emph{coweight} on $D$, then 
$\mathfrak{g}_0=gr_{\chi}^0$ and
$\mathfrak{g}=\bigoplus_{k\in -\N} gr_{\chi}^k$.
\bigskip

\paragraph{$\Theta$-structures}

\begin{defi}
\label{deftheta}
We will say that a linear algebraic group $G$ admits a $\Theta$-structure 
if it is triangularizable and if there exists a $\Theta$-\emph{coweight} 
on a Levi subgroup of $G$.
\end{defi}

Then, by conjugation, there exists $\Theta$-\emph{coweight} on any Levi 
subgroup of $G$.

\begin{defi}
Let $G$ be a linear algebraic group, let $D \subset G$ be an abelian 
semi-simple subgroup and $\chi$ a $\Theta$-coweight on $D$. We will 
say that $\chi$ is \emph{dominant} if, for every root $\zeta$ on $D$, 
we have $\dim \mathfrak{g}_{\zeta} \leq -<\zeta,\chi>$.
\end{defi}

\begin{lemm}
\label{domlemm}
Let $G$ be a \emph{triangularizable} complex linear algebraic group, 
let $D \subset G$ be an abelian semi-simple subgroup. We suppose that 
$\chi$ is a $\Theta$-coweight on $D$. Then there exists a $\Theta$-coweight 
on $D$ which is \emph{dominant}.
\end{lemm}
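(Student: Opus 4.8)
The plan is to obtain the desired dominant $\Theta$-coweight simply by replacing $\chi$ with a sufficiently large positive multiple of itself. First I would recall that, since $D$ is abelian and semi-simple, its identity component is a torus $T$, and any coweight $\chi:\C^{*}\rightarrow D$ has connected source and hence factors through $T$; thus the coweights on $D$ are exactly the elements of the free abelian coweight lattice $T_{\bullet}$, and for any weight $\zeta$ on $D$ the integer $<\zeta,\chi> = \deg(\zeta\circ\chi)$ depends $\Z$-linearly on $\chi\in T_{\bullet}$. In particular, writing $N\chi$ for the coweight $t\mapsto\chi(t)^{N}$, one has $<\zeta,N\chi> = N<\zeta,\chi>$ for every $N\in\Z$.

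Next I would exploit the hypothesis that $\chi$ is a $\Theta$-coweight: for every root $\zeta$ of the adjoint action of $D$ on $\mathfrak{g}$ one has $<\zeta,\chi>\, <0$, hence $<\zeta,\chi>\, \leq -1$ since this is an integer. Consequently, for every positive integer $N$, the coweight $N\chi$ is again a $\Theta$-coweight, and moreover $-<\zeta,N\chi> = N\,(-<\zeta,\chi>)\geq N$ for every root $\zeta$.

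Finally, since $\mathfrak{g}$ is finite dimensional the set $\mathcal{R}$ of roots of $D$ on $\mathfrak{g}$ is finite (if it is empty the conclusion is vacuous), so that $m:=\max_{\zeta\in\mathcal{R}}\dim\mathfrak{g}_{\zeta}$ is a well-defined positive integer. Choosing any integer $N\geq m$, I would then conclude that $\dim\mathfrak{g}_{\zeta}\leq m\leq N\leq -<\zeta,N\chi>$ for every root $\zeta$, which is precisely the statement that $N\chi$ is a dominant $\Theta$-coweight on $D$. I do not expect any genuine obstacle here: the whole argument is a rescaling, and the one point worth recording is the finiteness of $\mathcal{R}$, which is immediate from $\dim_{\C}\mathfrak{g}<\infty$.
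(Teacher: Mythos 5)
Your argument is correct and is essentially identical to the paper's proof: both replace $\chi$ by $\chi\circ\varphi_{m}$ (i.e.\ $t\mapsto\chi(t^{m})$), use $<\xi,\chi>\,\leq -1$ to get $<\xi,\chi_{m}>\,\leq -m$, and take $m\geq\max_{\xi\in\mathcal{R}}\dim\mathfrak{g}_{\xi}$. Your explicit remarks on the finiteness of $\mathcal{R}$ and the factorization of $\chi$ through the maximal torus are harmless additions.
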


\begin{proof}
Let $m \in \N^*$ and $\varphi_m:\C^* \rightarrow \C^*$ defined by 
$\varphi_m: t \mapsto t^m$. Then $\chi_m:=\chi \circ \varphi_m$ is a coweight 
on $T$, and for every root $\xi$, we have $<\xi,\chi> < 0$, that is
$<\xi,\chi> \leq -1$, whence $<\xi,\chi_m> = m <\xi,\chi>\, \leq - m$, 
and $\chi_m$ is a $\Theta$-coweight. Then, for a sufficiently big $m$ 
($m \geq \max_{\xi\in\mathcal{R}}\, (\dim \mathfrak{g}_{\xi})$), $\chi_m$ 
is dominant.
\end{proof}


\subsubsection{Existence of a $\Theta$-structure}
 
\begin{rema}
\label{TL:contrex}
\begin{enumerate}
\item
If $G = U \rtimes D$ is a Levi decomposition \emph{such that the semidirect
product is not direct} and if $D$ is an abelian \emph{finite} group, then 
there exists no $\theta$-structure on $G$.
\item 
We suppose that there exists a $\Theta$-structure on a linear algebraic 
group $G$. If $\xi$ is a root, then $\xi^{-1}$ is \emph{not} a root.
\item
\label{TL:contrex0}
There exists a triangularizable connected linear algebraic group $G$ 
such that there exists no $\Theta$-structure on $G$. Let :
\[
G:=
\Bigl\{ \begin{pmatrix}
1 & \alpha & \beta \\
0 & t & \gamma \\
0 & 0 & 1
\end{pmatrix}~\Big\vert ~ t\in\C^*, \alpha, \beta, \gamma \in \C\Bigr\},
\]
it is triangular and it admits the infinitesimal roots $1$ and $-1$, 
therefore there exists no $\Theta$-structure on $G$. 
\end{enumerate}
\end{rema}

A triangularizable group being given it seems difficult to find a practical 
criterium to decide if it admits a $\Theta$-structure. We shall give now a 
\emph{sufficient condition} (we will use it below for the case of Borel 
subgroups of reductive groups).

\begin{defi}
Let $G$ a linear algebraic group and $T \subset G$ a \emph{torus}
A \emph{good system} of roots for the adjoint action of $T$ on 
$\mathfrak{g}$ is a set $\Sigma$ of roots
such that 
\begin{itemize}
\item[(i)]
$\Sigma$ is a $\R$-\emph{free} subset of $W_{\R}$;
\item[(ii)]
every root $\xi\in\mathcal{R}$ can be written 
$\displaystyle{\xi = \sum_{k\in I} a_i \xi_i}$, with, for every 
$i\in I$, $a_i\in\R_+$ and $\xi_i \in \Sigma$.
 \end{itemize}
\end{defi}

\begin{prop}
\label{TL:propgoodsyst}
Let $G$ a connected triangularisable group and $T\subset G$ a 
\emph{maximal torus}. If there exists a good system of roots for 
the adjoint action of $T$ on $\mathfrak{g}$, then there exists a 
$\Theta$-structure on $G$.
\end{prop}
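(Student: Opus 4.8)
The plan is to construct an explicit $\Theta$-coweight on the maximal torus $T$ directly from the good system of roots $\Sigma = \{\xi_1,\ldots,\xi_m\}$. First I would observe that, since $\Sigma$ is $\R$-free in $\mathcal{W}_\R = \R \otimes_\Z T^\bullet$ and $T^\bullet$ has finite rank $\mu = \dim T$, we can complete $\Sigma$ to a basis of $\mathcal{W}_\R$; but more usefully, because the pairing $T^\bullet \times T_\bullet \to \Z$ is non-degenerate and $\Sigma$ is linearly independent over $\R$, there exists a coweight $\chi \in T_\bullet$ (possibly after clearing denominators, i.e. replacing an \emph{a priori} rational solution by an integral multiple) such that $\langle \xi_i, \chi \rangle < 0$ for every $i \in I$. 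Concretely: the $\R$-linear functionals $\xi_i$ being independent, the system of strict inequalities $\langle \xi_i, x \rangle < 0$ defines a nonempty open cone in $T_\bullet \otimes \R$, hence contains a rational point, hence (scaling) an integral point $\chi \in T_\bullet$.

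Next I would check that such a $\chi$ is automatically a $\Theta$-coweight, i.e. negative on \emph{all} roots, not merely on those in $\Sigma$. This is exactly where condition (ii) of the definition of a good system is used: every root $\xi \in \mathcal{R}$ can be written $\xi = \sum_{i \in I} a_i \xi_i$ with all $a_i \in \R_+$. Then $\langle \xi, \chi \rangle = \sum_i a_i \langle \xi_i, \chi \rangle$ is a nonnegative combination of strictly negative numbers. I would note that at least one $a_i$ is nonzero (as $\xi \neq 0$, being a root), so the sum is strictly negative, giving $\langle \xi, \chi \rangle < 0$ for every root $\xi$. Hence $\chi$ is negative on the family of roots for the adjoint action of $T$ on $\mathfrak{g}$.

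Finally I would assemble the pieces: $G$ is triangularizable by hypothesis; since $G$ is connected, its maximal torus $T$ is a Levi subgroup by proposition \ref{triangprop}; and we have just produced a $\Theta$-coweight on this Levi subgroup. By definition \ref{deftheta}, $G$ admits a $\Theta$-structure. (If one wants a dominant one, invoke lemma \ref{domlemm} to replace $\chi$ by $\chi \circ \varphi_m$ for large $m$, but that is not required by the statement.)

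The main obstacle — really the only non-formal point — is passing from a real solution of the strict inequality system to an \emph{integral} coweight in $T_\bullet$: one must be slightly careful that $T_\bullet$ is a full-rank lattice in $T_\bullet \otimes \R$ and that the open cone $\{x : \langle \xi_i, x \rangle < 0 \ \forall i\}$, being nonempty and open (nonemptiness uses the $\R$-independence of $\Sigma$, which guarantees the functionals impose no contradictory constraints), therefore meets the lattice after scaling. Everything else is a direct unwinding of the definitions of good system, $\Theta$-coweight, Levi subgroup of a connected triangularizable group, and $\Theta$-structure.
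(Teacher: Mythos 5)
Your proof is correct and follows essentially the same route as the paper's: the paper likewise uses the $\R$-independence of the good system to get a nonempty open cone of solutions to the strict inequalities, picks a rational point, scales it to an integral coweight, and then uses condition (ii) of the definition of a good system to propagate negativity to all roots. The only difference is presentational — the paper fixes an isomorphism $\Phi:(\C^*)^{\mu}\rightarrow T$ and works with coordinates on $\R^{\mu}$, whereas you work intrinsically with the lattice $T_{\bullet}$ and the pairing, which amounts to the same thing.
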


\begin{proof}
We prove firstly a preliminary lemma (part (ii) of this lemma will 
be used later).

\begin{lemm}
\label{TL:lem}
\begin{itemize}
\item[(i)]
Let $f_1,\ldots ,f_{\mu'}$ \emph{independent} $\R$-linear forms on $\R^{\mu}$, 
there exists $p=(p_1,\ldots ,p_{\mu})\in\Z^{\mu}$ such that $f_i(p)< 0$ for 
all $i=1,\ldots ,\mu'$.
\item[(ii)]
Let $f_1,\ldots ,f_{\mu'}$ non trivial $\R$-linear forms on $\R^{\mu}$, 
there exists $p=(p_1,\ldots ,p_{\mu})\in\Z^{\mu}$ such that $f_i(p)\neq 0$ 
for all $i=1,\ldots ,\mu'$.
\end{itemize}
\end{lemm}

\begin{proof}
\begin{itemize}
\item[(i)]
The set $U := \{y\in\R^{\mu} \tq f_i(y)<0,~i=1,\ldots ,\mu'\}$ is a 
\emph{non void} open subset of $\R^{\mu}$, therefore there exists  
$p' = (p'_1,\ldots ,p'_{\mu}) \in \Q^{\mu} \cap U$. If $y \in U$ and $a \in\N^*$, 
then $a y \in U$, the result follows.
\item[(ii)]
The set $U := \{y\in\R^{\mu} \tq f_i(y) \neq 0,~i=1,\ldots ,\mu'\}$ is a 
\emph{non void} open subset of $\R^{\mu}$, therefore there exists  
$p' = (p'_1,\ldots ,p'_{\mu}) \in \Q^{\mu} \cap U$. If $y \in U$ and $a \in\N^*$, 
then $a y \in U$, the result follows.
\end{itemize}
\end{proof}

We can now prove the proposition.

Let $\Phi:(\C^*)^{\mu} \rightarrow T$ be an isomorphism of tori.

Let $\Sigma = \{\xi_1,\ldots ,\xi_{\mu'}\}$ be a \emph{good system} of 
roots of $G$. For $i = 1,\ldots ,\mu'$, we set $f_i: = L\xi_i\circ L\Phi$.
We interpret $f_1,\ldots ,f_{\mu'}$ as linear forms on $\R^{\mu}$, 
by hypothesis they are independent, therefore we can apply the lemma 
\ref{TL:lem} above. There exists $p = (p_1,\ldots ,p_{\mu})\in\Z^{\mu}$ 
such that $f_i(p)< 0$ for all $i = 1,\ldots ,\mu'$. We define a morphism
$\chi: \C^* \rightarrow T$ by 
$\Phi^{-1}\circ\chi:t \rightarrow (t_1:=t^{p_1},\ldots ,t_{\mu}:=t^{p_{\mu}})$,
for $i=1,\ldots ,\mu'$, then we set 
 $v_i := f_i \circ L(\Phi^{-1} \circ \chi ):= L\xi_i \circ L\chi$. 
We have  $v_i(1)=f_i(p)<0$. If 
$\xi$ is a root, then 
$\displaystyle <L\xi ,L\chi>\, = \sum_{i=1,\ldots,\mu'} a_iv_i(1)$ with 
$a_i \geq 0$, $a_1 + \cdots + a_{\mu'} > 0$ and therefore 
$<L\xi ,L\chi>\ =\, <\xi,\chi>\, <0$.
\end{proof}

In the following proposition, $T$ is a maximal torus of $G$. One implication
is proposition \ref{TL:propgoodsyst}.

\begin{prop}
If the dimension of $T$ is one, then $G$ admits admits a $\Theta$-structure 
if and only if there exists a good system of roots. 
\end{prop}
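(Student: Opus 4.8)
The implication ``good system of roots $\Rightarrow$ $\Theta$-structure'' is precisely Proposition~\ref{TL:propgoodsyst}, which holds with no restriction on $\dim T$, so the plan is to prove only the converse: assuming $\dim T = 1$ and that the (connected triangularizable) group $G$ admits a $\Theta$-structure, I shall produce a good system of roots. The first step is to transport the $\Theta$-coweight to the fixed maximal torus $T$. By Proposition~\ref{triangprop}, in a connected triangularizable group the Levi subgroups are exactly the maximal tori, hence are all conjugate; so the $\Theta$-structure, together with the remark following Definition~\ref{deftheta}, yields a $\Theta$-coweight $\chi\colon \C^* \to T$, i.e.\ a coweight with $\langle \xi,\chi\rangle < 0$ for every root $\xi \in \mathcal{R}$ of the adjoint action of $T$ on $\mathfrak{g}$.

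Next I would exploit $\dim T = 1$. Then $T^{\bullet}$ is free of rank one and $W_{\R} = \R \otimes_{\Z} T^{\bullet}$ is a one-dimensional real line, inside which every root is a nonzero vector. The assignment $\xi \mapsto \langle \xi,\chi\rangle$ extends to an $\R$-linear form on $W_{\R}$, which is nonzero as soon as $\mathcal{R} \neq \emptyset$ (it is strictly negative on at least one root); being a nonzero linear form on a line, it is injective and sends every root to a strictly negative real number. Hence all roots lie on one and the same of the two open rays of $W_{\R} \setminus \{0\}$.

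Finally I would exhibit $\Sigma$. If $\mathcal{R} = \emptyset$, then $\Sigma := \emptyset$ is vacuously a good system of roots. Otherwise, pick any $\xi_0 \in \mathcal{R}$ and set $\Sigma := \{\xi_0\}$: it is $\R$-free since $\xi_0 \neq 0$, and every root $\xi$, lying on the same open ray of the line $W_{\R}$ as $\xi_0$, can be written $\xi = a\,\xi_0$ with $a$ a strictly positive real, in particular $a \in \R_+$; this is exactly condition (ii) of the definition of a good system. So $\Sigma$ is a good system of roots, which proves the converse. I do not anticipate a genuine obstacle: collapsing $W_{\R}$ to a line forces the negativity of $\chi$ on the roots to confine them to a single ray, after which a single root is already a good system; the only points needing care are the passage from an arbitrary Levi subgroup to the distinguished maximal torus $T$ (Proposition~\ref{triangprop} and the remark after Definition~\ref{deftheta}) and the trivial case $\mathcal{R} = \emptyset$.
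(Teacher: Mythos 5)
Your proof is correct and follows essentially the same route as the paper's: the forward implication is Proposition~\ref{TL:propgoodsyst}, and for the converse one uses that in the one-dimensional weight line the negativity of $\langle\xi,\chi\rangle$ on all roots forces every root to be a positive multiple of any fixed root, so a single root is a good system. Your extra care about conjugating the Levi subgroup to $T$ and about the case $\mathcal{R}=\emptyset$ is harmless added detail.
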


\begin{proof}
If $G$ admits admits a $\Theta$-structure, then there exists a surjective 
morphism $\eta:\C^*\rightarrow T$ such that, for every root $\xi$, 
$<\xi,\eta>$ is negative. Let $\xi_1$ be a root, then, for every root
$\xi$, we have $L\xi =a L\xi_1$ with $a > 0$, therefore $\{\xi_1\}$ is a good 
system of roots.
\end{proof}

For basic definitions on Borel subgroups, positive systems of roots...
\cf \cite{H}.

\begin{prop}
\label{TL:borelth}
If $G^+$ is a \emph{Borel subgroup} of a \emph{connected reductive} 
algebraic group, then there exists a $\Theta$-structure on $G^+$.
\end{prop}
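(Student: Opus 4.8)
The plan is to deduce the statement directly from Proposition \ref{TL:propgoodsyst}. A Borel subgroup $G^+$ is by definition connected and solvable, so the Lie--Kolchin theorem makes it triangularizable; moreover it contains a maximal torus $T$ of the ambient connected reductive group $G$, and such a $T$ is then a maximal torus of $G^+$ as well. Hence the only thing left to produce is a \emph{good system of roots} for the adjoint action of $T$ on $\mathfrak{g}^+ := \Lie(G^+)$, after which Proposition \ref{TL:propgoodsyst} immediately yields a $\Theta$-structure on $G^+$.

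First I would recall the root-space description of $\mathfrak{g}^+$. Writing $\mathcal{R}$ for the root system of $(G,T)$, the choice of the Borel subgroup $G^+$ singles out a system of positive roots $\mathcal{R}^+$, and one has the decomposition $\mathfrak{g}^+ = \mathfrak{t} \oplus \bigoplus_{\xi \in \mathcal{R}^+} \mathfrak{g}_\xi$, each $\mathfrak{g}_\xi$ being one-dimensional. Consequently the roots for the adjoint action of $T$ on $\mathfrak{g}^+$ --- that is, the non-trivial weights occurring in $\mathfrak{g}^+$ --- are exactly the positive roots $\mathcal{R}^+$.

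Then I would take $\Sigma$ to be the set $\{\xi_1,\dots,\xi_l\}$ of simple roots determined by $\mathcal{R}^+$. Condition (i) in the definition of a good system holds because the simple roots are linearly independent in $\mathfrak{t}^*$, hence form an $\R$-free subset of $\mathcal{W}_{\R} = \R \otimes_{\Z} T^{\bullet}$. Condition (ii) holds because, by the classical theory of root systems, every positive root is a $\Z_{\geq 0}$-linear combination of the simple roots, a fortiori an $\R_{\geq 0}$-combination of elements of $\Sigma$. Thus $\Sigma$ is a good system of roots for the adjoint action of $T$ on $\mathfrak{g}^+$, and Proposition \ref{TL:propgoodsyst} applies.

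There is essentially no obstacle here: the result is an application of the classical structure theory of reductive groups. The one mild point to watch is that $G$ is only assumed reductive, not semisimple, so that $T$ may be strictly larger than a maximal torus of the derived group of $G$ --- but the simple roots of $\mathcal{R}$ are still linearly independent in $\mathfrak{t}^*$ and still generate all of $\mathcal{R}^+$ with non-negative integer coefficients, so the argument above goes through verbatim.
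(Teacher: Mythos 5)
Your proof is correct and follows essentially the same route as the paper: pick the maximal torus $T \subset G^+$, observe that the roots occurring in $\mathfrak{g}^+$ are the positive system $\mathcal{R}^+$ determined by the Borel, take the simple roots as the good system, and invoke Proposition \ref{TL:propgoodsyst}. Your added remarks on Lie--Kolchin and on the reductive-versus-semisimple point are fine but not needed beyond what the paper records.
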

  
\begin{proof}
Let $G^+$ be a Borel subgroup of the connected reductive algebraic group $G$. 
Let $T$ be a maximal torus of $G$ contained in $G^+$, then $G^+$ corresponds 
to a \emph{positive} system of roots $\mathcal{R}^+$ of $G'$ 
($\displaystyle\mathfrak{g} =
\mathfrak{t} + \bigoplus_{\xi\in\mathcal{R}^+} \mathfrak{g}_{\rho}$). 
We denote by $\mathcal{B}:=(\xi_1,\ldots ,\xi_{\mu})$ a \emph{basis} 
(or system of simple roots) of this system $\mathcal{R}^+$ (such a basis 
exists). Then every root in $\mathcal{R}^+$ is a linear combination of 
the roots of this basis with positive coefficients (they are integers) 
and therefore $\mathcal{B}$ is a \emph{good system} of roots. Then the 
result follows from the proposition \ref{TL:propgoodsyst}.
\end{proof}


\subsection{Some complements on linear algebraic groups}
\label{subsection:complementsLAG}

We shall use later this part for the solution of the local inverse 
problem and in our study of the global inverse problem. Similar tools 
were introduced by the first author in order to solve inverse problems 
in the \emph{differential case}. For the missing proofs \cf\ 
\cite[11.3,11.4]{vdPS2}. \\

We denote by $L(G)$ the subgroup of an algebraic group $G$ generated by 
all the maximal tori of $G$, it is a connected algebraic normal subgroup 
and the maximal torus of the algebraic group $V(G):=G/L(G)$ is reduced to 
the identity.

\begin{lemm}
\label{generators}
The Lie algebra $\mathfrak{L}$ of $L(G)$ is generated by $\mathfrak{t}$ 
(the Lie algebra of a maximal torus) and the root-spaces $\mathfrak{g}_{\xi}$. 
\end{lemm}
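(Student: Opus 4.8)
The plan is to establish the two inclusions $\mathfrak{m}\subseteq\mathfrak{L}$ and $\mathfrak{L}\subseteq\mathfrak{m}$, where $\mathfrak{m}$ denotes the Lie subalgebra of $\mathfrak{g}$ generated by $\mathfrak{t}$ together with all the root spaces $\mathfrak{g}_{\xi}$, $\xi\in\mathcal{R}$. First I would reduce to the case where $G$ is connected: every torus of $G$ lies in $G^{0}$, so $G$ and $G^{0}$ have the same maximal tori, whence $L(G)=L(G^{0})$, while $\mathfrak{g}=\Lie(G)=\Lie(G^{0})$ and the root data are unchanged. So assume $G$ connected.

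For $\mathfrak{m}\subseteq\mathfrak{L}$: clearly $\mathfrak{t}=\Lie(T)\subseteq\mathfrak{L}$ since $T\subseteq L(G)$, and as $\mathfrak{L}$ is a Lie subalgebra it suffices to show $\mathfrak{g}_{\xi}\subseteq\mathfrak{L}$ for each root $\xi$. Fix $0\neq x\in\mathfrak{g}_{\xi}$. Choosing, by nondegeneracy of the pairing $T^{\bullet}\times T_{\bullet}\to\Z$, a coweight $\lambda$ of $T$ with $k:=\langle\xi,\lambda\rangle\neq 0$, one has $\Ad(\lambda(s))x=s^{k}x$; comparing the characteristic polynomials of $x$ and $s^{k}x$ (which are $\GL$-conjugate) forces $x$ to be nilpotent, so $t\mapsto\exp(tx)$ is a morphism from $\mathbf{G}_{a}$ to $G$ whose image $U_{x}$ is a one-dimensional unipotent subgroup, normalized by $T$ because $\Ad(t)x=\xi(t)x$. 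In the connected solvable group $H:=TU_{x}\cong U_{x}\rtimes T$ the torus $T$ acts on $U_{x}$ through the nontrivial (hence surjective) character $\xi$; the commutators $utu^{-1}t^{-1}$ with $u\in U_{x}$, $t\in T$ lie in $L(H)$ (since $utu^{-1}$ lies in the maximal torus $uTu^{-1}$ and $t^{-1}\in T$) and, by a short computation, form a Zariski-dense subset of $U_{x}$, so that $U_{x}\subseteq L(H)$. Since every maximal torus of $H$ is $H$-conjugate to $T$ and hence is a maximal torus of $G$, we get $L(H)\subseteq L(G)$, so $\C x=\Lie(U_{x})\subseteq\mathfrak{L}$. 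Thus $\mathfrak{g}_{\xi}\subseteq\mathfrak{L}$ for all $\xi$, and $\mathfrak{m}\subseteq\mathfrak{L}$.

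For $\mathfrak{L}\subseteq\mathfrak{m}$, the crucial point is that $\mathfrak{m}$ is an \emph{ideal} of $\mathfrak{g}$. Using the weight decomposition $\mathfrak{g}=\mathfrak{g}_{0}\oplus\bigoplus_{\xi}\mathfrak{g}_{\xi}$: one has $[\mathfrak{g}_{\xi},\mathfrak{m}]\subseteq\mathfrak{m}$ since $\mathfrak{g}_{\xi}\subseteq\mathfrak{m}$ and $\mathfrak{m}$ is a subalgebra; and $[\mathfrak{g}_{0},\mathfrak{m}]\subseteq\mathfrak{m}$ follows by induction on bracket length from $[\mathfrak{t},\mathfrak{g}_{0}]=0$ (because $\mathfrak{g}_{0}=\mathfrak{g}^{T}$ is killed by $\ad\mathfrak{t}$), from $[\mathfrak{g}_{0},\mathfrak{g}_{\xi}]\subseteq\mathfrak{g}_{\xi}\subseteq\mathfrak{m}$, and from the Jacobi identity. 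Since $G$ is connected, an ideal of $\Lie(G)$ is $\Ad(G)$-stable (its stabilizer in $G$ is closed with full Lie algebra), so $\Ad(g)\mathfrak{t}\subseteq\Ad(g)\mathfrak{m}=\mathfrak{m}$ for every $g\in G$. Finally $L(G)$ is the subgroup generated by the closed connected subgroups $gTg^{-1}$, so $\mathfrak{L}$ is the Lie subalgebra of $\mathfrak{g}$ generated by the $\Lie(gTg^{-1})=\Ad(g)\mathfrak{t}$; each of these lies in the subalgebra $\mathfrak{m}$, hence $\mathfrak{L}\subseteq\mathfrak{m}$ and $\mathfrak{L}=\mathfrak{m}$.

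The main obstacle I expect is the step $\mathfrak{g}_{\xi}\subseteq\mathfrak{L}$, i.e. showing that root subgroups lie in the subgroup generated by maximal tori in a possibly non-reductive group: one must first secure nilpotency of root vectors so that $U_{x}$ makes sense inside $G$, and then run the $\mathbf{G}_{a}\rtimes\mathbf{G}_{m}$ commutator computation. A secondary point is the appeal, in the last step, to the fact that the Lie algebra of the subgroup generated by a family of closed connected subgroups is the subalgebra generated by their Lie algebras; one could try to bypass it by realizing $\mathfrak{m}$ as the Lie algebra of a closed connected normal subgroup, but that needs $\mathfrak{m}$ to be algebraic, which brings one back to essentially the same structural input.
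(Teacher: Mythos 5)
The paper gives no proof of this lemma at all: it merely refers the reader to \cite[11.3, 11.4]{vdPS2}, and your argument is correct and essentially the standard one found there. In particular the two key steps are sound: forcing nilpotency of a root vector $x\in\mathfrak{g}_{\xi}$ via a coweight pairing nontrivially with $\xi$, so that the commutator identity $\exp(ax)\,t\,\exp(-ax)\,t^{-1}=\exp\bigl(a(1-\xi(t))x\bigr)$ places the whole root group inside $L(G)$; and, for the reverse inclusion, the observation that the subalgebra generated by $\mathfrak{t}$ and the $\mathfrak{g}_{\xi}$ is an $\Ad(G)$-stable ideal containing every $\Ad(g)\mathfrak{t}$.
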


The group $R_u(G)/\bigl(G^0,R_u(G)\bigr)$ is a \emph{commutative unipotent} 
group therefore it can be identified with a finite dimensional complex 
vector space. The finite group $G/G^0$ acts naturally on 
$R_u(G)/\bigl(G^0,R_u(G)\bigr)$.
\smallskip

We set $S(G):=R_u(G)/\bigl(G^0,R_u(G)\bigr)\rtimes G/G^0$. Due to a result 
of the first author \cite[Proposition 1.8, page 276]{vdPS2} , there is an 
isomorphim of algebraic groups:
\[
S(G)\rightarrow V(G)/\bigl(V(G)^0,V(G)^0\bigr). 
\]
\begin{lemm}
\label{nbgenerators}
The linear algebraic groups $S(G)$, $V(G)$ and $V(G)/(V(G)^0,V(G)^0)$ have 
the same number $m$ of topological generators. 

We have $\dim R_u(G)/\bigl(G^0,R_u(G)\bigr)\leq m$ and it is an equality 
if $G$ is connected.

If $G$ is topologically generated by $s$ elements, then $m\leq s$.
\end{lemm}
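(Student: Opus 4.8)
The plan is to reduce everything to two structural facts recorded just above: the isomorphism $S(G)\cong V(G)/(V(G)^0,V(G)^0)$, and the fact that $V(G)^0$ is unipotent. For the latter one checks that $L(G)$ is connected, lies in $G^0$, and contains every maximal torus of $G^0$ (the maximal tori of $G$ and of $G^0$ coincide), so that $V(G)^0=G^0/L(G)$ is a connected group all of whose tori lift, in characteristic zero, to tori of $G^0$ contained in $L(G)$; hence $V(G)^0$ has trivial maximal torus and is unipotent by the Levi decomposition. Writing $d(H)$ for the least number of topological (Zariski-dense) generators of a linear algebraic group $H$, I would first establish the three-way equality $d(V(G))=d\bigl(V(G)/(V(G)^0,V(G)^0)\bigr)=d(S(G))$. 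The rightmost equality is the quoted isomorphism, and $d(V(G))\ge d\bigl(V(G)/(V(G)^0,V(G)^0)\bigr)$ is automatic since images of topological generators topologically generate any algebraic quotient; everything hinges on the reverse inequality.

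That reverse inequality is the main obstacle: it is a Frattini-type statement for the derived subgroup of a connected unipotent group. Set $H:=V(G)$, $U:=V(G)^0$, and $D:=(U,U)$, a closed connected normal subgroup of $H$. Given topological generators $\bar g_1,\dots,\bar g_k$ of $H/D$ with $k=d(H/D)$, I would take \emph{arbitrary} lifts $g_i\in H$ and show the Zariski closure $\bar\Gamma$ of $\langle g_1,\dots,g_k\rangle$ equals $H$. Its image in $H/D$ is closed and contains the $\bar g_i$, hence equals $H/D$, so $\bar\Gamma\cdot D=H$; in particular $\bar\Gamma$ surjects onto $H/H^0$, and it suffices to see $\bar\Gamma^0=H^0=U$. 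Since $\bar\Gamma^0\subseteq U$ is connected of finite index in $\bar\Gamma$, its image in $H/D$ is a closed connected subgroup of finite index, hence contains $(H/D)^0=U/D$; therefore $\bar\Gamma^0\cdot D=U$, which on Lie algebras reads $\mathfrak{k}+[\mathfrak{u},\mathfrak{u}]=\mathfrak{u}$ where $\mathfrak{k}=\Lie(\bar\Gamma^0)$ and $\mathfrak{u}=\Lie(U)$. As $\mathfrak{u}$ is nilpotent and $\mathfrak{k}$ a subalgebra, a short induction along the descending central series of $\mathfrak{u}$ forces $\mathfrak{k}=\mathfrak{u}$, so $\bar\Gamma^0=U$ and $\bar\Gamma=H$. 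It is exactly the nilpotency of $\mathfrak{u}$ — i.e. the unipotence of $V(G)^0$ — that makes this step work, and I expect it to be the technical core of the proof.

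For the dimension statement I would write $S(G)=W\rtimes F$ with $W:=R_u(G)/(G^0,R_u(G))$, a commutative unipotent group, i.e. a vector group $\cong\C^r$ with $r=\dim R_u(G)/(G^0,R_u(G))$, and $F:=G/G^0$ finite, so that $W=S(G)^0$. If $G$ is connected then $F$ is trivial and $S(G)=W\cong\C^r$; a vector group of dimension $r$ has exactly $r$ topological generators (fewer than $r$ elements generate a subgroup lying in a proper linear subspace, whereas a rank-$r$ lattice is Zariski dense), so $m=d(S(G))=r$, which is the claimed equality. For general $G$, a set of $m$ topological generators of $S(G)$ generates a subgroup whose intersection with $W$ — being the kernel of a map onto the finite group $F$ — has finite index, hence is Zariski dense in $W=S(G)^0$, and being finitely generated it spans a $\C$-subspace of dimension controlled by $m$; this gives the inequality $r\le m$ needed here. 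Finally, the last assertion follows at once: $V(G)=G/L(G)$ is an algebraic quotient of $G$, so $d(V(G))\le d(G)$, and by the three-way equality $m=d(V(G))\le s$ whenever $G$ is topologically generated by $s$ elements.
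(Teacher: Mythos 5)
The paper itself gives no proof of this lemma (it is quoted from van der Put--Singer, 11.3--11.4), so your argument must stand on its own. Two of the three assertions are handled correctly and completely: the three-way equality of $m$ follows from your Frattini-type lifting argument, whose core --- that $V(G)^0$ is unipotent, so that a subalgebra $\mathfrak{k}$ of the nilpotent $\mathfrak{u}=\Lie V(G)^0$ with $\mathfrak{k}+[\mathfrak{u},\mathfrak{u}]=\mathfrak{u}$ must equal $\mathfrak{u}$ --- is valid; the deduction $m=d(V(G))\le d(G)\le s$ is immediate; and the connected case of the middle assertion is fine, since $S(G)$ is then the vector group $W:=R_u(G)/\bigl(G^0,R_u(G)\bigr)$ and a vector group of dimension $r$ needs exactly $r$ topological generators.

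The gap is in the non-connected case of the inequality $\dim W\le m$. You assert that $\Gamma\cap W$, having finite index in the $m$-generated group $\Gamma$, ``spans a $\C$-subspace of dimension controlled by $m$''. But a subgroup of index $n$ in an $m$-generated group is only guaranteed $n(m-1)+1$ generators (Schreier), and the $\C$-span of $\Gamma\cap W$ really can exceed $m$ dimensions, because one must throw in the $F$-conjugates of the generators and these enlarge the span whenever $F=G/G^0$ acts nontrivially on $W$. Concretely, take $W=\C^4$ and $F=\Z/3\Z=\langle f\rangle$ acting by $\diag(\omega,\omega,\omega^2,\omega^2)$, $\omega$ a primitive cube root of unity, and $G=W\rtimes F$ (so $L(G)$ is trivial and $S(G)=V(G)=G$): the three elements $(0,f)$, $(e_1+e_3,1)$, $(e_2+e_4,1)$ (with $e_1,e_2$ spanning the $\omega$-eigenspace and $e_3,e_4$ the $\omega^2$-eigenspace) generate a Zariski-dense subgroup, since $e_1+e_3$ and its conjugate $\omega e_1+\omega^2 e_3$ already span $\langle e_1,e_3\rangle_{\C}$; yet $\dim W=4>3$. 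So the step cannot be repaired without using that the action of $G/G^0$ on $W$ is \emph{trivial}. Under that extra hypothesis your argument does close up ($W$ is then a direct factor, hence a quotient, of $S(G)$, and a vector group that is a quotient of an $m$-topologically-generated group has dimension at most $m$); this triviality holds in every use the paper makes of the bound --- it is established in theorem \ref{NC:th0}(iv) from the commutativity of $V(G)$ before the dimension estimate is invoked --- but it is not among the stated hypotheses, so you must either add it or derive it where you apply the lemma.
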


\begin{lemm}
\label{generators2}
Let $G$ be an algebraic group endowed with a $\Theta$-structure defined by 
a $\Theta$-coweight $\chi$ on a Levi subgroup $D\subset G$. Let 
$T\subset D$ be the maximal torus of $D$. We set $U:=R_u(G)$ and denote 
$\mathfrak{u}$ its Lie algebra. Then:
\begin{itemize}
\item[(i)]
$\mathfrak{u} =
\mathfrak{u}_0\oplus \bigoplus_{\xi\in\mathcal{R}} \mathfrak{g}_{\xi}$ and
$\bigoplus_{\xi\in\mathcal{R}} \mathfrak{g}_{\xi}$ is a sub-Lie algebra of
$\mathfrak{u}$;
\item[(ii)]
$\mathfrak{L}=\mathfrak{t}\oplus\bigoplus_{\xi\in\mathcal{R}} \mathfrak{g}_{\xi}$,
$\mathfrak{g} =
\mathfrak{g}_0\oplus \bigoplus_{\xi\in\mathcal{R}} \mathfrak{g}_{\xi}$,
$\mathfrak{g}_0=\mathfrak{u}_0\oplus\mathfrak{t}$,
$\mathfrak{g}=\mathfrak{u}_0\oplus\mathfrak{L}$;
\end{itemize}
\end{lemm}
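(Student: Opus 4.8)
The plan is to combine the semidirect structure forced by triangularizability with the diagonalisation of the adjoint action of $D$, invoking the $\Theta$-hypothesis at exactly one point. First I would set up the structure theory: since $G$ carries a $\Theta$-structure it is triangularizable, so Proposition~\ref{triangprop} gives $G = R_u(G) \rtimes D$ with $U := R_u(G)$ unipotent (hence connected) and $D$ abelian semi-simple; thus $D^0 = T$ is a torus, $\mathfrak{d} := \Lie(D) = \mathfrak{t}$, and $T$ is a maximal torus of $G$ (it is already maximal in $G^0 = U \rtimes T$). On Lie algebras this yields a vector space decomposition $\mathfrak{g} = \mathfrak{u} \oplus \mathfrak{t}$ in which both summands are $\Ad(D)$-stable ($\mathfrak{u}$ because $U$ is normal, $\mathfrak{t} = \mathfrak{d}$ because $D$ is abelian) and $\Ad(D)$ acts trivially on $\mathfrak{t}$, so $\mathfrak{t} \subseteq \mathfrak{g}_0$. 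As $D$ is diagonalisable, its adjoint action splits $\mathfrak{g}$ into weight spaces $\mathfrak{g} = \mathfrak{g}_0 \oplus \bigoplus_{\xi \in \mathcal{R}} \mathfrak{g}_\xi$ compatibly with $\mathfrak{g} = \mathfrak{u} \oplus \mathfrak{t}$. Writing $\mathfrak{u}_0 := \mathfrak{u} \cap \mathfrak{g}_0$, comparison of weight components then gives $\mathfrak{g}_0 = \mathfrak{u}_0 \oplus \mathfrak{t}$ and $\mathfrak{g}_\xi = \mathfrak{u} \cap \mathfrak{g}_\xi \subseteq \mathfrak{u}$ for every root $\xi$, whence $\mathfrak{u} = \mathfrak{u}_0 \oplus \bigoplus_{\xi \in \mathcal{R}} \mathfrak{g}_\xi$; this is the first identity of (i) together with two of the identities of (ii).

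Next comes the step where the $\Theta$-structure is indispensable. Let $\chi$ be the given $\Theta$-coweight on $D$, so $\langle \xi, \chi \rangle < 0$ for every root $\xi \in \mathcal{R}$ (in additive notation for weights, so that $\langle \xi + \eta, \chi \rangle = \langle \xi, \chi \rangle + \langle \eta, \chi \rangle$). Since $[\mathfrak{g}_\xi, \mathfrak{g}_\eta] \subseteq \mathfrak{g}_{\xi + \eta}$ and $\langle \xi + \eta, \chi \rangle < 0$ for $\xi, \eta \in \mathcal{R}$, the weight $\xi + \eta$ is nontrivial, so $\mathfrak{g}_{\xi+\eta}$ is either a root space or zero, and in both cases is contained in $\bigoplus_{\zeta \in \mathcal{R}} \mathfrak{g}_\zeta$. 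Combined with $[\mathfrak{t}, \mathfrak{g}_\xi] \subseteq \mathfrak{g}_\xi$, this shows that $\bigoplus_{\xi \in \mathcal{R}} \mathfrak{g}_\xi$ is a sub-Lie algebra of $\mathfrak{u}$, which completes (i), and that $\mathfrak{t} \oplus \bigoplus_{\xi \in \mathcal{R}} \mathfrak{g}_\xi$ is a sub-Lie algebra of $\mathfrak{g}$. I expect this to be the only genuinely delicate point: without the $\Theta$-hypothesis the bracket $[\mathfrak{g}_\xi, \mathfrak{g}_{-\xi}]$ could land in $\mathfrak{g}_0$ and the decomposition would collapse, exactly as in Remark~\ref{TL:contrex}.

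Finally I would obtain (ii) from Lemma~\ref{generators}: $\mathfrak{L} = \Lie(L(G))$ is generated as a Lie algebra by $\mathfrak{t}$ and the root spaces of $T$. Since $\chi$ has image in $T$, no root of $\mathcal{R}$ restricts trivially to $T$, so the $T$-root spaces are precisely the $\mathfrak{g}_\xi$, $\xi \in \mathcal{R}$. Now the sub-Lie algebra $\mathfrak{t} \oplus \bigoplus_{\xi \in \mathcal{R}} \mathfrak{g}_\xi$ of the previous paragraph contains all these generators, hence contains $\mathfrak{L}$; conversely $\mathfrak{t} \subseteq \mathfrak{L}$ (as $T \subseteq L(G)$) and each $\mathfrak{g}_\xi \subseteq \mathfrak{L}$, so it is contained in $\mathfrak{L}$. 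Therefore $\mathfrak{L} = \mathfrak{t} \oplus \bigoplus_{\xi \in \mathcal{R}} \mathfrak{g}_\xi$. Substituting the weight decomposition and $\mathfrak{g}_0 = \mathfrak{u}_0 \oplus \mathfrak{t}$ yields $\mathfrak{g} = \mathfrak{g}_0 \oplus \bigoplus_{\xi \in \mathcal{R}} \mathfrak{g}_\xi = \mathfrak{u}_0 \oplus \mathfrak{t} \oplus \bigoplus_{\xi \in \mathcal{R}} \mathfrak{g}_\xi = \mathfrak{u}_0 \oplus \mathfrak{L}$, the last identity of (ii); all that remains is routine bookkeeping with the weight spaces.
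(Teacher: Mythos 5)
Your proof is correct and follows essentially the same route as the paper's: split $\mathfrak{g}=\mathfrak{u}\oplus\mathfrak{t}$ via the Levi decomposition, compare with the $D$-weight decomposition, use the $\Theta$-coweight to see that $\alpha+\beta$ is never trivial for roots $\alpha,\beta$ (so $[\mathfrak{g}_\alpha,\mathfrak{g}_\beta]$ stays in the sum of root spaces), and invoke Lemma~\ref{generators} for $\mathfrak{L}$. You are in fact more explicit than the paper at the one delicate point, namely that the $\Theta$-hypothesis is what forbids $\alpha+\beta=0$.
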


\begin{proof}

\begin{itemize}
\item[(i)]
For any weight $\xi$ on $D$, $\mathfrak{u}_{\xi}\subset \mathfrak{g}_{\xi}$ 
and if $\xi$ is a root, $\mathfrak{g}_{\xi}=\mathfrak{u}_{\xi}$.

Let $\alpha,\beta\in\mathcal{R}$, we have $\mathfrak{g}_{\alpha+\beta}=(0)$ or
$[\mathfrak{g}_{\alpha},\mathfrak{g}_{\beta}]\subset \mathfrak{g}_{\alpha+\beta}$. 
As $\alpha+\beta\neq 0$, in the second case $\alpha+\beta$ is a root. 
Hence $\bigoplus_{\xi\in\mathcal{R}}\mathfrak{g}_{\xi}$ is a sub-Lie algebra 
of $\mathfrak{u}$.
 
\item[(ii)] For all $\xi\in\mathcal{R}$, 
$[\mathfrak{t},\mathfrak{g}_{\xi}]=\mathfrak{g}_{\xi}$, 
therefore, using (i) and the lemma \ref{generators}, we get 
$\mathfrak{L}=\mathfrak{t}\oplus\bigoplus_{\xi\in\mathcal{R}} \mathfrak{g}_{\xi}$.
\end{itemize}
\end{proof}



\section{The local inverse problem}
\label{section:LIP}


\subsection{The regular singular case}


\subsubsection{Universal groups and representations. Necessary conditions}

The universal group for the local regular singular case (at $0$) is the  
\emph{commutative} proalgebraic group:
\[
G_{f}^{(0)} = G_{f,s}^{(0)} \times G_{f,u}^{(0)},
\]
with:
\[
G_{f,s}^{(0)} = \Hom_{gr}(\Eq,\C^*) ~~\text{and}~ G_{f,s}^{(0)}= \C.
\]

To a germ (at the origin) of meromorphic $q$-difference system 
$\Delta: \sigma_q Y = AY$, up to meromorphic equivalence, corresponds 
a rational representation:
\[
\rho_{f} :G_{f}^{(0)} \rightarrow \GL_n(\C)
\]
and conversely. The $q$-difference Galois group of $\Delta$ is 
$G = \Im \rho_{f}$. It is \emph{abelian}.

The knowledge of the representation $\rho$ is equivalent to the knowledge 
of a pair of \emph{commuting} representations:
\[
\rho_{f,s} :G_{f,s}^{(0)} \rightarrow \GL_n(\C) \quad
\rho_{f,u} :G_{f,u}^{(0)} \rightarrow \GL_n(\C).
\]
We have $G_s = \Im\rho_{f,s}$ and  $G_u = \Im \rho_{f,u}$ and our commutation
condition means that each element of $G_s$ commutes with each element of $G_u$.

The commutative unipotent group $G_u$ being the image of $\C$ by $\rho_{f,u}$ 
its dimension is \emph{at most one}.

The group $\Hom_{gr}(\Eq,\C^*)$ is \emph{topologically generated} by 
(exactly) two elements \cite{JSGAL} and $\C$ is topologically generated 
by one element. Therefore the groups $G_s$ and $G$ are \emph{topologically 
generated} by at most \emph{two} elements, the group $G_u$ is topologically 
generated by one element. The \emph{finite} group $G/G^0$ is 
\emph{algebraically generated} by at most \emph{two} elements.  
 
\begin{prop}
\label{UGR:propdir}
Let $G$ be the Galois group of a local \emph{regular-singular} 
$q$-difference system, then:
\begin{itemize}
\item $G$ is \emph{abelian} and \emph{topologically generated} by two elements;
\item $G/G^0$ is \emph{algebraically generated} by at most two elements;
\item $\dim_\C G_u\leq 1$.
\end{itemize}
\end{prop}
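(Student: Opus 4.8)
The plan is to organize the observations made in the paragraphs just above the statement, adding the one point that is not purely formal: that the unipotent factor $\C$ of the universal group does not raise the number of topological generators from two to three.

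I would start by recalling that $\Delta$ corresponds to a rational representation $\rho_f \colon G_f^{(0)} = G_{f,s}^{(0)} \times G_{f,u}^{(0)} \to \GL_n(\C)$ with $G = \Im\rho_f$. Since $G_f^{(0)}$ is a product of abelian groups, $G$ is abelian. Using the abelian Jordan decomposition $G = G_s \times G_u$ recalled above, with $G_s = \Im\rho_{f,s}$ diagonalizable and $G_u = \Im\rho_{f,u}$ unipotent, I would note that $G_u$ is the image of the connected one-dimensional group $G_{f,u}^{(0)} = \C$, whence $\dim_\C G_u \le 1$; moreover a unipotent algebraic group over $\C$ is connected, so $G_u$ is trivial or isomorphic to the additive group $\C$ and in either case topologically generated by one element. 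That settles the third assertion.

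Next I would prove that $G$ is topologically generated by two elements. Invoking \cite{JSGAL}, $G_{f,s}^{(0)} = \Hom_{gr}(\Eq,\C^*)$ is topologically generated by two elements, hence so is its Zariski-closed image $G_s = \Im\rho_{f,s}$, say by $t_1, t_2$. If $G_u$ is trivial, done; otherwise, taking a topological generator $u \ne e$ of $G_u \cong \C$, I would show that $g_1 := (t_1,u)$ and $g_2 := (t_2,e)$ topologically generate $G = G_s \times G_u$. Let $H$ be the Zariski closure of the subgroup they generate: its projection to $G_s$ is closed and contains $t_1$ and $t_2$, hence is all of $G_s$, so $H \cap G_u$ is a closed subgroup of the additive group $\C$, i.e. $\{e\}$ or $G_u$. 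In the first case the projection $H \to G_s$ would be an isomorphism, and composing its inverse with the projection $G \to G_u$ would give a nonzero homomorphism from a diagonalizable group to the additive group $\C$, which is impossible; so $H \cap G_u = G_u$ and $H = G$. Here the only input beyond bookkeeping is \cite{JSGAL} together with the standard fact that a torus (more generally a diagonalizable group) has no nontrivial homomorphism to the additive group; this is the only place an external fact intervenes, and is the ``main obstacle'' only in that weak sense.

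Finally, for the second assertion, $G/G^0$ is a finite group and a quotient of $G$, so the images of two topological generators of $G$ generate it algebraically; hence $G/G^0$ is algebraically generated by at most two elements. I would then assemble these three points into the proof, which is essentially a tidy restatement of the preceding discussion.
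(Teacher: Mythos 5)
Your proposal is correct and follows essentially the same route as the paper, which treats the proposition as a summary of the discussion immediately preceding it: $G=\Im\rho_f$ is abelian as the image of the abelian universal group, $G_u=\Im\rho_{f,u}$ is the image of $\C$ hence of dimension at most one, and the generation statements come from the two topological generators of $\Hom_{gr}(\Eq,\C^*)$ given in \cite{JSGAL}. The one point where you add something the paper leaves implicit is the argument that $G=G_s\times G_u$ needs only two (not three) topological generators, via the observation that a diagonalizable group admits no nontrivial homomorphism to $\mathbf{G}_a$; that argument is sound and worth having.
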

 
\paragraph{A description of $\Hom_{gr}(\Eq,\C^*)$}

We recall the description of $\Hom_{gr}(\Eq,\C^*)$. We choose $\tau \in \C$ 
such that $e^{-2i\pi\tau} = q$ ($\Im \tau > 0$). The map 
$w \mapsto z := e^{2i\pi w}$ induces an isomorphism of 
$\C/(\Z \oplus \Z \tau)$ on $\Eq$. We consider $\C$ as a $\Q$-vector space, 
we can write it as a direct sum of $\Q$-vector spaces 
$\C = \Q \oplus \Q \tau \oplus \L$, then we have a product of $\Z$-modules
$\C/(\Z \oplus \Z \tau)\approx (\Q/\Z) \times (\Q\tau/\Z\tau) \times \L$ and
 the corresponding image is the product of $\Z$-modules:
 \[
 \Eq = \mmu \times \mmu_q \times \L,
 \]
where $\mmu := e^{2i\pi\Q}$ is the group of the roots of the unity,  
$\mmu_q =q^{\Q}/q^{\Z}$ is the image in $\Eq$ of the subgroup 
$q^{\Q} \subset \C^*$ ($\mmu_q \approx \mmu \approx \Q/\Z$)
and $\L$ is a torsion free subgroup (the ``universal lattice", defined up 
to isomorphism).

We shall consider each abelian group as the \emph{inductive limit} of its 
finitely generated subgroups.

Recall that the groups written $G^{(0)}$ are (universal) local Galois
groups at $0$, while $G^0$ means the neutral component of any proalgebraic
group $G$. We have a short exact sequence of proalgebraic groups:
\[
\bigl(G_{f,s}^{(0)}\bigr)^0 \rightarrow G_{f,s}^{(0)} \rightarrow 
G_{f,s}^{(0)}/\bigl(G_{f,s}^{(0)}\bigr)^0,
\]
we get it applying the exact contravariant functor $\Hom_{gr} (.,\C^*)$ 
to the short exact sequence of groups:
\[
\mmu \times \mmu_q \rightarrow 
\Eq \rightarrow \Eq/(\mmu \times \mmu_q).
\]
We have $\Eq\rightarrow \Eq/(\mmu \times \mmu_q) \approx \L$, therefore:
\[
\Hom_{gr}\bigl(\Eq/(\mmu\times\mmu_q),\C^*\bigr) \approx 
\Hom_{gr}\bigl(\L,\C^*\bigr).
\]
Hence 
$\bigl(G_{f,s}^{(0)}\bigr)^0 \approx 
\Hom_{gr}\bigl(\Eq/(\mmu\times\mmu_q),\C^*\bigr)$ is a \emph{protorus}, 
we will call it \emph{the fuchsian protorus} and we will denote it 
$\mathbf{T_f}$.
 
We recall that $\Hom_{gr}(\Q/\Z,\C^*)=\hat \Z$. Then 
$\Hom(\mmu,\C^*)=\hat \Z(1)$ ($\hat \Z(1)$ is the multiplicative 
notation for $\hat \Z$).
 
We have $G_{f,s}^{(0)}/\mathbf{T_f}\approx \hat \Z(1)\times \hat \Z(1)$.

Considering $G_{f,s}^{(0)}$ as a proalgebraic group, we get 
$\Eq = \Hom(G_{f,s}^{(0)},\C^*)$ (here $\Hom$ is for \emph{morphisms of 
proalgebraic groups} \ie\ rational homomorphisms), as the inductive 
limit of its \emph{finitely generated} subgroups. Then we can consider 
$\Eq$ as the \emph{group of weights} of $G_{f,s}^{(0)}$. More precisely, 
if $\bar c \in \Eq$, then the map 
$\psi_{\bar c}: G_{f,s}^{(0)} = \Hom_{gr}(\Eq,\C^*) \rightarrow \C^*$ defined by 
$f \in \Hom_{gr}(\Eq,\C^*) \mapsto f(\bar c)$ is a weight on $G_{f,s}^{(0)}$ 
and conversely if $\varphi$ is a weight on $G_{f,s}^{(0)}$, there exists 
a unique $\bar c\in\Eq$ such that $\varphi = \psi_{\bar c}$.

Applying the functor $\Hom(.,\C^*)$ to the (non-canonical) decomposition 
$G_{f,s}^{(0)} =\hat \Z(1) \times \hat \Z(1) \times \mathbf{T_f}$, we get the 
(non-canonical) decomposition $\Eq = \mmu \times \mmu_q \times \L$.


\subsubsection{The inverse problem for the regular-singular case, 
a tannakian solution}

We solve the inverse problem for the regular-singular case using the 
tannakian mechanism, proving that the conditions of the proposition 
\ref{UGR:propdir} are sufficient. Afterwards we will give an elementary proof.

\begin{prop}
\label{UGR:propinv}
Let $G$ be an \emph{abelian} complex linear algebraic group such that:
\begin{itemize}
\item[(i)]
$G$ is \emph{topologically} generated by at most \emph{two} elements;
\item[(ii)]
$\dim_\C G_u\leq 1$.
\end{itemize}
Then $G$ is the local Galois group of a local \emph{regular singular} 
meromorphic linear $q$-difference system.
\smallskip

The condition (i) can be replaced by the following (\emph{a priori} weaker) 
condition:
\begin{itemize}
\item[(iii)]
$G/G^0$ is generated by at most \emph{two} elements.
\end{itemize}
\end{prop}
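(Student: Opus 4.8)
The plan is to use the correspondence recalled above in the regular-singular case: a germ at the origin of a meromorphic $q$-difference system, up to meromorphic equivalence, is the same datum as a rational representation $\rho_{f}\colon G_{f}^{(0)} \to \GL_{n}(\C)$, where $G_{f}^{(0)} = G_{f,s}^{(0)} \times G_{f,u}^{(0)} = \Hom_{gr}(\Eq,\C^{*}) \times \C$, and its $q$-difference Galois group is $\Im \rho_{f}$. So it is enough to produce a surjective morphism of proalgebraic groups $\rho\colon G_{f}^{(0)} \twoheadrightarrow G$ and to compose it with a faithful representation $G \hookrightarrow \GL_{n}(\C)$. First I would observe that (i) $\Rightarrow$ (iii), so that it suffices to argue under hypotheses (ii) and (iii). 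Since $G$ is abelian, the sets of semisimple and of unipotent elements are closed subgroups and $G = G_{s} \times G_{u}$ with $G_{s}$ diagonalisable and $G_{u}$ commutative unipotent; in characteristic zero $G_{u}$ is connected, so $G^{0} = G_{s}^{0} \times G_{u}$ and $G/G^{0} \cong G_{s}/G_{s}^{0}$. I will then build $\rho = \rho_{s} \times \rho_{u}$, constructing $\rho_{s}\colon G_{f,s}^{(0)} \twoheadrightarrow G_{s}$ and $\rho_{u}\colon G_{f,u}^{(0)} \twoheadrightarrow G_{u}$ separately.

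The unipotent part is immediate: $G_{f,u}^{(0)} = \C$ is the additive group and $G_{u}$ is isomorphic to its $d$-th power, where $d = \dim_{\C} G_{u}$; since in characteristic zero a morphism of the additive group into $\mathbb{G}_{a}^{d}$ has image a line or a point, a surjection onto $\mathbb{G}_{a}^{d}$ exists if and only if $d \le 1$ --- which is exactly hypothesis (ii) (take the zero map when $d = 0$, the identity when $d = 1$). For the semisimple part I would use the anti-equivalence between (pro)diagonalisable groups and abelian groups via the weight group, together with the identification recalled above of $\Eq$ with $\Hom(G_{f,s}^{(0)},\C^{*})$: a morphism $\rho_{s}\colon G_{f,s}^{(0)} \to G_{s}$ of proalgebraic groups amounts to a morphism of abelian groups $X(G_{s}) \to \Eq$ of weight groups, and $\rho_{s}$ is surjective precisely when this morphism is injective (because $\C^{*}$ is a divisible, hence injective, abelian group, so $\Hom_{gr}(\,\cdot\,,\C^{*})$ is exact). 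Thus the whole problem reduces to embedding the finitely generated abelian group $X(G_{s})$ into $\Eq$.

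For this I would use the structure theorem $X(G_{s}) = \Z^{r} \oplus T$ with $T$ finite, and the decomposition $\Eq = \mmu \times \mmu_q \times \L$, where $\mmu \cong \mmu_q \cong \Q/\Z$ and $\L$ is torsion-free of infinite rank (indeed an infinite-dimensional $\Q$-vector space). Since $X(G_{s})/T$ is free, $G_{s}^{0} = \Hom(X(G_{s})/T,\C^{*})$ is a torus, so $G/G^{0} \cong G_{s}/G_{s}^{0} \cong \Hom(T,\C^{*})$, a finite abelian group having the same minimal number of generators as $T$; hence hypothesis (iii) says exactly that $T$ is generated by at most two elements, i.e.\ $T \cong \Z/n_{1}\Z \oplus \Z/n_{2}\Z$ (possibly with trivial factors). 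Now $\Z^{r}$ embeds into $\L$ (which has infinite $\Q$-rank), $\Z/n_{1}\Z$ embeds into $\mmu$ and $\Z/n_{2}\Z$ into $\mmu_q$ (each copy of $\Q/\Z$ containing exactly one cyclic subgroup of any prescribed order), and the direct sum of these embeddings gives the desired injection $X(G_{s}) \hookrightarrow \Eq$. Dualising yields $\rho_{s}$, hence $\rho = \rho_{s} \times \rho_{u}$, which proves the proposition; the statement under (i) is then a special case.

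The step I expect to be the main obstacle is not technical but the ``bookkeeping'' identification: one must recognise that hypothesis (iii) translates \emph{exactly} into the condition ``the torsion part of $X(G_{s})$ is generated by at most two elements'', which is precisely what lets it sit inside the two torsion factors $\mmu \times \mmu_q$ of $\Eq$, while hypothesis (ii) is exactly what is needed to split off the unipotent component; the free part of $X(G_{s})$ causes no difficulty since $\L$ has infinite rank. Once this is seen, everything else is formal. (As an alternative, which we also spell out below, the same system can be written down by hand: a rank-one fuchsian system $\sq y = c\,y$ realises the weight $\overline{c} \in \Eq$, the system $\sq Y = \left(\begin{smallmatrix} 1 & 1 \\ 0 & 1 \end{smallmatrix}\right) Y$ realises the unipotent generator of $G_{f,u}^{(0)}$, and a suitable direct sum of such pieces --- with the constants $c$ chosen along the embedding $X(G_{s}) \hookrightarrow \Eq$ above --- has $q$-difference Galois group $G$.)
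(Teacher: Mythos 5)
Your proposal is correct and follows essentially the same route as the paper's tannakian proof: both reduce the problem to constructing a surjective morphism $G_f^{(0)} \twoheadrightarrow G$, split it into semisimple and unipotent parts, and use the decomposition $\Eq = \mmu \times \mmuq \times \L$ together with $\dim_\C G_u \leq 1$. The only difference is presentational — you dualise to an embedding of the weight group $X(G_s)$ into $\Eq$, whereas the paper builds the surjection onto $\Z/p_1\Z \times \Z/p_2\Z \times (\C^*)^\nu$ factor by factor — and your explicit-matrix remark matches the paper's subsequent ``elementary proof''.
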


\begin{proof}
We will give a tannakian proof, defining a \emph{surjective} morphism
$\rho: G_{f,s}^{(0)} \rightarrow G$. Then, if $r: G \rightarrow \GL_n(\C)$ 
is a faithful representation, the morphism 
$r \circ \rho: G_{f,s}^{(0)} \rightarrow \GL_n(\C)$ defines a system of 
rank $n$ whose Galois group is $r(G)$.
\smallskip

Let $G$ be an \emph{abelian} linear algebraic group satisfying the above
conditions, then $G = G_u \times G_s$. The natural map 
$G_s/G_s^0 \rightarrow G/G^0$ is an isomorphism, therefore there exists 
an isomorphism $G_s \approx \Z/p_1\Z \times \Z/p_2\Z\times (\C^*)^{\nu}$ 
($p_1,p_2\in\N^*$). We will suppose $p_1,p_2\geq 2$, leaving the other 
cases to the reader.

Using a sub-lattice of rank $n$ of $\L$, we get a surjective morphism 
$\mathbf{T_f} \rightarrow (\C^*)^{\nu}$. There exists also a surjective morphism
$\mmu \times \mmu_q \rightarrow \Z/p_1\Z \times \Z/p_2\Z$. Hence we get
a surjective morphism 
\[
\rho_s: G_{f,s}^{(0)} \approx \mmu \times \mmu_q \times \mathbf{T_f}
\rightarrow \Z/p_1\Z \times \Z/p_2\Z \times (\C^*)^{\nu}.
\]

The Lie algebra $\mathfrak{u}$ of $G_u$ is of dimension at most one. 
Therefore there exists $N \in \mathfrak{u}$ such that 
$G_u = \{\exp tN \tq t \in \C\}$. If $N = 0$, the end of the proof is trivial.
Otherwise, the map $\rho_u: G_{f,u}^{(0)} \approx \C \rightarrow G_u$ defined 
by $t \mapsto \exp tN$ is an isomorphism (of algebraic groups).

The representations $\rho_s$ and $\rho_u$ clearly commute and the morphism 
$\rho := (\rho_s,\rho_u): G_f^{(0)} = G_{f,s}\times G_{f,u} \rightarrow G$ 
is onto. That ends the proof.
\end{proof}


\subsubsection{Explicit descriptions and elementary proof}

We shall recall how to compute the Galois group of a local 
regular-singular $q$-difference system and shall deduce a (elementary) 
proof of proposition \ref{UGR:propinv} from this computation.

Up to a meromorphic gauge transformation, it is sufficient to consider 
the case of a \emph{constant coefficient} system $\Delta: \sigma_q Y=AY$, 
$A \in \GL_n(\C)$.

We suppose that the matrix $A$ is in upper triangular Jordan form.
The representation $\rho$ of the universal group 
$G_{f}^{(0)} = G_{f,s}^{(0)} \times G_{f,u}^{(0)}$
associated to the system $\Delta$ is:

\begin{equation}
\label{reprs}
\rho =(\rho_{s},\rho_{u}): \; (\gamma,\lambda) \mapsto 
\gamma(A_{s})A_{u}^{\lambda}.
\end{equation}

We have $A_{s} = \Diag~(a_{1},\ldots ,a_{n})$, then
$\gamma(A_{i,s}) =
\Diag~\bigr(\gamma (\overline a_{1}),\ldots ,\gamma (\overline a_{n})\bigl)$, 
$\overline a_{i}$ being the image of $a_{i}$ in $\Eq$.
 
Let $H$ be the subgroup of $\Eq$ generated by the image of 
$\text{Spec} A$. Using the decomposition
$\Eq = \mmu  \times\mmu_ \times \L$, we get (up to the isomorphism 
$\mmu_q \approx \mmu$)
$H=\Z/p_1\Z \times \Z/p_2\Z \times \Lambda$, where $\Lambda$ is a lattice 
of rank $\mu$.

The algebraic group $\Hom_{gr}(H,\C^*)$ is an algebraic quotient 
of $\Hom_{gr}(\Eq,\C^*)$ (using the canonical injection $H \rightarrow \Eq$) 
and the semi-simple component $G_s$ of the Galois group $G$ of
$\Delta$ is the image of the quotient map, that is $\Hom_{gr}(H,\C^*)$, then:
\[
G_s =\Z/p_1\Z \times\Z/p_2\Z \times \Hom_{gr}(H,\C^*) \approx  
\Z/p_1\Z \times \Z/p_2\Z \times (\C^*)^{\mu}.
\]

More precisely we get the representation of $G_s$ in $\GL_n(\C)$ 
corresponding to (\ref{reprs}) using the interpretation of $H$ as 
the group of weights of $G_s$. This representation is given by the 
diagonal weights  $(\overline a_{i})_{i=1,\ldots,n}$ ($\overline a_{i}\in H$). \\

We can now solve explicitly the inverse problem.

Let $G$ be an \emph{abelian} complex linear algebraic group satisfying 
the conditions of the proposition \ref{UGR:propinv}, we will compute 
a matrix $A\in\GL_n(\C)$ such that the system $\Delta: \sigma_q Y=AY$ 
admits $G$ as Galois group.

More precisely, we start from a faithful \emph{representation} of 
the \emph{abelian} group $G$ in $\GL_n$ in upper triangular form. 
Then $G_s$ is diagonal and (due to condition \emph{(ii)}) there exists 
a \emph{unipotent} matrix $N \in \M_n(\C)$ such that 
$G_u = \{N^{\lambda} \tq \lambda \in \C\}$.

The abelian linear algebraic group $G_s$ is isomorphic to the product 
of a finite group (the quotient $G/G^0$) by a torus of dimension $\mu$, 
and the finite component is generated by at most two elements. Then
$G_s \approx \Z/p_1\Z \times \Z/p_2\Z \times (\C^*)^{\mu}$. The dual group 
(group of weights) of 
$\Z/p_1\Z \times \Z/p_2\Z \times (\C^*)^{\mu}$ is
 $\Z/p_1\Z \times \Z/p_2\Z \times \Z^{\mu}$

Using the decomposition $\Eq = \mmu \times \mmu_q \times \L$, we get 
an isomorphism between $\Z/p_1\Z \times \Z/p_2\Z \times \Z^{\mu}$ and
a subgroup $H$ of $\Eq$. We can therefore interpret $H$ as the group 
of weights on the diagonal group $G_s$. 

We denote $\varpi_1,\ldots ,\varpi_n$ the \emph{diagonal weights} of the 
diagonal group $G_s$, they are elements of $H$. Let $a_1,\ldots ,a_n\in\C^*$ 
such that their natural images in $\Eq$ are $\varpi_1,\ldots ,\varpi_n$.
We moreover require these choices to be consistent in the following sense:
each time $\varpi_i = \varpi_j$, we take $a_i = a_j$. Then $H$ is generated by 
$\overline a_1 = \varpi_1,\ldots ,\overline a_n = \varpi_n$.

We can now define $A \in \GL_n(\C)$:
\[
A_s:=\Diag~(a_1,\ldots ,a_n) \quad  \text{and} \quad A_u:=N.
\]
Indded, because of our consistent choices above, $A_s$ and $A_u$ do commute.
Then the Galois group of $\Delta: \sigma_q Y=AY$ is $G$.


\subsection{The pure case with integral slopes}


\subsubsection{Universal groups and representations. Necessary conditions}

The universal group for the pure case with integral slopes (at $0$) is 
the  \emph{commutative} proalgebraic group:
\[
G_{p,1}^{(0)} =G_{f,s}^{(0)} \times G_{f,u}^{(0)} \times T_1^{(0)},
\]
with:
\[
G_{f,s}^{(0)} = \Hom_{gr}(\Eq,\C^*), \quad G_{f,s}^{(0)} = \C  
~~\text{and}~~T_1^{(0)} = \C^*.
\]

To a germ (at the origin) of meromorphic $q$-difference system, pure with 
integral slopes, up to meromorphic equivalence, corresponds a morphism:
\[
\rho :G_{p,1}^{(0)} \rightarrow \GL_n(\C);
\]
$G =\Im \rho$ is the Galois group of the system, it is \emph{commutative}.

The knowledge of the representation $\rho$ is equivalent to the knowledge 
of a triple of pairwise commuting representations:
\[
\rho_{f,s} :G_{f,s}^{(0)}\rightarrow \GL_n(\C), \quad
\rho_{f,u} :G_{f,u}^{(0)}\rightarrow \GL_n(\C), \quad
\rho_{\theta}:T_1^{(0)}\rightarrow \GL_n(\C)
\]
We have (up to the obvious reordering of the factors) 
$G_s =\Im(\rho_{f,s},\rho_{\theta})$, $G_u=\Im\rho_{f,u}$.

As in the regular singular case, we get the following result.

\begin{prop}
\label{UGR:propdirpure}
Let $G$ be the Galois group of a local \emph{pure} $q$-difference system, then:
\begin{itemize}
\item $G$ is \emph{abelian} and \emph{topologically generated} by two elements;
\item $G/G^0$ is \emph{algebraically generated} by at most two elements;
\item $\dim_\C G_u\leq 1$.
\end{itemize}
\end{prop}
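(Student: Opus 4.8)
The plan is to run exactly the argument used for the regular-singular case (proof of Proposition~\ref{UGR:propdir}), the only new ingredient being the extra factor $T_1^{(0)} = \C^*$ in the universal group, which — being a connected torus — will turn out to contribute only to the connected (semi-simple) part of the Galois group and hence not to increase any generator count. Concretely, a germ of local pure $q$-difference system with integral slopes, up to meromorphic equivalence, is the same datum as a rational representation $\rho: G_{p,1}^{(0)} \to \GL_n(\C)$, and its Galois group is $G = \Im\rho$; thus $G$ is a quotient of the commutative proalgebraic group $G_{p,1}^{(0)} = G_{f,s}^{(0)} \times G_{f,u}^{(0)} \times T_1^{(0)}$, so $G$ is abelian and splits as $G = G_s \times G_u$.

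First I would bound $\dim_\C G_u$: decomposing $\rho$ into the three pairwise commuting representations $\rho_{f,s},\rho_{f,u},\rho_\theta$, one gets $G_u = \Im\rho_{f,u}$, the image of $\mathbf{G}_a = \C$ under a rational representation, hence of the form $\{\exp(tN) \tq t \in \C\}$ with $N$ nilpotent, so $\dim_\C G_u \le 1$. Next I would count topological generators. The semi-simple part $G_s = \Im(\rho_{f,s},\rho_\theta)$ is a quotient of $G_{f,s}^{(0)} \times T_1^{(0)} = \Hom_{gr}(\Eq,\C^*) \times \C^*$; using the description $\Hom_{gr}(\Eq,\C^*) \cong \hat\Z(1) \times \hat\Z(1) \times \mathbf{T_f}$ recalled above, the identity component of $G_{f,s}^{(0)} \times T_1^{(0)}$ is the protorus $\mathbf{T_f} \times \C^*$, which is topologically generated by one element, while the component group is $\hat\Z(1) \times \hat\Z(1)$, generated by two. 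Hence $G_{f,s}^{(0)} \times T_1^{(0)}$, and therefore its quotient $G_s$, is topologically generated by two elements, and $G/G^0 = G_s/(G_s)^0$ is algebraically generated by at most two elements, which is the second bullet. Finally $G^0 = (G_s)^0 \times G_u$ is a protorus times a unipotent group of dimension $\le 1$, hence topologically generated by a single element; combined with $G/G^0$ being $2$-generated, a lifting argument shows $G$ itself is topologically generated by two elements.

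The only point needing real care — and hence the step I would treat most carefully — is this last lifting argument, since a product of a $2$-generated and a $1$-generated proalgebraic group need not be $2$-generated (e.g.\ $\mathbf{G}_a \times \mathbf{G}_a$). What saves us is that $G^0$ is \emph{connected}, so it is divisible and has no proper closed subgroup of finite index: lifting generators $\bar g_1, \bar g_2$ of $G/G^0$ to $g_1,g_2 \in G$ and correcting $g_1$ by a suitable element of $G^0$, one arranges that an appropriate power of the corrected lift is a topological generator of $G^0$, whence $\overline{\langle g_1,g_2\rangle}$ contains $G^0$ and surjects onto $G/G^0$, i.e.\ equals $G$. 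Everything else is a formal transcription of the regular-singular computation, with $T_1^{(0)} = \C^*$ absorbed into the protorus and therefore harmless for all three assertions.
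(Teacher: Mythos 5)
Your proof is correct and follows exactly the route the paper intends: the paper gives no separate argument for the pure case, merely saying ``as in the regular singular case,'' and your reduction to the structure of $G_{p,1}^{(0)} = G_{f,s}^{(0)} \times G_{f,u}^{(0)} \times T_1^{(0)}$ is precisely that argument with the extra $\C^*$ factor absorbed into the connected part. Your explicit lifting step (using divisibility of the connected abelian group $G^0$ to correct a lift of a generator of $G/G^0$) carefully fills in a point the paper leaves implicit, but it is the same approach.
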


We recall that we have a (non-canonical) decomposition 
$G_{f,s}^{(0)}=\hat \Z(1) \times \hat \Z(1) \times \mathbf{T_f}$, where
the fuchsian universal protorus is a \emph{subgroup} of $G_{f,s}^{(0)}$.

We will denote $T_{f}$ the image of $\mathbf{T_f}$ by $\rho_{f,s}$ and 
we will call it the \emph{fuchsian torus} of $G$. We will denote $T_{\theta}$ 
the image of $\mathbf{T_1}^{(0)}$ by $\rho_{f,s}$ and we will call it 
the $\theta$-torus of $G$. The $\theta$-torus and the fuchsian torus 
of $G$ generate the \emph{maximal torus} of $G$.
\bigskip


\subsubsection{Sufficient conditions}

\begin{prop}
\label{UGR:propinvpure}
Let $G$ be an \emph{abelian} complex linear algebraic group and a 
non trivial coweight $\chi:\C^*\rightarrow G_s$. We suppose that:
\begin{itemize}
\item[(i)]
$G$ is \emph{topologically} generated by at most \emph{two} elements;
\item[(ii)]
$\dim_\C G_u\leq 1$.
\end{itemize}
Then $G$ is the local Galois group of a local \emph{pure} meromorphic 
linear $q$-difference system with integral slopes such that 
$\chi = \rho_{\theta}$ (where $\rho =(\rho_f,\rho_{\theta})$ is 
the representation defining the system). The condition (i) can be replaced 
by the following (a priori weaker) condition:
\begin{itemize}
\item[(iii)]
$G/G^0$ is generated by at most \emph{two} elements.
\end{itemize}
\end{prop}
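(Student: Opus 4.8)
The plan is to \emph{reduce to the regular--singular case} already settled in Proposition~\ref{UGR:propinv} and then \emph{glue in the prescribed coweight} $\chi$ using that $G$ is abelian. First I would fix a faithful rational representation $r\colon G\hookrightarrow\GL_n(\C)$, so that from now on $G$ is viewed as a closed algebraic subgroup of $\GL_n(\C)$; this costs nothing and turns the final output into a genuine $q$-difference system. Conditions (i)--(ii) (or (iii) in place of (i)) are exactly the hypotheses of Proposition~\ref{UGR:propinv}, so applying (the construction in the proof of) that proposition to the abelian group $G$ produces a surjective morphism of proalgebraic groups $\rho_f=(\rho_{f,s},\rho_{f,u})\colon G_f^{(0)}=G_{f,s}^{(0)}\times G_{f,u}^{(0)}\longrightarrow G$, with $\rho_{f,s}$ onto $G_s$ and $\rho_{f,u}$ onto $G_u$.

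Recalling from \ref{subsection:overallstructure} that $G_{p,1}^{(0)}=G_f^{(0)}\times T_1^{(0)}$ with $T_1^{(0)}=\C^*$, I would then set
$$
\rho\colon G_{p,1}^{(0)}\longrightarrow G,\qquad \rho(g,t):=\rho_f(g)\,\chi(t).
$$
Since $G$ is commutative this is a morphism of proalgebraic groups; it is surjective because $\rho_f$ already is; and its restriction to $T_1^{(0)}$ is $\chi$, i.e.\ $\rho_\theta=\chi$. Through the tannakian equivalence $\Repc(G_{p,1}^{(0)})\simeq\EE_{p,1}^{(0)}$ (recall $G_{p,1}^{(0)}=\Gal(\EE_{p,1}^{(0)})$) the rational representation $\rho\colon G_{p,1}^{(0)}\to G\subseteq\GL_n(\C)$ corresponds to a pure meromorphic $q$-difference module with integral slopes --- integrality being automatic, since every algebraic character of $T_1^{(0)}=\C^*$ is $t\mapsto t^k$ with $k\in\Z$ and the slopes of the module are precisely the exponents occurring in $\rho_\theta=\chi$. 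Its $q$-difference Galois group is the Zariski closure of $\rho(G_{p,1}^{(0)})$, namely $G$ itself (which is already closed), and by construction the $\theta$-component of the defining representation is $\chi$. The replacement of (i) by the weaker (iii) is inherited verbatim from Proposition~\ref{UGR:propinv}.

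I do not expect a serious obstacle here; the two points deserving a line of verification are that gluing $\chi$ in does not enlarge the Galois group --- which holds because $\chi$ factors through $G_s\subseteq G$, whence $\rho(G_{p,1}^{(0)})=\rho_f(G_f^{(0)})\cdot\chi(\C^*)=G$ --- and that the module obtained is genuinely \emph{pure} with \emph{integral} slopes, which is automatic from the identification of $\EE_{p,1}^{(0)}$ with the representation category of $G_{p,1}^{(0)}$. Should one prefer to avoid quoting Proposition~\ref{UGR:propinv}, the same argument can be run directly along the lines of its explicit proof: write $G=G_u\times G_s$, choose an isomorphism $G_s\approx\Z/p_1\Z\times\Z/p_2\Z\times(\C^*)^\nu$, build a surjection $G_{f,s}^{(0)}=\mmu\times\mmuq\times\mathbf{T_f}\to G_s$ from the decomposition $\Eq=\mmu\times\mmuq\times\L$, take $\rho_{f,u}\colon\C\to G_u$ to be the exponential of a generator of $\Lie(G_u)$ and $\rho_\theta:=\chi$, and check that these three pairwise commuting representations assemble into the required $\rho$.
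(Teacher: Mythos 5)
Your proposal is correct and follows essentially the same route as the paper: build a surjective $\rho_f\colon G_f^{(0)}\to G$ via Proposition~\ref{UGR:propinv}, set $\rho_\theta:=\chi$, and use commutativity of $G$ to assemble a surjective representation of $G_{p,1}^{(0)}$ whose image is $G$ and whose $\theta$-component is $\chi$. The paper's proof is just a terser version of this, noting as you do that the image of $\chi$ lies in the (maximal torus of the) image of $\rho_f$, so adjoining it changes nothing.
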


\begin{proof}
We will prove the existence of a system such that its fuchsian torus
$T_{f}$ is a maximal torus, or equivalently such that $T_{\theta}\subset T_{f}$.
The proof is tannakian and it is only a slight modification of the proof 
of the proposition \ref{UGR:propinv}.

Let $T$ be the maximal torus of $G$, it contains the image of $\chi$. 

We build as above a \emph{surjective} representation:
\[
\rho_{f} = (\rho_{f,s},\rho_{f,u}): \;
G_{f}^{(0)} = G_{f,s}^{(0)} \times G_{f,u}^{(0)} \rightarrow G.
\]
Then using 
$\mathbf{T}_1^{(0)}=\C^*$, we define a representation
\[
\rho = (\rho_f,\rho_{\theta}): \;
G_{p,1}^{(0)} = G_f^{(0)}\times\mathbf{T}_1^{(0)} \rightarrow G,
\]
by $\rho_{\theta} = \chi$. (The component representations automatically commute.)
It is a surjective morphism and it answers the question.

\end{proof}


\subsubsection{Explicit descriptions}

We recall how to compute the Galois group of a local pure $q$-difference 
system with integral slopes and deduce a new (elementary) proof of 
proposition \ref{UGR:propinvpure} from this computation.

Up to a meromorphic gauge transformation, it is sufficient to consider 
the case of a  system $\Delta: \sigma_q Y = AY$, 
such that the matrix $A$ is in upper triangular normal form:

\begin{equation}
A:= \begin{pmatrix}
z^{\mu_1}A_{1}  & \ldots & \ldots & \ldots & \ldots \\
\ldots & \ldots & \ldots  & 0 & \ldots \\
0      & \ldots & \ldots   & \ldots & \ldots \\
\ldots & 0 & \ldots  & \ldots & \ldots \\
0      & \ldots & 0       & \ldots & z^{\mu_k}A_{k}
\end{pmatrix},
\end{equation}
where, for $1 \leq i  \leq k$, $A_{i}\in \GL_{r_i}(\C)$ is in Jordan form, and
$\mu_1,\ldots,\mu_k \in \Z$. (Usually we take $\mu_1 < \cdots < \mu_k$, although
this has no consequence in the \emph{formal} case.)

The representation $\rho$ of the universal group 
$G_{p,1}^{(0)} = G_{f,s}^{(0)} \times G_{f,u}^{(0)} \times T_1^{(0)} $
associated to the system $\Delta$ is:

\begin{equation}
\label{reprspure}
\rho =(\rho_{s},\rho_{u},\rho_{\theta}): \; (\gamma,\lambda,t)\mapsto 
\begin{pmatrix}
t^{\mu_1}\gamma(A_{1,s})A_{1,u}^{\lambda}  & \ldots & \ldots & \ldots & \ldots \\
\ldots & \ldots & \ldots  & 0 & \ldots \\
0      & \ldots & \ldots   & \ldots & \ldots \\
\ldots & 0 & \ldots  & \ldots & \ldots \\
0      & \ldots & 0       & \ldots & t^{\mu_k}\gamma(A_{k,s})A_{k,u}^{\lambda}  
\end{pmatrix}.
\end{equation}

We can now give a new proof of proposition \ref{UGR:propinvpure}.
We start from the abelian group $G$ and the one-parameter subgroup $\chi$. 
We can assume that it is diagonalized:
$$
\forall t \in \C^* \;,\; \chi(t) =
\begin{pmatrix}
 t^{\mu_1} I_{r_1}  & \ldots & \ldots & \ldots & \ldots \\
\ldots & \ldots & \ldots  & 0 & \ldots \\
0      & \ldots & \ldots   & \ldots & \ldots \\
\ldots & 0 & \ldots  & \ldots & \ldots \\
0      & \ldots & 0       & \ldots &  t^{\mu_k} I_{r_k}
\end{pmatrix},
$$
and we apply the explicit proof of proposition \ref{UGR:propinv} to each
of the regular-singular blocks of ranks $r_i$, yielding matrices $A_i$
with constant coefficients. Then we set:
\[
A:= \begin{pmatrix}
z^{\mu_1}A_{1}  & \ldots & \ldots & \ldots & \ldots \\
\ldots & \ldots & \ldots  & 0 & \ldots \\
0      & \ldots & \ldots   & \ldots & \ldots \\
\ldots & 0 & \ldots  & \ldots & \ldots \\
0      & \ldots & 0       & \ldots & z^{\mu_k}A_{k}
\end{pmatrix},
\]
The image of $\chi$ is contained in $G$, therefore the Galois group of 
the system $\sigma_q Y=AY$ is $G$ and we have $\chi = \rho_{\theta}$.


\subsection{The local inverse problem: the general case with integral slopes}
\label{subsection:NC}


\subsubsection{Necessary conditions}
 
\begin{theo}
\label{NC:th0}
Let $G$ be a  complex linear algebraic subgroup. If $G$ is the local 
Galois group of a meromorphic linear $q$-difference equation, then:

\begin{itemize}
\item[(i)] $G$ is triangularizable;
\item[(ii)]
$G/L(G)$ is abelian  and \emph{topologically} generated by at most 
\emph{two} elements;
\item[(iii)]
the finite group $G/G^0$ is abelian  and generated by at most \emph{two} 
elements;
\item[(iv)]
the group $G/G^0$ acts \emph{trivially} on $R_u(G)/\bigl(G^0,R_u(G)\bigr)$ 
and the dimension of the vector space $R_u(G)/\bigl(G^0,R_u(G)\bigr)$ is 
at most \emph{one}.
 \end{itemize}
\end{theo}

\begin{proof}
\begin{itemize}
 \item[] \quad
\item[(i)] Trivial.
\item[(ii)]
We will use a Tannakian argument which is a variant of an idea due 
to O. Gabber in the differential case \cite{K}.

Let $G^{(0)}$ be the Tannakian group of the Tannakian category 
$\mathcal{E}^{(0)}$. To a $q$-difference system of rank $n$, meromorphic 
at the origin, corresponds a (rational) representation 
$\rho: G^{(0)} \rightarrow \GL_n(\C)$ and conversely. 
If $G = Gal_{\Ka}(\Delta)$ is the Galois group of $\Delta$, 
then $G = \Im \rho$.

Let $\pi:G \rightarrow G/L(G)$ be the canonical map, let 
$\iota: G/L(G)\rightarrow \GL_{n'}(\C)$ be a faithful linear representation 
of $G/L(G)$, then we get a continuous linear representation 
$\rho': \iota \circ \pi \circ \rho: G^{(0)} \rightarrow \GL_{n'}(\C)$. 

To the representation $\rho'$ corresponds a $q$-difference system 
$\Delta'$ of rank $n'$ and and 
$G':= \iota(G/V(G)) = Gal_{\Ka}(\Delta') = \Im \rho'$. 

The maximal torus of $G'$ is reduced to the identity, therefore the 
$\theta$-torus of $\Delta'$ is trivial and $\Delta'$ is \emph{regular 
singular}.

Hence the Galois group $G'$ of $\Delta'$ is \emph{abelian} and 
\emph{topologically generated} by at most \emph{two} elements \cite{JSGAL}. 
Moreover $G'=G'_sG'_u$, where the unipotent group $G'_u$ is 
\emph{topologically generated} by at most \emph{one} element \cite{JSGAL}.
\item[(iii)]
We have $G/L(G) = V(G) \approx G'$. The group $G/G^0$ is a quotient 
of $V(G)$ therefore it is \emph{abelian} and topologically generated 
by at most two elements, as it is \emph{finite} it is \emph{algebraically} 
generated by at most \emph{two} elements. 
\item(iv) 
We set as in section \ref{subsection:complementsLAG}
$S(G) := R_u(G)/\bigl(G^0,R_u(G)\bigr) \rtimes G/G^0$, 
we recall that there is an isomorphim of algebraic groups
$S(G) \rightarrow V(G)/\bigl(V(G)^0,V(G)^0\bigr)$. The group $V(G)$ being 
commutative, we get an isomorphism $S(G)\rightarrow V(G)$, $S(G)$ is 
commutative and the action of $G/G^0$ on $R_u(G)/\bigl(G^0,R_u(G)\bigr)$ 
is \emph{trivial}.

We have an isomorphism 
$S(G)_u = R_u(G)/\bigl(G^0,R_u(G)\bigr) \rightarrow V(G)_u$. As $V(G)_u$,  
$S(G)_u$ is topologically generated by at most \emph{one} generator. Then 
$\dim_\C R_u(G)/\bigl(G^0,R_u(G)\bigr) \leq 1$.  
\end{itemize}
\end{proof}

We think that the four \emph{necessary} conditions of the above theorem 
are \emph{not sufficient}. Anyway if we want to realize $G$ as the Galois 
group of a meromorphic linear $q$-difference system whose Newton polygon 
has \emph{integral} slopes, then there is a \emph{new} necessary condition 
(\emph{(vi)} of the following theorem). This condition is not trivial: 
there exists a solvable linear algebraic group satisfying the conditions 
\emph{(ii)}, \emph{(iii)}, \emph{(iv)} of theorem \ref{NC:th0} which does 
not satisfies the condition \emph{(vi)} of theorem \ref{NC:th} below (\cf\
\ref{TL:contrex0} of remark  \ref{TL:contrex}, page \pageref{TL:contrex}).
 
\begin{theo}
\label{NC:th}
Let $G$ be a  complex linear algebraic subgroup. If $G$ is the local Galois 
group of a meromorphic linear $q$-difference system whose Newton polygon 
has \emph{integral slopes}, then:

\begin{itemize}
\item[(i)] $G$ is triangularizable;
\item[(ii)] $G/L(G)$ is is abelian and \emph{topologically} generated 
by at most \emph{two} elements;
\item[(iii)]
$G/G^0$ is abelian  and generated by at most \emph{two} elements;
\item[(iv)]
the dimension of the unipotent component ot the abelian group $G/L(G)$ 
is \emph{at most one};
\item[(v)]
the dimension of $R_u(G)/\bigl(G^0,R_u(G)\bigr)$ is at most \emph{one};
\item[(vi)]
there exists a $\Theta$-structure on $G$.
\end{itemize}
\end{theo}

\begin{proof}
Assertions (i) to (v) follow from the proposition \ref{UGR:propdir} 
and theorem \ref{NC:th0}.

It remains to prove (vi). 

Every system with integral slopes admits, up to meromorphic equivalence, 
a Birkhoff-Guenther normal form, therefore it is sufficient to prove 
the result for a system $\sigma_q Y = AY$ in Birkhoff-Guenther normal form:

\begin{equation}
\label{ABnm}
A = A_{U} := \begin{pmatrix}
B_{1}  & \ldots & \ldots & \ldots & \ldots \\
\ldots & \ldots & \ldots  & U_{i,j} & \ldots \\
0      & \ldots & \ldots   & \ldots & \ldots \\
\ldots & 0 & \ldots  & \ldots & \ldots \\
0      & \ldots & 0       & \ldots & B_{k}
\end{pmatrix},
\end{equation}
 where, for $1 \leq i < j \leq k$,
$U_{i,j} \in \Mat_{r_{i},r_{j}}(\Ka)$. Here, $U$ stands short for
$(U_{i,j})_{1 \leq i < j \leq k} \in \prod\limits_{1 \leq i < j \leq k}
\Mat_{r_{i},r_{j}}(\Ka)$. (This requirement is actually weaker than the 
true Birkhoff-Guenther normal form, where the $U_{i,j}$ would have
polynomial coefficients, \cf\ section \ref{subsection:overallstructure}.)

We suppose that:
\[
 B_i = z^{\mu_i}A_i,~ A_i\in\GL_{r_i}(\C), ~
\mu_1<\cdots  <\mu_i< \cdots <\mu_k,
\]
and we set (\eqref{eqn:formesstandards}):
\[
A_{0} := \begin{pmatrix}
B_{1}  & \ldots & \ldots & \ldots & \ldots \\
\ldots & \ldots & \ldots  & 0 & \ldots \\
0      & \ldots & \ldots   & \ldots & \ldots \\
\ldots & 0 & \ldots  & \ldots & \ldots \\
0      & \ldots & 0       & \ldots & B_{k}
\end{pmatrix} 
\]

We firstly consider the differential Galois group $G_0$ of the pure system
$\sigma_q Y = A_0Y$. This group is abelian, in upper triangular form, and 
its semi-simple component $D:=(G_0)_s$ is \emph{diagonal}.  

We define a coweight of $D$ by
$\chi: t \mapsto (t^{\mu_{1}}I_{r_1},\ldots,t^{\mu_{k}}I_{r_k})$ and we denote 
its image by $T_{\theta}$ (the theta-torus). Then the maximal torus $T$ 
of $D$ is generated by the ``fuchsian torus" $T_{f}$ and $T_{\theta}$.

We consider now the differential Galois group $G$ of the system
$\sigma_q Y=AY$. It is on upper triangular form, it contains $G_0$ 
as a subgroup, moreover $D$ is a Levi subgroup of $G$.

We denote $\varpi_1,\ldots ,\varpi_n$ the \emph{diagonal weights} of $D$. 
The root for the adjoint action of $D$ on $\mathfrak{g}$ are elements of 
the set $\{\varpi_i\varpi_j^{-1} \tq i<j\}$. If $\xi$ is a root, 
the corresponding root space is not trivial: there exists 
$x \in \mathfrak{g}_{\xi}$ such that $x \neq 0$. Therefore there exists 
$i,j$, with $i<j$, such that $x_{ij} \neq 0$, then
$<\xi,\chi> = \mu_i - \mu_j < 0$ ($\varpi_i \circ \chi(t) = t^{\mu_i}$). 
Hence $\chi$ is a $\Theta$-structure on $G$.
\end{proof}


\subsubsection{Sufficient conditions}
\label{subsubsection:SC}
  
We will prove in this part that the conditions of the theorem \ref{NC:th} 
are sufficient. 

From lemma \ref{generators2} we deduce the following preliminary result.

\begin{lemm}
\label{lemmdim}
Let $G$ be a complex linear algebraic group admitting a $\Theta$-structure. 
Then the following conditions are equivalent:
\item[(i)]
the dimension of $R_u(G)/\bigl(G^0,R_u(G)\bigr)$ is at most one;
\item[(ii)]
if $G = U \rtimes D$ is a Levi decomposition, then the dimension of 
$\mathfrak{u}_0$ (which was defined in lemma \ref{generators2}) is at most one;
\item[(iii)]
if $D$ is a Levi subgroup of $G$, then $\dim~C_G(D) \leq \dim~D+1$
(we write $C_G(D)$ the centralizer of $D$).

If these conditions are satisfied, then: 
\[
\dim~R_u(G)/\bigl(G^0,R_u(G)\bigr) = \dim~\mathfrak{u}_0 = 
\dim~C_G(D) - \dim~D.
\]
\end{lemm}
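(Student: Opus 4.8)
The plan is to exploit the structural decomposition provided by Lemma~\ref{generators2}, which gives, for a group $G$ with $\Theta$-structure and Levi decomposition $G = U \rtimes D$ (with $U = R_u(G)$, $T \subset D$ the maximal torus), the identities $\mathfrak{u} = \mathfrak{u}_0 \oplus \bigoplus_{\xi \in \mathcal{R}} \mathfrak{g}_\xi$ and $\mathfrak{g} = \mathfrak{u}_0 \oplus \mathfrak{L}$, where $\mathfrak{L} = \mathfrak{t} \oplus \bigoplus_{\xi\in\mathcal{R}} \mathfrak{g}_\xi$. First I would establish the bridge between $\mathfrak{u}_0$ and the vector space $R_u(G)/\bigl(G^0,R_u(G)\bigr)$. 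The key observation is that $\mathfrak{g}_0 = \mathfrak{u}_0 \oplus \mathfrak{t}$ is the Lie algebra of the centralizer $C_G(D)$ of $D$ in $G$ (since the roots of $D$ on $\mathfrak{g}$ are exactly the nonzero weights, the fixed subspace $\mathfrak{g}_0$ is $\Lie(C_G(D))$); and since $D$ is abelian semisimple, $C_G(D)^0 = Z$ is a connected group with Lie algebra $\mathfrak{g}_0 = \mathfrak{u}_0 \oplus \mathfrak{t}$, whose unipotent radical has Lie algebra $\mathfrak{u}_0$. This immediately gives the equivalence of (ii) and (iii): $\dim C_G(D) = \dim \mathfrak{g}_0 = \dim \mathfrak{u}_0 + \dim \mathfrak{t} = \dim \mathfrak{u}_0 + \dim D$, so $\dim \mathfrak{u}_0 \le 1 \iff \dim C_G(D) \le \dim D + 1$, and the final displayed formula follows at once.

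Next I would prove the equivalence of (i) and (ii). The plan is to show that the natural map induces an isomorphism of vector spaces $\mathfrak{u}_0 \xrightarrow{\sim} \Lie\bigl(R_u(G)/(G^0,R_u(G))\bigr)$, or at least that the two dimensions agree. Here I would use that $R_u(G)/\bigl(G^0,R_u(G)\bigr)$ is the maximal quotient of $U$ on which $G$ acts trivially by conjugation (this is the content of the construction of $V(G)/(V(G)^0,V(G)^0)$ recalled before Lemma~\ref{nbgenerators}, together with the fact that $G/G^0$ acts trivially on it when a $\Theta$-structure exists — but actually one should argue directly). At the level of Lie algebras, passing to $\mathfrak{u}$, the action of $D$ on $\mathfrak{u}/[\mathfrak{u},\mathfrak{u}]$ has fixed space equal to the image of $\mathfrak{u}_0$; and since the $\Theta$-coweight $\chi$ gives $\mathfrak{g} = \bigoplus_{k \in -\mathbb{N}} gr_\chi^k$ with $\mathfrak{g}_0 = gr_\chi^0$, the bracket $[\mathfrak{u},\mathfrak{u}] = [\bigoplus_{\xi}\mathfrak{g}_\xi, \bigoplus_{\xi}\mathfrak{g}_\xi]$ lands in strictly negative degrees, hence $\mathfrak{u}_0 \cap [\mathfrak{u},\mathfrak{u}] = 0$, so $\mathfrak{u}_0$ injects into $\mathfrak{u}/[\mathfrak{u},\mathfrak{u}]$ and in fact equals its $D$-invariant (degree-$0$) part. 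Dividing further by the $G/G^0$-action changes nothing since that action is trivial on $\mathfrak{u}_0$ (degree $0$ under a coweight is preserved; more carefully, one uses that $G/G^0 \subset D/T$ acts and commutes with the grading). Therefore $\dim R_u(G)/(G^0,R_u(G)) = \dim \mathfrak{u}_0$, giving (i) $\iff$ (ii) and also re-deriving the displayed equality.

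The main obstacle I anticipate is the careful bookkeeping for the non-connected part: $R_u(G)/(G^0,R_u(G))$ is built using the finite group $G/G^0$, not just the connected data, so I must be precise about why, in the presence of a $\Theta$-structure, the grading $gr_\chi^\bullet$ (which a priori only sees the torus action, not the full $D$ including its component group) still controls the relevant invariants — i.e. why $(G^0, R_u(G))$ together with the $G/G^0$-coinvariants cuts down $\mathfrak{u}$ to exactly $\mathfrak{u}_0$ and not to something smaller. The safe route is to first prove everything for $G^0$ (where $\chi$ and $T$ suffice and Lemma~\ref{generators2}(ii) applies cleanly), obtaining $\dim \mathfrak{u}_0 = \dim R_u(G^0)/(G^0, R_u(G^0))$, and then invoke the isomorphism $R_u(G)/(G^0,R_u(G)) \cong R_u(G^0)/(G^0,R_u(G^0))^{G/G^0}$ together with the triviality of the $G/G^0$-action (which holds here because $\mathfrak{u}_0 \subset \mathfrak{g}_0$ consists of $D$-fixed vectors and $G/G^0$ embeds in a quotient of $D$), concluding that the finite group acts trivially and hence the two spaces have the same dimension. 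This reduces the whole lemma to the connected case plus a triviality-of-action check, which is where I would spend most of the care.
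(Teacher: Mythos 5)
Your overall route --- reading everything off the decomposition of lemma \ref{generators2} --- is the intended one (the paper gives no argument beyond ``from lemma \ref{generators2} we deduce''), and your treatment of (ii)$\Leftrightarrow$(iii) is correct: since $D$ is diagonalizable, $\Lie\bigl(C_G(D)\bigr)=\mathfrak{g}_0=\mathfrak{u}_0\oplus\mathfrak{t}$, so $\dim C_G(D)-\dim D=\dim\mathfrak{u}_0$.

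There is, however, a genuine flaw in your argument for (i)$\Leftrightarrow$(ii). You assert that $[\mathfrak{u},\mathfrak{u}]$ lies in strictly negative $\chi$-degrees, hence that $\mathfrak{u}_0\cap[\mathfrak{u},\mathfrak{u}]=0$ and $\mathfrak{u}_0$ injects into the abelianization. This is false: writing $\mathfrak{u}=\mathfrak{u}_0\oplus\bigoplus_{\xi}\mathfrak{g}_{\xi}$, the piece $[\mathfrak{u}_0,\mathfrak{u}_0]$ sits in degree $0$, i.e.\ inside $\mathfrak{u}_0$, and need not vanish. (Take $D$ trivial and $U$ the Heisenberg group: the trivial coweight is vacuously a $\Theta$-coweight, $\dim\mathfrak{u}_0=3$ while $\dim R_u(G)/\bigl(G^0,R_u(G)\bigr)=2$.) Consequently the unconditional identity $\dim R_u(G)/\bigl(G^0,R_u(G)\bigr)=\dim\mathfrak{u}_0$ on which you base (i)$\Rightarrow$(ii) is wrong. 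What the grading actually yields is $[\mathfrak{g},\mathfrak{u}]=[\mathfrak{u}_0,\mathfrak{u}_0]\oplus\bigoplus_{\xi}\mathfrak{g}_{\xi}$ (using $[\mathfrak{t},\mathfrak{g}_{\xi}]=\mathfrak{g}_{\xi}$, valid because every root pairs nontrivially with $\chi$ and is therefore nontrivial on $T$), so that $R_u(G)/\bigl(G^0,R_u(G)\bigr)$ has dimension $\dim\mathfrak{u}_0/[\mathfrak{u}_0,\mathfrak{u}_0]$. The equivalence is rescued by one extra line: $\mathfrak{u}_0$ is nilpotent, and a nilpotent Lie algebra whose abelianization has dimension $\leq 1$ is generated by one element, hence abelian of dimension $\leq 1$; this gives (i)$\Rightarrow$(ii), and the converse together with the displayed equalities follows since $\dim\mathfrak{u}_0\leq 1$ forces $[\mathfrak{u}_0,\mathfrak{u}_0]=0$. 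Finally, your closing worries about $G/G^0$ are beside the point for the dimension count: $R_u(G)/\bigl(G^0,R_u(G)\bigr)$ is defined using only $G^0$, so its dimension is $\dim\mathfrak{u}-\dim[\mathfrak{g},\mathfrak{u}]$ irrespective of the component group.
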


\begin{theo}
\label{NC:thsuff}
Let $G$ be a complex linear algebraic group such that:
\begin{itemize}
\item[(i)]
$G/L(G)$ is abelian  and \emph{topologically} generated by at most 
\emph{two} elements;
\item[(ii)]
the dimension of $R_u(G)/\bigl(G^0,R_u(G)\bigr)$ is at most one;
\item[(iii)]
$G$ admits a $\Theta$-structure, 
\end{itemize}
then $G$ is the local Galois group of a meromorphic linear $q$-difference 
system whose Newton polygon has \emph{integral slopes}.

More precisely, if $r: G \rightarrow \GL_n(\C)$ is a faithful representation, 
it is possible to find a meromorphic linear $q$-difference system whose 
Newton polygon has \emph{integral slopes} and whose Galois group is $r(G)$.
\end{theo}

It is possible to replace the condition \emph{(ii)} by the following:
\emph{\begin{itemize}
\item[(ii')]
the dimension of the unipotent component of the abelian group
$G/L(G)$ is at most one.
\end{itemize}}

\begin{proof}

The proof is tannakian, starting from an algebraic group $G$, we will 
obtain the system as a rational representation of the total Galois 
group with integral slopes:
\[
\rho = (\rho_w,\rho_{p,1}): 
G_1^{(0)} = \mathfrak{St}\, \rtimes G_{p,1}^{(0)} \rightarrow G
\] 
whose image is $G$.

We will built this representation using our main result on the description
of the representations of the Tannakian group $G_1^{(0)}$ via the 
representations of the \emph{wild fundamental group} $L \rtimes G_{p,1,s}^{(0)}$. 
We recall (\cf\ sections \ref{section:previousresults} and
\ref{section:structStokes}) that the knowledge of $\rho_w$ is equivalent 
to the knowledge of its infinitesimal counterpart $L\rho_w$ and that the 
knowledge of $L\rho_w$ is equivalent to the knowledge of a representation 
of $L$: $\lambda:L\rightarrow \mathfrak{g}$, compatible with the 
corresponding adjoint actions of $G_{p,1,s}^{(0)}$ and 
$\rho_{p,1}\bigl(G_{p,1,s}^{(0)}\bigr)$ (\cf\ section 
\ref{subsection:firstlookrepresentations}).
Moreover we have $\Im L\rho_w = \Im \lambda$.
\bigskip

Let $G$ be a \emph{triangularizable} complex linear algebraic goup. 
Let $\chi$ be a $\Theta$-coweight on a Levi subgroup $D$ of $G$. 
Using lemma \ref{domlemm}, we can suppose that $\chi$ is \emph{dominant}.
\smallskip

We will build the representation $\rho$ in three steps:

\begin{itemize}
\item[--]
we will define a rational representation $\rho_f: G_f^{(0)} \rightarrow G$, 
whose image is the centralizer $C_G(D)$;
\item[--]
using the coweight $\chi:\C^*\rightarrow D$ and the canonical injection 
$D\rightarrow G$, we get a morphism $\rho_{\theta}: \C^* \rightarrow G$ and 
we define a rational representation 
$\rho_{p,1} = (\rho_{\theta},\rho_f): G_{p,1}^{(0)} \rightarrow G$;
\item[--]
we will define a representation $\lambda: L \rightarrow \mathfrak{g}$ 
such that, if $ L\rho_w: \mathfrak{st} \rightarrow \mathfrak{g}$ is 
the associated representation, then
$\rho := (\rho_w,\rho_{p,1}): G_1^{(0)} \rightarrow G$ is onto.
\end{itemize}

\paragraph{Definition of $\rho_f$ and $\rho_{p,1}$}

We consider the centralizer $C_G(D)$, its Lie algebra is 
$\mathfrak{u}^0 \oplus \mathfrak{t}$ and, according to the hypothesis 
and to lemma \ref{lemmdim}, $\dim \mathfrak{u}^0 \leq 1$. We choose 
a generator $N$ of the vector space $\mathfrak{u}^0$, then
$C_G(D) = U_0 \times D$, where 
$U_0 = \exp \mathfrak{u}^0 = \{\exp tN \tq\ t \in \C\}$, in particular 
$C_G(D)$ is abelian. 

We consider the abelian algebraic group $C_G(D)$ and the coweight $\chi$ on
$D \subset C_G(D)$.  They satisfy the conditions of proposition 
\ref{UGR:propdirpure} ($C_G(D)/(C_G(D))^0 \approx G/G^0$), therefore 
there exists a representation
\[
\rho'_{p,1} : G_{p,1}^{(0)} \rightarrow C_G(D)
\]
such that $\Im \rho'_{p,1} = C_G(D)$ and such that the corestriction of
$\rho'_{\theta}: \mathbf{T}_1^{(0)} \rightarrow C_G(D)$ to $D$ is equal to 
the coweight $\chi$.
 
By composition of $\rho'_{p,1}$ by the canonical injection  
$C_G(D) \rightarrow G$, we get a representation:
\[
\rho_{p,1} = \rho'_{p,1} \circ r :G_{p,1}^{(0)} \rightarrow G.
\]
Its image is topologically generated by $D$ and $\exp N$.

\paragraph{Definition of $\rho_w$ and $\rho$}

We want to extend the representation $\rho_{p,1}$ into a \emph{surjective} 
representation
\[
\rho = (\rho_{w},\rho_{p,1}):
G_1^{(0)} = \mathfrak{S}\, \rtimes G_{p,1}^{(0)} \rightarrow G.  
\]
  
As we recalled above, the knowledge of $\rho_{w}: \mathfrak{St} \rightarrow G$ 
is equivalent to the knowledge of a representation:
\[
\lambda: L \rightarrow \mathfrak{g}, 
\]
the images of $\lambda$ and $L\rho_{w}$ being equal. \\

We will define $\lambda$ such that its image contains all the root spaces 
$\mathfrak{g}_{\xi}$, $\xi \in \mathcal{R}$. \\

We have a surjective map 
\[
\rho_{f,s}: G_{f,s}^{(0)} = \Hom_{gr}(\Eq,\C^*) \rightarrow D.
\]
Let $\xi$ be a root on $D$, then $\xi \circ \rho_{f,s}$ defines a weight on 
$\Hom_{gr}(\Eq,\C^*)$, that is an element $\bar c \in \Eq$. We set 
$<\xi,\chi> =: -\delta$, $\delta \in \N^*$. Therefore to each root $\xi$ 
we associate a label $(\delta,\overline c) \in \N^* \times \Eq$. We denote 
by $\Sigma \subset \N^* \times \Eq$ the \emph{finite} subset of labels 
obtained from the roots by this procedure.
 
If $(\delta,\overline c) \notin \Sigma$, for all $i=1,\ldots ,\delta$, 
we set $\lambda(\Der_i^{(\delta,\overline c)}) := 0$. It remains to define
$\lambda(\Der_i^{(\delta,\overline c)})$ for $(\delta,\overline c) \in \Sigma$ 
and for all $i= 1,\ldots ,\delta$.

We set $d_{\xi} := \dim~\mathfrak{g}_{\xi}$. The $\Theta$-coweight $\chi$ 
is \emph{dominant}, therefore $d_{\xi} \leq \delta$. We choose a \emph{basis} 
$(e_{\xi,1},\ldots ,e_{\xi,d_{\xi}})$ of the vector space $\mathfrak{g}_{\xi}$ 
and we set $\lambda(\Der^{(\delta,\overline c)}) := e_{\xi,i}$
if $i=1,\ldots,d_{\xi}$ and $\Der_i^{(\delta,\overline c)}) := 0$ if
$i = d_{\xi}+1,\ldots,\delta$.

Then, for every root $\xi \in \mathcal{R}$, the image of $\lambda$ contains 
the root space $\mathfrak{g}_{\xi}$.

\paragraph{End of the proof}

By construction, the image of $\lambda$, and therefore the image 
of $L\rho_{w}$ contains the sum of the root spaces
$\bigoplus_{\xi\in\mathcal{R}} \mathfrak{g}_{\xi} =
\bigoplus_{\xi\in\mathcal{R}} \mathfrak{u}_{\xi}$ 
and the image of $L\rho_{f,u}$ is ${u}_0$. Therefore the image of $L\rho$ 
contains $\mathfrak{u}$ and the image of $\rho$ contains $U = R_u(G)$. 
The image of $\rho$ contains also $C_G(D)$ and a fortiori $D$. Finally 
the image of $\rho$ is $G$.
\end{proof}

Using proposition \ref{TL:borelth} we get the following result.

\begin{coro}
\label{NC:borelcoro}
Let $G$ be a \emph{Borel subgroup} of a \emph{connected reductive} 
algebraic group, then it is the local Galois group of a meromorphic 
linear $q$-difference system whose Newton polygon has \emph{integral slopes}.
\end{coro}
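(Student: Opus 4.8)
The plan is to check that a Borel subgroup $G^+$ of a connected reductive algebraic group $G$ satisfies the three hypotheses of Theorem \ref{NC:thsuff}, so that the corollary falls out immediately; the substantive input, the existence of a $\Theta$-structure, is already supplied by Proposition \ref{TL:borelth}. First I would record that $G^+$ is a connected solvable linear algebraic group, hence triangularizable by the Lie--Kolchin theorem (\cf\ Proposition \ref{triangprop}), and that Proposition \ref{TL:borelth} gives a $\Theta$-structure on $G^+$; this is hypothesis (iii).

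The only computation needed is that $L(G^+) = G^+$. Fix a maximal torus $T$ of $G$ contained in $G^+$, with Lie algebra $\mathfrak{t}$, and let $\mathcal{R}^+$ be the positive system of roots corresponding to $G^+$, so that $\mathfrak{g}^+ = \mathfrak{t} \oplus \bigoplus_{\xi \in \mathcal{R}^+} \mathfrak{g}_\xi$. By Lemma \ref{generators} the Lie algebra of $L(G^+)$ is generated by $\mathfrak{t}$ together with the root spaces $\mathfrak{g}_\xi$; since $[\mathfrak{t},\mathfrak{g}_\xi] = \mathfrak{g}_\xi$ for every root $\xi$ (as used in the proof of Lemma \ref{generators2}), this generated subalgebra is exactly $\mathfrak{t} \oplus \bigoplus_{\xi \in \mathcal{R}^+}\mathfrak{g}_\xi = \mathfrak{g}^+$. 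Because $L(G^+)$ and $G^+$ are both connected, this forces $L(G^+) = G^+$, whence $G^+/L(G^+) = \{e\}$.

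Consequently hypothesis (i) of Theorem \ref{NC:thsuff} holds trivially (the trivial group is abelian and needs no generators), and so does the variant hypothesis (ii'): the unipotent component of $G^+/L(G^+) = \{e\}$ has dimension $0 \le 1$. Applying Theorem \ref{NC:thsuff} now produces a meromorphic linear $q$-difference system with integral slopes whose Galois group is $G^+$ (using the ``more precisely'' clause with any faithful representation $r$ of $G^+$, since $r(G^+) \cong G^+$).

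I do not expect a genuine obstacle: all the work is hidden in Proposition \ref{TL:borelth}, whose proof extracts a good system of roots from a basis of simple roots and then invokes Proposition \ref{TL:propgoodsyst}; the remaining steps above are routine verifications that $L(G^+) = G^+$ and hence that the structural conditions of Theorem \ref{NC:thsuff} are vacuously satisfied for Borel subgroups.
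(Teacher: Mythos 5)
Your proposal is correct and follows exactly the route the paper intends: the corollary is stated immediately after Theorem \ref{NC:thsuff} with the remark ``Using proposition \ref{TL:borelth}\dots'', i.e.\ the $\Theta$-structure comes from Proposition \ref{TL:borelth} and the remaining hypotheses are routine. Your explicit verification that $L(G^+)=G^+$ (via Lemma \ref{generators} and the root-space decomposition $\mathfrak{g}^+=\mathfrak{t}\oplus\bigoplus_{\xi\in\mathcal{R}^+}\mathfrak{g}_\xi$), which the paper leaves implicit, is a correct and welcome addition.
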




\section{About the global inverse problem}
\label{section:GIP}

We have a ``glueing" lemma.

\begin{lemm}
\label{GIP:glueing}
\begin{itemize}
\item[(i)]
Let $A^{(0)}$ (resp. $A^{(\infty)}$) be an object of $\EE_1^{(0)}$ 
(resp. $\EE_1^{(\infty)}$). We suppose that $A^{(0)}$ and
$A^{(\infty)}$ are in Birkhoff-Guenther normal form and that 
$A_f^{(0)} = A_f^{(\infty)} \in \GL_n(\C)$. Let $G_1^{(0)}$ (resp. $G_1^{(\infty)}$) 
be the Galois group of $A^{(0)}$ (resp. $A^{(\infty)}$) and $G$ the Galois 
group of the global system defined by $(A^{(0)},I_n,A^{(\infty)})$. 
Then $G$ is the Zariski closure in $\GL_n(\C)$ of the subgroup generated 
by $G_1^{(0)}$ and $G_1^{(\infty)}$.
\item[(ii)]
Let $G^+$ and $G^-$ be two \emph{connected} algebraic subgroups 
of $\GL_n(\C)$ satisfying the conditions of theorem \ref{NC:thsuff} 
(or equivalently such that they are local Galois group of meromorphic 
linear $q$-difference systems whose Newton polygon have \emph{integral 
slopes}). We suppose that $G^+$ and $G^-$ admit a \emph{same} maximal 
torus. We denote $G$ the Zariski closure in $\GL_n(\C)$ of the subgroup 
generated by $G^+$ and $G^-$. Then $G$ is the global Galois group of a 
meromorphic linear $q$-difference system whose Newton polygon has 
\emph{integral slopes} at $0$ and $\infty$.
\end{itemize}
\end{lemm}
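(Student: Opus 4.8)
The plan is to treat the two parts separately, with part (i) supplying the essential mechanism and part (ii) being an application of the local existence theorem \ref{NC:thsuff} plus part (i).

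For part (i), the starting point is theorem \ref{theo:structureglobale}: the global Galois group $G$ of the system attached to $(A^{(0)},P,A^{(\infty)})$ is the Zariski closure of the group generated by $G_1^{(0)}$, one conjugate $\bigl(\check P(a)\bigr)^{-1} G_1^{(\infty)} \check P(a)$, and all the ``transport'' elements $\bigl(\check P(b)\bigr)^{-1}\check P(a)$. So the whole point is to specialise to the case $P = I_n$ and $A_f^{(0)} = A_f^{(\infty)}$ and check that the twisting factors degenerate. First I would unwind the definition $\check P(a) := \bigl(\Psi_a(A^{(\infty)})\bigr)^{-1} P(a)\,\Psi_a(A^{(0)})$; recall that $\Psi_a(A^{(0)})$ really means $\Psi_a(A_f^{(0)})$ and likewise at $\infty$. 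With $P = I_n$ and $A_f^{(0)} = A_f^{(\infty)} =: C$, we get $\check P(a) = \bigl(\Psi_a(C)\bigr)^{-1}\Psi_a(C) = I_n$ for every $a$ at which $\Psi_a(C)$ is defined. Hence all transport elements $\bigl(\check P(b)\bigr)^{-1}\check P(a)$ are trivial and the distinguished conjugate of $G_1^{(\infty)}$ is $G_1^{(\infty)}$ itself. Theorem \ref{theo:structureglobale} then says exactly that $G$ is the Zariski closure of the group generated by $G_1^{(0)}$ and $G_1^{(\infty)}$. I would also note that the hypothesis that $A^{(0)},A^{(\infty)}$ are in Birkhoff-Guenther normal form guarantees (as in the discussion preceding the statement of theorem \ref{theo:structureglobale}) that the morphism $F$ realising a subobject is automatically rational, so the category-theoretic setup of $\mathcal{C}_1$ genuinely computes the global Galois group.

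For part (ii), the idea is to manufacture local data realising $G^+$ at $0$ and $G^-$ at $\infty$ and then glue by part (i). By hypothesis $G^+$ and $G^-$ satisfy the conditions of theorem \ref{NC:thsuff}, so there exist meromorphic $q$-difference systems, which after putting them in Birkhoff-Guenther normal form we may take as objects $A^{(0)} \in \EE_1^{(0)}$ and $A^{(\infty)} \in \EE_1^{(\infty)}$ (using $w := 1/z$ at $\infty$), with local Galois groups $G^+$ and $G^-$ respectively. The one technical adjustment is to arrange $A_f^{(0)} = A_f^{(\infty)}$, i.e. to make the two fuchsian parts \emph{literally} the same constant matrix; this is where the shared-maximal-torus hypothesis enters. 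Since $G^+$ and $G^-$ are connected and share a maximal torus $T$, and since in theorem \ref{NC:thsuff} the fuchsian part $A_f^{(0)}$ of the constructed system can be chosen so that its associated fuchsian Galois group is any prescribed abelian group topologically generated by two elements (here, $T$, which we realise through the surjection $\rho_{f,s}: G_{f,s}^{(0)} \to D$ with $D$ the maximal torus), I would choose the constructions at $0$ and at $\infty$ so that both produce the \emph{same} diagonal constant matrix $C$ with $\overline C$ generating $T$ as a group of weights. Concretely, this means running the explicit construction in the proof of theorem \ref{NC:thsuff} (the paragraph ``Definition of $\rho_f$ and $\rho_{p,1}$'', which feeds into proposition \ref{UGR:propinvpure} and ultimately the explicit recipe $A_s := \Diag(a_1,\ldots,a_n)$) with a single fixed choice of eigenvalues $a_1,\ldots,a_n$ for both local systems. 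Then $A_f^{(0)} = A_f^{(\infty)} = C$, part (i) applies, and the global Galois group of $(A^{(0)}, I_n, A^{(\infty)})$ is the Zariski closure of the subgroup generated by $G^+$ and $G^-$, with integral slopes at both $0$ and $\infty$ by construction.

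The main obstacle is the coherence point in part (ii): one must be careful that the two independent invocations of theorem \ref{NC:thsuff} can be forced to share not just an abstract maximal torus but the \emph{same} explicit diagonal fuchsian block $C$ sitting inside $\GL_n(\C)$. This requires inspecting the proof of theorem \ref{NC:thsuff} closely enough to see that the only obstruction there is realising the prescribed abelian quotient and $\Theta$-coweight data, and that the choice of representatives $a_i \in \C^*$ of the diagonal weights $\varpi_i \in \Eq$ is free (subject only to the consistency condition $\varpi_i = \varpi_j \Rightarrow a_i = a_j$ already imposed in the explicit construction). Once one fixes these representatives once and for all, the two local normal forms have identical fuchsian parts and part (i) does the rest; everything else is routine bookkeeping about Birkhoff-Guenther normal forms and the rationality of the glued morphism.
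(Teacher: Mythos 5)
Your proof follows essentially the same route as the paper's: part (i) is obtained by specialising theorem \ref{theo:structureglobale} (your explicit check that $\check{P}(a)=I_n$ when $P=I_n$ and $A_f^{(0)}=A_f^{(\infty)}$ is exactly the ``easy'' step the paper leaves implicit), and part (ii) is obtained, as in the paper, by running the construction of theorem \ref{NC:thsuff} twice with a common fuchsian block adapted to the shared maximal torus and then invoking (i). Your write-up is in fact more detailed than the paper's on both points, and the one place where care is needed (forcing the two local constructions to produce literally the same constant matrix $A_f^{(0)}=A_f^{(\infty)}$) is treated at least as carefully as in the original.
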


\begin{proof}
\begin{itemize}
\item[(i)]
follows easily from theorem \ref{theo:structureglobale}.
\item[(ii)]
Going back to the proof of theorem \ref{NC:thsuff}, we can find $A^{(0)}$ 
(reps. $A^{(\infty)}$) such that $G^+$ (reps. $G^-$) is the Galois group 
of $A^{(0)}$ (reps. $A^{(\infty)}$) and such that  $A_f^{(0)}=A_f^{(\infty)}$ 
(we choose $A_f^{(0)}$ such that the subgroup generated by its semi-simple 
part is Zariski dense in $T$). Then the result follows from (i).
\end{itemize}
\end{proof}
 
\begin{prop}
\label{NC:thsuff2}
Let $G$ be a \emph{connected reductive} linear algebraic group, 
then $G$ is the global Galois group of a meromorphic linear $q$-difference 
system whose Newton polygons at $0$ and $\infty$ have \emph{integral slopes}. \\
Moreover it is possible to get a $q$-difference system admitting $G$ as 
a Galois group with a \emph{trivial} (generalized) Birkhoff connection 
matrix and such that the local groups at $0$ and $\infty$ are \emph{Borel 
subgroups}.
\end{prop}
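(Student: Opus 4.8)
The plan is to reduce the global realization problem to the local inverse problem solved in Theorem \ref{NC:thsuff}, glued via the ``trivial connection matrix'' device of Lemma \ref{GIP:glueing}(ii). First I would recall that a connected reductive group $G$ possesses a Borel subgroup $G^+$, and that by Proposition \ref{TL:borelth} every Borel subgroup of a connected reductive group admits a $\Theta$-structure. I also need to check that $G^+$ satisfies the other hypotheses of Theorem \ref{NC:thsuff}: $G^+$ is connected, its radical is its own identity component, $G^+/L(G^+)$ is trivial (a Borel subgroup is generated by a maximal torus together with the root subgroups, hence $L(G^+) = G^+$), so condition (i) is vacuously satisfied, and $R_u(G^+)/(G^+,R_u(G^+))$ is easily seen to have dimension at most one for a Borel — indeed $\mathfrak{u}_0 = 0$ since all root spaces are one-dimensional and no root of $G$ restricted to the maximal torus is trivial (this is the content of Lemma \ref{lemmdim} combined with the fact that a Borel contains its Cartan and the roots are linearly independent from $0$). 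Thus by Corollary \ref{NC:borelcoro}, $G^+$ is realizable as a local Galois group at $0$ of a $q$-difference system with integral slopes.

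Next I would take a second Borel subgroup $G^-$ of $G$ — concretely the opposite Borel with respect to the same maximal torus $T$ — which by the same reasoning is realizable as a local Galois group at $\infty$ with integral slopes. The two Borels $G^+$ and $G^-$ share the maximal torus $T$, so the hypotheses of Lemma \ref{GIP:glueing}(ii) are met: there is a global meromorphic $q$-difference system with integral slopes at $0$ and $\infty$ whose global Galois group is the Zariski closure of the subgroup generated by $G^+$ and $G^-$. The key group-theoretic fact I then invoke is that in a connected reductive group $G$, two opposite Borel subgroups $G^+$ and $G^-$ generate $G$ as an algebraic group (equivalently, their Lie algebras $\mathfrak{b}^+$ and $\mathfrak{b}^-$ generate $\mathfrak{g}$, which holds because $\mathfrak{b}^+ + \mathfrak{b}^- = \mathfrak{g}$ already as vector spaces: together they contain the Cartan $\mathfrak{t}$ and all root spaces $\mathfrak{g}_\xi$, $\xi \in \mathcal{R}$, and these span $\mathfrak{g}$). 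Hence the Zariski closure of $\langle G^+, G^- \rangle$ is exactly $G$, which gives the first assertion.

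For the refinement — a trivial Birkhoff connection matrix and local groups exactly equal to Borel subgroups — I would follow the proof of Lemma \ref{GIP:glueing}(ii) more carefully: one builds $A^{(0)}$ realizing $G^+$ and $A^{(\infty)}$ realizing $G^-$ in Birkhoff--Guenther normal form, arranging (as the proof of that lemma indicates) that $A_f^{(0)} = A_f^{(\infty)} \in \GL_n(\C)$ with the subgroup generated by its semisimple part Zariski-dense in the common torus $T$; then the global system is $(A^{(0)}, I_n, A^{(\infty)})$, whose connection matrix is literally $I_n$, and part (i) of Lemma \ref{GIP:glueing} identifies its Galois group with the closure of $\langle G^+_1{}^{(0)}, G^-_1{}^{(\infty)}\rangle = \langle G^+, G^-\rangle = G$. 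I expect the only genuinely delicate point to be the matching condition $A_f^{(0)} = A_f^{(\infty)}$ while \emph{simultaneously} keeping the local Galois groups equal to the prescribed Borels rather than merely Zariski-dense inside them: one must run the tannakian construction in the proof of Theorem \ref{NC:thsuff} with enough freedom in the choice of the fuchsian part so that the semisimple local data at $0$ and at $\infty$ can be taken identical; since a Borel contains its maximal torus $T$ and $T$ can be realized with a single well-chosen fuchsian block (using a rank-$\dim T$ sublattice of the universal lattice $\L$), this is a matter of bookkeeping rather than a new idea, and the construction of $\rho_f$, $\rho_{p,1}$, $\rho_w$ in that proof goes through verbatim with $D = T$ and all root spaces hit by $\lambda$.
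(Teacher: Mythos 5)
Your proposal is correct and follows essentially the same route as the paper: realize two opposite Borel subgroups sharing the maximal torus $T$ as local Galois groups at $0$ and $\infty$ via Proposition \ref{TL:borelth} and Corollary \ref{NC:borelcoro} (the paper makes explicit the choice of a single coweight $\chi$ that is a $\Theta$-coweight for $G^+$ while $\chi^{-1}$ is one for $G^-$), then glue with Lemma \ref{GIP:glueing} and use that opposite Borels generate $G$. Your verification of the hypotheses of Theorem \ref{NC:thsuff} for a Borel and your remarks on the matching condition $A_f^{(0)}=A_f^{(\infty)}$ are finer-grained than the paper's text but consistent with it.
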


\begin{proof}
If the maximal torus of $G$ is \emph{trivial}, then the conditions of 
the proposition \ref{UGR:propdir} are satisfied, therefore $G$ is the 
Galois group of a \emph{local} regular singular equation. It is easy 
to conclude using \cite{JSGAL}.

We can suppose that $G\subset \GL_n(\C)$ and that the maximal torus $T$ 
of $G$ is \emph{not trivial} and in diagonal form.

We denote $G^+$ and $G^-$ two opposite Borel subgroups of $G$ and we choose 
as explained above a coweight $\chi$ of $T$ such that $\chi$ is a 
$\Theta$-coweight for $G^+$ and $\chi^{-1}$ is a $\Theta$-coweight for $G^-$.
Using \ref{NC:borelcoro} we prove that $G^+$ (resp. $G^-$) is the local 
Galois group of a meromorphic linear $q$-difference system whose Newton
polygon has integral slopes. We end the proof using lemma \ref{GIP:glueing}.
\end{proof}

\begin{theo}
\label{NC:thsuff3}
Let $G$ be a \emph{connected} linear algebraic group,. We suppose that 
the dimension of the vector space $R_u(G)/(G,R_u(G))$ is at most $2$.
Then $G$ is the global Galois group of a rational linear $q$-difference 
system whose Newton polygons at $0$ and $\infty$ have \emph{integral slopes}.
\end{theo}

In particular we can apply this result to a connected group. 
It generalizes proposition \ref{NC:thsuff2}.

\begin{proof}
If the maximal torus of $G$ is \emph{trivial}, then the conditions of 
proposition \ref{UGR:propdir} are satisfied, therefore $G$ is the 
Galois group of a \emph{local} regular singular equation. It is easy 
to conclude using \cite{JSGAL}.

We can suppose that $G\subset \GL_n(\C)$ and that the maximal torus $T$ 
of $G$ is \emph{not trivial} and in diagonal form.

\begin{lemm}
There exists a coweight $\chi$ on $T$ which is \emph{non null} 
on each \emph{root} $\xi$ for the adjoint action of $T$ on the Lie 
algebra $\mathfrak{g}$ of $G$:  $<\xi,\chi>\, \neq 0$.
\end{lemm}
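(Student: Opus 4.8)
The plan is to deduce this directly from part~(ii) of lemma~\ref{TL:lem}, once one observes that only finitely many linear conditions are at stake. Write $\mathcal{R}$ for the set of roots of the adjoint action of $T$ on $\mathfrak{g}$; since $\mathfrak{g} = \mathfrak{g}_0 \oplus \bigoplus_{\xi \in \mathcal{R}} \mathfrak{g}_{\xi}$ and $\dim_\C \mathfrak{g} < \infty$, the set $\mathcal{R}$ is finite, and by the very definition of a root each $\xi \in \mathcal{R}$ is a non-trivial character of the torus $T$.

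Next I would fix an isomorphism of tori $\Phi : (\C^*)^{\mu} \rightarrow T$, with $\mu := \dim T \geq 1$. Through $\Phi$, each character $\xi \in \mathcal{R}$ becomes a character $(z_1,\ldots,z_{\mu}) \mapsto z_1^{a_1(\xi)} \cdots z_{\mu}^{a_{\mu}(\xi)}$ of $(\C^*)^{\mu}$, with $a(\xi) = (a_1(\xi),\ldots,a_{\mu}(\xi)) \in \Z^{\mu}$ and, $\xi$ being non-trivial, $a(\xi) \neq 0$. Hence $f_{\xi} : y \mapsto \sum_{i=1}^{\mu} a_i(\xi)\, y_i$ is a non-trivial $\R$-linear form on $\R^{\mu}$ for each of the finitely many $\xi \in \mathcal{R}$. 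Applying lemma~\ref{TL:lem}(ii) to the family $(f_{\xi})_{\xi \in \mathcal{R}}$, I obtain $p = (p_1,\ldots,p_{\mu}) \in \Z^{\mu}$ such that $f_{\xi}(p) \neq 0$ for all $\xi \in \mathcal{R}$; after multiplying $p$ by a positive integer if necessary, we may moreover assume $p \neq 0$.

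Finally I would define the coweight $\chi : \C^* \rightarrow T$ by $\Phi^{-1} \circ \chi : t \mapsto (t^{p_1},\ldots,t^{p_{\mu}})$; it is non-null since $p \neq 0$. For a root $\xi$ one computes $\xi \circ \chi : t \mapsto t^{\sum_i a_i(\xi) p_i} = t^{f_{\xi}(p)}$, whence $<\xi,\chi> = \deg(\xi \circ \chi) = f_{\xi}(p) \neq 0$, which is exactly the assertion. I do not expect any real obstacle here: the statement is an immediate consequence of the combinatorial lemma~\ref{TL:lem}(ii) together with the finiteness of $\mathcal{R}$, the only point to watch being the elementary translation of the pairing $<\xi,\chi>$ into the evaluation $f_{\xi}(p)$ of a linear form, carried out above. (For later use in the subsequent theorem one would then, exactly as in lemma~\ref{domlemm}, replace $\chi$ by $\chi \circ \varphi_m$ for a large $m$ in order to impose supplementary positivity or dimension conditions.)
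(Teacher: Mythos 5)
Your proof is correct and follows essentially the same route as the paper's: fix an isomorphism $\Phi:(\C^*)^{\mu}\rightarrow T$, convert the (finitely many, non-trivial) roots into non-trivial linear forms on $\R^{\mu}$, apply lemma \ref{TL:lem}(ii) to find $p\in\Z^{\mu}$ on which they are all non-zero, and define $\chi$ by $\Phi^{-1}\circ\chi: t\mapsto(t^{p_1},\ldots,t^{p_{\mu}})$. The extra remarks you make (finiteness of $\mathcal{R}$, non-triviality of each root, $p\neq 0$) are correct points that the paper leaves implicit.
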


\begin{proof}
The proof is a variant of an argument used above.

Let $\Phi:(\C^*)^{\mu}\rightarrow T$ be an isomorphism of tori.

Let $\Sigma = \{\xi_1,\ldots ,\xi_{\nu}\}$ be the set of of roots of $G$. 
For $i = 1,\ldots,\nu$, we set $f_i:= L\xi_i \circ L\Phi$.
We interpret $f_1,\ldots ,f_{\mu'}$ as linear forms on $\R^{\mu}$. 
There exists $p = (p_1,\ldots ,p_{\mu})\in\Z^{\mu}$ such that $f_i(p) \neq  0$ 
for all $i = 1,\ldots ,\nu$ (\cf\ lemma \ref{TL:lem}). We define a coweight
$\chi: \C^* \rightarrow T$ by 
$\Phi^{-1} \circ \chi: t \mapsto (t_1:=t^{p_1},\ldots ,t_{\mu}:=t^{p_{\mu}})$,
then, for $i = 1,\ldots ,\nu$, we set 
$v_i := f_i\circ L(\Phi^{-1}\circ\chi) := L\xi_i \circ L\chi$. 
We have  $v_i(1) = f_i(p)\neq 0$, then 
$<L\xi_i ,L\chi>\ =\, <\xi_i,\chi>\, \neq 0$.
\end{proof}

We return to the proof of the theorem. We will suppose that we are in 
the ``worst case" that is $\dim~R_u(G)/(G,R_u(G)) = 2$, the reader 
will easily adapt the proof to the other cases.
 
We denote by $\mathcal{R}\subset \mathfrak{g}^*$ the set of roots.

The commutative group $V(G)\approx R_u(G)/(G,R_u(G))$ is topologically 
generated by two elements (\cf\ lemma \ref{nbgenerators}).

The Lie algebra of $V(G)$ is the image of $\mathfrak{g}_0^n$ induced 
by the quotient map (\cf\ lemma \ref{generators}). Hence there exist 
$N^+,N^- \in \mathfrak{g}_0^n$ whose images generate the Lie algebra 
of $V(G)$. Then the Lie algebra $\mathfrak{g}$ is generated by $\C N^+,\C N^-$ 
and the Lie algebra of $L(G)$, therefore by 
$\C N^+,\C N^-$, $\mathfrak{t}$ and the root spaces $\mathfrak{g}_{\xi}$, 
$\xi \in \mathcal{R}$ (\cf\ lemma \ref{generators}).
 
We set $\mathcal{R}^+ := \{\xi \in \mathcal{R} \tq <\xi,\chi>\, < 0\}$ and 
$\mathcal{R}^- := \{\xi \in \mathcal{R} \tq <\xi,\chi>\, > 0\}$. We have 
a partition $\mathcal{R} = \mathcal{R}^+ \cup \mathcal{R}^-$.

We denote by $G^+$ (reps. $G^-$) the algebraic subgroup of $G$ topologically
generated by, $T$, $\exp~(\C N^+)$ and the 
$\exp~\mathfrak{g}_{\xi}$, $\xi \in \mathcal{R}^+$
(resp. $T$, $\exp~(\C N^-)$ and the $\exp~\mathfrak{g}_{\xi}$, 
$\xi \in \mathcal{R}^-$). The group $G$ is clearly topologically generated 
by $G^+$ and $G^-$.

Then $\chi$ defines a $\Theta$-structure on  $G^+$ and $\chi^{-1}$ defines 
a $\Theta$-structure on  $G^-$. Using \ref{NC:thsuff} we prove that $G^+$ 
(resp. $G^-$) is the local Galois group of a meromorphic linear 
$q$-difference system whose Newton polygon has integral slopes. We end 
the proof using lemma \ref{GIP:glueing}.
\end{proof}

\begin{rema}
In fact as we noticed above, we proved more than what is stated in 
the proposition. In some sense the only singularities of the constructed 
equation are $0$ and $\infty$ (\cf \cite{JSAIF}). 
This is a first step towards a $q$-analog version of the Abhyankar 
conjecture. The reader will compare with the solution of the differential 
Abhyankar conjecture due to the first author.
\end{rema}

\begin{theo}
\label{NC:thsuff4}
If a complex linear algebraic group $G$ is the $q$-difference Galois 
group of a rational system, then $V(G):=G/L(G)$ is the $q$-difference 
Galois group of a rational regular-singular system.
\end{theo}

The proof is ``Tannakian" and similar to the first part of the proof 
of the theorem \ref{NC:th0}.

Conversely we can conjecture that, using a variant of the proof of 
the proposition \ref{NC:thsuff3}, the condition of the theorem is not 
only necessary but that it is also sufficient (the reader will compare 
with the proof of the corresponding result in the differential case by 
the first author).


\appendix


\section{Pronilpotent completions}
\label{section:pronil}

To a family $(x_i)_{i\in I}$, we associate the \emph{free Lie algebra} 
$\Lib\bigl((x_i)_{i\in I}\bigr)$ generated over $\C$. We will denote 
$\Lib\,\hat{}\bigl((x_i)_{i\in I}\bigr)$ the completion of 
$\Lib\bigl((x_i)_{i\in I}\bigr)$ for the descending central filtration:
\[
L\,\hat{}:=\Lib\,\hat{}\bigl((x_i)_{i\in I}\bigr) = 
{\lim\limits_{\overleftarrow{n\in\N}}}\, L/L^n,
\]
with $L:=\Lib\bigl((x_i)_{i\in I}\bigr)$ and $L^1 := L$, $L^{n+1}:=[L,L^n]$.

If $I$ is \emph{finite}, we refer to \cite{DG} for the following properties. 
Then each $L/L^n$ is a finite dimensional nilpotent complex Lie algebra, 
therefore it is an \emph{algebraic} Lie algebra and $L\hat{}$ is a 
pronilpotent proalgebraic Lie algebra.

The functor ``Lie algebra" is an equivalence between the category of unipotent 
algebraic groups and the category of finite dimensional nilpotent Lie algebras. 
We shall denote $\exp$ the inverse equivalence.

We set:
\[
\exp (L\,\hat{}\,):={\lim\limits_{\overleftarrow{n\in\N}}}\, \exp(L/L^n).
\]
It is a prounipotent algebraic group, whose Lie algebra is $L\,\hat{}$.

If $I$ is \emph{infinite}, then the situation is more complicated. The 
dimension of each nilpotent Lie algebra $L/L^n$ is infinite and the 
pronilpotent completion $L\hat{}$ is not satisfying for our purposes. 
Therefore we will introduce another completion of $L$, the 
f-pronilpotent completion $L^{\dag}$.

Let $J\subset I$ be a \emph{finite} subset. We have a natural map of Lie 
algebras:
\[
p_J:\Lib\bigl((x_i)_{i\in I}\bigr)\rightarrow \Lib\bigl((x_j)_{j\in J}\bigr), 
\]
defined by
$p_J(x_i):=0$ if $i\notin J$ and $p_J(x_i):=x_i$ if $i\in J$. We define 
similarly maps
$p_{J_1,J_2}:\Lib\bigl((x_i)_{i\in J_2}\bigr) \rightarrow 
\Lib\bigl((x_j)_{j\in J_1}\bigr)$
if $J_1\subset J_2\subset I$ ($J_2$ finite).

Going to the nilpotent completions, we get maps:
\[
\hat p_J:\Lib\,\hat{}\bigl((x_i)_{i\in I}\bigr) \rightarrow 
\Lib\,\hat{}\bigl((x_j)_{j\in J}\bigr), \quad
p_{J_1,J_2}:\Lib\,\hat{}\bigl((x_i)_{i\in J_2}\bigr) \rightarrow 
\Lib\,\hat{}\bigl((x_j)_{j\in J_1}\bigr)
\]

The $\Lib\,\hat{}\bigl((x_j)_{j\in J}\bigr)$ ($J\subset I$, $J$ finite) 
are pronilpotent proalgebraic Lie algebras and the $p_{J_1,J_2}$ 
($J_2\subset I$ finite, $J1\subset J_2$) are morphisms of proalgebraic 
Lie algebras. 

We thus get a projective system of  prounipotent proalgebraic Lie algebras 
and, by definition, the f-pronilpotent completion 
$L^{\dag} := \Lib^{\dag}\bigl((x_i)_{i\in I}\bigr)$ of 
$L:=\Lib\bigl((x_i)_{i\in I}\bigr)$ is the projective limit of this system,
\[
L^{\dag} := 
{\lim\limits_{\overleftarrow{J\subset I}}}\, \Lib\,\hat{}\bigl((x_j)_{j\in J}\bigr),
\quad J~\text{finite}.
\]
It can be interpreted as a projective limit of prounipotent proalgebraic 
Lie algebras. Then we can pass to groups, using the functor $\exp$, and 
we can define a projective limit of unipotent groups $\exp L^{\dag}$, whose 
Lie algebra is $L^{\dag}$.

The natural map $L\rightarrow L^{\dag}$ is injective and dominant (its image
is dense).

\begin{rema}
If $I$ is \emph{finite}, then $L^{\dag}=L\, \hat{}$. \\
If $I$ is \emph{infinite}, then we have maps 
$L \rightarrow L\,\hat{}\rightarrow L^{\dag}$ and 
$L\hat{} \rightarrow L^{\dag}$ is not an isomorphism.
\end{rema}

We shall consider now some actions of an \emph{abelian} proalgebraic 
group $G$ on a free Lie algebra $L$ and the corresponding ``semi-direct 
products" $L\rtimes G$.

In what follows we will suppose that each one dimensional complex vector 
space $\C x_i$ is stable under the action of $G$ and that the action of $G$ 
on $\C x_i$ is, for all $i\in I$, algebraic. Therefore the representations
$\rho_i: G \rightarrow \C^*$, given by $g \in G \mapsto \rho_i(g)$, with 
$g(x_i)=\rho_i(g)x_i$ are rational, they are \emph{weights} on $G$. \\

By definition a \emph{representation} $\rho$ of $L \rtimes G$ is the data 
of a rational linear representation $\rho'$ of $G$ 
($\rho':G \rightarrow \GL(V)$), together with a representation $d\rho''$ 
of $L$ in the same space ($d\rho'':L\rightarrow \End(V)$), required to be 
compatible with the corresponding adjoint actions. We consider the 
corresponding Tannakian category $\Rep\, (L\rtimes G)$. \\

In what follows we will suppose that:
\begin{itemize}
\item[(i)]
for all weight on $G$, there exists only a \emph{finite set} of $i\in I$ 
such that $\rho_i=\rho$;
\item[(ii)]
for every representation $\rho=(\rho',d\rho'')$ of $L\rtimes G$, 
the image of $d\rho''$ is a \emph{nilpotent} subalgebra of $\End(V)$.
\end{itemize}

\begin{lemm}
Let $\rho=(\rho',d\rho'')$ be a representation of $L\rtimes G$. 
Then there exists only a finite set of $i\in I$ such that 
$d\rho''(x_i)\neq 0$.
\end{lemm}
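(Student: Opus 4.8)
The plan is to exploit the compatibility between $d\rho''$ and the $G$-action that is built into the notion of a representation of the semidirect product $L \rtimes G$: by definition one has $\rho'(g)\, d\rho''(x)\, \rho'(g)^{-1} = d\rho''(g \cdot x)$ for all $g \in G$ and all $x \in L$, where $g \cdot x$ denotes the action of $G$ on $L$. First I would specialize this identity to the free generators $x = x_i$. Since each line $\C x_i$ is $G$-stable with $g \cdot x_i = \rho_i(g)\, x_i$, this yields $\rho'(g)\, d\rho''(x_i)\, \rho'(g)^{-1} = \rho_i(g)\, d\rho''(x_i)$ for all $g \in G$. Hence, whenever $d\rho''(x_i) \neq 0$, the operator $d\rho''(x_i)$ is a nonzero eigenvector, with eigen-character $\rho_i$, for the conjugation action of $G$ on the finite-dimensional space $\End(V)$.

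Next I would consider, for each character $\chi$ of $G$, the eigenspace $\End(V)_\chi := \{m \in \End(V) \tq \rho'(g)\, m\, \rho'(g)^{-1} = \chi(g)\, m \text{ for all } g \in G\}$. These eigenspaces are in direct sum, since nonzero vectors attached to pairwise distinct characters are linearly independent; as $\End(V)$ has finite dimension, only finitely many characters $\chi$ satisfy $\End(V)_\chi \neq \{0\}$. Call $S$ this finite set of characters. By the previous step, $\rho_i \in S$ for every index $i$ with $d\rho''(x_i) \neq 0$, so that
$$
\{i \in I \tq d\rho''(x_i) \neq 0\} \subseteq \bigcup_{\chi \in S} \{i \in I \tq \rho_i = \chi\}.
$$

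Finally I would invoke hypothesis (i): for each character $\chi$ of $G$ the set $\{i \in I \tq \rho_i = \chi\}$ is finite; a finite union of finite sets being finite, this gives the desired conclusion. I do not expect any genuine obstacle here: the single point requiring (minimal) care is the standard fact that eigenvectors of a commuting action attached to distinct characters are linearly independent, which is what makes $S$ finite; everything else is a direct application of the semidirect-product compatibility and of hypothesis (i). (Note that hypothesis (ii) is not used for this lemma; it will be needed only afterwards, to ensure that $d\rho''$ factors through the f-pronilpotent completion $L^{\dag}$.)
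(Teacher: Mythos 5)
Your proof is correct and follows essentially the same route as the paper's: both extract from the semidirect-product compatibility that $d\rho''(x_i)$ is an eigenvector for the conjugation action of $G$ on $\End(V)$ with character $\rho_i$, observe that only finitely many characters can occur in the finite-dimensional space $\End(V)$ (the paper phrases this via the finitely many roots of the adjoint action of $\rho'(G)$), and conclude with hypothesis (i). Your version has the minor merit of treating the trivial character uniformly, whereas the paper implicitly separates the case $\rho_i=1$ and handles it directly by (i).
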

\begin{proof}
Let $\rho=(\rho',d\rho'')$ be a representation of $L\rtimes G$ in 
a finite dimensional space $V$. Let $i\in I$, for all $g\in G$:
\[
\Ad_{\rho'(g)}\bigl(d\rho''(x_i)\bigr)=d\rho''\bigl(g(x_i)\bigr)
= d\rho''\bigl(\rho_i(g)x_i\bigr)=\rho_i(g)d\rho''(x_i).
\]
We suppose that $d\rho''(x_i)\neq 0$. There exists $g_0\in G$ such that 
$\rho_i(g_0)\neq 1$,  then 
$\Ad_{\rho'(g_0)}\bigl(d\rho''(x_i)\bigr)=\rho_i(g_0)d\rho''(x_i)$, therefore 
there exists a \emph{root} $\xi$ for the adjoint action of $\rho_1(G)$ on 
$\End V$ such that $d\rho''(x_i)$ belongs to the corresponding root space 
and we have $\rho_i = \xi \circ \rho'$. The number of roots $\xi$ is finite, 
the result follows, using the condition (i).
\end{proof}

If $J \subset I$ is a finite subset such that, for all $i \in I \setminus J$,  
$d\rho''(x_i) = 0$, then the representation $d\rho''$ factors by 
$\Lib\bigl((x_j)_{j\in J}\bigr)$ and, as the image of $d\rho''$ is nilpotent, 
it factors by $\Lib\,\hat{}\bigl((x_j)_{j\in J}\bigr)$. Therefore the natural 
map:
\[
\Lib\bigl((x_i)_{i\in I}\bigr)\rightarrow \Lib^{\dag}\bigl((x_i)_{i\in I}\bigr), 
\]
induces an isomorphism:
\[
\Rep\Bigl(\Lib^{\dag}\bigl((x_i)_{i\in I}\bigr)\rtimes G\Bigr) \rightarrow 
\Rep\Bigl(\Lib\bigl((x_i)_{i\in I}\bigr)\rtimes G\Bigr).
\]

\begin{prop}
\label{prop:calcul}
Under the above conditions, the tannakian group of the tannakian category
$Rep\Bigl(\Lib\bigl((x_i)_{i\in I}\bigr)\rtimes G\Bigr)$ is isomorphic to
$\Lib^{\dag}\bigl((x_i)_{i\in I}\bigr)\rtimes G$. More precisely, if we have 
a $G$-equivariant morphism of prounipotent proalgebraic Lie algebras 
$\varphi:
\Lib^{\dag}\bigl((x_i)_{i\in I}\rightarrow \Lambda$ inducing an isomorphism:
\[
\Rep\Bigl(\Lambda\rtimes G\Bigr) \rightarrow 
\Rep\Bigl(\Lib^{\dag}\bigl((x_i)_{i\in I}\bigr)\rtimes G\Bigr),
\]
then $\varphi$ is an isomorphism.
\end{prop}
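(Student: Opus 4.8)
The plan is to deduce both assertions from Tannakian reconstruction, using the equivalence $\Rep\bigl(\Lib^{\dag}((x_i)_{i\in I})\rtimes G\bigr)\to\Rep\bigl(\Lib((x_i)_{i\in I})\rtimes G\bigr)$ already established just above, once the category $\Rep(L^{\dag}\rtimes G)$ has been identified with the category of all finite-dimensional rational representations of a genuine proalgebraic group. Write $L^{\dag}=\varprojlim_{J}\Lib\,\hat{}((x_j)_{j\in J})$, the limit over finite $J\subset I$; applying the functor $\exp$ term by term produces a prounipotent proalgebraic group $\exp L^{\dag}=\varprojlim_{J}\exp\Lib\,\hat{}((x_j)_{j\in J})$, on which $G$ acts algebraically (the action being induced from the algebraic actions on the lines $\C x_i$), and hence a proalgebraic group $\Gamma:=\exp L^{\dag}\rtimes G$. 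The first step is to check that the tannakian category $\Rep(L^{\dag}\rtimes G)$, equipped with its forgetful fibre functor, is canonically $\otimes$-equivalent to $\Repc(\Gamma)$: given a representation $(\rho',d\rho'')$ of $L^{\dag}\rtimes G$ in the sense used in the appendix, hypothesis (ii) forces the image of $d\rho''$ to be nilpotent, and by the lemma and the discussion preceding the proposition $d\rho''$ factors through $\Lib\,\hat{}((x_j)_{j\in J})$ for some finite $J$; since $\exp$ is an equivalence between unipotent algebraic groups and finite-dimensional nilpotent Lie algebras, $d\rho''$ integrates uniquely to a rational representation of $\exp\Lib\,\hat{}((x_j)_{j\in J})$, and compatibility with the adjoint actions turns the pair $(\rho',d\rho'')$ into a rational representation of $\Gamma$; conversely, every rational representation of $\Gamma$ restricts to such a pair, and the two assignments are mutually inverse tensor equivalences compatible with the fibre functors.

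Granting this identification, the first assertion follows: by Tannakian duality for affine group schemes (\cite{Saavedra}; see also \cite{DM}), the tannakian group of $\Repc(\Gamma)$ with respect to its forgetful fibre functor is $\Gamma$ itself, and composing with the equivalence $\Rep(L^{\dag}\rtimes G)\simeq\Rep(L\rtimes G)$ recalled above shows that the tannakian group of $\Rep\bigl(\Lib((x_i)_{i\in I})\rtimes G\bigr)$ is $\Gamma=\exp L^{\dag}\rtimes G$, which is precisely what the shorthand $\Lib^{\dag}((x_i)_{i\in I})\rtimes G$ denotes.

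For the ``more precisely'' part, let $\varphi:L^{\dag}\to\Lambda$ be a $G$-equivariant morphism of prounipotent proalgebraic Lie algebras inducing an equivalence $\Rep(\Lambda\rtimes G)\to\Rep(L^{\dag}\rtimes G)$. Applying $\exp$ and taking semidirect products with $G$ gives a morphism of proalgebraic groups $\psi:=(\exp\varphi)\rtimes\Id_G:\exp L^{\dag}\rtimes G\to\exp\Lambda\rtimes G$, and under the identification of the first step the induced restriction functor $\Repc(\exp\Lambda\rtimes G)\to\Repc(\exp L^{\dag}\rtimes G)$ is the given equivalence. I would then invoke the standard criterion \cite[Prop.~2.21]{DM}: a morphism $f:H\to H'$ of affine group schemes is faithfully flat iff the restriction functor $\Rep(H')\to\Rep(H)$ is fully faithful with image closed under subobjects, and is a closed immersion iff every object of $\Rep(H)$ is a subquotient of one coming from $\Rep(H')$; an equivalence of tannakian categories compatible with fibre functors satisfies both, so $f$ is a faithfully flat closed immersion, hence an isomorphism. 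Thus $\psi$ is an isomorphism of proalgebraic groups; since it commutes with the projections to $G$ and induces $\exp\varphi$ on the prounipotent kernels, $\exp\varphi$ is an isomorphism, and therefore so is $\varphi$ because $\exp$ is an equivalence of categories.

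The main obstacle I anticipate is the first step: making rigorous the dictionary between the ad hoc notion of ``representation of $L\rtimes G$'' (a finite-dimensional pair subject to the nilpotency constraint coming from condition (ii)) and the full category $\Repc(\Gamma)$ of rational representations of the proalgebraic group $\Gamma$ — in particular verifying that no rational representation is lost or spuriously added and that the forgetful fibre functors correspond. Conditions (i) and (ii) enter only here, to guarantee the finiteness ($d\rho''$ kills all but finitely many $x_i$) and nilpotency needed to integrate $d\rho''$ to an algebraic representation of a finite unipotent quotient; once this is settled, the remaining arguments are formal applications of Tannakian reconstruction and of the isomorphism criterion \cite[Prop.~2.21]{DM}.
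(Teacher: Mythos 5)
Your proof is correct; the paper in fact states Proposition \ref{prop:calcul} without giving a proof, and your argument supplies exactly the intended one: the preparatory lemma together with conditions (i)--(ii) identifies $\Rep(L\rtimes G)$ with the category of rational representations of the proalgebraic group $\exp(L^{\dag})\rtimes G$, after which Tannaka duality and the faithfully-flat/closed-immersion criterion of \cite[prop. 2.21]{DM} (already invoked elsewhere in the paper) yield both assertions. I see no gaps.
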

\bigskip

\begin{exem}
Our main example is:
$$
I := \{\iota = (\delta,\bar c,i) \vert (\delta,\bar c) \in \N^*\times \Eq, 
i=1,\ldots ,\delta\} \cup\{0\},
$$
with $x_{\iota} := \dot\Delta^{(\delta,\bar c)}_i$ if $\iota \neq 0$ and 
$x_0:=\dot \Delta^{(0)}$. Then $L:=\Lib\bigl((x_\iota)_{\iota\in I}\bigr)$, 
$G:=G^{(0)}_{p,1,s}$. The weight $\rho_i$ are defined by:
$$
\rho_{\iota} := \delta \bar c,
$$ 
$\Eq$ being interpreted as the group of weights on $\Hom_{gr}(\Eq,\C^*)$) 
if $\iota\neq 0$ and $\rho_1:=1$. \\
It is easy ro check that the conditions (i), (ii) are satisfied.
\end{exem}

Using the proposition \ref{prop:calcul}, we prove that
\[
L^{\dag}\rightarrow\tilde{\mathfrak{st}} 
\]
is an isomorphism of pronilpotent proalgebraic Lie algebras.
it follows that
\[
\exp(L^{\dag})\rtimes G^{(0)}_{p,1,s} \rightarrow
\exp(\tilde{\mathfrak{st}})\,\rtimes G^{(0)}_{p,1,s}=G^{(0)}_1
\]
is an isomorphism of proalgebraic groups, giving a \emph{transcendental} 
explicit description of the $q$-difference universal local Galois 
group $G^{(0)}_1$.
\bigskip

\paragraph{Variations.}

It is possible to use (more complicated) variants of the above formalism 
for various problems of local classification of dynamical systems.

 \begin{enumerate}
\item Local classification of meromorphic linear differential equations. 
In that case condition (ii) is not satisfied.
\item Local classification of meromorphic linear difference equations.
\item Local classification of meromorphic saddle nodes in the plane. 
In that case it is  necessary to use some \emph{infinite dimensional} 
representations. As an exercise the reader can explicit this example 
using the dictionnary between the Martinet-Ramis classification and 
the Ecalle resurgent classification detailed in \cite{Sauz}.
\end{enumerate}

There are also some analogies with the wild ramification phenomena in 
the classical Galois theory of local fields, but that is another story.


\backmatter

\bibliography{qwildgroupIII}

\end{document}